\def\ls{\lesssim}
\def\fz{\infty}
\def\red{\color{red}}
\renewcommand{\r}{\right}
\newcommand{\lf}{\left}
\def\ls{\lesssim}
\def\paz{{\partial}}
\def\supp{{\mathop\mathrm{\,supp\,}}}
\def\rr{{\mathbb R}}
\def\rn{{{\rr}^n}}
\newcommand{\wz}{\widetilde}
\newcommand{\cm}{{\mathcal M}}
\def\az{\alpha}
\def\lz{\lambda}
\def\dz{\delta}
\def\bz{\beta}
\def\fai{\varphi}
\def\gz{{\gamma}}
\def\tz{\theta}
\def\sz{\sigma}
\def\wz{\widetilde}
\def\ls{\lesssim}
\def\ol{\overline}
\def\boz{\Omega}
\def\uc{{\varepsilon}}
\def\esup{\mathop\mathrm{\,ess\,sup\,}}
\def\dfrac{\displaystyle\frac}
\newtheorem{theorem}{Theorem}[section]
\newtheorem{lemma}[theorem]{Lemma}
\newtheorem{corollary}[theorem]{Corollary}
\theoremstyle{definition}
\newtheorem{remark}[theorem]{Remark}
\newtheorem{definition}[theorem]{Definition}
\def\supp{{\mathop\mathrm{\,supp\,}}}
\def\diam{{\mathop\mathrm{diam}}}
\def\dist{{\mathop\mathrm{\,dist\,}}}
\def\loc{{\mathop\mathrm{loc}}}
\numberwithin{equation}{section}
\begin{document}

\title{\Large\bf Weighted Global Regularity Estimates for Elliptic Problems with
Robin Boundary Conditions in Lipschitz Domains
\footnotetext{\hspace{-0.35cm} 2010 {\it Mathematics Subject
Classification}. {Primary 35J25; Secondary 35J15, 42B35, 42B37.}
\endgraf{\it Key words and phrases}. elliptic equation, Robin boundary problem,
Lipschitz domain, (semi-)convex domain,  weak reverse H\"older inequality,
gradient estimate, Muckenhoupt weight, Morrey space, variable Lebesgue space.
\endgraf This work is supported by the National Natural Science Foundation
of China (Grant Nos. 11871254, 11971058, 11761131002, 11671185 and 11871100).}}
\author{Sibei Yang, Dachun Yang\,\footnote{Corresponding
author, E-mail: \texttt{dcyang@bnu.edu.cn}/{\red March 17, 2020}/Final version.}
\ and Wen Yuan}
\date{ }
\maketitle

\vspace{-0.8cm}

\begin{center}
\begin{minipage}{13.5cm}\small
{{\bf Abstract.}
Let $n\ge2$ and $\Omega$ be a bounded Lipschitz domain in $\mathbb{R}^n$.
In this article, the authors investigate global (weighted) estimates
for the gradient of solutions to Robin boundary value problems of second order
elliptic equations of divergence form with real-valued, bounded, measurable coefficients
in $\Omega$. More precisely, let $p\in(n/(n-1),\infty)$.
Using a real-variable argument, the authors obtain
two necessary and sufficient conditions for $W^{1,p}$ estimates of solutions
to Robin boundary value problems, respectively, in terms of a weak reverse H\"older
inequality with exponent $p$ or weighted $W^{1,q}$ estimates of solutions
with $q\in(n/(n-1),p]$ and some Muckenhoupt weights. As applications, the authors
establish some global regularity estimates for solutions to Robin boundary value problems of second order elliptic equations of divergence form with small
$\mathrm{BMO}$ coefficients, respectively, on bounded Lipschitz domains, $C^1$
domains or (semi-)convex domains, in the scale of
weighted Lebesgue spaces, via some quite subtle approach which is different from the
existing ones and, even when $n=3$ in case of bounded $C^1$ domains,
also gives an alternative correct proof of some know result. By this and some
technique from harmonic analysis, the authors further
obtain the global regularity estimates, respectively, in Morrey spaces, (Musielak--)Orlicz
spaces and variable Lebesgue spaces.}
\end{minipage}
\end{center}

\vspace{0.1cm}

\section{Introduction\label{s1}}

\hskip\parindent The study of regularity estimates in various function spaces
for linear or non-linear elliptic equations (or systems) in non-smooth domains
is one of the most interesting and important topics in partial differential equations
(see, for instance, \cite{amp18,aq02,b05,bw04,d96,dk10,fmm98,g12,ho19,jk95}
for the linear case and \cite{ap16,ap15,b18,byz08,cp98,cm16,mp12,mp11} for the non-linear case).
Furthermore, it is well known that the global regularity estimates for solutions to
elliptic boundary problems depend not only on the structure of equations
and the properties of the right-hand side datum and the coefficients appearing
in equations, but also on the smooth property or
the geometric property of the boundary of domains
(see, for instance, \cite{amp18,aq02,bw05,cm16,dk10,g12,jk95,k07,mp12}).

Motivated by \cite{d00,g12,ycyy20}, in this article, our aim is to study the
weighted global regularity estimates for Robin boundary value problems of
second-order elliptic equations of divergence form with real-valued, bounded, measurable
coefficients in bounded Lipschitz domains and their applications.
More precisely, let $n\ge2$, $\Omega$ be a bounded Lipschitz
domain in $\mathbb{R}^n$ and $p\in(n/(n-1),\fz)$.
Using a real-variable argument, we obtain
two necessary and sufficient conditions for $W^{1,p}$ estimates of
solutions to Robin boundary value problems, respectively,
in terms of a weak reverse H\"older
inequality with exponent $p$ or weighted $W^{1,q}$ estimates of solutions
with $q\in(n/(n-1),p]$ and some Muckenhoupt weights. As applications,
we establish some global regularity estimates for solutions to Robin boundary value problems
of second-order elliptic equations of divergence form with small
$\mathrm{BMO}$ coefficients on bounded Lipschitz domains, $C^1$
domains or (semi-)convex domains, in the scale of
weighted Lebesgue spaces. Applying those weighted global estimates and some
technique from harmonic analysis, such as properties of Muckenhoupt weights and
the extrapolation theorem, we further
obtain the global regularity estimates, respectively,
in Morrey spaces, (Musielak--)Orlicz spaces and variable Lebesgue spaces.
We point out that, in cases of bounded $C^1$ domains or (semi-)convex domains,
the approach used in this article to obtain the global weighted regularity
estimates is different from that used in \cite{amp18,acgg19,g18,ycyy20},
which is quite subtle and, even when $n=3$ in case of bounded $C^1$ domains,
also gives an alternative correct proof of \cite[Theorem 1.1]{acgg19}
(see Remark \ref{r1.6} below for the details).

To describe the main results of this article,
we first recall the notions of the Muckenhoupt weight class
and the reverse H\"older class (see, for instance, \cite{am07,cf13,g14,St93}).

\begin{definition}\label{d1.1}
Let $q\in[1,\fz)$. A non-negative and locally integrable function $\omega$ on $\rn$
is said to belong to the \emph{Muckenhoupt weight class} $A_q(\rn)$, denoted by $\omega\in A_q(\rn)$,
if, when $q\in(1,\fz)$,
\begin{equation*}
[\omega]_{A_q(\rn)}:=\sup_{B\subset\rn}\lf[\frac{1}{|B|}\int_B
\omega(x)\,dx\r]\lf\{\frac{1}{|B|}\int_B
[\omega(x)]^{-\frac{1}{q-1}}\,dx\r\}^{q-1}<\fz
\end{equation*}
or
\begin{equation*}
[\omega]_{A_1(\rn)}:=\sup_{B\subset\rn}\lf[\frac{1}{|B|}\int_B \omega(x)\,dx\r]
\lf\{\esup_{y\in B}[\omega(y)]^{-1}\r\}<\fz,
\end{equation*}
where the suprema are taken over all balls $B\subset\rn$.

Let $r\in(1,\fz]$. A non-negative and locally integrable function $\omega$ on $\rn$
is said to belong to the \emph{reverse H\"older class} $RH_r(\rn)$,
denoted by $\omega\in RH_r(\rn)$, if, when $r\in(1,\fz)$,
\begin{align*}
[\omega]_{RH_r(\rn)}:=\sup_{B\subset\rn}\lf\{\frac{1}
{|B|}\int_B [\omega(x)]^r\,dx\r\}^{\frac1r}\lf[\frac{1}{|B|}\int_B
\omega(x)\,dx\r]^{-1}<\fz
\end{align*}
or
\begin{equation*}
[\omega]_{RH_\fz(\rn)}:=\sup_{B\subset\rn}\lf[\esup_{y\in
B}\omega(y)\r]\lf[\frac{1}{|B|}\int_B\omega(x)\,dx\r]^{-1} <\fz,
\end{equation*}
where the suprema are taken over all balls $B\subset\rn$.
\end{definition}

Let $n\ge2$ and $\boz$ be a bounded Lipschitz domain in $\rn$.
Assume that $p\in[1,\fz)$ and $\omega\in A_q(\rn)$ with some $q\in[1,\fz)$.
Recall that the \emph{weighted Lebesgue space} $L^p_\omega(\Omega)$ is defined by setting
\begin{align}\label{1.1}
L^p_\omega(\Omega):=\lf\{f\ \text{is measurable on}\ \Omega: \
\|f\|_{L^p_\omega(\Omega)}:=\lf[\int_{\boz}
|f(x)|^p\omega(x)\,dx\r]^{\frac1p}<\fz\r\}.
\end{align}
Moreover, let
\begin{equation}\label{1.2}
L^p_\omega(\boz;\rn):=\lf\{\mathbf{f}:=(f_1,\,\ldots,\,f_n):\ \text{for any}
\ i\in\{1,\,\ldots,\,n\},\ f_i\in L^p_\omega(\boz)\r\}
\end{equation}
and
$$\|\mathbf{f}\|_{L^p_\omega(\boz;\rn)}:
=\sum_{i=1}^n\|f_i\|_{L^p_\omega(\boz)}.
$$
Denote by $W^{1,p}_\omega(\boz)$ the \emph{weighted Sobolev space on $\boz$} equipped
with the \emph{norm}
$$\|f\|_{W^{1,p}_\omega(\boz)}:=\|f\|_{L^p_\omega(\boz)}+\|\nabla f\|_{L^p_\omega
(\boz;\rn)},$$
where $\nabla f$ denotes the \emph{distributional
gradient} of $f$. Furthermore, $W^{1,p}_{0,\,\omega}(\boz)$ stands for
the \emph{closure} of $C^{\fz}_{\mathrm{c}} (\boz)$ in $W^{1,p}_\omega(\boz)$, where
$C^{\fz}_{\mathrm{c}}(\boz)$ denotes the set of all \emph{infinitely differentiable functions on
$\boz$ with compact supports contained in $\boz$}.
In particular, when $\omega\equiv1$, the weighted spaces $L^p_\omega(\boz)$
and $W^{1,p}_\omega(\boz)$ are denoted simply, respectively,
by $L^p(\boz)$ and $W^{1,p}(\boz)$, which are
just, respectively, the classical Lebesgue space
and the classical Sobolev space.

For any given $x\in\rn$, let $A(x):=\{a_{ij}(x)\}_{i,j=1}^n$ denote
an $n\times n$ matrix with real-valued, bounded and measurable entries.
Then $A$ is said to satisfy the \emph{uniform ellipticity condition}
if there exists a positive constant $\mu_0\in(0,1]$ such that,
for any $x:=(x_1,\,\ldots,\,x_n),\ \xi:=(\xi_1,\,\ldots,\,\xi_n)\in\rn$,
\begin{equation}\label{1.3}
\mu_0|\xi|^2\le\sum_{i,j=1}^na_{ij}(x)\xi_i\xi_j\le \mu_0^{-1}|\xi|^2.
\end{equation}
Throughout this article, we \emph{always assume} that the matrix
$A$ is real-valued, bounded and measurable, and satisfies
the uniform ellipticity condition \eqref{1.3}.

Denote by $\boldsymbol{\nu}:=(\nu_1,\,\ldots,\,\nu_n)$ the \emph{outward unit normal}
to $\partial\boz$, the \emph{boundary} of $\boz$.
Throughout this article, we always assume that
\begin{equation}\label{1.4}
0\le\az\in L^\fz(\partial\boz)\quad\text{and}\quad \az\not\equiv0\quad\text{on}\ \partial\boz.
\end{equation}
For any given $p\in(1,\fz)$, let
\begin{equation}\label{1.5}
p_\ast:=
\begin{cases}
\dfrac{np}{n+p} \ &\text{when}\ p\in(n/(n-1),\fz),\\
1+\epsilon\ & \text{when}\ p\in(1,n/(n-1)],
\end{cases}
\end{equation}
where $\epsilon\in(0,\fz)$ is an arbitrary given constant.
Assume that $p\in(1,\fz)$, $\mathbf{f}\in L^p(\boz;\rn)$, $F\in L^{p_\ast}(\boz)$
and $g\in W^{-1/p,p}(\partial\boz)$, where $W^{-1/p,p}(\partial\boz)$ denotes
the \emph{dual space} of the Sobolev space $W^{1/p,p'}(\partial\boz)$ on $\partial\boz$.
Here and thereafter, for any given $p\in[1,\fz]$, $p'\in[1,\fz]$ denotes the \emph{conjugate
number} of $p$, namely,  $1/p+1/p'=1$.
Then a function $u$ is called a \emph{weak solution} of the following
\emph{Robin boundary value problem}
\begin{equation}\label{1.6}
\begin{cases}
-\mathrm{div}(A\nabla u)=\mathrm{div}(\mathbf{f})+F\ \ & \text{in}\ \ \boz,\\
\dfrac{\partial u}{\partial\boldsymbol{\nu}}+\az u=\mathbf{f}\cdot\boldsymbol{\nu}+g\ \ & \text{on}\ \ \partial\boz,
\end{cases}
\hspace{4cm}\hfill (R)_{p}
\end{equation}
where $\frac{\partial u}{\partial\boldsymbol{\nu}}:=(A\nabla u)\cdot\boldsymbol{\nu}$
denotes the \emph{conormal derivative} of $u$ on $\partial\boz$,
if $u\in W^{1,p}(\boz)$ and, for any $\varphi\in C^\fz(\rn)$
(the set of all infinitely differentiable functions on $\rn$),
\begin{align}\label{1.7}
&\int_{\boz}A(x)\nabla u(x)\cdot\nabla\varphi(x)\,dx+\int_{\partial\boz}\az(x) u(x)
\varphi(x)\,d\sz(x)\\
&\quad=-\int_\boz\mathbf{f}(x)\cdot\nabla\varphi(x)\,dx+\int_{\boz}F(x)\varphi(x)\,dx
+\langle g,\varphi\rangle_{\partial\boz}.\nonumber
\end{align}
Here and thereafter, $\langle\cdot,\cdot\rangle_{\partial\boz}$ denotes the duality between
$W^{-1/p,p}(\partial\boz)$ and $W^{1/p,p'}(\partial\boz)$.
Moreover, the Robin problem $(R)_{p}$
is said to be \emph{uniquely solvable} if, for any $\mathbf{f}\in L^p(\boz;\rn)$,
$F\in L^{p_\ast}(\boz)$ and $g\in W^{-1/p,p}(\partial\boz)$,
there exists a \emph{unique} $u\in W^{1,p}(\boz)$ such that \eqref{1.7} holds true.
It is worth pointing out that the Robin boundary condition
naturally arises in the heat conduction problem
as well as in physical geodesy (see, for instance, \cite{k67,o98}).

Furthermore, let $p\in(1,\fz)$, $\omega\in A_q(\rn)$ with some $q\in[1,p]$,
$\mathbf{f}\in L^p_\omega(\boz;\rn)$ and $F\in L^{p_\ast}_{\omega^{p_\ast/p}}(\boz)$.
A function $u$ is called a \emph{weak solution} of the following
\emph{weighted Robin boundary value problem}
\begin{equation}\label{1.8}
\begin{cases}
-\mathrm{div}(A\nabla u)=\mathrm{div}(\mathbf{f})+F\ \ & \text{in}\ \ \boz,\\
\dfrac{\partial u}{\partial\boldsymbol{\nu}}+\az u=\mathbf{f}\cdot\boldsymbol{\nu}\ \ & \text{on}\ \ \partial\boz,
\end{cases}
\hspace{4cm}(R)_{p,\,\omega}
\end{equation}
if $u\in W^{1,p}_\omega(\boz)$ and, for any $\varphi\in C^\fz(\rn)$, \eqref{1.7} holds true with $g\equiv0$.
The weighted Robin boundary value problem $(R)_{p,\,\omega}$
is said to be \emph{uniquely solvable} if, for any $\mathbf{f}\in L^p_\omega(\boz;\rn)$
and $F\in L^{p_\ast}_{\omega^{p_\ast/p}}(\boz)$,
there exists a \emph{unique} $u\in W^{1,p}_\omega(\boz)$ such that \eqref{1.7}
holds true with $g\equiv0$. In particular, if $\az\equiv0$
in \eqref{1.6} and \eqref{1.8}, then the Robin problem $(R)_p$
and the weighted Robin problem $(R)_{p,\,\omega}$
are, respectively, the Neumann problem $(N)_p$
and the weighted Neumann problem $(N)_{p,\,\omega}$
(see, for instance, \cite{g12,g18,ycyy20}).
The Neumann problem $(N)_p$ is said to be \emph{uniquely solvable} if,
for any $\mathbf{f}\in L^p(\boz,\rn)$, $F\in L^{p_\ast}(\boz)$ and
$g\in W^{-1/p,p}(\partial\boz)$ satisfying the compatibility condition
$$
\int_\boz F(x)\,dx=\langle g,1\rangle_{\partial\boz},
$$
there exists a $u\in W^{1,p}(\boz)$, unique up to constants, such that \eqref{1.7}
holds true. Furthermore, the weighted Neumann boundary value problem $(N)_{p,\,\omega}$
is said to be \emph{uniquely solvable} if, for any $F\in L^{p_\ast}_{\omega^{p^\ast/p}}(\boz)$
satisfying $\int_\boz F(x)\,dx=0$ and $\mathbf{f}\in L^p_\omega(\boz;\rn)$,
there exists a $u\in W^{1,p}_\omega(\boz)$,
unique up to constants, such that \eqref{1.7} holds true.

Moreover, a function $u$ is called a \emph{weak solution} of the
following \emph{weighted Dirichlet boundary value problem}
\begin{equation}\label{1.9}
\begin{cases}
-\mathrm{div}(A\nabla u)=\mathrm{div}(\mathbf{f})+F\ \ & \text{in}\ \ \boz,\\
u=0 \ \ & \text{on}\ \ \partial\boz,
\end{cases}
\hspace{4cm}(D)_{p,\,\omega}
\end{equation}
if $u\in W^{1,p}_{0,\,\omega}(\boz)$ and \eqref{1.7} with $g\equiv0$ holds true for
any $\varphi\in C^\fz_{\mathrm{c}}(\boz)$. In particular, when $\omega\equiv1$,
the weighted Dirichlet problem $(D)_{p,\,\omega}$ is just the Dirichlet problem $(D)_{p}$.
The weighted Dirichlet problem $(D)_{p,\,\omega}$ is said to
be \emph{uniquely solvable} if, for any $F\in L^{p_\ast}_{\omega^{p_\ast/p}}(\boz)$ and
$\mathbf{f}\in L^p_\omega(\boz;\rn)$, there exists a \emph{unique} $u\in W^{1,p}_{0,\,\omega}(\boz)$
such that \eqref{1.7} with $g\equiv0$ holds true for
any $\varphi\in C^\fz_{\mathrm{c}}(\boz)$.

\begin{remark}\label{r1.1x}
Let $n\ge2$ and $\boz\subset\rn$ be a bounded Lipschitz domain.
By the Lax--Milgram theorem (see, for instance, \cite[Theorem 5.8]{gt01})
and the Sobolev embedding theorem (see, for instance, \cite[Thoerem 7.26]{gt01} and
\cite[Section 2.3.4, Theorem 3.4]{n12}), we know that the Dirichlet problem $(D)_2$
and the Neumann problem $(N)_2$ are uniquely solvable. Furthermore,
by the Lax--Milgram theorem, the Friedrichs inequality
(see, for instance, \cite[Section 1.1.8, Theorem 1.9]{n12}
and \cite[Theorem 6.1]{ls04}) and the Sobolev embedding theorem, we find that
the Robin problem $(R)_2$ is uniquely solvable. We point out that
the conditions that $\az\ge0$ and $\az\not\equiv0$ on $\partial\boz$
is necessary for the unique solvability of the Robin problem $(R)_2$
(see, for instance, \cite[p.\,97]{ls04} for a counterexample).

Moreover, via an example given by Meyers \cite[Section 5]{m63} (see also \cite[p.\,1285]{bw04}),
we conclude that, for a general $p\in(1,\fz)$ with $p\neq2$,
the Dirichlet problem $(D)_p$, the Neumann problem $(N)_p$
and the Robin problem $(R)_p$ may not be uniquely solvable,
even when the domain $\boz$ and the coefficient matrix $A$ are smooth.
\end{remark}

Let $n\ge2$ and $\boz\subset\rn$ be a bounded Lipschitz domain.
Assume that the matrix $A$ satisfies the $(\dz,R)$-BMO condition
(see Definition \ref{d1.2} below) or $A$ belongs to the space $\mathrm{VMO}(\rn)$
(see, for instance, \cite{s75}). The main aim of this article is to obtain
the following Calder\'on--Zygmund type estimates
\begin{equation}\label{1.10}
\|u\|_{W^{1,p}(\boz)}\le C\lf[\|\mathbf{f}\|_{L^p(\boz;\rn)}
+\|F\|_{L^{p_\ast}(\boz)}+\|g\|_{W^{-1/p,p}(\partial\boz)}\r]
\end{equation}
for the Robin problem \eqref{1.6},
the following weighted Calder\'on--Zygmund type estimates
\begin{equation}\label{1.11}
\|u\|_{W^{1,p}_\omega(\boz)}\le C\lf[\|\mathbf{f}\|_{L^p_\omega(\boz;\rn)}
+\|F\|_{L^{p_\ast}_{\omega^{p_\ast/p}}(\boz)}\r]
\end{equation}
for the weighted Robin problem \eqref{1.8}, and then give some applications,
where $C$ is a positive constant independent of $u$, $\mathbf{f}$, $F$ and $g$.

Now we recall some known results for (weighted) Calder\'on--Zygmund type
estimates, respectively, to the (weighted) Dirichlet problem, the (weighted) Neumann problem
and the (weighted) Robin problem.

For the Dirichlet problem $(D)_p$, the estimate \eqref{1.10}
with $F\equiv0,\,g\equiv0$ and $p\in(1,\fz)$ was obtained in \cite{d96},
under the assumptions that $A\in\mathrm{VMO}(\rn)$
and $\partial\boz\in C^{1,1}$, which was then weakened to $\partial\boz\in C^{1}$
in \cite{aq02}. Furthermore, for the Dirichlet problem $(D)_p$,
the estimate \eqref{1.10} with $F\equiv0,\,g\equiv0$ and $p\in(1,\fz)$
was established in \cite{b05,bw04}, under the assumptions that $A$ satisfies the $(\dz,R)$-BMO
condition for sufficiently small $\dz\in(0,\fz)$, and $\boz$ is a bounded Lipschitz domain
with small Lipschitz constant or a bounded Reifenberg flat domain (see, for instance, \cite{r60,t97}).
Moreover, for the Dirichlet problem $(D)_p$ with partial small $\mathrm{BMO}$ coefficients,
the estimate \eqref{1.10} with $g\equiv0$ and $p\in(1,\fz)$
was systematically studied in \cite{dk10,k07}, under the assumption that
$\boz$ is a bounded Lipschitz domain with small Lipschitz constant.
For the Dirichlet problem $(D)_p$ in a general Lipschitz domain $\boz$,
it was proved in \cite{sh05a} that, if $A$ is symmetric and $A\in\mathrm{VMO}(\rn)$, then
\eqref{1.10} with $F\equiv0$ and $g\equiv0$ holds true for any $p\in(\frac32-\uc,3+\uc)$
when $n\ge3$, or $p\in(\frac43-\uc,4+\uc)$ when $n=2$, where $\uc\in(0,\fz)$ is a positive constant
depending only on the Lipschitz constant of $\boz$ and $n$. It is worth pointing out that, when $A:=I$ (the identity matrix)
in \eqref{1.9}, the range of $p$ obtained in \cite{sh05a} is even sharp for
general Lipschitz domains (see, for instance, \cite{jk95}).
Moreover, for the weighted Dirichlet problem $(D)_{p,\,\omega}$ with partial small
$\mathrm{BMO}$ coefficients, \eqref{1.7} with $F\equiv0$, $p\in(2,\fz)$
and $\omega\in A_{p/2}(\rn)$ was obtained in \cite{bp14} under the assumption that
$\boz$ is a bounded Reifenberg flat domain. For the problem $(D)_{p,\,\omega}$ with symmetric and
small $\mathrm{BMO}$ coefficients, \eqref{1.11} with $F\equiv0$, $p\in(1,\fz)$ and
$\omega\in A_{p}(\rn)$ was established in \cite{amp18} under the assumption that
$\boz$ is a bounded Lipschitz domain with small Lipschitz constant.

For the Neumann problem $(N)_p$, the estimate \eqref{1.10}
with $F\equiv0,\,g\equiv0$ and $p\in(1,\fz)$ was obtained in \cite{aq02},
under the assumptions that $A\in\mathrm{VMO}(\rn)$ and $\partial\boz\in C^1$.
Moreover, when $A$ has the small $\mathrm{BMO}$
coefficients and $\boz$ is a bounded Reifenberg flat domain, or $A$ has
partial small $\mathrm{BMO}$ coefficients and $\boz$ is a bounded Lipschitz domain
with small Lipschitz constant, the estimate \eqref{1.10} with $g\equiv0$
and $p\in(1,\fz)$ was established, respectively, in \cite{bw05} and \cite{dk10}
for the Neumann problem $(N)_p$.
Furthermore, for the Neumann problem $(N)_p$ on a general Lipschitz domain,
it was proved in \cite{g12,g18} that, if $A$ is symmetric and $A\in\mathrm{VMO}(\rn)$, then
\eqref{1.10} holds true for any $p\in(\frac32-\uc,3+\uc)$
when $n\ge3$, or $p\in(\frac43-\uc,4+\uc)$ when $n=2$,
where $\uc\in(0,\fz)$ is a positive constant depending only
on the Lipschitz constant of $\boz$ and $n$.
We point out that, when $A:=I$ in the Neumann problem $(N)_p$,
the range of $p$ obtained in \cite{g12,g18} is even sharp for
general Lipschitz domains (see, for instance, \cite{fmm98}).
In particular, if $A$ is symmetric, $A\in\mathrm{VMO}(\rn)$ and $\boz$ is convex,
it was proved in \cite{g18} that \eqref{1.10} with $F\equiv0$
holds true for the Neumann problem $(N)_p$ with any given $p\in(1,\fz)$.
Moreover, for any given $p\in(1,\fz)$ and $\omega\in A_p(\rn)$,
the weighted estimate \eqref{1.11}, with $\|F\|_{L^{p_\ast}_{\omega^{p_\ast/p}}(\boz)}$
replaced by $\|F\|_{L^{p}_{\omega}(\boz)}$, was established in \cite{ycyy20}
for the weighted Neumann problem $(N)_{p,\,\omega}$ when $\boz$ is a bounded (semi-)convex domain.
Furthermore, for the weighted Neumann problem $(N)_{p,\,\omega}$ on a
general Lipschitz domain, if $p\in(\frac32-\uc,3+\uc)$
when $n\ge3$, or $p\in(\frac43-\uc,4+\uc)$ when $n=2$, and $\omega\in
A_{p/(\frac32-\uc)}(\rn)\cap RH_{(\frac{3+\uc}{p})'}(\rn)$ when $n\ge3$,
or $\omega\in A_{p/(\frac43-\uc)}(\rn)\cap RH_{(\frac{4+\uc}{p})'}(\rn)$ when $n=2$,
the estimate \eqref{1.11} with $\|F\|_{L^{p_\ast}_{\omega^{p_\ast/p}}(\boz)}$
replaced by $\|F\|_{L^{p}_{\omega}(\boz)}$ is a simple corollary of
\cite[Theorem 1.2]{g12} and \cite[Theorem 1.2]{ycyy20},
where $\uc\in(0,\fz)$ is a positive constant depending
only on the Lipschitz constant of $\boz$ and $n$.

For $C^1$ domains, Lipschitz domains, Reifenberg flat domains and (semi-)convex domains mentioned as above,
we have the following relations. It is known that
$C^1$ domains are Lipschitz domains with small Lipschitz
constants, Lipschitz domains with small Lipschitz constants
are Reifenberg flat domains and hence $C^1$ domains are Reifenberg flat domains,
but general Lipschitz domains may not be Reifenberg flat domains.
Moreover, (semi-)convex domains are Lipschitz domains, but may not be $C^1$ domains,
Lipschitz domains with small Lipschitz constants or Reifenberg flat domains.
Furthermore, convex domains are semi-convex domains (see \cite[Remark 1.10]{ycyy20}
or Remarks \ref{r1.4}(iv) below).

For the Robin problem \eqref{1.6}, the general well-posedness of solutions
was studied in \cite{d00}. Moreover, when $\boz\subset\rn$ ($n\ge3$) is a bounded
Lipschitz domain, the regularity theory for the Robin problem
\eqref{1.6} with $\mathbf{f}\equiv0$ and $A:=I$
was investigated in \cite{lm06} via the layer potential method.
Recently, for the Robin problem $(R)_p$ on a general Lipschitz domain $\boz\subset\rn$
($n\ge3$), when $A$ is symmetric and $A\in\mathrm{VMO}(\rn)$, the global regularity estimate
\eqref{1.10} was established in \cite{gz20} for any given
$p\in(\frac32-\uc,3+\uc)$, where $\uc\in(0,\fz)$ is a positive constant
depending only on the Lipschitz constant of $\boz$ and $n$. Moreover, for the Robin problem $(R)_p$ on a bounded $C^1$
domain $\boz\subset\rr^3$, when $A$ is symmetric and $A\in\mathrm{VMO}(\rr^3)$,
the estimate \eqref{1.10} was obtained in \cite{acgg19}
for any given $p\in(1,\fz)$. By the way, we point out that H\"older
regularity estimates for the Robin problem \eqref{1.6} in bounded Lipschitz domains
were studied in \cite{n11}.

Now we give the main results of this article as follows. In what follows,
for any $x\in\rn$ and $r\in(0,\fz)$, we always let
$B(x,r):=\{y\in\rn:\ |y-x|<r\}$.

\begin{theorem}\label{t1.1}
Let $n\ge2$, $\boz\subset\rn$ be a bounded Lipschitz domain, $p_0\in(n/(n-1),\fz)$, $p\in(p_0,\fz)$
and $\az$ be as in \eqref{1.4}.
Assume that the matrix $A$ is real-valued, symmetric, bounded and measurable,
and satisfies \eqref{1.3}, and the weak solution $u$ of the Robin problem $(R)_{p_0}$
with $\mathbf{f}\in L^{p_0}(\boz;\rn)$, $F\in L^{(p_0)_\ast}(\boz)$
and $g\equiv0$ exists and satisfies
\begin{equation}\label{1.12}
\|u\|_{W^{1,p_0}(\boz)}\le C\lf[\|\mathbf{f}\|_{L^{p_0}(\boz;\rn)}
+\|F\|_{L^{(p_0)_\ast}(\boz)}\r],
\end{equation}
where $(p_0)_\ast$ is as in \eqref{1.5} with $p$ replaced by $p_0$
and $C$ is a positive constant depending only on $n$, $p_0$, $\mu_0$,
$\|\az\|_{L^\fz(\partial\boz)}$ and the Lipschitz constant of $\boz$.
Then the following three statements are mutually equivalent.
\begin{itemize}
\item[\rm(i)] A weak solution $u\in W^{1,p_0}(\boz)$ of the Robin problem
$(R)_p$ with $\mathbf{f}\in L^p(\boz;\rn)$, $F\in L^{p_\ast}(\boz)$ and $g\equiv0$
exists, where $p_\ast$ is as in \eqref{1.5}, and, moreover, $u\in W^{1,p}(\boz)$ and
there exists a positive constant $C$, depending only on $n$, $p$, $\mu_0$,
$\|\az\|_{L^\fz(\partial\boz)}$ and the Lipschitz constant of $\boz$, such that
\begin{equation}\label{1.13}
\|u\|_{W^{1,p}(\boz)}\le C\lf[\|\mathbf{f}\|_{L^p(\boz;\rn)}
+\|F\|_{L^{p_\ast}(\boz)}\r].
\end{equation}

\item[\rm(ii)] There exist positive constants $C_0\in(0,\fz)$ and $r_0\in(0,\diam(\boz))$
such that, for any ball $B(x_0,r)$ having the property that $r\in(0,r_0/4)$
and either $x_0\in\partial\boz$ or $B(x_0,2r)\subset\boz$,
the weak reverse H\"older inequality
\begin{align}\label{1.14}
&\lf\{\frac{1}{r^n}\int_{B(x_0,r)\cap\boz}[|v(x)|+|\nabla v(x)|]^p\,dx\r\}^{\frac1p}\\
&\quad\le C_0\lf\{\frac{1}{r^n}\int_{B(x_0,2r)\cap\boz}
[|v(x)|+|\nabla v(x)|]^{p_0}\,dx\r\}^{\frac1{p_0}}\nonumber
\end{align}
holds true for any function $v\in W^{1,p_0}(B(x_0,2r)\cap\boz)$ satisfying
$\mathrm{div}(A\nabla v)=0$ in $B(x_0,2r)\cap\boz$ and
$\frac{\partial v}{\partial\boldsymbol{\nu}}+\az v=0$
on $B(x_0,2r)\cap\partial\boz$ when $x_0\in\partial\boz$,
where $\diam(\boz):=\sup\{|y-z|:\ y,\,z\in\boz\}$.

\item[\rm(iii)] Let $q\in[p_0,p]$, $q_0\in[1,\frac{q}{p_0}]$, $r_0\in[(\frac{p}{q})',\fz]$
and $\omega\in A_{q_0}(\rn)\cap RH_{r_0}(\rn)$.
A weak solution $u$ of the weighted Robin problem
$(R)_{q,\,\omega}$ with $\mathbf{f}\in L^q_\omega(\boz;\rn)$ and
$F\in L^{q_\ast}_{\omega^{q_\ast/q}}(\boz)$,
where $q_\ast$ is as in \eqref{1.5} with $p$ replaced by $q$,
exists and, moreover, $u\in W^{1,q}_\omega(\boz)$ and there exists a positive constant $C$,
depending only on $n$, $\mu_0$, $p$, $q$, $[\omega]_{A_{q_0}(\rn)}$,
$[\omega]_{RH_{r_0}(\rn)}$, $\|\az\|_{L^\fz(\partial\boz)}$ and the Lipschitz constant of $\boz$, such that
\begin{equation}\label{1.15}
\|u\|_{W^{1,q}_\omega(\boz)}\le C\lf[\|\mathbf{f}\|_{L^q_\omega(\boz;\rn)}+
\|F\|_{L^{q_\ast}_{\omega^{q_\ast/q}}(\boz)}\r].
\end{equation}
\end{itemize}
\end{theorem}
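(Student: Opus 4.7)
The plan is to establish the cycle (i) $\Rightarrow$ (ii) $\Rightarrow$ (iii) $\Rightarrow$ (i), where the heart of the matter is a Shen-type real-variable argument: under the a priori $W^{1,p_0}$ estimate \eqref{1.12}, the weak reverse H\"older inequality \eqref{1.14} on balls is equivalent to, and upgradeable to, weighted $W^{1,q}$ estimates in the indicated range of weights. The implication (iii) $\Rightarrow$ (i) is immediate by specializing to $q=p$, $q_0=1$, $r_0=\fz$, and $\omega\equiv 1$, since the constant weight lies in every class $A_{q_0}(\rn)\cap RH_{r_0}(\rn)$ with these indices and $(p_\ast)_\ast=p_\ast$.

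For (i) $\Rightarrow$ (ii), I would fix a ball $B(x_0,2r)$ with either $x_0\in\partial\boz$ or $B(x_0,2r)\subset\boz$, and a local homogeneous solution $v\in W^{1,p_0}(B(x_0,2r)\cap\boz)$ (with the Robin condition on $B(x_0,2r)\cap\partial\boz$ in the boundary case). Choosing a cutoff $\eta\in C^{\fz}_{\mathrm{c}}(B(x_0,\frac{3r}{2}))$ with $\eta\equiv 1$ on $B(x_0,r)$ and $|\nabla\eta|\ls 1/r$, the product $w:=\eta v$ (more precisely, $\eta(v-c)$ for a suitable constant $c$ in the interior case) solves a global Robin problem on $\boz$ whose right-hand side $\mathbf{f}$ and $F$ are supported in the annular region $[B(x_0,\frac{3r}{2})\setminus B(x_0,r)]\cap\boz$ and dominated by $\frac{1}{r}(|v|+|\nabla v|)$ there. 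Applying the assumed $W^{1,p}$ estimate \eqref{1.13} to $w$, together with a scaling reduction and a Meyers--Gehring type self-improvement that absorbs the zeroth-order term on the right, yields \eqref{1.14}.

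For (ii) $\Rightarrow$ (iii), I would invoke a boundary-adapted, weighted version of Shen's real-variable lemma, in the spirit of the arguments developed for the Neumann problem in \cite{g12,ycyy20}. Given $u$ solving $(R)_{q,\omega}$, for each ball $B=B(x_0,r)$ in a Whitney--type covering of $\boz$ one writes $u=u_1+u_2$, where $u_1$ solves the Robin problem with data localized to a concentric dilate of $B$ (and its $W^{1,p_0}$-norm is controlled by \eqref{1.12}) and $u_2:=u-u_1$ is homogeneous on $B$, so that \eqref{1.14} upgrades its integrability from $p_0$ to $p$. A good-$\lambda$ inequality in the measure $\omega\,dx$ then closes the argument provided $\omega$ lies simultaneously in $A_{q/p_0}(\rn)$ and $RH_{(p/q)'}(\rn)$, which is exactly guaranteed by $\omega\in A_{q_0}(\rn)\cap RH_{r_0}(\rn)$ with $q_0\le q/p_0$ and $r_0\ge(p/q)'$. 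The lower-order source $F\in L^{q_\ast}_{\omega^{q_\ast/q}}(\boz)$ is absorbed through a weighted Sobolev--type embedding (via $A_q$-properties of $\omega$) that converts $F$ into divergence-form data.

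The main obstacle I anticipate is the boundary component of Shen's lemma. One must quantitatively construct the local homogeneous comparison $u_2$ with the Robin boundary condition on a small piece of the Lipschitz boundary, and absorb the \emph{nonlocal} zeroth-order term $\az u_2$ on $\partial\boz$ using $\az\in L^\fz(\partial\boz)$ together with trace and Friedrichs inequalities; this is where the hypothesis $\az\not\equiv 0$, which secures unique solvability of $(R)_2$ (cf.\ Remark \ref{r1.1x}), is decisive, as it rules out the ambiguity up to additive constants that plagues the Neumann case. Once this boundary construction is in place and the good-$\lambda$ inequality is verified uniformly over both interior and boundary balls, the weighted step (iii) follows routinely from the resulting gradient extrapolation together with standard self-improvement properties of $A_p$--$RH_r$ pairs.
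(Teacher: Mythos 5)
Your overall architecture is the same as the paper's: the cycle (i) $\Rightarrow$ (ii) $\Rightarrow$ (iii) $\Rightarrow$ (i), with (iii) $\Rightarrow$ (i) by specialization to $q=p$, $\omega\equiv1$; (i) $\Rightarrow$ (ii) by a cutoff construction $\eta(v-\overline{v})$ feeding into the global $W^{1,p}$ estimate followed by self-improvement (Lemma \ref{l4.3}); and (ii) $\Rightarrow$ (iii) by a Shen-type real-variable argument with a local split into a piece controlled by the $W^{1,p_0}$ estimate \eqref{1.12} and a homogeneous remainder to which \eqref{1.14} applies.

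The one place you genuinely diverge from the paper, and where your sketch has a gap, is the treatment of the zeroth-order source $F$ in the step (ii) $\Rightarrow$ (iii). You propose to ``convert $F$ into divergence-form data'' via a weighted Sobolev-type embedding and then run the good-$\lambda$ argument only for divergence data. This is not what the paper does, and as stated it is problematic: it implicitly requires solving $\mathrm{div}(\mathbf{g})=F$ with $\mathbf{g}\in L^q_\omega(\boz;\rn)$ whenever $F\in L^{q_\ast}_{\omega^{q_\ast/q}}(\boz)$, i.e.\ a weighted divergence-equation solvability result on a Lipschitz domain with the particular weight transfer $\omega^{q_\ast/q}\mapsto\omega$. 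That is a nontrivial fact which is neither cited nor proved, and it is not a consequence of the weighted Sobolev inequality alone. The paper avoids this entirely: it first splits $u=u_1+u_2$ by data type (divergence data $\mathbf{f}$ vs.\ source $F$), handles $u_1$ with the ``diagonal'' case $\gz=0$ of Theorem \ref{t3.1}, and handles $u_2$ with the ``off-diagonal'' case $\gz=1-\frac{(p_0)_\ast}{p_0}$, where the real-variable lemma itself is formulated in terms of the fractional localized maximal function $\cm_{\gz,2B_0}$ and yields \eqref{3.3} with the extra factor $[\omega(\boz)]^{\gz/p_2}$. In other words, the off-diagonal structure of the $L^{q_\ast}_{\omega^{q_\ast/q}}\to W^{1,q}_\omega$ estimate is absorbed directly into the good-$\lambda$ machinery, not eliminated beforehand. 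If you wish to keep your route, you would need to supply the weighted Bogovskii-type lemma; otherwise you should replace it with the off-diagonal version of the real-variable argument, which is exactly what Theorem \ref{t3.1} with $\gz\neq0$ provides, and then apply it separately to the two pieces $u_1$ and $u_2$ as in \eqref{4.23} and \eqref{4.29}.

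A smaller point: your remark that the Friedrichs inequality and $\az\not\equiv0$ are ``decisive'' for the boundary construction in the proof of (ii) $\Rightarrow$ (iii) somewhat overstates their role here. In Theorem \ref{t1.1}, the local homogeneous comparison solution inherits the Robin condition automatically (it is defined as the solution of the Robin problem with the complementary data), and the weak reverse H\"older inequality \eqref{1.14} is the hypothesis; the Friedrichs inequality and the comparison principle appear only later, in the proof of Theorem \ref{t1.3}, where one must actually verify \eqref{1.14} for specific classes of domains.
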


\begin{remark}\label{r1.1}
\begin{itemize}
\item[{\rm(a)}] The assumption $p_0>n/(n-1)$ in Theorem \ref{t1.1} is to guarantee
$1/(p_0)_\ast=1/p_0+1/n$, which is necessary in the proof of Theorem \ref{t1.1}.
Moreover, by the proof of Theorem \ref{t1.1}, we know that, if $F\equiv0$
in (i) and (iii) of Theorem \ref{t1.1}, the condition $p_0\in(n/(n-1),\fz)$ can be weaken
to $p_0\in(1,\fz)$ for that Theorem \ref{t1.1}(ii) implies (i) and (iii)
of Theorem \ref{t1.1}.
\item[{\rm(b)}] If replacing the assumption $p_0\in(n/(n-1),\fz)$ by $p_0\in(1,\fz)$
in Theorem \ref{t1.1}, then Theorem \ref{t1.1}(ii) implies the following conclusion.
\begin{itemize}
\item[(iv)] Let $q\in[p_0,p]$, $q_0\in[1,\frac{q}{p_0}]$, $r_0\in[(\frac{p}{q})',\fz]$
and $\omega\in A_{q_0}(\rn)\cap RH_{r_0}(\rn)$.
A weak solution $u$ of the weighted Robin problem
$(R)_{q,\,\omega}$ with $\mathbf{f}\in L^q_\omega(\boz;\rn)$ and
$F\in L^{q_0}_{\omega^{q_0/q}}(\boz)$, where $q_0\in(1,\fz)$ is given by
$1/q_0=1/q+1/(p_0)_\ast-1/p_0$, exists and, moreover,
$u\in W^{1,q}_\omega(\boz)$ and there exists a positive constant $C$,
depending only on $n$, $\mu_0$, $p$, $q$, $[\omega]_{A_{q_0}(\rn)}$,
$[\omega]_{RH_{r_0}(\rn)}$, $\|\az\|_{L^\fz(\partial\boz)}$ and the Lipschitz constant of $\boz$, such that
\begin{equation*}
\|u\|_{W^{1,q}_\omega(\boz)}\le C\lf[\|\mathbf{f}\|_{L^q_\omega(\boz;\rn)}+
\|F\|_{L^{q_0}_{\omega^{q_0/q}}(\boz)}\r].
\end{equation*}
\end{itemize}
\end{itemize}
\end{remark}

Theorem \ref{t1.1} is proved by using a weighted real-variable argument obtained
in Theorem \ref{t3.1} below, the linear structure of the Robin
boundary value problem, the Sobolev embedding theorem and properties of
the Muckenhoupt weight class and the reverse H\"older class.
The weighted real-variable argument, established
in Theorem \ref{t3.1} below, is inspired by \cite{cp98,sh07,w03}
and indeed a natural generalization of the off-diagonal case in \cite[Theorem 3.4]{sh07} and
\cite[Theorems 2.1 and 2.2]{g12} (see also \cite{g18,sh18,sh05a}).
We also point out that a similar (weighted) real-variable
argument with the different motivation was used in \cite{a07,am07}.

Denote by $L^1_{\loc}(\rn)$ the \emph{set of all locally integrable functions on $\rn$}.
To establish the (weighted) global regularity estimates for solutions
to Robin boundary problems on bounded Lipschitz domains via using Theorem
\ref{t1.1}, we need to recall notions of the $(\dz,R)$-$\mathrm{BMO}$ condition and
the space $\mathrm{VMO}(\rn)$ as follows (see, for instance, \cite{bw05,bw04,s75}).

\begin{definition}\label{d1.2}
Let $R,\,\dz\in(0,\fz)$.
\begin{itemize}
\item[{\rm(i)}] A function $f\in L^1_{\loc}(\rn)$ is said to satisfy
the \emph{$(\dz,R)$-$\mathrm{BMO}$ condition} if
\begin{equation}\label{1.16}
\|f\|_{\ast,\,R}:=\sup_{r\in(0,R)}\sup_{x\in\rn}\frac{1}{|B(x,r)|}
\int_{B(x,r)}|f(y)-f_{B(x,r)}|\,dy\le\dz,
\end{equation}
where the suprema are taken, respectively, over all $r\in(0,R)$ and $x\in\rn$, and $$f_{B(x,r)}:=\frac{1}{|B(x,r)|}\int_{B(x,r)}f(y)\,dy.$$
Furthermore, $f$ is said to belong to the space $\mathrm{VMO}(\rn)$ if $f$ satisfies the
$(\dz,R)$-$\mathrm{BMO}$ condition for some $\dz,\,R\in(0,\fz)$ and
$$\lim_{r\to0^{+}}\sup_{x\in\rn}\frac{1}{|B(x,r)|}\int_{B(x,r)}|f(y)-f_{B(x,r)}|\,dy=0,
$$
where $r\to0^{+}$ means $r\in(0,\fz)$ and $r\to0$.

\item[{\rm(ii)}] A matrix $A:=\{a_{ij}\}_{i,j=1}^n$ is said to
satisfy the \emph{$(\dz,R)$-$\mathrm{BMO}$ condition} [resp., $A\in\mathrm{VMO}(\rn)$]
if, for any $i,\,j\in\{1,\,\ldots,\,n\}$, $a_{ij}$
satisfies the $(\dz,R)$-$\mathrm{BMO}$ condition [resp., $a_{ij}\in\mathrm{VMO}(\rn)$].
\end{itemize}
\end{definition}

\begin{remark}\label{r1.2}
A function $f\in L^1_{\loc}(\rn)$ is said to belong to
the \emph{space $\mathrm{BMO}(\rn)$} (the \emph{space of bounded mean oscillation}),
denoted by $f\in\mathrm{BMO}(\rn)$,
if $\|f\|_{\mathrm{BMO}(\rn)}:=\|f\|_{\ast,\,\fz}<\fz$,
where $\|f\|_{\ast,\,\fz}$ is as in \eqref{1.16}.
Similarly, a matrix $A:=\{a_{ij}\}_{i,j=1}^n$ is said to belong to the \emph{space}
$\mathrm{BMO}(\rn)$, denoted by $A\in\mathrm{BMO}(\rn)$, if,
for any $i,\,j\in\{1,\,\ldots,\,n\}$, $a_{ij}\in\mathrm{BMO}(\rn)$.

Let $\dz\in(0,\fz)$. If $f\in\mathrm{BMO}(\rn)$ and $\|f\|_{\mathrm{BMO}(\rn)}\le\dz$,
then $f$ satisfies the $(\dz,R)$-$\mathrm{BMO}$ condition for any $R\in(0,\fz)$.
Moreover, if $f\in\mathrm{VMO}(\rn)$, then $f$ satisfies the $(\gz,R)$-$\mathrm{BMO}$
condition for any $\gz\in(0,\fz)$ and some $R\in(0,\fz)$.
\end{remark}

\begin{theorem}\label{t1.2}
Let $n\ge3$, $\boz\subset\rn$ be a bounded Lipschitz domain and $\az\in L^\fz(\partial\boz)$
as in \eqref{1.4}.
Assume that the matrix $A$ is real-valued,
symmetric, bounded and measurable, and satisfies \eqref{1.3}.
\begin{itemize}
\item[\rm(i)] Then there exist positive constants $\uc_0,\,\dz_0\in(0,\fz)$,
depending only on $n$ and the Lipschitz constant of $\boz$, such that, for any given $p\in((3+\uc_0)', 3+\uc_0)$,
if $A$ satisfies the $(\dz,R)$-$\mathrm{BMO}$
condition for some $\dz\in(0,\dz_0)$ and $R\in(0,\fz)$ or $A\in\mathrm{VMO}(\rn)$, then
the Robin problem $(R)_{p}$ with $\mathbf{f}\in L^p(\boz;\rn)$,
$F\in L^{p_\ast}(\boz)$ and $g\in W^{-1/p,p}(\partial\boz)$ is uniquely solvable and
there exists a positive constant $C$, depending only on $n$, $p$,
$\|\az\|_{L^\fz(\partial\boz)}$ and the Lipschitz constant of $\boz$, such that,
for any weak solution $u$, $u\in W^{1,p}(\boz)$ and
\begin{equation}\label{1.17}
\|u\|_{W^{1,p}(\boz)}\le C\lf[\|\mathbf{f}\|_{L^p(\boz;\rn)}
+\|F\|_{L^{p_\ast}(\boz)}+\|g\|_{W^{-1/p,p}(\partial\boz)}\r],
\end{equation}
where $p_\ast$ is as in \eqref{1.5}.

\item[\rm(ii)] Let $\uc_0$ be as in (i) and $\wz{p}_0:=\max\{\frac n{n-1},(3+\uc_0)'\}$.
For any given $p\in(\wz{p}_0,3+\uc_0)$ and any
$\omega\in A_{\frac{p}{\wz{p}_0}}(\rn)\cap RH_{(\frac{3+\uc_0}{p})'}(\rn)$,
there exists a positive constant $\dz_0\in(0,\fz)$,
depending only on $n$, $p$, the Lipschitz constant of $\boz$, $[\omega]_{A_{\frac{p}{\wz{p}_0}}(\rn)}$
and $[\omega]_{RH_{(\frac{3+\uc_0}{p})'}(\rn)}$,  such that, if
$A$ satisfies the $(\dz,R)$-$\mathrm{BMO}$ condition for some $\dz\in(0,\dz_0)$
and $R\in(0,\fz)$ or $A\in\mathrm{VMO}(\rn)$,
then the weighted Robin problem $(R)_{p,\,\omega}$ with
$\mathbf{f}\in L^p_\omega(\boz;\rn)$ and $F\in L^{p_\ast}_{\omega^{p_\ast/p}}(\boz)$
is uniquely solvable and there exists a positive constant
$C$, depending only on $n$, $p$, $\|\az\|_{L^\fz(\partial\boz)}$,
$[\omega]_{A_{\frac{p}{\wz{p}_0}}(\rn)}$, $[\omega]_{RH_{(\frac{3+\uc_0}{p})'}(\rn)}$
and the Lipschitz constant of $\boz$, such that,
for any weak solution $u$, $u\in W^{1,p}_\omega(\boz)$ and
\begin{equation}\label{1.18}
\|u\|_{W^{1,p}_\omega(\boz)}\le C\lf[\|\mathbf{f}\|_{L^p_\omega(\boz;\rn)}
+\|F\|_{L^{p_\ast}_{\omega^{p_\ast/p}}(\boz)}\r].
\end{equation}
\end{itemize}
\end{theorem}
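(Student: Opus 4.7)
The plan is to deduce Theorem \ref{t1.2} from the real-variable equivalences in Theorem \ref{t1.1}. The main work is concentrated in Part (i): once the unweighted statement is established, Part (ii) follows immediately by applying Theorem \ref{t1.1}(iii) with base exponent $p_0:=\wz{p}_0$ and upper exponent $p:=3+\uc_0$, and taking $q:=p$, $q_0:=p/\wz{p}_0$, and $r_0:=((3+\uc_0)/p)'$; the weight hypothesis imposed in Theorem \ref{t1.2}(ii) then matches precisely the admissible range of Theorem \ref{t1.1}(iii), producing \eqref{1.18}.

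For Part (i), the Lax--Milgram theorem together with the Friedrichs inequality (see Remark \ref{r1.1x}) yields the unique solvability of $(R)_2$ with the natural $W^{1,2}$ energy estimate, so the hypothesis \eqref{1.12} of Theorem \ref{t1.1} is available with $p_0=2$. The key step is then to verify the weak reverse H\"older inequality \eqref{1.14} for every $p\in(2,3+\uc_0)$. I would run a freezing/perturbation argument. Given $v$ solving $\mathrm{div}(A\nabla v)=0$ in $B(x_0,2r)\cap\boz$ together with $\frac{\partial v}{\partial\boldsymbol{\nu}}+\az v=0$ on $B(x_0,2r)\cap\partial\boz$ when $x_0\in\partial\boz$, set $\overline{A}_B$ to be the mean of $A$ on $B(x_0,2r)$, and introduce a comparison function $w$ solving the analogous problem with $A$ replaced by $\overline{A}_B$ and matching Robin data on the boundary portion together with Dirichlet data on the interior portion of $\partial(B(x_0,2r)\cap\boz)$, so that $v-w$ vanishes there. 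A standard energy estimate yields
\begin{equation*}
\|\nabla(v-w)\|_{L^2(B(x_0,2r)\cap\boz)}\ls\dz\|\nabla v\|_{L^2(B(x_0,2r)\cap\boz)}
\end{equation*}
under the $(\dz,R)$-BMO condition. Since $\overline{A}_B$ is constant and symmetric, the higher integrability
\begin{equation*}
\lf\{\frac{1}{r^n}\int_{B(x_0,r)\cap\boz}[|w|+|\nabla w|]^p\,dx\r\}^{1/p}
\ls\lf\{\frac{1}{r^n}\int_{B(x_0,2r)\cap\boz}[|w|+|\nabla w|]^2\,dx\r\}^{1/2}
\end{equation*}
for every $p\in(1,3+\uc_0)$ is available from \cite{gz20} (after, if necessary, a bi-Lipschitz straightening of the boundary and a reduction that preserves symmetry), with $\uc_0$ depending only on $n$ and the Lipschitz constant of $\boz$. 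Combining the two bounds and choosing $\dz_0$ small enough so that the perturbation is absorbed gives \eqref{1.14}. Theorem \ref{t1.1}(ii)$\Rightarrow$(i) then produces \eqref{1.17} in the range $p\in(2,3+\uc_0)$. The complementary range $p\in((3+\uc_0)',2)$ is obtained by duality, which is legitimate because the symmetry of $A$ implies that the adjoint Robin problem has the same structure. Finally, the inhomogeneous datum $g\in W^{-1/p,p}(\partial\boz)$ is incorporated by choosing a suitable lifting in $W^{1,p}(\boz)$ and reducing to the case $g\equiv0$, while unique solvability follows from the a priori estimate combined with a density/approximation argument.

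The principal obstacle is the boundary version of the reverse H\"older inequality. At a boundary point of a general Lipschitz domain, the Robin condition mixes Dirichlet and Neumann features, so flattening the boundary by a bi-Lipschitz change of variables converts it into a zero-order boundary condition with merely a bounded coefficient and a slightly deformed (still bounded, measurable, symmetric, uniformly elliptic) matrix. Transferring the higher integrability of the constant-coefficient model to the true solution in a way that keeps the constants uniform, while simultaneously keeping the BMO-perturbation error in $A$ small enough to absorb, is the technical heart of the argument and dictates the smallness threshold $\dz_0$ in terms of $n$ and the Lipschitz constant of $\boz$.
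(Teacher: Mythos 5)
Your plan for Part (i) is broadly in the right direction, but for Part (ii) there is a genuine gap, and there is a technical misstatement in the perturbation step of Part (i).

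For Part (ii), you propose to apply Theorem~\ref{t1.1}(iii) with base exponent $p_0:=\wz{p}_0$ and upper exponent $p:=3+\uc_0$. This does not work as stated for two reasons. First, the weak reverse H\"older inequality \eqref{1.14} provided by [gz20] (Lemma~\ref{l4.4} in the paper) holds only for exponents strictly below $3+\uc_0$, so Theorem~\ref{t1.1}(ii) cannot be verified at $p=3+\uc_0$, and hence Theorem~\ref{t1.1} cannot be invoked with $3+\uc_0$ as the upper exponent. Second, Theorem~\ref{t1.1} requires $p_0>n/(n-1)$ as a strict inequality, but $\wz{p}_0=\max\{n/(n-1),(3+\uc_0)'\}$ can equal $n/(n-1)$, so $p_0:=\wz{p}_0$ need not be admissible. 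The paper resolves both problems simultaneously by first invoking the self-improvement of the Muckenhoupt and reverse H\"older conditions (Lemma~\ref{l3.1}(i) and the openness of $RH$-classes): given $\omega\in A_{p/\wz{p}_0}(\rn)\cap RH_{((3+\uc_0)/p)'}(\rn)$, there is a small $\epsilon>0$ with $\omega\in A_{p/s_1}(\rn)\cap RH_{(s_2/p)'}(\rn)$, where $s_1:=\wz{p}_0+\epsilon$ and $s_2:=3+\uc_0-\epsilon$ satisfy $\max\{n/(n-1),(3+\uc_0)'\}<s_1<p<s_2<3+\uc_0$; then Theorem~\ref{t1.1} is applied with base exponent $s_1$ and upper exponent $s_2$, both strictly inside the admissible range. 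Without this $\epsilon$-improvement your deduction of \eqref{1.18} is not justified.

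For Part (i), your intermediate inequality $\|\nabla(v-w)\|_{L^2}\ls\dz\|\nabla v\|_{L^2}$ cannot come from a one-line energy estimate when $A$ is merely BMO-small: the energy bound gives $\|\nabla(v-w)\|_{L^2}\ls\||A-\overline{A}_B|\,\nabla v\|_{L^2}$, and passing from this to a bound in terms of $\|\nabla v\|_{L^2}$ requires the Meyers/Gehring higher integrability of $\nabla v$ together with the John--Nirenberg inequality, exactly as in Lemma~\ref{l5.4}. Moreover, even with that corrected, the combination of the constant-coefficient higher integrability and the comparison estimate does not directly "absorb'' into \eqref{1.14}: one must run the real-variable iteration of Theorem~\ref{t3.1} (with $\omega\equiv1$) on the pair $(G,G_B,R_B)$ built from $v$ and $w$, as in the derivation of \eqref{5.27} in the proof of Theorem~\ref{t1.3}. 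The paper itself avoids re-deriving this for Theorem~\ref{t1.2}(i) by directly citing Lemma~\ref{l4.4} (essentially [gz20]), which already packages the variable-coefficient reverse H\"older inequality on Lipschitz domains with the range $p\in[2,3+\uc_0)$. Your duality step for $p\in((3+\uc_0)',2)$ and the treatment of the boundary datum $g$ agree with the paper's use of Lemmas~\ref{l4.1}, \ref{l4.2} and Corollary~\ref{c4.1}.
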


We show Theorem \ref{t1.2} via using Theorem \ref{t1.1}
and a weak reverse H\"older inequality
for the local Robin problem, which is as in Theorem \ref{t1.1}(i)
and was obtained in \cite{gz20}.

\begin{remark}\label{r1.3}
Let $\boz$, $\az$ and $A$ be as in Theorem \ref{t1.2}. If $F\equiv0$ in
Theorem \ref{t1.2}(ii), via using Theorem \ref{t1.1}, Remark \ref{r1.1}(a)
and Theorem \ref{t1.2}(i), we find that \eqref{1.18} holds true without
the restriction $p>n/(n-1)$.
More precisely, assume that $p\in((3+\uc_0)',3+\uc_0)$ and $\omega\in A_{\frac{p}{(3+\uc_0)'}}(\rn)
\cap RH_{(\frac{3+\uc_0}{p})'}(\rn)$, where $\uc_0$ is as in Theorem \ref{t1.2}(i).
Then there exists a positive constant $\dz_0\in(0,\fz)$,
depending only on $n$, $p$, the Lipschitz constant of $\boz$, $[\omega]_{A_{\frac{p}{(3+\uc_0)'}}(\rn)}$
and $[\omega]_{RH_{(\frac{3+\uc_0}{p})'}(\rn)}$,  such that, if
$A$ satisfies the $(\dz,R)$-$\mathrm{BMO}$ condition for some $\dz\in(0,\dz_0)$
and $R\in(0,\fz)$ or $A\in\mathrm{VMO}(\rn)$,
then the weighted Robin problem $(R)_{p,\,\omega}$ with
$\mathbf{f}\in L^p_\omega(\boz;\rn)$ and $F\equiv0$
is uniquely solvable and there exists a positive constant
$C$, depending only on $n$, $p$, $\|\az\|_{L^\fz(\partial\boz)}$, $[\omega]_{A_{\frac{p}{(3+\uc_0)'}}(\rn)}$,
$[\omega]_{RH_{(\frac{3+\uc_0}{p})'}(\rn)}$ and the Lipschitz constant of $\boz$, such that,
for any weak solution $u$, $u\in W^{1,p}_\omega(\boz)$ and
\begin{equation}\label{1.19}
\|u\|_{W^{1,p}_\omega(\boz)}\le C\|\mathbf{f}\|_{L^p_\omega(\boz;\rn)}.
\end{equation}
\end{remark}

To describe the (weighted) global regularity estimates for the
Robin problems on (semi-)convex domains,
we recall the notion of semi-convex domains as follows.

\begin{definition}\label{d1.4}
\begin{enumerate}
\item[(i)] Let $O$ be an open set in $\rn$. The collection of
\emph{semi-convex functions} on $O$ is defined to be the set of all continuous functions
$u:\ O\rightarrow\rr$ having the property that there exists a positive
constant $C$ such that, for any $x,\,h\in\rn$ with the ball
$B(x,|h|)\subset O$,
$$2u(x)-u(x+h)-u(x-h)\le C|h|^2.
$$
The minimal positive constant $C$ as above is referred as the
\emph{semi-convexity constant} of $u$.

\item[(ii)] A non-empty, proper open subset $\boz$ of $\rn$ is said to be
\emph{semi-convex} provided that there exist constants $c,\,d\in(0,\fz)$ such that,
for every $x_0\in\paz\boz$, there exist an
$(n-1)$-dimensional affine variety $H\subset\rn$ passing through
$x_0$, a choice $N$ of the unit normal to $H$, and an open set
$$\mathcal{C}:=\{\wz{x}+tN:\
\wz{x}\in H,\ |\wz{x}-x_0|<c,\ |t|<d\}$$
(which is called a \emph{coordinate cylinder} near $x_0$ with axis along $N$) satisfying,
for some semi-convex function $\fai:\ H\rightarrow\rr$,
$$\mathcal{C}\cap\boz=\mathcal{C}\cap\{\wz{x}+tN:\ \wz{x}\in H,\
t>\fai(\wz{x})\},$$
$$\mathcal{C}\cap\paz\boz=\mathcal{C}\cap\{\wz{x}+tN:\ \wz{x}\in H,\
t=\fai(\wz{x})\},$$
$$\mathcal{C}\cap\ol{\boz}
^{\complement}=\mathcal{C}\cap\{\wz{x}+tN:\ \wz{x}\in H,\
t<\fai(\wz{x})\},$$
$$\fai(x_0)=0\ \ \text{and}\ \ |\fai(\wz{x})|<d/2\ \ \text{if}\ \
|\wz{x}-x_0|\le c,$$
where $\ol{\boz}$ and $\ol{\boz}^{\complement}$ respectively denote the closure of $\boz$ in $\rn$
and the complementary set of $\overline{\boz}$ in $\rn$.
\end{enumerate}
\end{definition}

\begin{remark}\label{r1.4}
\begin{enumerate}
\item[(i)] A set $E\subset\rn$ is said to satisfy an \emph{exterior ball condition} at $x\in\partial E$
if there exist $v\in S^{n-1}$ and $r\in(0,\fz)$ such that
\begin{equation}\label{1.20}
B(x+rv,r)\subset(\rn\setminus E),
\end{equation}
where $S^{n-1}$ denotes the  \emph{unit sphere} of $\rn$.
For such an $x\in \partial E$, let
$$r(x):=\sup\lf\{r\in(0,\fz):\ \eqref{1.20} \ \text{holds true for some}\ v\in S^{n-1}\r\}.
$$
A set $E$ is said to satisfy a \emph{uniform exterior ball condition} (for short,
UEBC) with radius $r\in(0,\fz]$ if
\begin{equation}\label{1.21}
\inf_{x\in\partial E}r(x)\ge r,
\end{equation}
and the value $r$ in \eqref{1.21} is referred to the \emph{UEBC constant}.
A set $E$ is said to satisfy a UEBC if there exists $r\in(0,\fz]$ such that $E$ satisfies
the uniform exterior ball condition with radius $r$. Moreover, the largest positive constant $r$
as above is called the \emph{uniform ball constant} of $E$.

It is well known that, for any open set $\boz\subset\rn$ with compact boundary,
$\boz$ is a \emph{Lipschitz domain satisfying a UEBC} if and only if $\boz$
is a \emph{semi-convex domain} in $\rn$ (see, for instance, \cite[Theorem 2.5]{mmmy10} or
\cite[Theorem 3.9]{mmy10}).
\vspace{-0.5em}
\item[(ii)] It is worth pointing out that, if $\boz\subset\rn$ is convex, then $\boz$ satisfies
a UEBC with the uniform ball constant $\fz$ (see, for instance, \cite{e58}).
Thus, convex domains in $\rn$ are Lipschitz domains satisfying a UEBC and hence
convex domains in $\rn$ are semi-convex domains
(see, for instance, \cite{mmmy10,mmy10,y16,yyy18}).

\item[\rm(iii)] Let $n\ge2$, $\dz\in(0,1)$ and $R_0\in(0,\fz)$.
A domain $\boz\subset\rn$ is called a $(\dz,R_0)$-\emph{Reifenberg
flat domain} if, for any $x_0\in\partial\boz$ and $r\in(0,R_0]$, there exists
a system of coordinates, $\{y_1,\,\ldots,\,y_n\}$, which may depend on $x_0$ and $r$,
such that, in this coordinate system, $x_0=\mathbf{0}_n$ and
\begin{equation}\label{1.22}
B(\mathbf{0}_n,r)\cap\{y\in\rn:\ y_n>\dz r\}\subset B(\mathbf{0}_n,r)\cap\boz\subset
B(\mathbf{0}_n,r)\cap\{y\in\rn:\ y_n>-\dz r\},
\end{equation}
where $\mathbf{0}_{n}$ denotes the \emph{origin} of $\rr^{n}$.
The Reifenberg flat domain was introduced by Reifenberg \cite{r60},
which naturally appears in the theory of minimal surfaces and free boundary problems.
In recent years, boundary value problems of elliptic or parabolic equations
on Reifenberg flat domains have been widely concerned and studied
(see, for instance, \cite{b20,b18,bd17a,bd17,bow14,bw05,bw04,byz08,mp12,mp11}).
\vspace{-0.5em}
\item[\rm(iv)] On Reifenberg flat domains, Lipschitz domains, and (semi-)convex domains,
we have the following relations. Lipschitz domains with small Lipschitz constants
are Reifenberg flat domains, but generally Lipschitz domains may
not be Reifenberg flat domains (see, for instance, \cite{t97}).
(Semi-)convex domains are Lipschitz domains, but may not be Lipschitz domains with
small Lipschitz constants or Reifenberg flat domains.
For instance, let $\boz:=\{(x_1,x_2)\in\rr^2:\
1>x_2>|x_1|\}$. It is easy to see that $\boz\subset\rr^2$ is
a bounded convex domain. However, at the points on $\partial\boz$ near
the vertexes of $\boz$, \eqref{1.22} does not hold true. Thus,
$\boz$ is not a Reifenberg flat domain
and hence not a Lipschitz domain with small Lipschitz constant.
\end{enumerate}
\end{remark}

Using Theorem \ref{t1.1} and a weak reverse H\"older inequality
for the non-tangential maximal function of the gradient of the solution of the Robin problem
of the local Laplace equation established in \cite[Theorems 1.2 and 3.2]{yyy18},
we obtain the following (weighted) global $W^{1,\,p}$ estimates for the Robin
problems \eqref{1.6} and \eqref{1.8} on bounded $C^1$ or (semi-)convex domains.

\begin{theorem}\label{t1.3}
Let $n\ge3$, $\boz\subset\rn$ be a bounded $C^1$ or (semi-)convex domain,
$p\in(1,\fz)$, $\omega\in A_p(\rn)$, and $\az\in L^\fz(\partial\boz)$
satisfy $\az\ge \az_0$, where $\az_0\in(0,\fz)$ is a constant.
Assume that the matrix $A$ is real-valued,
symmetric, bounded and measurable, and satisfies \eqref{1.3}.
\begin{itemize}
\item[\rm(i)] Then there exists a positive constant $\dz_0\in(0,\fz)$,
depending only on $n$, $p$ and $\boz$, such that, if $A$ satisfies the $(\dz,R)$-$\mathrm{BMO}$
condition for some $\dz\in(0,\dz_0)$ and $R\in(0,\fz)$ or $A\in\mathrm{VMO}(\rn)$, then
the Robin problem $(R)_{p}$ with $\mathbf{f}\in L^p(\boz;\rn)$,
$F\in L^{p_\ast}(\boz)$ and $g\in W^{-1/p,p}(\partial\boz)$ is uniquely solvable and
there exists a positive constant $C$, depending only on $n$, $p$ and $\boz$, such that,
for any weak solution $u$ of the Robin problem $(R)_{p}$, $u\in W^{1,p}(\boz)$ and
\begin{equation}\label{1.23}
\|u\|_{W^{1,p}(\boz)}\le C\lf[\|\mathbf{f}\|_{L^p(\boz;\rn)}
+\|F\|_{L^{p_\ast}(\boz)}+\|g\|_{W^{-1/p,p}(\partial\boz)}\r],
\end{equation}
where $p_\ast$ is as in \eqref{1.5}.

\item[\rm(ii)]
Let
\begin{equation*}
q:=\frac{np}{n+p-1}
\quad\text{and}\quad a:=\frac{n-1}{n+p-1}.
\end{equation*}
Then there exists a positive constant $\dz_0\in(0,\fz)$,
depending only on $n$, $p$, $\boz$ and $[\omega]_{A_p(\rn)}$,  such that, if
$A$ satisfies the $(\dz,R)$-$\mathrm{BMO}$ condition for some $\dz\in(0,\dz_0)$
and $R\in(0,\fz)$ or $A\in\mathrm{VMO}(\rn)$, then the weighted Robin problem $(R)_{p,\,\omega}$ with
$\mathbf{f}\in L^p_\omega(\boz;\rn)$ and $F\in L^{q}_{\omega^a}(\boz)$ is uniquely solvable and there exists a positive constant
$C$, depending only on $n$, $p$, $[\omega]_{A_p(\rn)}$ and $\boz$, such that,
for any weak solution $u$, $u\in W^{1,p}_\omega(\boz)$ and
\begin{equation}\label{1.24}
\|u\|_{W^{1,p}_\omega(\boz)}\le C\lf[\|\mathbf{f}\|_{L^p_\omega(\boz;\rn)}
+\|F\|_{L^{q}_{\omega^{a}}(\boz)}\r].
\end{equation}

\item[\rm(iii)]
Assume further that $p\in(n/(n-1),\fz)$ and $\omega\in A_1(\rn)$.
Then there exists a positive constant $\dz_0\in(0,\fz)$,
depending only on $n$, $p$, $\boz$ and $[\omega]_{A_1(\rn)}$,  such that, if
$A$ satisfies the $(\dz,R)$-$\mathrm{BMO}$ condition for some $\dz\in(0,\dz_0)$
and $R\in(0,\fz)$ or $A\in\mathrm{VMO}(\rn)$, then the weighted Robin problem $(R)_{p,\,\omega}$ with
$\mathbf{f}\in L^p_\omega(\boz;\rn)$ and $F\in L^{p_\ast}_{\omega^{p_\ast/p}}(\boz)$ is uniquely solvable
and there exists a positive constant $C$, depending only on $n$, $p$, $[\omega]_{A_1(\rn)}$ and $\boz$, such that,
for any weak solution $u$, $u\in W^{1,p}_\omega(\boz)$ and
\begin{equation}\label{1.25}
\|u\|_{W^{1,p}_\omega(\boz)}\le C\lf[\|\mathbf{f}\|_{L^p_\omega(\boz;\rn)}
+\|F\|_{L^{p_\ast}_{\omega^{p_\ast/p}}(\boz)}\r].
\end{equation}
\end{itemize}
\end{theorem}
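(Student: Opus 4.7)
The plan is to reduce Theorem \ref{t1.3} to Theorem \ref{t1.1} applied at the base exponent $p_0=2$. The hypothesis \eqref{1.12} at $p_0=2$ is supplied by Remark \ref{r1.1x} (Lax--Milgram together with the Friedrichs inequality), so the substantive task is to verify the weak reverse H\"older inequality \eqref{1.14} for every $p\in(2,\fz)$ under the small $(\dz,R)$-BMO (or VMO) assumption on $A$ and the $C^1$ or (semi-)convex geometry of $\boz$.

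For an interior ball $B(x_0,2r)\subset\boz$, \eqref{1.14} is the standard Meyers-type higher integrability for divergence-form equations with small-BMO coefficients. For a boundary ball at $x_0\in\paz\boz$, I would freeze the coefficients by replacing $A$ with its average $\overline{A}$ on $B(x_0,2r)$, writing $v=w+(v-w)$ with $w$ solving the frozen-coefficient Robin problem; the small-BMO hypothesis bounds $v-w$ in $W^{1,2}$ by a power of $\dz$. A linear change of variables associated with $\overline{A}^{-1/2}$, whose spectrum lies in $[\mu_0^{1/2},\mu_0^{-1/2}]$ by \eqref{1.3}, reduces the equation for $w$ to the Robin problem for the Laplacian on a transformed domain whose $C^1$ or semi-convex constants are controlled uniformly in terms of those of $\boz$ and $\mu_0$. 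For the Robin--Laplace problem on such a domain, \cite[Theorems 1.2 and 3.2]{yyy18} supply $L^p$ bounds on the non-tangential maximal function of the gradient for every $p\in(1,\fz)$; these pointwise dominate $|\nabla w|$ and thereby yield \eqref{1.14} for $w$ at exponents $p$ and $p_0=2$, and the perturbation step transfers the inequality to $v$ once $\dz$ is chosen small enough depending on $n$, $p$, $\boz$.

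Part (i) is then immediate: for $p\in(2,\fz)$ apply Theorem \ref{t1.1}(ii)$\Rightarrow$(i); for $p=2$ use Remark \ref{r1.1x}; and for $p\in(1,2)$ the symmetry of $A$ makes the adjoint Robin problem identical in structure, so duality transfers the $W^{1,p'}$ estimate at $p'\in(2,\fz)$ into the desired $W^{1,p}$ estimate. For (iii) with $\omega\in A_1(\rn)$ and $p>n/(n-1)$, I would apply Theorem \ref{t1.1}(iii) at an intermediate exponent $q\in(p_0,p)$, choosing $p_0$ close to $n/(n-1)$ and using the self-improvement of $A_1$ (which gives $\omega\in A_1\cap RH_s$ for some $s>1$) to satisfy the $RH_{(p/q)'}$ condition; the resulting $W^{1,q}_\omega$ estimate is then promoted to $W^{1,p}_\omega$ by off-diagonal Rubio de Francia extrapolation, which naturally produces the $L^{p_\ast}_{\omega^{p_\ast/p}}$ dependence on $F$ corresponding to the Muckenhoupt--Wheeden weighted Riesz potential estimate $I_1\colon L^{p_\ast}(\omega^{p_\ast/p})\to L^p(\omega)$ (whose admissible class contains $A_1(\rn)$ precisely when $p\ge n/(n-1)$). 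For (ii) with $\omega\in A_p(\rn)$, the exponents $q=np/(n+p-1)$ and $a=(n-1)/(n+p-1)$ emerge from a parallel argument using the self-improvement $\omega\in A_{p-\eta}(\rn)\cap RH_s(\rn)$ for some $\eta,s-1>0$, Theorem \ref{t1.1}(iii) at a suitable intermediate exponent, and off-diagonal extrapolation.

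The hardest step will be the geometric transfer in the boundary freezing argument: unlike $C^1$ domains, the class of semi-convex domains is not automatically preserved under bi-Lipschitz maps, so one has to verify that the linear transformation associated with $\overline{A}^{-1/2}$ sends $B(x_0,2r)\cap\boz$ to a domain whose semi-convexity constants are bounded solely in terms of $\boz$ and $\mu_0$, ensuring that \cite[Theorems 1.2 and 3.2]{yyy18} apply with a constant uniform in the boundary ball.
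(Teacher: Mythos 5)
Your strategy for part (i) is essentially the paper's: freeze coefficients, compare against the constant-coefficient Robin solution, and use the non-tangential maximal function estimates of \cite[Theorems 1.2 and 3.2]{yyy18} after a linear change of variables (this is precisely the content of Lemmas \ref{l5.4}, \ref{l5.5} and \ref{l5.8}). One caveat: the phrase ``the perturbation step transfers the inequality to $v$'' glosses over a real difficulty. The comparison lemma only gives an $L^2$ bound on $\nabla(v-w)$, while the frozen-coefficient solution $w$ has an $L^p$ bound, so a naive triangle inequality does not yield the weak reverse H\"older inequality \eqref{1.14}. One needs a genuine real-variable/iteration argument (the paper applies Theorem \ref{t3.1} with $\omega\equiv1$ to the decomposition $v=w+(v-w)$ to obtain \eqref{5.27}); without that device, the step does not close.

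For part (iii), your route is more circuitous than necessary but not wrong in spirit. Since $p>n/(n-1)$, one may simply take $p_0\in(n/(n-1),p)$ and some $p_2>p$ with $\omega\in RH_{(p_2/p)'}(\rn)$ (self-improvement of $A_1$), and then apply Theorem \ref{t1.1}(iii) directly at $q=p$ — no intermediate exponent or extrapolation step is needed. The appeal to off-diagonal extrapolation at the end, however, is suspicious: an extrapolation theorem moves across exponents and weight classes but does not by itself ``produce'' a new source term $F\in L^{p_\ast}_{\omega^{p_\ast/p}}$; the $F$-dependence has to be present in the base estimate, which is exactly what the direct application of Theorem \ref{t1.1}(iii) supplies.

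The genuine gap is in part (ii). You propose to obtain \eqref{1.24} for all $\omega\in A_p(\rn)$, $p\in(1,\fz)$, by ``self-improvement plus Theorem \ref{t1.1}(iii) at a suitable intermediate exponent plus off-diagonal extrapolation.'' This cannot work. Theorem \ref{t1.1}(iii) handles the $F$-term only when the base exponent $p_0$ satisfies $p_0>n/(n-1)$, and it then requires $\omega\in A_{q/p_0}(\rn)$, which for $q\le p$ forces $\omega\in A_{p(n-1)/n+\varepsilon}(\rn)$ — a strictly smaller class than $A_p(\rn)$ when $p$ is close to $1$. Extrapolating from $A_1$ weights (Lemma \ref{l6.3}) also gives estimates only for restricted weight classes, not for general $A_p$. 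The point you are missing is the \emph{duality} argument for the $F$-term: one tests $u_2$ (solving with source $F_2$) against the solution $v_1$ of the adjoint-type problem with divergence-form data $\mathbf{h}_1\in L^{p'}_{\omega_1}(\boz;\rn)$, $\omega_1:=\omega^{-p'/p}\in A_{p'}(\rn)$, and then controls $\|v_1\|_{L^{q'}_{\omega_1}(\boz)}$ via the Fabes--Kenig--Serapioni weighted Sobolev inequality (Lemma \ref{l5.9}, which gains from exponent $p'$ to $np'/(n-1)=q'$). It is exactly this pairing — together with the estimate \eqref{5.30} for the divergence-form problem — that makes the exponents $q=np/(n+p-1)$, $a=(n-1)/(n+p-1)$ appear, and that reaches the whole class $\omega\in A_p(\rn)$. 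Without this step, your outline does not prove \eqref{1.24}.
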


To prove Theorem \ref{t1.3},
using Theorem \ref{t1.1}, we need to prove that
the weak reverse H\"older inequality \eqref{1.14} is valid for any
$p\in(n/(n-1),\fz)$ when $\boz$ is a bounded $C^1$ or (semi-)convex domain.
To this end, we apply the real-variable argument obtained in Theorem \ref{t3.1} below,
a comparison principle (see Lemma \ref{l5.4} below) inspired by \cite{cp98}, and a weak reverse H\"older inequality
for the non-tangential maximal function of the gradient of the solution of the Robin problem
of the local Laplace equation obtained in \cite[Theorems 1.2 and 3.2]{yyy18}.
Moreover, the weighted Sobolev inequality (see, for instance, \cite[Theorem 3.1]{fks82}),
the Friedrichs inequality (see, for instance, \cite[Section 1.1.8, Theorem 1.9]{n12}
and \cite[Theorem 6.1]{ls04}) and the duality argument
are subtly used in the proof of Theorem \ref{t1.3}.

We point out that the approach used in this article to obtain the weak reverse H\"older
inequality \eqref{1.14} is deferent from that used in \cite{acgg19,g18,ycyy20}.
More precisely, via applying geometric properties of (semi-)convex domains,
a Bernstein type identity on the boundary of the domain
(see \cite[(3.1.1.2)]{g85} and \cite[Theorem 3.2]{mmmy10}) and some ideas from \cite{gs10},
it was proved in \cite[Lemma 1.6]{g18} and \cite[Theorem 2.4]{y16} that,
for the local Neumann problem similar to that in Theorem \ref{t1.1}(ii), \eqref{1.14}
holds true for any given $p\in(2,\fz)$ if the coefficient matrix $A:=I$.
Combining this conclusion, a perturbation argument
and the $(\dz,R)$-BMO condition for $A$, the weak reverse H\"older
inequality \eqref{1.14} was obtained in \cite{g18,ycyy20} for any given
$p\in(2,\fz)$ in the case of bounded (semi-)convex domains for the Neumann problem.
However, the Bernstein type identity, used in \cite{g18,ycyy20}, is
only compatible with the Dirichlet or the Neumann boundary condition,
but not valid for the Robin boundary condition.
To overcome this difficulty, in this article,
we resort to a weak reverse H\"older inequality
for the non-tangential maximal function of the gradient of the solution
of the local Laplace equation with the Robin boundary condition
obtained in \cite[Theorems 1.2 and 3.2]{yyy18} to prove the inequality \eqref{1.14}
for any $p\in(n/(n-1),\fz)$ when $\boz$ is a bounded $C^1$ or (semi-)convex domain.
It is worth pointing out that the approach used in this article is also valid
for the Neumann problem studied in \cite{g18,ycyy20}.

\begin{remark}\label{r1.6}
In the special case of Theorem \ref{t1.3} that
$\boz\subset\rr^3$ is a bounded $C^1$ domain and
$A\in\mathrm{VMO}(\rr^3)$, the global estimate \eqref{1.23}
was established in \cite[Theorem 1.1]{acgg19} via a different way, which is similar to
that used in \cite{g12,g18}.
However, two inequalities used in \cite[p.\,8, line 10 and p.\,11, line 12]{acgg19},
which are essential for the proof of \cite[Theorem 1.1]{acgg19},
are not correct. Thus, even in the special case that
$\boz\subset\rr^3$ is a bounded $C^1$ domain and $A\in\mathrm{VMO}(\rr^3)$, the proof of
Theorem \ref{t1.3}(i) provides an alternative correct proof of \cite[Theorem 1.1]{acgg19}.
\end{remark}

\begin{remark}\label{r1.5}
Let $\boz$, $\az$ and $A$ be as in Theorem \ref{t1.3}.
\begin{itemize}
\item[(i)] By Remark \ref{r1.1}(b), Theorem \ref{t1.3}(i)
and Lemma \ref{l5.5} below, we have the following conclusion.

Let $p\in(1,\fz)$ and $\omega\in A_{p}(\rn)$.
Then there exists a positive constant $\dz_0\in(0,\fz)$,
depending only on $n$, $p$, $\boz$ and $[\omega]_{A_p(\rn)}$, such that, if
$A$ satisfies the $(\dz,R)$-$\mathrm{BMO}$ condition for some $\dz\in(0,\dz_0)$
and $R\in(0,\fz)$ or $A\in\mathrm{VMO}(\rn)$, then the weighted Robin problem $(R)_{p,\,\omega}$ with
$\mathbf{f}\in L^p_\omega(\boz;\rn)$ and $F\in L^{p}_{\omega}(\boz)$ is uniquely solvable
and there exists a positive constant $C$, depending only on $n$, $p$,
$[\omega]_{A_p(\rn)}$ and $\boz$, such that, for any weak solution $u$,
$u\in W^{1,p}_\omega(\boz)$ and
\begin{equation}\label{1.26}
\|u\|_{W^{1,p}_\omega(\boz)}\le C\lf[\|\mathbf{f}\|_{L^p_\omega(\boz;\rn)}
+\|F\|_{L^{p}_{\omega}(\boz)}\r].
\end{equation}

We point out that for the weighted Neumann problem $(N)_{p,\,\omega}$,
the estimate \eqref{1.26} was obtained in \cite[Theorem 1.6 and Corollary 1.7]{ycyy20}.
\item[(ii)] We now clarify the relations among three weighted global estimates,
respectively, obtained in \eqref{1.24}, \eqref{1.25} and \eqref{1.26} as follows.

For \eqref{1.24} and \eqref{1.26}, the conditions for $p$ and $\omega$
are the same but the estimate in \eqref{1.24} is better than that in \eqref{1.26} because
$q<p$ and $\|F\|_{L^{q}_{\omega^{a}}(\boz)}\le C\|F\|_{L^{p}_{\omega}(\boz)}$,
where $C$ is a positive constant independent of $F$.
For \eqref{1.24} and \eqref{1.25}, the conditions for $p$ and $\omega$ in \eqref{1.24}
are weaker that those in \eqref{1.25}; however,  when $p\in(n/(n-1),\fz)$ and
$\omega\in A_1(\rn)$, the estimate in \eqref{1.25} is better than that in \eqref{1.24} because
$p_\ast=\frac{np}{n+p}<\frac{np}{n+p-1}=q$ and $\|F\|_{L^{p_\ast}_{\omega^{p_\ast/p}}(\boz)}
\le C\|F\|_{L^{q}_{\omega^a}(\boz)}$,
where $C$ is a positive constant independent of $F$.
Moreover, for \eqref{1.25} and \eqref{1.26}, the conditions for $p$ and $\omega$ in \eqref{1.26}
are weaker that those in \eqref{1.25}; however,  when $p\in(n/(n-1),\fz)$ and
$\omega\in A_1(\rn)$, the estimate in \eqref{1.25} is better than that in \eqref{1.26} because
$p_\ast<p$ and $\|F\|_{L^{p_\ast}_{\omega^{p_\ast/p}}(\boz)}
\le C\|F\|_{L^{p}_{\omega}(\boz)}$,
where $C$ is a positive constant independent of $F$.

Furthermore, we point out that we establish \eqref{1.25} and \eqref{1.26} by using
Theorem \ref{t1.1}, Remark \ref{r1.1} and Theorem \ref{t1.3}(i).
However, we obtain \eqref{1.24} via using Theorems \ref{t1.1} and \ref{t1.3}(i),
and the weighted Sobolev inequality (see, for instance, \cite[Theorem 1.5]{fks82}
or Lemma \ref{l5.9} below).
\end{itemize}
\end{remark}

By applying the weighted norm inequality obtained in Theorems \ref{t1.2}
and \ref{t1.3}, and some tools from harmonic analysis, such as the properties
of Muckenhoupt weights and the Rubio de Francia extrapolation theorem established in \cite{ch18,cw17},
we further obtain the global regularity estimates for the Robin problem \eqref{1.6},
respectively, in Morrey spaces, (Musielak--)Orlicz spaces (also called generalized Orlicz
spaces) and variable Lebesgue spaces, which have independent interests
and are presented in Section \ref{s2} below.
We point out that the approach used in this
article to establish the globally gradient estimates in both Orlicz spaces
and variable Lebesgue spaces is quite different from that used in \cite{bow14,byz08}.
In \cite{bow14,byz08}, the globally gradient estimates in variable Lebesgue spaces or in Orlicz spaces
were established via the so-called ``maximum function free technique".
However, in this article, we obtain the globally gradient estimates in both Orlicz spaces and
variable Lebesgue spaces by simply using weighted norm inequalities in
Theorems \ref{t1.2} and \ref{t1.3}, and the extrapolation theorem.
It is worth pointing out that the extrapolation theorem used in this article
is also valid for the boundary value problem studied in
\cite{bow14,byz08} and independent of the boundary value condition
and the considered equation.

This article is organized as follows. In Section \ref{s2},
several applications of the global weighted estimates in
Theorems \ref{t1.2} and \ref{t1.3} are given, but their proofs are given in Section \ref{s6}.
In Section \ref{s3}, we establish a weighted real-variable argument,
which is a key tool for the proof of Theorem \ref{t1.2} and
has the independent interest. In Section \ref{s4}, we give the proofs of Theorems \ref{t1.1}
and \ref{t1.2} via using the weighted real-variable argument obtained in Section \ref{s3}
and, in Section \ref{s5}, we prove Theorem \ref{t1.3} by using Theorem \ref{t1.1} and
the weighted Sobolev inequality.

Finally, we make some conventions on notation.
Throughout the whole article, we always denote by $C$ a \emph{positive constant} which is
independent of main parameters, but it may vary from line to
line. We also use $C_{(\gz,\,\bz,\,\ldots)}$ or $c_{(\gz,\,\bz,\,\ldots)}$
to denote a  \emph{positive constant} depending on the indicated parameters $\gz,$ $\bz$,
$\ldots$. The \emph{symbol} $f\ls g$ means that $f\le Cg$. If $f\ls
g$ and $g\ls f$, then we write $f\sim g$. We also use the following
convention: If $f\le Cg$ and $g=h$ or $g\le h$, we then write $f\ls g\sim h$
or $f\ls g\ls h$, \emph{rather than} $f\ls g=h$
or $f\ls g\le h$. For any given normed spaces $\mathcal A$ and $\mathcal
B$ with the corresponding norms $\|\cdot\|_{\mathcal A}$ and
$\|\cdot\|_{\mathcal B}$, the \emph{symbol} ${\mathcal
A}\subset{\mathcal B}$ means that, for any $f\in \mathcal A$, then
$f\in\mathcal B$ and $\|f\|_{\mathcal B}\ls \|f\|_{\mathcal A}$.
For each ball $B:=B(x_B,r_B)$ in $\rn$, with some $x_B\in\rn$,
$r_B\in (0,\fz)$, and $\az\in(0,\fz)$, let $\az B:=B(x_B,\az r_B)$;
furthermore, denote the set $B(x,r)\cap\boz$ by $B_\boz(x,r)$ and the set $(\az B)\cap\boz$ by $\az B_\boz$.
For any subset $E$ of $\rn$, we denote the \emph{set} $\rn\setminus E$ by $E^\complement$
and its \emph{characteristic function} by $\mathbf{1}_{E}$ .
For any $\omega\in A_p(\rn)$ with $p\in[1,\fz)$ and any measurable set $E\subset\rn$,
let $\omega(E):=\int_E\omega(x)\,dx$. For any given $q\in[1,\fz]$, we denote by $q'$
its \emph{conjugate exponent}, namely, $1/q + 1/q'= 1$.
Finally, for any measurable set $E\subset\rn$, $\omega\in A_q(\rn)$ with some $q\in[1,\fz)$
and $f\in L^1(E)$, we denote the integral $\int_E|f(x)|\omega(x)\,dx$
simply by $\int_E|f|\omega\,dx$ and, when $|E|<\fz$, we use the notation
$$\fint_E fdx:=\frac{1}{|E|}\int_Ef(x)dx.$$

\section{Several Applications of Theorems \ref{t1.2} and \ref{t1.3}\label{s2}}

\hskip\parindent In this section, we give several applications of the weighted global
estimates obtained in Theorems \ref{t1.2} and \ref{t1.3}, whose proofs are given in
Section \ref{s6}. More precisely, using Theorems \ref{t1.2}(ii) and \ref{t1.3}(iii),
we obtain the global regularity estimates, respectively, in Morrey spaces,
(Musielak--)Orlicz spaces (also called generalized Orlicz spaces) and variable Lebesgue spaces.
We first recall the definition of the Morrey space
$\cm^{\tz}_p(\boz)$ on the domain $\boz$ as follows.

\begin{definition}\label{d2.1}
Let $n\ge2$, $\boz\subset\rn$ be a bounded Lipschitz domain, $p\in(1,\fz)$
and $\tz\in[0,n]$. The \emph{Morrey space} $\cm^{\tz}_p(\boz)$ is defined by setting
$$\cm^{\tz}_p(\boz):=\lf\{f\ \text{is measurable on}\ \boz:\
\|f\|_{\cm^{\tz}_p(\boz)}<\fz\r\},
$$
where
$$\|f\|_{\cm^{\tz}_p(\boz)}:=\sup_{\rho\in(0,\diam(\boz)]}\sup_{x\in\boz}
\lf\{\rho^{\frac{\tz-n}{p}}\|f\|_{L^{p}(B(x,\rho)\cap\boz)}\r\}.
$$
Moreover, the \emph{space} $\cm^{\tz}_p(\boz;\rn)$ is defined via replacing
$L^p_\omega(\boz)$ [see \eqref{1.1}] by the above $\cm^{\tz}_p(\boz)$
in the definition of $L^p_\omega(\boz;\rn)$ [see \eqref{1.2}].
\end{definition}

Via applying the weighted global regularity estimates obtained in Theorem \ref{t1.2}(ii)
and \eqref{1.19} and the relation between weighted Lebesgue spaces and Morrey spaces,
we obtain the following global regularity estimates in Morrey spaces
for the Robin problem \eqref{1.6} in bounded Lipschitz domains.

\begin{theorem}\label{t2.1}
Let $n\ge3$, $A$, $\az$ and $\boz$ be as in Theorem \ref{t1.2},
$p\in((3+\uc_0)',3+\uc_0)$ and $\tz\in(pn/(3+\uc_0),n]$,
where $\uc_0\in(0,\fz)$ is as in Theorem \ref{t1.2}(i).
\begin{itemize}
\item[{\rm(i)}] There exists a positive constant $\dz_0\in(0,\fz)$,
depending only on $n$, $p$, $\tz$ and the Lipschitz constant of $\boz$,  such that, if
$A$ satisfies the $(\dz,R)$-$\mathrm{BMO}$
condition for some $\dz\in(0,\dz_0)$ and $R\in(0,\fz)$ or $A\in\mathrm{VMO}(\rn)$,
then, for any weak solution $u\in W^{1,2}(\boz)$ of the Robin problem $(R)_2$
with $\mathbf{f}\in \cm^{\tz}_p(\boz;\rn)$, $F\equiv0$ and $g\equiv0$,
$|u|+|\nabla u|\in \cm^{\tz}_p(\boz)$ and
\begin{equation}\label{2.1}
\|u\|_{\cm^{\tz}_p(\boz)}+\|\nabla u\|_{\cm^{\tz}_p(\boz;\rn)}\le
C\|\mathbf{f}\|_{\cm^{\tz}_p(\boz;\rn)},
\end{equation}
where $C$ is a positive constant depending only on $n$, $p$, $\tz$,
$\diam(\boz)$ and the Lipschitz constant of $\boz$.
\item[{\rm(ii)}] Assume further that $p\in(\max\{n/(n-1),(3+\uc_0)'\},3+\uc_0)$.
Then there exists a positive constant $\dz_0\in(0,\fz)$,
depending only on $n$, $p$, $\tz$ and the Lipschitz constant of $\boz$,  such that, if
$A$ satisfies the $(\dz,R)$-$\mathrm{BMO}$
condition for some $\dz\in(0,\dz_0)$ and $R\in(0,\fz)$ or $A\in\mathrm{VMO}(\rn)$,
then, for any weak solution $u\in W^{1,2}(\boz)$ of the Robin problem $(R)_2$
with $\mathbf{f}\in \cm^{\tz}_p(\boz;\rn)$, $F\in \cm^{\wz{\tz}}_{p_\ast}(\boz)$
and $g\equiv0$,
$|u|+|\nabla u|\in \cm^{\tz}_p(\boz)$ and
\begin{equation}\label{2.2}
\|u\|_{\cm^{\tz}_p(\boz)}+\|\nabla u\|_{\cm^{\tz}_p(\boz;\rn)}\le
C\lf[\|\mathbf{f}\|_{\cm^{\tz}_p(\boz;\rn)}+\|F\|_{\cm^{\wz{\tz}}_{p_\ast}(\boz)}\r],
\end{equation}
where $p_\ast$ is as in \eqref{1.5}, $\wz{\tz}$ is given by $\wz{\tz}:=p_\ast(1+\frac{\tz}{p})$
and $C$ is a positive constant depending only on $n$, $p$, $\tz$,
$\diam(\boz)$ and the Lipschitz constant of $\boz$.
\end{itemize}
\end{theorem}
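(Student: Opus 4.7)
The approach is to extract the Morrey-space estimates \eqref{2.1} and \eqref{2.2} from the weighted Lebesgue estimates in \eqref{1.19} (for part (i)) and Theorem \ref{t1.2}(ii) (for part (ii)) by testing against a family of power weights and invoking a Rubio de Francia-type extrapolation argument from \cite{cw17, ch18}. The hypothesis $\tz \in (pn/(3+\uc_0), n]$ is sharply matched to the Muckenhoupt--reverse H\"older class in Theorem \ref{t1.2}(ii): for each $x_0 \in \rn$, the weight $\omega_{x_0}(x) := |x-x_0|^{\tz-n}$ belongs to $A_{p/\wz{p}_0}(\rn) \cap RH_{((3+\uc_0)/p)'}(\rn)$, with $A_q$ and $RH_r$ constants independent of $x_0$, precisely because $|x|^\az$ belongs to $RH_r$ if and only if $\az > -n/r$.

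For part (i), fix $x_0 \in \boz$ and $\rho \in (0, \diam(\boz)]$, and apply \eqref{1.19} with the weight $\omega_{x_0}$. Since $p \in ((3+\uc_0)', 3+\uc_0)$ forces $\omega_{x_0} \in A_{p/(3+\uc_0)'}(\rn) \cap RH_{((3+\uc_0)/p)'}(\rn)$ uniformly in $x_0$, this yields $\|u\|_{W^{1,p}_{\omega_{x_0}}(\boz)} \le C \|\mathbf{f}\|_{L^p_{\omega_{x_0}}(\boz;\rn)}$ with $C$ independent of $x_0$. The lower bound $\omega_{x_0}(x) \ge \rho^{\tz-n}$ on $B(x_0, \rho)$ then shows that the left-hand side dominates $\rho^{(\tz-n)/p}[\|u\|_{L^p(B_{\boz}(x_0,\rho))} + \|\nabla u\|_{L^p(B_{\boz}(x_0,\rho);\rn)}]$. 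To control the right-hand side by $\|\mathbf{f}\|_{\cm^{\tz}_p(\boz;\rn)}$ uniformly in $x_0$ and $\rho$, apply the Rubio de Francia extrapolation theorem for Morrey spaces from \cite{cw17, ch18} to this family of weights, which yields \eqref{2.1}.

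For part (ii), the strategy is identical with Theorem \ref{t1.2}(ii) replacing \eqref{1.19}. The algebraic identity $\wz{\tz} - n = (p_\ast/p)(\tz-n)$, equivalent to $\wz{\tz} = p_\ast(1 + \tz/p)$ via $p_\ast = np/(n+p)$, guarantees simultaneously that $\omega_{x_0}^{p_\ast/p}(x) = |x-x_0|^{\wz{\tz}-n}$ lies in the appropriate weight class with constants uniform in $x_0$, and that the scaling factors $\rho^{(\tz-n)/p}$ and $\rho^{(\wz{\tz}-n)/p_\ast}$ of the two Morrey seminorms coincide, so that both source terms contribute on the same scale. The same extrapolation argument then delivers \eqref{2.2}.

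The main technical obstacle is the passage from the weighted Lebesgue estimate to the Morrey estimate: a direct dyadic-annular expansion of $\|\mathbf{f}\|_{L^p_{\omega_{x_0}}(\boz;\rn)}$ in terms of $\|\mathbf{f}\|_{\cm^{\tz}_p(\boz;\rn)}$ produces a logarithmic blow-up and so cannot be used naively. The Rubio de Francia extrapolation theorem from \cite{cw17, ch18} precisely encodes the correct translation between weighted Lebesgue bounds and Morrey bounds over the required weight class, and the endpoint condition $\tz > pn/(3+\uc_0)$ is exactly what ensures that $\omega_{x_0}$ lies in $RH_{((3+\uc_0)/p)'}(\rn)$, which in turn is the input condition required by that extrapolation theorem.
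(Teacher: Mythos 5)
Your starting point and your choice of power weights are right, and several of your observations correctly match the paper's strategy: apply the weighted $W^{1,p}$ estimate of Theorem \ref{t1.2}(ii) (or Remark \ref{r1.3} when $F\equiv0$) to a family of power weights centered at $z\in\boz$; observe that $\tz>pn/(3+\uc_0)$ is exactly the threshold placing $|x-z|^{\tz-n}$ in $RH_{((3+\uc_0)/p)'}(\rn)$ with constants uniform in $z$; and check the algebraic identity $\wz\tz-n=(p_\ast/p)(\tz-n)$ so that the two Morrey scalings in part (ii) coincide. All of this is consistent with the paper's proof.

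However, there is a genuine gap at the crucial step, and you have in fact diagnosed the problem yourself. As you note, with the unperturbed weight $|x-z|^{\tz-n}$ a direct comparison of $\|\mathbf{f}\|_{L^p_{\omega_z}(\boz;\rn)}$ against $\|\mathbf{f}\|_{\cm^\tz_p(\boz;\rn)}$ produces a logarithmic divergence. Your proposed remedy, a ``Rubio de Francia extrapolation theorem for Morrey spaces from \cite{cw17,ch18}'', does not exist in those references: \cite{cw17} treats variable Lebesgue spaces and \cite{ch18} treats generalized Orlicz spaces, and the paper itself invokes those extrapolation theorems only for Theorems \ref{t2.3} and \ref{t2.4}, not for the Morrey estimates. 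More fundamentally, it is unclear what extrapolation would even assert here: extrapolation transfers operator bounds from a family of weighted estimates to a target space, but the obstacle you face is the comparison of a weighted norm of a \emph{fixed datum} against its Morrey norm, which is not an operator statement and is not resolved by any extrapolation principle.

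The actual repair is elementary and replaces extrapolation entirely: perturb the exponent. Fix $\epsilon\in\lf(0,\,\tz-n+n/\lf(\tfrac{3+\uc_0}{p}\r)'\r)$ (this interval is nonempty precisely because $\tz>pn/(3+\uc_0)$), and for $z\in\boz$, $\rho\in(0,\diam(\boz)]$ use the truncated weight
\begin{equation*}
\omega_z(x):=\min\lf\{|x-z|^{\tz-n-\epsilon},\ \rho^{\tz-n-\epsilon}\r\},
\end{equation*}
which still lies in $A_{p/\wz p_0}(\rn)\cap RH_{((3+\uc_0)/p)'}(\rn)$ uniformly in $z$ and $\rho$ by Lemma \ref{l6.1}. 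Since $\omega_z\equiv\rho^{\tz-n-\epsilon}$ on $B(z,\rho)$, the left-hand side of \eqref{1.18} extracts the Morrey factor $\rho^{(n-\tz)/p}$ together with an \emph{extra} factor $\rho^{\epsilon/p}$. On the right, the layer-cake formula gives
\begin{equation*}
\int_\boz|\mathbf{f}|^p\omega_z\,dx
\le\int_0^{\rho^{\tz-n-\epsilon}}\int_{B(z,t^{-1/(n-\tz+\epsilon)})\cap\boz}|\mathbf{f}|^p\,dx\,dt
\le\|\mathbf{f}\|_{\cm^\tz_p(\boz;\rn)}^p\int_0^{\rho^{\tz-n-\epsilon}}t^{-\frac{n-\tz}{n-\tz+\epsilon}}\,dt
\ls\|\mathbf{f}\|_{\cm^\tz_p(\boz;\rn)}^p\,\rho^{-\epsilon},
\end{equation*}
where the $t$-integral converges \emph{precisely because} $\epsilon>0$ makes the exponent strictly greater than $-1$; the resulting $\rho^{-\epsilon/p}$ cancels the extra $\rho^{\epsilon/p}$ from the left-hand side, and the uniform Morrey bound follows. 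The same truncated weight handles the $F$-term in part (ii) via $\wz\tz=p_\ast(1+\tz/p)$. So the missing idea is the $\epsilon$-perturbation of the power and a direct Fubini estimate, not extrapolation.
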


Similarly to Theorem \ref{t2.1},
we have the following global regularity estimates in Morrey spaces
for the Robin problem \eqref{1.6} in bounded $C^1$ or (semi-)convex domains.

\begin{theorem}\label{t2.2}
Let $n\ge3$, $A$, $\az$ and $\boz$ be as in Theorem \ref{t1.3},
$p\in(1,\fz)$ and $\tz\in(0,n]$.
\begin{itemize}
\item[{\rm(i)}] Assume that $q:=\frac{np}{n+p-1}$ and $\wz{\tz}:=n-\frac{(n-1)(n-\tz)}{n+p-1}$.
Then there exists a positive constant $\dz_0\in(0,\fz)$,
depending only on $n$, $p$, $\tz$ and $\boz$, such that, if
$A$ satisfies the $(\dz,R)$-$\mathrm{BMO}$
condition for some $\dz\in(0,\dz_0)$ and $R\in(0,\fz)$ or $A\in\mathrm{VMO}(\rn)$,
then, for any weak solution $u\in W^{1,2}(\boz)$ of the Robin problem $(R)_2$
with $\mathbf{f}\in \cm^{\tz}_p(\boz;\rn)$, $F\in\cm^{\wz{\tz}}_q(\boz)$
and $g\equiv0$,
$|u|+|\nabla u|\in \cm^{\tz}_p(\boz)$ and
\begin{equation}\label{2.3}
\|u\|_{\cm^{\tz}_p(\boz)}+\|\nabla u\|_{\cm^{\tz}_p(\boz;\rn)}\le
C\lf[\|\mathbf{f}\|_{\cm^{\tz}_p(\boz;\rn)}+\|F\|_{\cm^{\wz{\tz}}_{q}(\boz)}\r],
\end{equation}
where $C$ is a positive constant depending only on $n$, $p$, $\tz$ and $\boz$.
\item[{\rm(ii)}] Assume further that $p\in(n/(n-1),\fz)$.
Then there exists a positive constant $\dz_0\in(0,\fz)$,
depending only on $n$, $p$, $\tz$ and $\boz$,  such that, if
$A$ satisfies the $(\dz,R)$-$\mathrm{BMO}$
condition for some $\dz\in(0,\dz_0)$ and $R\in(0,\fz)$ or $A\in\mathrm{VMO}(\rn)$,
then, for any weak solution $u\in W^{1,2}(\boz)$ of the Robin problem $(R)_2$
with $\mathbf{f}\in \cm^{\tz}_p(\boz;\rn)$, $F\in \cm^{\wz{\tz}}_{p_\ast}(\boz)$
and $g\equiv0$,
$|u|+|\nabla u|\in \cm^{\tz}_p(\boz)$ and
\begin{equation}\label{2.4}
\|u\|_{\cm^{\tz}_p(\boz)}+\|\nabla u\|_{\cm^{\tz}_p(\boz;\rn)}\le
C\lf[\|\mathbf{f}\|_{\cm^{\tz}_p(\boz;\rn)}+\|F\|_{\cm^{\wz{\tz}}_{p_\ast}(\boz)}\r],
\end{equation}
where $p_\ast$ is as in \eqref{1.5}, $\wz{\tz}$ is given by $\wz{\tz}:=p_\ast(1+\frac{\tz}{p})$
and $C$ is a positive constant depending only on $n$, $p$, $\tz$ and
$\boz$.
\end{itemize}
\end{theorem}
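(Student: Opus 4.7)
The plan is to derive the Morrey estimates in Theorem \ref{t2.2} from the weighted $L^p$ estimates of Theorem \ref{t1.3}(ii)--(iii) by means of a power-weight argument. The key observation is that, for any $x_0 \in \boz$, $\rho \in (0, \diam(\boz)]$, and $\gz \in (-n, 0)$, the function $\oz_{x_0,\rho}(x) := (|x - x_0| + \rho)^\gz$ is radial and non-increasing about $x_0$, hence comparable to $\max\{|x-x_0|,\rho\}^\gz$, and therefore lies in $A_1(\rn) \subset A_p(\rn)$ with $A_1$-constant depending only on $n$ and $\gz$; in particular, the bound is uniform in $(x_0,\rho)$. The same is true of every positive power $\oz_{x_0,\rho}^s$ with $s\gz \in (-n, 0)$.

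For Theorem \ref{t2.2}(ii), fix $x_0 \in \boz$ and $\rho \in (0,\diam(\boz)]$, and set $\gz := \bz - n$ for some $\bz \in (0, \tz)$. Two observations then suffice. First, since $\oz_{x_0,\rho}(x) \ge (2\rho)^{\bz-n}$ on $B(x_0,\rho)$, one has
\[
\rho^{(\tz-n)/p}\|\nabla u\|_{L^p(B_\boz(x_0,\rho))} \le C \rho^{(\tz-\bz)/p}\|\nabla u\|_{L^p_{\oz_{x_0,\rho}}(\boz)}.
\]
Second, decomposing $\boz$ into the dyadic shells $A_k := B_\boz(x_0, 2^k\rho) \setminus B_\boz(x_0,2^{k-1}\rho)$ for $k \ge 1$ together with $A_0 := B_\boz(x_0,\rho)$, estimating $\oz_{x_0,\rho}(x) \asymp (2^k\rho)^{\bz-n}$ on $A_k$, and bounding $\|\mathbf{f}\|_{L^p(B_\boz(x_0,2^k\rho))}^p \le (2^k\rho)^{n-\tz}\|\mathbf{f}\|_{\cm^\tz_p(\boz)}^p$ via the Morrey norm, one is left with a geometric series of ratio $2^{\bz-\tz} < 1$ whose sum yields $\|\mathbf{f}\|_{L^p_{\oz_{x_0,\rho}}(\boz)} \le C \rho^{(\bz-\tz)/p}\|\mathbf{f}\|_{\cm^\tz_p(\boz)}$. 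The identical computation, applied to $\oz_{x_0,\rho}^{p_\ast/p}$ (whose exponent equals $\wz{\bz}-n$ with $\wz{\bz} := n(p+\bz)/(n+p)$ strictly less than $\wz{\tz} = n(p+\tz)/(n+p) = p_\ast(1+\tz/p)$), gives
\[
\|F\|_{L^{p_\ast}_{\oz_{x_0,\rho}^{p_\ast/p}}(\boz)} \le C \rho^{(\wz{\bz}-\wz{\tz})/p_\ast}\|F\|_{\cm^{\wz{\tz}}_{p_\ast}(\boz)},
\]
and a direct calculation using $p_\ast = np/(n+p)$ shows $(\wz{\bz}-\wz{\tz})/p_\ast = (\bz-\tz)/p$. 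Inserting both bounds into Theorem \ref{t1.3}(iii) makes the $\rho$-factors cancel, and taking the supremum over $(x_0,\rho)$ yields \eqref{2.4}; the bound on $\|u\|_{\cm^\tz_p(\boz)}$ is identical.

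For part (i), the same scheme is run with Theorem \ref{t1.3}(ii) replacing (iii), but the formal endpoint $\gz = \tz-n$ here makes both dyadic series just fail to converge. I would therefore fix a small $\uc > 0$, set $\gz := \tz-n-\uc$, and carry out the parallel computation. Using $q = np/(n+p-1)$, $a = (n-1)/(n+p-1)$ and $\wz{\tz} = n-(n-1)(n-\tz)/(n+p-1)$, a direct verification shows that the total $\rho$-exponent on the $\mathbf{f}$-side still cancels exactly, while the $F$-side carries a harmless residual factor $\rho^{\uc/(np)} \le \diam(\boz)^{\uc/(np)}$; this yields \eqref{2.3}.

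The principal obstacle is the bookkeeping of the scaling exponents so that the $\rho$-powers cancel on both the $\mathbf{f}$- and the $F$-sides. Once the right weight and the auxiliary parameter $\bz$ (or $\uc$) are identified, the verification reduces to elementary algebra, relying only on the uniform Muckenhoupt property of $\oz_{x_0,\rho}$ and on the Morrey control of $L^p$-norms on concentric balls.
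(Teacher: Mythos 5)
Your proposal is correct and follows essentially the same route as the paper: the paper proves Theorem 2.1 in detail (using the truncated power weight $\omega_z(x)=\min\{|x-z|^{-n+\tz-\epsilon},\rho^{-n+\tz-\epsilon}\}$ and a Fubini-theorem computation to sum over scales) and then declares Theorem 2.2 analogous. Your weight $(|x-x_0|+\rho)^{\bz-n}$ is pointwise comparable to the paper's $\omega_z$ with $\epsilon=\tz-\bz$, and your dyadic-shell summation is an equivalent rewriting of the paper's Fubini-theorem layer-cake argument; the scaling bookkeeping (the cancellations $(\wz{\bz}-\wz{\tz})/p_\ast=(\bz-\tz)/p$ for part (ii), and the harmless residual $\rho^{\uc/(np)}$ for part (i)) is verified correctly.
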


Recall that, for any $\az\in(0,1]$, the \emph{H\"older space} $C^{0,\az}(\overline{\boz})$ on $\boz$
is defined by setting
$$C^{0,\az}(\overline{\boz}):=\lf\{g\ \text{is continuous on}\ \boz:\
[g]_{C^{0,\az}(\overline{\boz})}:=\sup_{x,\,y\in\boz,\,x\neq y}\frac{|g(x)-g(y)|}{|x-y|^\az}<\fz\r\}.
$$

Then, by Theorems \ref{t2.1} and \ref{t2.2} and the Sobolev--Morrey embedding theorem (see, for instance,
\cite[Theorem 7.19]{gt01}), we obtain the following conclusion.

\begin{corollary}\label{c2.1}
\begin{itemize}
\item[{\rm(i)}] Let $n\ge3$, $A$, $\az$ and $\boz$ be as in Theorem \ref{t1.2}.
Assume that $3+\uc_0>n$, $p\in(n/(n-1),3+\uc_0)$
and $\tz\in(pn/(3+\uc_0),n]\cap(0,p)$,
where $\uc_0\in(0,\fz)$ is as in Theorem \ref{t1.2}(i).
Then there exists a positive constant $\dz_0\in(0,\fz)$,
depending only on $n$, $p$, $\tz$ and the Lipschitz constant of $\boz$, such that, if
$A$ satisfies the $(\dz,R)$-$\mathrm{BMO}$ condition for some $\dz\in(0,\dz_0)$
and $R\in(0,\fz)$ or $A\in\mathrm{VMO}(\rn)$,
then, for any weak solution $u\in W^{1,2}(\boz)$ of the Robin problem $(R)_2$
with $\mathbf{f}\in \cm^{\tz}_p(\boz;\rn)$, $F\in \cm^{\wz{\tz}}_{p_\ast}(\boz)$
and $g\equiv0$, $u\in C^{0,1-\frac{\tz}{p}}(\overline{\boz})$ and
\begin{equation*}
[u]_{C^{0,1-\frac{\tz}{p}}(\overline{\boz})}\le C\lf[\|\mathbf{f}\|_{\cm^{\tz}_p(\boz;\rn)}
+\|F\|_{\cm^{\wz{\tz}}_{p_\ast}(\boz)}\r],
\end{equation*}
where $p_\ast$ is as in \eqref{1.5}, $\wz{\tz}$ is
given by $\wz{\tz}:=p_\ast(1+\frac{\tz}{p})$ and $C$ is a positive constant depending only on $n$, $p$, $\tz$,
$\diam(\boz)$ and the Lipschitz constant of $\boz$.
\item[{\rm(ii)}] Let $n\ge3$, $A$, $\az$ and $\boz$ be as in Theorem \ref{t1.3},
$\tz\in(0,n]$ and $p\in(\max\{n/(n-1),\tz\},\fz)$.
Then there exists a positive constant $\dz_0\in(0,\fz)$,
depending only on $n$, $p$, $\tz$ and $\boz$, such that, if
$A$ satisfies the $(\dz,R)$-$\mathrm{BMO}$ condition for some $\dz\in(0,\dz_0)$
and $R\in(0,\fz)$ or $A\in\mathrm{VMO}(\rn)$,
then, for any weak solution $u\in W^{1,2}(\boz)$ of the Robin problem $(R)_2$
with $\mathbf{f}\in \cm^{\tz}_p(\boz;\rn)$, $F\in \cm^{\wz{\tz}}_{p_\ast}(\boz)$
and $g\equiv0$,
$u\in C^{0,1-\frac{\tz}{p}}(\overline{\boz})$ and
\begin{equation*}
[u]_{C^{0,1-\frac{\tz}{p}}(\overline{\boz})}\le C\lf[\|\mathbf{f}\|_{\cm^{\tz}_p(\boz;\rn)}
+\|F\|_{\cm^{\wz{\tz}}_{p_\ast}(\boz)}\r],
\end{equation*}
where $p_\ast$ is as in \eqref{1.5}, $\wz{\tz}$ is
given by $\wz{\tz}:=p_\ast(1+\frac{\tz}{p})$ and $C$ is a positive constant
depending only on $n$, $p$, $\tz$ and $\boz$.
\end{itemize}
\end{corollary}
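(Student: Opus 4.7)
The proof strategy is a two-step bootstrap: first upgrade the gradient of $u$ from $L^2(\boz;\rn)$ to the Morrey space $\cm^{\tz}_p(\boz;\rn)$ via Theorem~\ref{t2.1}(ii) in case (i) or Theorem~\ref{t2.2}(ii) in case (ii), and then pass from Morrey control of $\nabla u$ to Hölder regularity of $u$ via the Sobolev--Morrey (Morrey--Campanato) embedding. The latter asserts that, whenever $u\in W^{1,p}(\boz)$ satisfies $\nabla u\in\cm^{\tz}_p(\boz;\rn)$ with $1\le \tz<p$, one has $u\in C^{0,1-\tz/p}(\overline{\boz})$ together with
\begin{equation*}
[u]_{C^{0,1-\tz/p}(\overline{\boz})}\le C\|\nabla u\|_{\cm^{\tz}_p(\boz;\rn)},
\end{equation*}
where $C$ depends only on $n$, $p$, $\tz$ and $\boz$. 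This is a classical consequence of Poincaré's inequality applied on balls $B(x,r)\cap\boz$ together with Campanato's characterization of Hölder classes; when $\tz=n$, it reduces to the standard Morrey inequality stated in \cite[Theorem~7.19]{gt01}.

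For case (i), the assumption $3+\uc_0>n$ together with $p\in(n/(n-1),3+\uc_0)$ and $\tz\in(pn/(3+\uc_0),n]\cap(0,p)$ ensures that the hypotheses of Theorem~\ref{t2.1}(ii) are satisfied and that $\tz<p$, so that the Hölder exponent $1-\tz/p$ is strictly positive. Applying Theorem~\ref{t2.1}(ii) to the weak solution $u\in W^{1,2}(\boz)$ of $(R)_{2}$ with $\mathbf{f}\in\cm^{\tz}_p(\boz;\rn)$ and $F\in\cm^{\wz{\tz}}_{p_\ast}(\boz)$ yields
\begin{equation*}
\|\nabla u\|_{\cm^{\tz}_p(\boz;\rn)}\le C\lf[\|\mathbf{f}\|_{\cm^{\tz}_p(\boz;\rn)}
+\|F\|_{\cm^{\wz{\tz}}_{p_\ast}(\boz)}\r],
\end{equation*}
and composing with the displayed embedding produces the desired Hölder bound on $u$. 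Case (ii) follows by the identical argument with Theorem~\ref{t2.1}(ii) replaced by Theorem~\ref{t2.2}(ii); the hypothesis $p\in(\max\{n/(n-1),\tz\},\fz)$ there simultaneously enforces $p>n/(n-1)$ (required by Theorem~\ref{t2.2}(ii)) and $\tz<p$ (required by the embedding).

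I do not anticipate any significant obstacle, since once the Morrey-regularity of $\nabla u$ is in hand from Theorems~\ref{t2.1} and \ref{t2.2}, the passage to Hölder continuity of $u$ is a purely functional-analytic fact about Morrey and Campanato spaces. The only points meriting care are the verification that $\tz<p$ (which is built into the stated hypotheses), the validity of the Morrey--Campanato embedding up to the boundary on the Lipschitz (respectively, $C^1$ or semi-convex) domain $\boz$ (which follows from standard extension and chaining arguments using the Lipschitz character of $\partial\boz$), and the tracking of constants so that the final constant $C$ depends only on $n$, $p$, $\tz$, $\diam(\boz)$ and the Lipschitz constant of $\boz$ in case (i), or on $n$, $p$, $\tz$ and $\boz$ in case (ii), as claimed.
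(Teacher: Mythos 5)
Your approach coincides with the paper's: the authors obtain Corollary~\ref{c2.1} precisely by combining Theorem~\ref{t2.1}(ii) (resp.\ Theorem~\ref{t2.2}(ii)) with the Sobolev--Morrey embedding (they cite \cite[Theorem~7.19]{gt01}), and your proposal carries this out correctly, including the verification that the hypotheses of the corollary place you in the admissible ranges of Theorems~\ref{t2.1}(ii) and \ref{t2.2}(ii) and force $\tz<p$ so that the H\"older exponent $1-\tz/p$ is positive.

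One small inaccuracy: in your statement of the Morrey--Campanato embedding you impose the extra restriction $1\le\tz<p$, but the embedding in fact holds for every $\tz\in[0,p)$ (one only needs $\tz<p$ for the Campanato seminorm computation via Poincar\'e on balls). The lower bound $\tz\ge1$ is not needed, and indeed the hypotheses of the corollary do not guarantee it: in case (i), since $\tz>pn/(3+\uc_0)$ with $p$ arbitrarily close to $n/(n-1)$ and $3+\uc_0$ possibly much larger than $n$, one can have $\tz<1$; similarly in case (ii) since $\tz\in(0,n]$ and $p>\tz$ allows small $\tz$. Simply drop the condition $\tz\ge1$ and the argument covers the full range claimed in the corollary.
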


\begin{remark}\label{r2.1}
We point out that, for the Dirichlet problem \eqref{1.9} with $F\equiv0$,
the estimate \eqref{2.3} was established in
\cite[Theorem 2.3]{amp18} under the assumptions that $A$ is symmetric
and satisfies the $(\dz,R)$-BMO condition for some small $\dz\in(0,\fz)$ and some $R\in(0,\fz)$,
and that $\boz$ is a bounded Lipschitz domain with a small Lipschitz constant.
Moreover, estimates similar to \eqref{2.3} with $F\equiv0$ for the Dirichlet problem
of some nonlinear elliptic or parabolic equations on Reifenberg flat domains
were obtained in \cite{ap16,ap15,bd17a,bd17,mp12,mp11}. Furthermore,
for the Neumann problem $(N)_p$ in bounded (semi-)convex domains,
the estimate \eqref{2.3}, with $\|F\|_{\cm^{\wz{\tz}}_{q}(\boz)}$ replaced
by $\|F\|_{\cm^{\tz}_{p}(\boz)}$, was obtained in \cite[Corollary 2.5]{ycyy20}.
\end{remark}

In what follows, a function $f:\ [0,\fz)\to[0,\fz]$ is said to be \emph{almost
increasing} (resp., \emph{almost decreasing}) if there exists a positive constant $L\in[1,\fz)$
such that, for any $s,\,t\in[0,\fz)$ with $s\le t$, $f(s)\le Lf(t)$
[resp., $f(s)\ge Lf(t)$]. In particular, if $L:=1$, then
$f$ is said to be increasing (resp., decreasing). Now we recall the definitions of weak
$\Phi$-functions and Musielak--Orlicz spaces (also called generalized Orlicz spaces)
as follows (see, for instance, \cite{ch18,hh19,ho19,rr91,ylk17}).
Recall that the \emph{symbol $t\to0^+$} means $t\in(0,\fz)$ and $t\to0$.

\begin{definition}\label{d2.2}
Let $\fai:\,[0,\fz)\to[0,\fz]$ be an increasing function satisfying that
$$\fai(0)
=\lim_{t\to0^+}\fai(t)=0\quad \text{and}\quad \lim_{t\to\fz}\fai(t)=\fz.
$$
\begin{itemize}
\item[{\rm(i)}] Then $\fai$ is called a \emph{weak $\Phi$-function}, denoted by
$\fai\in\Phi_w$, if $t\to\frac{\fai(t)}{t}$ is almost increasing on $(0,\fz)$.
\item[{\rm(ii)}] The \emph{left-continuous generalized inverse} of $\fai$,
denoted by $\fai^{-1}$, is defined by setting, for any $s\in[0,\fz]$,
$$\fai^{-1}(s):=\inf\lf\{t\in[0,\fz):\ \fai(t)\ge s\r\}.$$
\item[{\rm(iii)}] The \emph{conjugate $\Phi$-function} of $\fai$, denoted by $\fai^\ast$,
is defined by setting, for any $t\in[0,\fz)$,
$$\fai^\ast(t):=\sup_{s\in[0,\fz)}\{st-\fai(s)\}.
$$
\item[{\rm(iv)}] Let $E\subset\rn$ be a measurable set. A function
$\fai:\ E\times[0,\fz)\to[0,\fz]$ is called a
\emph{Musielak--Orlicz function} (or a \emph{generalized $\Phi$-function}) on $E$
if it satisfies
\begin{enumerate}
\item[$\mathrm{(iv)_1}$] for any $t\in[0,\fz)$, $\fai(\cdot,t)$ is measurable;
\item[$\mathrm{(iv)_2}$] for almost every $x\in E$,
$\fai(x,\cdot)\in\Phi_w$.
\end{enumerate}
Then the set $\Phi_w(E)$ is defined to be the collection of all
Musielak--Orlicz functions on $E$.
\end{itemize}
\end{definition}

\begin{definition}\label{d2.3}
Let $E\subset\rn$ be a measurable set and $\fai\in\Phi_w(E)$. For any given
$f\in L^1_\loc(E)$, the \emph{Musielak--Orlicz modular} of $f$ is defined by setting
$$\rho_\fai(f):=\int_{E}\fai(x,|f(x)|)\,dx.
$$
Then the \emph{Musielak--Orlicz space} (also called \emph{generalized Orlicz space})
$L^\fai(E)$ is defined by setting
\begin{align*}
&L^\fai(E):=\{u\ \text{is measurable on}\ E:\ \\
&\quad\quad\quad\quad\quad\text{there exists a}\ \lz\in(0,\fz)\
\text{such that}\ \rho_\fai(\lz f)<\fz\}
\end{align*}
equipped with the \emph{Luxemburg} (also called the \emph{Luxembourg--Nakano}) \emph{norm}
\begin{equation*}
\|u\|_{L^\fai(E)}:=\inf\lf\{\lz\in(0,\fz):\
\rho_\fai\lf(\frac{u}{\lz}\r)\le1\r\}.
\end{equation*}
\end{definition}

To obtain the global regularity estimates for the Robin problem
in the scale of Musielak--Orlicz spaces, we need several additional assumptions
for the Musielak--Orlicz function $\fai$.
Let $E\subset\rn$ be a measurable set, $\fai\in\Phi_w(E)$ and $p\in(0,\fz)$.

\medskip
\noindent {\bf Assumption (A0).} There exist positive constants
$\beta\in(0,1)$ and $\gamma\in(0,\fz)$ such that,
for any $x\in E$, $\fai(x,\beta\gamma)\le1\le\fai(x,\gamma)$.

\medskip

\noindent {\bf Assumption (A1).} There exists a $\bz\in(0,1)$
such that, for any $x,\,y\in E$ with $|x-y|\le1$ and any $t\in[1,|x-y|^{-n}]$,
$\bz\fai^{-1}(x,t)\le\fai^{-1}(y,t)$.

\medskip

\noindent {\bf Assumption (A2).} There exist $\bz,\,\sz\in(0,\fz)$
and $h\in L^1(E)\cap L^\fz(E)$ such that, for any $t\in[0,\sz]$,
$$\fai(x,\bz t)\le\fai(y,t)+h(x)+h(y).
$$

\medskip

\noindent {\bf Assumption $\mathrm{(aInc)}_p$.} The function
$s\to \frac{\fai(x,s)}{s^p}$ is almost increasing uniformly in $x\in E$.

\medskip

\noindent {\bf Assumption $\mathrm{(aDec)}_p$.} The function
$s\to \frac{\fai(x,s)}{s^p}$ is almost decreasing uniformly in $x\in E$.

\medskip

By using the weighted global regularity estimates obtained in Theorem \ref{t1.2}
and Remark \ref{r1.3}, and the limited range extrapolation theorem
established in \cite[Theorem 4.18 and Corollary 4.21]{ch18} in the scale of
Musielak--Orlicz spaces, we obtain the following global regularity estimates in
Musielak--Orlicz spaces for the Robin problem $(R)_p$ in bounded Lipschitz domains.

\begin{theorem}\label{t2.3}
Let $n\ge3$, $A$, $\az$ and $\boz$ be as in Theorem \ref{t1.2}, and
$p_0,\,p_1\in((3+\uc_0)',3+\uc_0)$ with $p_0\le p_1$, where $\uc_0\in(0,\fz)$ is as in Theorem \ref{t1.2}(i).
Assume that $\fai\in\Phi_w(\boz)$ satisfies Assumptions $(A0)$--$(A2)$,
$\mathrm{(aInc)}_{p_0}$ and $\mathrm{(aDec)}_{p_1}$.
Then there exists a positive constant $\dz_0\in(0,\fz)$,
depending only on $n$, $\fai$ and the Lipschitz constant of $\boz$,  such that, if
$A$ satisfies the $(\dz,R)$-$\mathrm{BMO}$
condition for some $\dz\in(0,\dz_0)$ and $R\in(0,\fz)$ or $A\in\mathrm{VMO}(\rn)$,
then, for any weak solution $u\in W^{1,2}(\boz)$ of the Robin problem $(R)_2$
with $\mathbf{f}\in L^\fai(\boz;\rn)$, $F\equiv0$ and $g\equiv0$,
$|u|+|\nabla u|\in L^\fai(\boz)$ and
\begin{equation}\label{2.5}
\|u\|_{L^\fai(\boz)}+\|\nabla u\|_{L^\fai(\boz;\rn)}\le
C\|\mathbf{f}\|_{L^\fai(\boz;\rn)},
\end{equation}
where $C$ is a positive constant depending only on $n$, $\fai$,
$\diam(\boz)$ and the Lipschitz constant of $\boz$.
\end{theorem}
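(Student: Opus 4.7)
The plan is to reduce \eqref{2.5} to the weighted estimate of Theorem \ref{t1.2}(ii), in the sharpened form of Remark \ref{r1.3}, by invoking the limited range Rubio de Francia extrapolation theorem in Musielak--Orlicz spaces from \cite{ch18}. Concretely, set $p_-:=(3+\uc_0)'$ and $p_+:=3+\uc_0$. Remark \ref{r1.3} asserts that, for every $p\in(p_-,p_+)$ and every $\omega\in A_{p/p_-}(\rn)\cap RH_{(p_+/p)'}(\rn)$, a weak solution $u$ of the weighted Robin problem $(R)_{p,\,\omega}$ with $\mathbf{f}\in L^p_\omega(\boz;\rn)$ and $F\equiv0$ exists uniquely and satisfies $\|u\|_{W^{1,p}_\omega(\boz)}\le C_{[\omega]}\|\mathbf{f}\|_{L^p_\omega(\boz;\rn)}$. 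This yields a family of weighted $(L^p_\omega,L^p_\omega)$-inequalities for the pair $(|u|+|\nabla u|,|\mathbf{f}|)$ indexed by the full open range $(p_-,p_+)$, which is precisely the input required by a limited range extrapolation in the sense of \cite[Theorem~4.18]{ch18}.

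For the extrapolation step, I would check that the hypotheses on $\fai$ exactly match the conclusion sought. The fact that $p_0,\,p_1\in(p_-,p_+)$ with $p_0\le p_1$ and that $\fai$ satisfies $\mathrm{(aInc)}_{p_0}$ and $\mathrm{(aDec)}_{p_1}$ places the lower and upper Boyd-type indices of $\fai$ strictly inside $(p_-,p_+)$; together with Assumptions (A0), (A1) and (A2), these are precisely the conditions on a generalized $\Phi$-function that appear in \cite[Corollary~4.21]{ch18}. The extrapolation theorem then upgrades the Step~1 family to the single Musielak--Orlicz inequality
\begin{equation*}
\lf\||u|+|\nabla u|\r\|_{L^\fai(\boz)}\le C\|\mathbf{f}\|_{L^\fai(\boz;\rn)},
\end{equation*}
from which \eqref{2.5} follows since $\|u\|_{L^\fai(\boz)}$ and $\|\nabla u\|_{L^\fai(\boz;\rn)}$ are both dominated by the left-hand side.

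A preliminary issue is that for the extrapolated inequality to be meaningful I first need a weak solution of $(R)_p$ with datum $\mathbf{f}\in L^\fai(\boz;\rn)$ living in some classical Sobolev space. Because $\fai$ satisfies $\mathrm{(aDec)}_{p_1}$ and Assumption (A0), and because $\boz$ has finite measure, one has the embedding $L^\fai(\boz)\hookrightarrow L^{p_0}(\boz)$; hence $\mathbf{f}\in L^{p_0}(\boz;\rn)$ and Theorem \ref{t1.2}(i) produces a unique $u\in W^{1,p_0}(\boz)$ solving $(R)_{p_0}$ with $F\equiv0$ and $g\equiv0$. The extrapolated inequality then promotes this $u$ into the Musielak--Orlicz Sobolev setting. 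The main delicate point I expect is verifying that the off-the-shelf extrapolation of \cite{ch18}, formulated for operators on $\rn$ against Muckenhoupt weights on $\rn$, applies to pairs $(g,h)$ living on the bounded Lipschitz domain $\boz$; this is handled by extending $g:=|u|+|\nabla u|$ and $h:=|\mathbf{f}|$ by zero outside $\boz$, which preserves both the $A_q(\rn)$--$RH_r(\rn)$ characteristics used in Step~1 and the Musielak--Orlicz norms through the localization properties of Assumptions (A1) and (A2).
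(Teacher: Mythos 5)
Your proposal follows the same route as the paper: use the weighted estimate from Remark \ref{r1.3} over the full range $p\in((3+\uc_0)',3+\uc_0)$ with weights in $A_{p/(3+\uc_0)'}(\rn)\cap RH_{((3+\uc_0)/p)'}(\rn)$, then invoke the limited-range Rubio de Francia extrapolation theorem \cite[Corollary 4.21]{ch18} (recorded as Lemma \ref{l6.2}) for the pair $(|u|+|\nabla u|,|\mathbf{f}|)$. One small slip in your preliminary discussion: the embedding $L^\fai(\boz)\hookrightarrow L^{p_0}(\boz)$ on the bounded domain is a consequence of $\mathrm{(aInc)}_{p_0}$ together with (A0), not of $\mathrm{(aDec)}_{p_1}$; the latter gives the reverse inclusion $L^{p_1}(\boz)\hookrightarrow L^\fai(\boz)$.
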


Furthermore, similarly to Theorem \ref{t2.3}, via using Theorem \ref{t1.3}(iii)
and the off-diagonal extrapolation theorem
established  in \cite[Theorem 4.5 and Corollary 4.8]{ch18} in
Musielak--Orlicz spaces, we obtain the following global regularity estimates in
Musielak--Orlicz spaces for the Robin problem $(R)_p$ in
bounded $C^1$ or (semi-)convex domains.

\begin{theorem}\label{t2.4}
Let $n\ge3$, $A$, $\az$ and $\boz$ be as in Theorem \ref{t1.3},
and $p_0,\,p_1\in(n/(n-1),\fz)$ with $p_1>p_0$.
Assume that $\fai\in\Phi_w(\boz)$ satisfies Assumptions $(A0)$--$(A2)$,
$\mathrm{(aInc)}_{p_0}$ and $\mathrm{(aDec)}_{p_1}$, and,
for any $(x,t)\in\boz\times[0,\fz)$,
$\psi^{-1}(x,t):=t^{1/n}\fai^{-1}(x,t)$.
Then there exists a positive constant $\dz_0\in(0,\fz)$,
depending only on $n$, $\fai$ and $\boz$,  such that, if
$A$ satisfies the $(\dz,R)$-$\mathrm{BMO}$
condition for some $\dz\in(0,\dz_0)$ and $R\in(0,\fz)$ or $A\in\mathrm{VMO}(\rn)$,
then, for any weak solution $u\in W^{1,2}(\boz)$ of the Robin problem $(R)_2$
with $\mathbf{f}\in L^\fai(\boz;\rn)$, $F\in L^\psi(\boz)$
and $g\equiv0$,
$|u|+|\nabla u|\in L^\fai(\boz)$ and
\begin{equation}\label{2.6}
\|u\|_{L^\fai(\boz)}+\|\nabla u\|_{L^\fai(\boz;\rn)}\le
C\lf[\|\mathbf{f}\|_{L^\fai(\boz;\rn)}+\|F\|_{L^\psi(\boz)}\r],
\end{equation}
where $C$ is a positive constant depending only on $n$, $\fai$ and
$\boz$.
\end{theorem}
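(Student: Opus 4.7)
The plan is to combine the weighted global regularity estimates of Theorem \ref{t1.3}(iii), together with the $F\equiv0$ counterpart in Remark \ref{r1.3}, with the off-diagonal extrapolation theorem in Musielak--Orlicz spaces from \cite[Theorem 4.5 and Corollary 4.8]{ch18}. The structure is parallel to Theorem \ref{t2.3}, with the difference that the $F$-term now forces genuinely off-diagonal (rather than limited-range) extrapolation.

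First I would fix an auxiliary exponent $p\in(\max\{n/(n-1),p_0\},p_1)$ and, for each $\omega\in A_1(\rn)$, invoke Theorem \ref{t1.3}(iii) to produce $\dz_0\in(0,\fz)$, depending on $n$, $p$, $\boz$ and $[\omega]_{A_1(\rn)}$, such that, under the $(\dz,R)$-$\mathrm{BMO}$ smallness condition on $A$, any $W^{1,2}(\boz)$-weak solution $u$ of $(R)_{2}$ with data $\mathbf{f}\in L^p_\omega(\boz;\rn)$ and $F\in L^{p_\ast}_{\omega^{p_\ast/p}}(\boz)$ lies in $W^{1,p}_\omega(\boz)$ and satisfies
\begin{equation*}
\|u\|_{L^p_\omega(\boz)}+\|\nabla u\|_{L^p_\omega(\boz;\rn)}\le C\lf[\|\mathbf{f}\|_{L^p_\omega(\boz;\rn)}+\|F\|_{L^{p_\ast}_{\omega^{p_\ast/p}}(\boz)}\r].
\end{equation*}
This single weighted bound encodes two families of inequalities: a diagonal pair $(|u|+|\nabla u|,\,|\mathbf{f}|)$ in $L^p_\omega(\boz)$, obtained from the $F\equiv0$ case (\eqref{1.19} together with Theorem \ref{t1.3}(iii) for the $L^p_\omega$-bound on $u$ itself), and an off-diagonal pair $(|u|+|\nabla u|,\,|F|)$ relating $L^p_\omega(\boz)$ to $L^{p_\ast}_{\omega^{p_\ast/p}}(\boz)$ with the Sobolev exponent relation $\frac{1}{p_\ast}-\frac{1}{p}=\frac{1}{n}$.

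Next I would apply \cite[Theorem 4.5 and Corollary 4.8]{ch18} separately to each of these two families. For the diagonal pair, Musielak--Orlicz extrapolation under Assumptions (A0)--(A2), $\mathrm{(aInc)}_{p_0}$ and $\mathrm{(aDec)}_{p_1}$ on $\fai$ yields $\|u\|_{L^\fai(\boz)}+\|\nabla u\|_{L^\fai(\boz;\rn)}\lesssim \|\mathbf{f}\|_{L^\fai(\boz;\rn)}$. For the off-diagonal pair, the exponent shift $\frac{1}{p_\ast}-\frac{1}{p}=\frac{1}{n}$ matches exactly the relation $\psi^{-1}(x,t)=t^{1/n}\fai^{-1}(x,t)$ defining $\psi$, so the off-diagonal extrapolation theorem converts the weighted inequality into $\|u\|_{L^\fai(\boz)}+\|\nabla u\|_{L^\fai(\boz;\rn)}\lesssim \|F\|_{L^\psi(\boz)}$. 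Summing the two estimates produces \eqref{2.6}.

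The main obstacle is verifying that the hypotheses on $\fai$ are precisely what is required by \cite[Theorem 4.5 and Corollary 4.8]{ch18}: one must check that the pair $(\fai,\psi)$ determined by $\psi^{-1}(x,t)=t^{1/n}\fai^{-1}(x,t)$ inherits the appropriate versions of (A0)--(A2) and that $\psi$ satisfies shifted $\mathrm{(aInc)}_{(p_0)_\ast}$ and $\mathrm{(aDec)}_{(p_1)_\ast}$ with $(p_i)_\ast\in(1,\fz)$ (which is ensured by $p_0,p_1>n/(n-1)$), and also that the $A_1$-condition coming from Theorem \ref{t1.3}(iii) is sufficient input for the extrapolation, which it is because $A_1(\rn)$ is contained in every relevant $A_q(\rn)\cap RH_r(\rn)$ class. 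A secondary point to handle is identifying the $W^{1,2}(\boz)$-solution with the $W^{1,p}_\omega(\boz)$-solution furnished by Theorem \ref{t1.3}(iii); this follows from the unique solvability in $W^{1,p_0}(\boz)$ (Theorem \ref{t1.3}(i)) together with the embedding $L^\fai(\boz)\hookrightarrow L^{p_0}(\boz)$ guaranteed by $\mathrm{(aInc)}_{p_0}$ on a bounded domain.
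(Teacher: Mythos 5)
Your plan matches the paper's proof at every structural step: decompose $u = u_{\mathbf{f}} + u_F$ (which is what your ``two families'' framing amounts to), obtain $A_1$-weighted estimates from Theorem~\ref{t1.3}(iii), and feed the diagonal pair $(|u_{\mathbf{f}}|+|\nabla u_{\mathbf{f}}|, |\mathbf{f}|)$ and the off-diagonal pair $(|u_F|+|\nabla u_F|, |F|)$ into the Musielak--Orlicz extrapolation from \cite{ch18} (Lemma~\ref{l6.3}), using $1/p_\ast - 1/p = 1/n$ to match $\psi^{-1}(x,t)=t^{1/n}\fai^{-1}(x,t)$. That is precisely the argument in Section~\ref{s6}.

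There is, however, a concrete error in your choice of the auxiliary exponent. You take $p\in(\max\{n/(n-1),p_0\},p_1)$, i.e.\ $p>p_0$. But Lemma~\ref{l6.3} (applied with $q_0:=p$) requires $\fai$ to satisfy $\mathrm{(aInc)}_{q_0}=\mathrm{(aInc)}_p$, and $\mathrm{(aInc)}_{p_0}$ implies $\mathrm{(aInc)}_q$ only for $q\le p_0$: indeed $\fai(x,s)/s^q = [\fai(x,s)/s^{p_0}]\cdot s^{p_0-q}$ is almost increasing precisely when the extra power $s^{p_0-q}$ is increasing, that is when $q\le p_0$. The simple example $\fai(s)=s^{p_0}$ satisfies $\mathrm{(aInc)}_{p_0}$ and $\mathrm{(aDec)}_{p_1}$ but fails $\mathrm{(aInc)}_p$ for every $p>p_0$, so with your choice the extrapolation lemma cannot be applied. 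The fix is to take $p:=p_0$ (or any $p\in(n/(n-1),p_0]$), as the paper implicitly does: then $q_0=p\le p_0$ gives $\mathrm{(aInc)}_{q_0}$, $q_+:=p_1>p_0\ge p=q_0$ gives the required $\mathrm{(aDec)}_{q_+}$, Theorem~\ref{t1.3}(iii) still applies because $p_0>n/(n-1)$, and $1/p_\ast-1/p=1/n$ still produces the correct $\psi$. With this correction the remainder of your outline, including the identification of the $W^{1,2}(\boz)$-solution with the $W^{1,p}_\omega(\boz)$-solution, is sound.
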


To describe more corollaries of Theorems \ref{t2.3} and \ref{t2.4},
we recall some necessary notions for variable exponent functions $p(\cdot)$
as follows (see, for instance, \cite{cf13,dhhr11}).
Let $\mathcal{P}(\rn)$ be the set of all measurable functions
$p:\,\rn\to[1,\fz)$. For any $p\in\mathcal{P}(\rn)$, let
\begin{equation}\label{2.7}
p_+:=\mathop{\mathrm{ess\,sup}}\limits_{x\in\rn}p(x)\ \
\text{and} \ \ p_-:=\mathop{\mathrm{ess\,inf}}\limits_{x\in\rn}p(x).
\end{equation}
Recall that a function $p:\ \rn\to\rr$ is said to satisfy the \emph{local
log-H\"older continuity condition} if there exists a positive constant $C_\loc$ such that, for any
$x,\,y\in\rn$ with $|x-y|\le1/2$,
$$|p(x)-p(y)|\le\frac{C_\loc}{-\log(|x-y|)};
$$
a function $p:\ \rn\to\rr$ is said to satisfy the \emph{log-H\"older decay condition} (at infinity)
if there exist positive constants $C_\fz\in(0,\fz)$ and $p_\fz\in[1,\fz)$ such that, for any
$x\in\rn$,
$$|p(x)-p_\fz|\le\frac{C_\fz}{\log(e+|x|)}.
$$
If a function $p$ satisfies both the local log-H\"older continuity condition
and the log-H\"older decay condition,
then the function $p$ is said to satisfy the \emph{log-H\"older continuity condition}.

Then we have the following two corollaries of Theorems \ref{t2.3} and \ref{t2.4}.

\begin{corollary}\label{c2.2}
Assume that $p\in\mathcal{P}(\rn)$ satisfies the log-H\"older continuity condition
and $(3+\uc_0)'<p_-\le p_+<3+\uc_0$, where $\uc_0\in(0,\fz)$ is as in
Theorem \ref{t1.2}(i), and $p_-$ and $p_+$ are as in \eqref{2.7}.
Then the conclusion of Theorem \ref{t2.3} holds true if $\fai$
satisfies one of the following cases:
\begin{itemize}
\item[\rm(i)] for any $x\in\boz$ and $t\in[0,\fz)$,
$\fai(x,t):=\phi(t)$, where $\phi\in\Phi_w$ satisfies Assumptions
$\mathrm{(aInc)}_{p_0}$ and $\mathrm{(aDec)}_{p_1}$ with $(3+\uc_0)'<p_0\le p_1<3+\uc_0$.
\item[\rm(ii)] for any $x\in\boz$ and $t\in[0,\fz)$,
$\fai(x,t):=a(x)t^{p(x)}$, where $C^{-1}\le a\le C$ with $C$ being a positive constant.
\item[\rm(iii)] for any $x\in\boz$ and $t\in[0,\fz)$, $\fai(x,t):=t^{p(x)}\log(e+t)$.
\item[\rm(iv)] for any $x\in\boz$ and $t\in[0,\fz)$, $\fai(x,t):=t^{p}+a(x)t^q$,
where $(3+\uc_0)'<p<q<3+\uc_0$ and $0\le a\in L^\fz(\boz)\cap C^{0,\frac{n}{p}(q-p)}(\boz)$.
\item[\rm(v)] for any $x\in\boz$ and $t\in[0,\fz)$, $\fai(x,t):=t^{p}+a(x)t^{p}\log(e+t)$,
where $(3+\uc_0)'<p<3+\uc_0$ and $0\le a\in L^\fz(\boz)$ satisfies the local
log-H\"older continuity condition.
\end{itemize}
\end{corollary}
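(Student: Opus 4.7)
The plan is to reduce Corollary \ref{c2.2} entirely to Theorem \ref{t2.3} by verifying, in each of the five cases (i)--(v), that the specified Musielak--Orlicz function $\fai$ belongs to $\Phi_w(\boz)$ and satisfies Assumptions (A0), (A1), (A2), $\mathrm{(aInc)}_{p_0}$ and $\mathrm{(aDec)}_{p_1}$ for some pair $p_0,\,p_1\in((3+\uc_0)',\,3+\uc_0)$. Once these structural facts are established, the estimate \eqref{2.5} is an immediate consequence of Theorem \ref{t2.3}, and no new analysis of the Robin problem itself is required; the entire task reduces to a catalogue of standard verifications drawn from the theory of generalized Orlicz spaces.

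First I would dispatch case (i). Since $\fai(x,t)=\phi(t)$ is independent of $x$, Assumptions (A1) and (A2) hold trivially; Assumption (A0) follows from the fact that $\phi\in\Phi_w$ is increasing and ranges over $[0,\fz]$, so that a common choice of $\gz$ works for every $x\in\boz$; and $\mathrm{(aInc)}_{p_0}$, $\mathrm{(aDec)}_{p_1}$ with the range $(3+\uc_0)'<p_0\le p_1<3+\uc_0$ are built directly into the hypothesis. For cases (ii) and (iii) I would appeal to the classical fact (see, for instance, \cite{ch18, hh19}) that the log-H\"older continuity of $p$ together with $C^{-1}\le a\le C$ imply Assumptions (A0)--(A2) for variable-exponent type functions such as $a(x)t^{p(x)}$ and $t^{p(x)}\log(e+t)$. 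The growth indices can be taken as $p_0:=p_-$ and $p_1:=p_+$ in case (ii); in case (iii) the extra factor $\log(e+t)$ is absorbed by replacing $p_1$ with $p_++\epsilon$ for any small $\epsilon\in(0,\fz)$, which still lies in $((3+\uc_0)',\,3+\uc_0)$ thanks to the strict inequality $p_+<3+\uc_0$.

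For the double-phase cases (iv) and (v) the key and most delicate step is Assumption (A1): the H\"older exponent $\frac{n}{p}(q-p)$ imposed on $a$ is precisely the sharp Colombo--Mingione threshold, which guarantees that on the scale $|x-y|\le 1$ and $t\in[1,|x-y|^{-n}]$ the jump between the $t^p$ and $t^q$ regimes is controlled by the oscillation $|a(x)-a(y)|\ls|x-y|^{\frac{n}{p}(q-p)}$; this verification is by now classical and can be cited from \cite{ch18, hh19, ho19}. Assumption (A2) follows from $a\in L^\fz(\boz)$ together with the boundedness of $\boz$, Assumption (A0) is elementary, and $\mathrm{(aInc)}_p$ and $\mathrm{(aDec)}_{q+\epsilon}$ (for arbitrarily small $\epsilon\in(0,\fz)$, still inside the admissible range by $q<3+\uc_0$) follow directly from the explicit form of $\fai$, with the log factor in case (v) handled exactly as in case (iii). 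The main obstacle throughout is therefore not analytical but rather one of careful bookkeeping: ensuring that the two growth indices stay strictly inside $((3+\uc_0)',\,3+\uc_0)$ at every step, so that Theorem \ref{t2.3} applies without loss, which is built in by the strict inequalities $(3+\uc_0)'<p_-\le p_+<3+\uc_0$ and $(3+\uc_0)'<p<q<3+\uc_0$ imposed in the statement.
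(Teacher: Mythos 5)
Your proposal takes essentially the same route as the paper: the paper simply remarks that the Musielak--Orlicz functions in cases (i)--(v) satisfy the hypotheses of Theorem \ref{t2.3} by citing \cite{ch18,cw17,hh19,ho19} and omits the details, which is exactly the reduction you carry out, with the additional service of spelling out the routine verifications of (A0)--(A2) and $\mathrm{(aInc)}_{p_0}$, $\mathrm{(aDec)}_{p_1}$. The bookkeeping you highlight (keeping the growth indices strictly inside $((3+\uc_0)',3+\uc_0)$, absorbing the $\log$ factor by perturbing $p_1$, and invoking the sharp H\"older threshold for the double-phase cases) is correct and aligns with the cited references.
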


\begin{corollary}\label{c2.3}
Assume that $p\in\mathcal{P}(\rn)$ satisfies the log-H\"older continuity condition
and $n/(n-1)<p_-\le p_+<\fz$, where $p_-$ and $p_+$ are as in \eqref{2.7}.
Then the conclusion of Theorem \ref{t2.4} holds true if $\fai$
satisfies one of the following cases:
\begin{itemize}
\item[\rm(i)] for any $x\in\boz$ and $t\in[0,\fz)$,
$\fai(x,t):=\phi(t)$, where $\phi\in\Phi_w$ satisfies Assumptions
$\mathrm{(aInc)}_{p_0}$ and $\mathrm{(aDec)}_{p_1}$ with $n/(n-1)<p_0\le p_1<\fz$.
\item[\rm(ii)] for any $x\in\boz$ and $t\in[0,\fz)$,
$\fai(x,t):=a(x)t^{p(x)}$, where $C^{-1}\le a\le C$ with $C$ being a positive constant.
\item[\rm(iii)] for any $x\in\boz$ and $t\in[0,\fz)$, $\fai(x,t):=t^{p(x)}\log(e+t)$.
\item[\rm(iv)] for any $x\in\boz$ and $t\in[0,\fz)$, $\fai(x,t):=t^{p}+a(x)t^q$,
where $n/(n-1)<p<q<\fz$ and $0\le a\in L^\fz(\boz)\cap C^{0,\frac{n}{p}(q-p)}(\boz)$.
\item[\rm(v)] for any $x\in\boz$ and $t\in[0,\fz)$, $\fai(x,t):=t^{p}+a(x)t^{p}\log(e+t)$,
where $p\in(n/(n-1),\fz)$ and $0\le a\in L^\fz(\boz)$ satisfies the local
log-H\"older continuity condition.
\end{itemize}
\end{corollary}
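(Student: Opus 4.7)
The plan is to deduce Corollary \ref{c2.3} directly from Theorem \ref{t2.4} by verifying, case by case, that each of the Musielak--Orlicz functions $\fai$ listed in (i)--(v) lies in $\Phi_w(\boz)$ and fulfils Assumptions (A0)--(A2), $\mathrm{(aInc)}_{p_0}$ and $\mathrm{(aDec)}_{p_1}$ for some pair $p_0,\,p_1$ with $n/(n-1)<p_0\le p_1<\fz$. One must also check that the associated function $\psi^{-1}(x,t):=t^{1/n}\fai^{-1}(x,t)$ is again a weak $\Phi$-function, which follows automatically once $\fai^{-1}$ has been controlled, since $\psi$ merely shifts the lower growth exponent of $\fai$ down by the Sobolev factor $n/(n-1)$ and this stays above $1$ by the assumption $p_->n/(n-1)$. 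Given that every hypothesis of Theorem \ref{t2.4} has been verified, the estimate \eqref{2.6} follows at once.

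Case (i) is almost immediate: since $\fai(x,t)=\phi(t)$ is spatially constant, (A1) and (A2) are trivial (take $h\equiv0$ and any $\bz\in(0,1)$), (A0) is an easy consequence of $\phi(0)=0$, $\lim_{t\to\fz}\phi(t)=\fz$ together with $\phi$ being increasing and $\phi(t)/t$ being almost increasing, and $\mathrm{(aInc)}_{p_0}$, $\mathrm{(aDec)}_{p_1}$ are inherited directly from the hypothesis on $\phi$. Case (ii) is the standard variable exponent setting: the two-sided bound on $a$ absorbs the spatial dependence of $a$ into multiplicative constants in every defining inequality, and the log-H\"older continuity of $p(\cdot)$ is exactly the classical condition (see, e.g., Harjulehto--H\"ast\"o) under which $t^{p(\cdot)}$ satisfies (A1) and (A2); then $\mathrm{(aInc)}_{p_-}$ and $\mathrm{(aDec)}_{p_+}$ hold trivially, and the assumption $n/(n-1)<p_-\le p_+<\fz$ places the resulting exponents in the admissible range. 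Case (iii) is handled by the same log-H\"older arguments, since the extra factor $\log(e+t)$ is a slowly varying perturbation: $\mathrm{(aInc)}_{p_-}$ still holds and $\mathrm{(aDec)}_{p_1}$ holds for any $p_1>p_+$, while (A1) and (A2) are preserved because the log term can be absorbed into the constants via splitting the range of $t$.

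Case (iv) is the double-phase functional $\fai(x,t)=t^p+a(x)t^q$. Here $\mathrm{(aInc)}_p$, $\mathrm{(aDec)}_q$, (A0) and (A2) are immediate from $a\in L^\fz(\boz)$, while the crucial condition is (A1); for this, the H\"older exponent $\frac{n}{p}(q-p)$ imposed on $a$ is precisely the sharp threshold (due to Colasuonno--Squassina and Harjulehto--H\"ast\"o) that forces $a(x)t^q\ls a(y)t^q+t^p$ whenever $t\le|x-y|^{-n/p}$, and this inequality translates, via the definition of $\fai^{-1}$, to the required comparison $\bz\fai^{-1}(x,t)\le\fai^{-1}(y,t)$ on $t\in[1,|x-y|^{-n}]$. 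Case (v) is treated analogously with the logarithmic double-phase energy $\fai(x,t)=t^p+a(x)t^p\log(e+t)$: the local log-H\"older continuity of $a$ is the sharp corresponding condition ensuring (A1), while $\mathrm{(aInc)}_p$ is trivial and $\mathrm{(aDec)}_{p_1}$ holds for any $p_1>p$ because the log factor grows more slowly than any positive power.

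The only genuinely delicate step is verifying (A1) in cases (iv) and (v), where one must exploit the precise quantitative regularity of the modulating function $a(\cdot)$ to compare $\fai(x,t)$ with $\fai(y,t)$ at the maximal scale $t\sim|x-y|^{-n}$; however, these verifications are by now standard in the theory of generalized Orlicz spaces and do not involve the PDE at all. Once they are in place, Theorem \ref{t2.4} applies directly and produces \eqref{2.6} for every weak solution of the Robin problem $(R)_2$ under the stated $\mathrm{BMO}$ smallness hypothesis on $A$, which completes the proof.
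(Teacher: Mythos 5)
Your proof takes the same route the paper does: the paper's own argument is a one-line citation to the generalized Orlicz space literature (Cruz-Uribe--H\"ast\"o, Cruz-Uribe--Wang, Harjulehto--H\"ast\"o, H\"ast\"o--Ok) asserting that each of the functions in (i)--(v) satisfies Assumptions (A0)--(A2), $\mathrm{(aInc)}_{p_0}$ and $\mathrm{(aDec)}_{p_1}$, after which Theorem \ref{t2.4} applies verbatim; you have simply unpacked those verifications case by case. The only cosmetic inaccuracy is the phrase ``shifts the lower growth exponent down by the Sobolev factor $n/(n-1)$'' --- the map $\psi^{-1}(x,t)=t^{1/n}\fai^{-1}(x,t)$ sends a lower growth exponent $p$ to $p_\ast=np/(n+p)$, and the relevant fact is just that $p_\ast>1$ precisely when $p>n/(n-1)$, which your conclusion correctly uses.
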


By \cite{ch18,cw17,hh19,ho19}, we know that the Musielak--Orlicz functions appearing in
Corollaries \ref{c2.2} and \ref{c2.3} satisfy the assumptions on $\fai$ in Theorems
\ref{t2.3} and \ref{t2.4}. We omit the details here.

\begin{remark}\label{r2.2}
For the Dirichlet problem \eqref{1.9} with $F\equiv0$,
Corollary \ref{c2.3}(ii) with $a\equiv1$ was established in \cite[Theorem 2.5]{bow14}
(see also \cite{dr03}) under the assumptions that $A$ is symmetric and has partial
small BMO coefficients, and that $\boz$ is a bounded Reifenberg flat domain. Moreover, the variable exponent
type regularity estimate similar to Corollary \ref{c2.3}(ii)
for the Dirichlet problem of some $p$-Laplace type elliptic equations on Reifenberg
flat domains was also obtained in \cite[Theorem 1.4]{b18}.
Furthermore, for the Neumann problem $(N)_p$,
the conclusion of Corollary \ref{c2.3}(ii) with $a\equiv1$ was established in \cite[Theorem 2.13]{ycyy20}
under the assumptions that $A$ is symmetric and has
small BMO coefficients, and that $\boz$ is a bounded (semi-)convex domain.
\end{remark}

\section{A real-variable argument\label{s3}}

\hskip\parindent In this section, we give a real-variable argument for (weighted) $L^p(\boz)$ estimates,
which is inspired by the work of Caffarelli and Peral \cite{cp98} (see also \cite{w03}).
We point out that, if $\gz=0$ in Theorem \ref{t3.1} below,
the conclusion of Theorem \ref{t3.1} was essentially established in \cite[Theorem 3.4]{sh07} (see also
\cite[Theorems 2.1 and 2.2]{g12}, \cite[Theorem 2.1]{g18},
\cite[Theorem 4.2.6]{sh18} and \cite[Theorem 3.3]{sh05a}). When $\gz\neq0$,
Theorem \ref{t3.1} is a natural generalization of \cite[Theorem 3.4]{sh07} and
\cite[Theorems 2.1 and 2.2]{g12} in the off-diagonal case.
We also mention that a similar argument with a different motivation was also established in \cite{a07,am07}.

\begin{theorem}\label{t3.1}
Let $\gz\in[0,1)$, $n\ge2$, $\boz\subset\rn$ be a bounded Lipschitz domain,
$p_1,\,p_2,\,p_3\in[1,\fz)$
satisfy $p_3>\max\{p_1,\,p_2\}$, $G\in L^{p_1}(\boz)$ and $f\in L^q(\boz)$ with some
$q\in(\max\{p_1,\,p_2\},p_3)$.
Suppose that, for any ball $B:=B(x_B,r_B)\subset\rn$ having the property
that $|B|\le \bz_1|\boz|$ and either $2B\subset\boz$ or $x_B\in\partial\boz$,
there exist two measurable functions $G_B$ and $R_B$ on $2B$
such that $|G|\le|G_B|+|R_B|$ on $2B\cap\boz$,
\begin{align}\label{3.1}
\lf(\fint_{2B_\boz}|R_B|^{p_3}\,dx\r)^{\frac1{p_3}}
\le C_1\lf[\lf(\fint_{\bz_2 B_\boz}|G|^{p_1}\,dx\r)^{\frac1{p_1}}
+\sup_{B\subset \wz{B}}\lf(
|\wz{B}_\boz|^{\gz}\fint_{\wz{B}_\boz}|f|^{p_2}\,dx\r)^{\frac1{p_2}}\r]
\end{align}
and
\begin{align}\label{3.2}
\lf(\fint_{2B_\boz}|G_B|^{p_1}\,dx\r)^{\frac1{p_1}}
\le \uc\lf(\fint_{\bz_2 B_\boz}|G|^{p_1}\,dx\r)^{\frac1{p_1}}
+C_2\sup_{B\subset\wz{B}}\lf(
|\wz{B}_\boz|^{\gz}\fint_{\wz{B}_\boz}|f|^{p_2}\,dx\r)^{\frac1{p_2}},
\end{align}
where $C_1,\,C_2,\,\uc\in(0,\fz)$ and $0<\bz_1<1<\bz_2<\fz$ are positive constants independent
of $G,\,f,\,R_B,\,G_B$ and $B$, and the suprema are taken over all balls $\wz{B}\supset B$.
Assume further that $q$, $p_2$ and $\gz$ satisfy
$$q(1-\gz)>p_2.
$$
Then, for any $\omega\in A_{q(1-\gz)/p_2}(\rn)\cap RH_s(\rn)$
with $s\in((\frac{p_3}{q})',\fz]$, there exists a positive constant $\uc_0$,
depending only on $C_1,\,C_2,\,n,\,p_1,\,p_2,\,q,\,\bz_1,\,\bz_2$, $[\omega]_{A_{q(1-\gz)/p_2}(\rn)}$ and
$[\omega]_{RH_s(\rn)}$, such that, if $\uc\in[0,\uc_0)$, then
\begin{align}\label{3.3}
\lf[\frac{1}{\omega(\boz)}\int_{\boz}|G|^q\omega\,dx\r]^{\frac1q}
&\le C\lf\{\lf(\frac{1}{|\boz|}\int_{\boz}|G|^{p_1}\,dx\r)^{\frac1{p_1}}\r.\\
&\qquad+\lf.
\lf[\frac{1}{\omega(\boz)}\int_{\boz}|f|^{q_0}\omega^{\frac{q_0}{q}}
\,dx\r]^{\frac{1}{q_0}}[\omega(\boz)]^{\frac{\gz}{p_2}}\r\},\nonumber
\end{align}
where $q_0\in(1,\fz)$ is given by $1/q_0=1/q+\gz/p_2$
and $C$ is a positive constant depending only on $C_1$, $C_2$, $n$, $p_1,\,p_2$,
$q$, $\bz_1,\,\bz_2$, $[\omega]_{A_{q(1-\gz)/p_2}(\rn)}$ and $[\omega]_{RH_s(\rn)}$.
\end{theorem}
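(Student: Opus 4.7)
The plan is to adapt the real-variable framework of Shen \cite{sh07,sh05a,g12} to the present off-diagonal weighted setting. First I would normalize by rescaling so that $\fint_\boz|G|^{p_1}\,dx=1$, and introduce the centered maximal operator $\cm_\boz h(x):=\sup_{B\ni x}\fint_{B_\boz}|h|\,dy$ (with the supremum running over admissible balls $B$ satisfying $|B|\le\bz_1|\boz|$) together with the localized fractional-type operator
\begin{equation*}
\cn_{\gz,\,p_2}f(x):=\sup_{B\ni x}\lf(|B_\boz|^\gz\fint_{B_\boz}|f|^{p_2}\,dy\r)^{1/p_2}.
\end{equation*}
The desired bound \eqref{3.3} is equivalent to controlling $\|G\|_{L^q_\omega(\boz)}$ by a constant base level plus $\|\cn_{\gz,\,p_2}f\|_{L^q_\omega(\boz)}$, so the argument splits naturally into a good-$\lz$ step that transfers information from $G$ to $\cn_{\gz,\,p_2}f$, and a fractional-maximal step that converts the latter into the claimed norm of $f$.

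The heart of the proof is a weighted good-$\lz$ inequality of the form
\begin{equation*}
\omega\lf(\{\cm_\boz(|G|^{p_1})>A\lz\}\r)\le C\uc^\kappa\omega\lf(\{\cm_\boz(|G|^{p_1})>\lz\}\r)+\omega\lf(\{[\cn_{\gz,\,p_2}f]^{p_1}>\eta\lz\}\r),
\end{equation*}
valid for every $\lz$ above a threshold proportional to $\fint_\boz|G|^{p_1}\,dx$. To produce it, I would Vitali-cover the level set $\{\cm_\boz(|G|^{p_1})>\lz\}$ by admissible balls $B$, decompose $|G|\le|G_B|+|R_B|$ on $2B$, and write
\begin{equation*}
\{\cm_\boz(|G|^{p_1})>A\lz\}\cap B\subset\{|R_B|^{p_1}>cA\lz\}\cup\{|G_B|^{p_1}>cA\lz\}.
\end{equation*}
The first set is small in Lebesgue measure by Chebyshev applied to \eqref{3.1}, exploiting the higher integrability $p_3>q$; the second is small by \eqref{3.2} together with the smallness of $\uc$. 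The passage from Lebesgue to weighted measure uses the reverse H\"older hypothesis $\omega\in RH_s$ with $s>(p_3/q)'$ through the standard estimate $\omega(F)\le[\omega]_{RH_s(\rn)}(|F|/|B|)^{1/s'}\omega(B)$; the choice $s>(p_3/q)'$ is what keeps $1/s'>q/p_3$, so the Chebyshev gain on $R_B$ survives the weighted conversion.

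Integrating the good-$\lz$ inequality against $\lz^{q/p_1-1}\,d\lz$ and choosing $\uc_0$ small enough to absorb the first term on the right into the left yields
\begin{equation*}
\int_\boz(\cm_\boz(|G|^{p_1}))^{q/p_1}\omega\,dx\ls\omega(\boz)+\int_\boz[\cn_{\gz,\,p_2}f]^q\omega\,dx,
\end{equation*}
which by the weighted maximal theorem for $\omega\in A_{q(1-\gz)/p_2}(\rn)\subset A_\fz(\rn)$ transfers to the same bound on $\|G\|_{L^q_\omega(\boz)}^q$. The last step is a two-weight norm inequality for $\cn_{\gz,\,p_2}$, namely
\begin{equation*}
\|\cn_{\gz,\,p_2}f\|_{L^q_\omega(\boz)}\ls\|f\|_{L^{q_0}_{\omega^{q_0/q}}(\boz)}[\omega(\boz)]^{\gz/p_2},
\end{equation*}
with $1/q_0=1/q+\gz/p_2$. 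Since $|B|^\gz\fint_{B}|f|^{p_2}\,dy$ is, up to a constant, a fractional maximal function of $|f|^{p_2}$ of order $\gz n$, this is precisely the Muckenhoupt--Wheeden type inequality available under $\omega\in A_{q(1-\gz)/p_2}(\rn)\cap RH_s(\rn)$; see, for instance, \cite{g14,am07}.

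The principal obstacle lies in the good-$\lz$ step: the three exponents governing the iteration, namely the Chebyshev gain $(p_3/q)'$ coming from $R_B$, the reverse H\"older loss $1/s'$ coming from the weighted conversion, and the fractional scaling $\gz$ coming from $\cn_{\gz,\,p_2}f$, must be balanced so that the resulting iteration constant is strictly less than one when $\uc$ is small; this is where the hypotheses $q(1-\gz)>p_2$ and $s>(p_3/q)'$ enter in an essential way, and where the threshold $\uc_0$ is determined by $[\omega]_{A_{q(1-\gz)/p_2}(\rn)}$ and $[\omega]_{RH_s(\rn)}$ together with the structural exponents.
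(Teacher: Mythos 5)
Your proposal follows essentially the same route as the paper: a good-$\lambda$ inequality built from the $(G_B,R_B)$ decomposition, with the $R_B$ piece handled by Chebyshev from \eqref{3.1}, the $G_B$ piece by the smallness of $\uc$ from \eqref{3.2}, the Lebesgue estimate converted to a weighted one via $RH_s$ with $s>(p_3/q)'$, and the fractional-maximal term finished by the Muckenhoupt--Wheeden inequality under $\omega\in A_{q(1-\gz)/p_2}(\rn)$; the paper organizes the covering via a dyadic Calder\'on--Zygmund stopping-time rather than a Vitali cover, but that is cosmetic. One point to repair: the coefficient on the first term of your displayed good-$\lambda$ inequality cannot literally be $C\uc^\kappa$, because the $R_B$ contribution does not vanish as $\uc\to 0$; in the paper the coefficient is a small constant $\dz^{(s-1)/s}$ tied to the dilation factor $\bz$ through $\bz^{q/p_1}\dz^{(s-1)/s}=1/2$ so absorption is automatic, after which $\dz$ is taken small, which is precisely where $(s-1)p_3/(sq)>1$, i.e., $s>(p_3/q)'$, is needed to kill the Chebyshev term, and only then are $\kappa$ and $\uc_0$ chosen small to kill the $G_B$ term. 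Your last paragraph shows you already see this balance, so the argument is sound once the display is rephrased; note also that passing from $\int_\boz[\cm_{2B_0}(|G|^{p_1})]^{q/p_1}\omega\,dx$ to $\int_\boz|G|^q\omega\,dx$ is just a.e.\ pointwise domination, not an application of a weighted maximal theorem.
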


To prove Theorem \ref{t3.1}, we need to recall some necessary notions as follows.
Let $\gz\in[0,1)$. Then the \emph{fractional Hardy--Littlewood maximal operator}
$\cm_\gz$ on $\rn$ is defined by setting, for any $f\in L^1_{\loc}(\rn)$ and $x\in\rn$,
$$\cm_\gz(f)(x):=\sup_{B\ni x}\lf(|B|^{\gz}\fint_{B}|f|\,dy\r),$$
where the supremum is taken over all balls $B\subset\rn$ containing $x$.
We point out that, when $\gz=0$, the fractional Hardy--Littlewood maximal operator
$\cm_\gz$ is just the well-known \emph{Hardy--Littlewood maximal operator}; in this case,
we denote $\cm_\gz$ simply by $\cm$.
Moreover, let $B_0\subset\rn$ be a ball. For any $f\in L^1_{\loc}(\rn)$,
the \emph{localized fractional Hardy--Littlewood maximal function} $\cm_{\gz,B_0}(f)$
is defined by setting, for any $x\in B_0$,
$$\cm_{\gz,B_0}(f)(x):=\sup_{B\ni x}\lf(|B|^{\gz}\fint_{B}|f|\,dy\r),$$
where the supremum is taken over all balls $B\subset B_0$ containing $x$.
It is easy to see that, for any $f\in L^1_{\loc}(\rn)$ and $x\in B_0$,
\begin{equation}\label{3.4}
\cm_{\gz,B_0}(f)(x)\le\cm_{\gz}\lf(f\mathbf{1}_{B_0}\r)(x).
\end{equation}
Furthermore, when $\gz=0$, we denote $\cm_{\gz,B_0}$ simply by $\cm_{B_0}$.

To show Theorem \ref{t3.1}, we need the following properties of $A_p(\rn)$ weights,
which are well known (see, for instance, \cite[Chapter 7]{g14}).
\begin{lemma}\label{l3.1}
Let $q\in(1,\fz)$, $\omega\in A_q(\rn)$, and $\boz\subset\rn$ be a bounded domain.
\begin{enumerate}
\item[\rm(i)] There exists a $q_1\in(1,q)$, depending only on $n$,
$q$ and $[\omega]_{A_q(\rn)}$, such that $\omega\in A_{q_1}(\rn)$.
\item[\rm(ii)] There exists a $\gamma\in(0,\fz)$,
depending only on $n$, $q$, and $[\omega]_{A_q(\rn)}$, such that $\omega\in RH_{1+\gamma}(\rn)$.
\item[\rm(iii)] If $q'$ denotes the conjugate number of $q$, namely, $1/q+1/q'=1$,
then $\omega^{-q'/q}\in A_{q'}(\rn)$ and
$[\omega^{-q'/q}]_{A_{q'}(\rn)}=[\omega]^{q'/q}_{A_q(\rn)}$.
\item[{\rm(iv)}] Let $\delta\in(0,1)$ and $s:=\delta(q-1)+1$.
Then $\omega^\delta\in A_s(\rn)$
and $[\omega^\delta]_{A_s(\rn)}\le[\omega]_{A_q(\rn)}^\delta$.
\item[\rm(v)] There exists a positive constant $C$, depending only on $n$, $q$ and
$[\omega]_{A_q(\rn)}$, such that, for any ball $B\subset\rn$ and any measurable set
$E\subset B$,
$$\frac{\omega(B)}{\omega(E)}\le C\lf[\frac{|B|}{|E|}\r]^{q}.$$
\item[\rm(vi)] If $\omega\in RH_s(\rn)$ with $s\in(1,\fz]$, then there
exists a positive constant $\wz{C}$, depending only on $n$, $s$ and $[\omega]_{RH_s(\rn)}$,
such that, for any ball $B\subset\rn$ and any measurable set $E\subset B$,
$$\frac{\omega(E)}{\omega(B)}\le \wz{C}\lf[\frac{|E|}{|B|}\r]^{\frac{s-1}s}.$$
\item[\rm(vii)] Let $q_2:=q(1+\frac{1}{\gamma})$ with
$\gamma$ as in (ii) and let $q_1$ be as in (i).
Then $L^{q_2}(\boz)\subset L^q_\omega(\boz)\subset
L^{\frac q{q_1}}(\boz)$.
\end{enumerate}
\end{lemma}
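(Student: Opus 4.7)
The plan is to treat the seven items as classical consequences of the $A_q$ structure and to group them as: (i)--(iv) self-improvement and duality; (v)--(vi) quantitative doubling-type comparisons; and (vii) weighted Lebesgue embeddings on a bounded domain, which is essentially a corollary of (i), (ii), and H\"older's inequality. Since these are textbook results cited as \cite{g14,cf13,am07,St93}, I would indicate the reference for each item and only sketch the arguments whose bookkeeping is needed elsewhere in the paper.

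For (i) and (ii), the key input I would cite is the reverse H\"older inequality for $A_q$ weights (Gehring's lemma applied via a Calder\'on--Zygmund decomposition, as presented in Grafakos \cite{g14}): there exist constants $\gz,\,C>0$ depending only on $n$, $q$, $[\omega]_{A_q(\rn)}$ such that $(\fint_B \omega^{1+\gz})^{1/(1+\gz)}\le C\fint_B \omega$ for every ball $B\subset\rn$; this is exactly (ii). Statement (i), the ``open property'' of $A_q$, then follows by applying the same reverse H\"older inequality to the dual weight $\sz:=\omega^{-1/(q-1)}$, which lies in $A_{q'}(\rn)$ by (iii); translating the improved integrability of $\sz$ back to $\omega$ yields some $q_1<q$ with $\omega\in A_{q_1}(\rn)$. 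Statement (iii) is a direct computation from Definition \ref{d1.1}: since $-q'/q=-1/(q-1)$, the $A_{q'}$ functional of $\omega^{-1/(q-1)}$ equals $(\fint_B \omega^{-1/(q-1)})(\fint_B \omega)^{q'-1}$, which is $[(\fint_B \omega)(\fint_B \omega^{-1/(q-1)})^{q-1}]^{q'/q}$, i.e.\ $[\omega]_{A_q(\rn)}^{q'/q}$ after taking the supremum. For (iv), I would observe that $s-1=\dz(q-1)$ forces $\dz/(s-1)=1/(q-1)$, so the second factor in $[\omega^\dz]_{A_s(\rn)}$ collapses to $(\fint_B \omega^{-1/(q-1)})^{\dz(q-1)}$; applying Jensen's inequality (since $\dz<1$) to $\fint_B\omega^\dz\le(\fint_B\omega)^\dz$ and regrouping yields $[(\fint_B\omega)(\fint_B\omega^{-1/(q-1)})^{q-1}]^\dz\le[\omega]_{A_q(\rn)}^\dz$.

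For (v) and (vi), the plan relies on H\"older's inequality together with the defining estimates. For (v), I would write $|E|/|B|=\fint_B \mathbf{1}_E\le (\fint_B \mathbf{1}_E\omega)^{1/q}(\fint_B \omega^{-1/(q-1)})^{(q-1)/q}$; raising to the $q$-th power, using the $A_q$ bound to control the last factor by $[\omega]_{A_q(\rn)}|B|/\omega(B)$, and rearranging gives $\omega(B)/\omega(E)\le[\omega]_{A_q(\rn)}(|B|/|E|)^q$. For (vi), I would estimate $\omega(E)=\int_E\omega\le|E|^{(s-1)/s}(\int_B\omega^s)^{1/s}$ by H\"older, and then invoke the $RH_s$ inequality to replace $(\fint_B\omega^s)^{1/s}$ by $[\omega]_{RH_s(\rn)}\fint_B\omega=[\omega]_{RH_s(\rn)}\omega(B)/|B|$, producing the claimed exponent $(s-1)/s$.

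For (vii), the plan is a two-sided H\"older computation on the bounded domain $\boz$. Using (ii), I pick $\gz>0$ with $\omega\in RH_{1+\gz}(\rn)$, and apply H\"older with exponents $(1+\gz)/\gz$ and $1+\gz$ to $|f|^q=|f|^q\omega^{1/(1+\gz)}\omega^{-1/(1+\gz)}$; since $q(1+\gz)/\gz=q(1+1/\gz)=q_2$ and $\int_\boz \omega^{1+\gz}<\fz$ (as $\omega\in L^1_{\mathrm{loc}}(\rn)$ and $\boz$ is bounded), this yields $L^{q_2}(\boz)\subset L^q_\omega(\boz)$. For the reverse inclusion, I use (i) to pick $q_1\in(1,q)$ with $\omega\in A_{q_1}(\rn)$, and apply H\"older with exponents $q_1$ and $q_1'$ to the factorization $|f|^{q/q_1}=(|f|^q\omega)^{1/q_1}\,\omega^{-1/q_1}$; the $A_{q_1}$ condition ensures $\int_\boz \omega^{-1/(q_1-1)}<\fz$ on the bounded set $\boz$, giving $L^q_\omega(\boz)\subset L^{q/q_1}(\boz)$.

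The main conceptual obstacle is item (i), which is the only statement not reducible to a direct application of Jensen or H\"older: it genuinely requires the non-trivial reverse H\"older/self-improvement theorem for $A_q$ weights together with the duality identity (iii). All other items are essentially bookkeeping, and the routine computations above would be presented only to fix the dependence of the constants on $n$, $q$, $s$, $[\omega]_{A_q(\rn)}$ and $[\omega]_{RH_s(\rn)}$ that is used repeatedly in later sections.
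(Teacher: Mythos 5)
Your proposal is correct and takes essentially the same route as the paper, which offers no proof of Lemma \ref{l3.1} at all but simply records these items as well-known properties of Muckenhoupt weights with a citation to the standard references (e.g.\ \cite[Chapter 7]{g14}); your sketches of (i)--(vii) are exactly the classical arguments those references contain. One small point of bookkeeping: in (vii) the finiteness of $\int_{\boz}\omega^{1+\gamma}\,dx$ is not a consequence of $\omega\in L^1_{\loc}(\rn)$ and the boundedness of $\boz$ alone, but follows from the reverse H\"older inequality of (ii) applied on a ball containing $\boz$, which is available since you chose $\gamma$ via (ii).
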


Furthermore, we also need the following boundedness of the fractional
Hardy--Littlewood maximal operator $\cm_\gz$ on weighted (weak) Lebesgue spaces
(see, for instance, \cite[Theorems 2 and 3]{mw74}).

\begin{lemma}\label{l3.2}
Let $\gz\in[0,1)$ and $\cm_\gz$ be the fractional Hardy--Littlewood maximal
operator on $\rn$. Assume that $p\in[1,\gz^{-1})$, $q\in(1,\fz)$ is given by
$\frac{1}{q}=\frac{1}{p}-\gz$,
and $0\le\omega\in L^1_\loc(\rn)$ satisfies $\omega^q\in A_{q(1-\gz)}(\rn)$.
Then there exists a positive constant $C$, depending only on $n$, $p$
and $[\omega^q]_{A_{q(1-\gz)}(\rn)}$, such that,
for any $f\in L^p_{\omega^p}(\rn)$,
$$\sup_{\lz\in(0,\fz)}\lz\lf[\omega^q\lf(\lf\{x\in\rn:\
|\cm_\gz(f)(x)|>\lz\r\}\r)\r]^{\frac1q}
\le C\|f\|_{L^p_{\omega^p}(\rn)}.
$$
Here, for any measurable set $E\subset\rn$, $\omega^q(E):=\int_E[\omega(x)]^q\,dx$.
Moreover, if $p\in(1,\gz^{-1})$, then $\cm_\gz$ is bounded
from $L^p_{\omega^p}(\rn)$ to $L^q_{\omega^q}(\rn)$.
\end{lemma}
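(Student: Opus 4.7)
The plan is twofold: first I would establish the weak-type $(p,q)$ bound for every admissible $p\in[1,\gz^{-1})$ by a Vitali covering argument tailored to an algebraic identity between the exponents, then upgrade to strong type for $p>1$ by Marcinkiewicz interpolation after invoking the self-improvement of the Muckenhoupt class.

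For the weak-type estimate, I fix $\lz\in(0,\fz)$, set $E_\lz:=\{x\in\rn:\,\cm_\gz(f)(x)>\lz\}$, and for each $x\in E_\lz$ choose a ball $B_x\ni x$ with $|B_x|^\gz\fint_{B_x}|f|\,dy>\lz$. The Vitali covering lemma, applied to the bounded truncations $E_\lz\cap B(\mathbf{0}_n,R)$ and passed to the limit as $R\to\fz$, yields a countable disjoint subfamily $\{B_j\}_j$ satisfying $E_\lz\subset\bigcup_j 5B_j$. Set $s:=q(1-\gz)$; from the relation $1/q=1/p-\gz$ one checks the algebraic identity $s-1=q/p'$, so the $A_s$ condition on $\oz^q$ rewrites as
$$\lf(\fint_{B}\oz^q\,dy\r)^{1/q}\lf(\fint_{B}\oz^{-p'}\,dy\r)^{1/p'}\le[\oz^q]_{A_s(\rn)}^{1/q}.$$
Combining H\"older's inequality on $\fint_B|f|\,dy$ with this bound, and using the cancellation $\gz-1/p+1/q=0$, I obtain
$$\lz<|B_j|^\gz\fint_{B_j}|f|\,dy\le[\oz^q]_{A_s(\rn)}^{1/q}\,\|f\|_{L^p_{\oz^p}(B_j)}\,[\oz^q(B_j)]^{-1/q}.$$
Rearranging, invoking the doubling property of $\oz^q$ (which follows from $\oz^q\in A_s$) to replace $\oz^q(5B_j)$ by $\oz^q(B_j)$, summing, and using the elementary inequality $\sum_j a_j^{q/p}\le(\sum_j a_j)^{q/p}$ (valid since $q/p\ge 1$) together with the disjointness of $\{B_j\}_j$, I arrive at $\lz^q\oz^q(E_\lz)\ls\|f\|_{L^p_{\oz^p}(\rn)}^q$. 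The endpoint $p=1$ is handled by the same scheme after replacing the inner H\"older step with the $A_1$ bound $\esup_{y\in B}[\oz(y)]^{-1}\le C[\fint_B\oz^q\,dy]^{-1/q}$.

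For the strong-type assertion with $p\in(1,\gz^{-1})$, Lemma~\ref{l3.1}(i) supplies $s_0\in(1,s)$ with $\oz^q\in A_{s_0}(\rn)$; defining $(p_0,q_0)$ by $s_0=q_0(1-\gz)$ and $1/q_0=1/p_0-\gz$ gives $1<p_0<p$, and the weak-type argument above applies verbatim at $(p_0,q_0)$. An analogous self-improvement via the reverse H\"older inequality (Lemma~\ref{l3.1}(ii)), translated through the correspondence between the $A_s$ condition on $\oz^q$ and the off-diagonal Muckenhoupt condition already exploited above, produces a pair $(p_1,q_1)$ with $p_1>p$ for which the same weak-type bound holds. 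Marcinkiewicz interpolation between the two weak-type endpoints $(p_0,q_0)$ and $(p_1,q_1)$, performed via the distributional identity
$$\|\cm_\gz(f)\|_{L^q_{\oz^q}(\rn)}^q=q\int_0^\fz\lz^{q-1}\oz^q\lf(\lf\{x\in\rn:\cm_\gz(f)(x)>\lz\r\}\r)\,d\lz,$$
after the standard splitting $f=f\mathbf{1}_{\{|f|\le\lz^\tau\}}+f\mathbf{1}_{\{|f|>\lz^\tau\}}$ with $\tau$ tuned to the two endpoints, delivers the strong-type bound $\|\cm_\gz(f)\|_{L^q_{\oz^q}(\rn)}\le C\|f\|_{L^p_{\oz^p}(\rn)}$. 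The principal obstacle is spotting and exploiting the exponent identity $s-1=q/p'$ that makes the weight factors cancel exactly; once this alignment is recognized, the Vitali covering and the Marcinkiewicz interpolation pieces are routine.
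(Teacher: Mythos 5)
The paper never proves this lemma: it is quoted verbatim from Muckenhoupt--Wheeden \cite[Theorems 2 and 3]{mw74}, so your argument has to stand on its own. The weak-type half does stand: the identity $q(1-\gz)-1=q/p'$, the resulting rewriting of the hypothesis as $(\fint_B\oz^q\,dy)^{1/q}(\fint_B\oz^{-p'}\,dy)^{1/p'}\le[\oz^q]_{A_{q(1-\gz)}(\rn)}^{1/q}$, the cancellation $\gz-1/p+1/q=0$, the Vitali covering with truncation, doubling of $\oz^q$, disjointness together with $q/p\ge1$, and the separate $A_1$ treatment of $p=1$ are all correct and give the stated weak $(p,q)$ bound.

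The strong-type half, however, has a genuine gap at the interpolation step. The endpoint estimates your covering argument produces have the form $\lz[\oz^{q_i}(\{\cm_\gz f>\lz\})]^{1/q_i}\ls\|f\|_{L^{p_i}_{\oz^{p_i}}(\rn)}$, so the source and target weights $\oz^{p_i}$ and $\oz^{q_i}$ change from endpoint to endpoint, whereas the quantity you must control is $q\int_0^\fz\lz^{q-1}\oz^q(\{\cm_\gz f>\lz\})\,d\lz$, a distribution function taken with respect to the third measure $\oz^q\,dx$. Neither endpoint controls $\oz^q(\{\cm_\gz f>\lz\})$, so the classical Marcinkiewicz theorem (which requires one fixed pair of measures) does not apply, and the splitting $f=f\mathbf{1}_{\{|f|\le\lz^\tau\}}+f\mathbf{1}_{\{|f|>\lz^\tau\}}$ by the size of $|f|$ alone is not the correct one in this change-of-weight situation; as written, the final step does not follow. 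The standard repair is to fix the measure first: put $h:=f\oz^{-q\gz}$ and $d\nu:=\oz^q\,dx$, and run your covering argument at $(p_i,q_i)$ with the auxiliary weight $\oz^{q/q_i}$ in place of $\oz$, whose hypothesis is $\oz^q\in A_{q_i(1-\gz)}(\rn)$ --- automatic for $q_1>q$ by nestedness of the Muckenhoupt classes (so no reverse H\"older argument is even needed at the upper endpoint) and true for $q_0<q$ by the openness you invoke. Since $p_i/q_i=1-\gz p_i$, one checks $\int_{\rn} h^{p_i}\,d\nu=\int_{\rn}|f|^{p_i}\oz^{qp_i/q_i}\,dx$ and $\int_{\rn} h^{p}\,d\nu=\int_{\rn}|f|^{p}\oz^{p}\,dx$, so both weak bounds become $\nu(\{\cm_\gz f>\lz\})\ls\lz^{-q_i}\|h\|_{L^{p_i}(\nu)}^{q_i}$ and the desired conclusion becomes $\|\cm_\gz f\|_{L^{q}(\nu)}\ls\|h\|_{L^{p}(\nu)}$; now everything lives over the single measure $\nu$ and the off-diagonal Marcinkiewicz theorem applies legitimately. (Also note that applying your weak-type lemma ``verbatim'' at $(p_0,q_0)$ with the original weight $\oz$ needs $\oz^{q_0}\in A_{q_0(1-\gz)}(\rn)$, which follows from $\oz^q\in A_{s_0}(\rn)$ only after the power rule of Lemma \ref{l3.1}(iv); this is unacknowledged in your sketch, though it is the harmless part of the gap.)
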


Now we prove Theorem \ref{t3.1} by using Lemmas \ref{l3.1} and \ref{l3.2},
and borrowing some ideas from the proofs of \cite[Theorem 3.4]{sh07}
and \cite[Theorem 4.2.3]{sh18}.

\begin{proof}[Proof of Theorem \ref{t3.1}]
Take a ball $B_0\subset\rn$ such that $\boz\subset B_0$ and
its radius $r_{B_0}=\diam(\boz)$. Moreover, let $\wz{G}$ and $\wz{f}$ be, respectively,
the \emph{zero extensions} of $G$ and $f$ to $\rn$. To prove \eqref{3.3},
it suffices to show that
\begin{align}\label{3.5}
&\lf[\frac{1}{\omega(B_0)}\int_{B_0}|\wz{G}|^q\omega\,dx\r]^{\frac1q}\\
&\quad\ls\lf[\frac{1}{|B_0|}\int_{B_0}|\wz{G}|^{p_1}\,dx\r]^{\frac1{p_1}}+
\lf[\frac{1}{\omega(B_0)}\int_{B_0}|\wz{f}|^{q_0}\omega^{\frac{q_0}{q}}
\,dx\r]^{\frac{1}{q_0}}[\omega(B_0)]^{\frac{\gz}{p_2}}.\nonumber
\end{align}

Take a cube $Q_0\subset\rn$ such that $2Q_0\subset2B_0$ and $|Q_0|\sim|B_0|$.
To prove \eqref{3.5}, we only need to show that
\begin{align}\label{3.6}
&\lf[\frac{1}{\omega(Q_0)}\int_{Q_0}|\wz{G}|^q\omega\,dx\r]^{\frac1q}\\
&\quad\ls\lf[\frac{1}{|2B_0|}\int_{2B_0}|\wz{G}|^{p_1}\,dx\r]^{\frac1{p_1}}+
\lf[\frac{1}{\omega(2B_0)}\int_{2B_0}|\wz{f}|^{q_0}\omega^{\frac{q_0}{q}}
\,dx\r]^{\frac{1}{q_0}}[\omega(2B_0)]^{\frac{\gz}{p_2}}.\nonumber
\end{align}
Indeed, if \eqref{3.6} holds true, then we can obtain \eqref{3.5} via
using \eqref{3.6} and covering $B_0$ with a finite number of non-overlapping cubes
$Q_0$ of the same size such that $2Q_0\subset2B_0$.

Now we show \eqref{3.6}. For any $\lz\in(0,\fz)$, let
$$E(\lz):=\lf\{x\in Q_0:\ \cm_{2B_0}\lf(|\wz{G}|^{p_1}\r)(x)>\lz\r\}.
$$
We first claim that there exists a positive constant $\uc_0$,
depending only on $p_1$, $p_2$, $q$, $\bz_1,\,\bz_2$, $C_1,\,C_2$,
$[\omega]_{A_{q(1-\gz)/p_2}(\rn)}$
and $[\omega]_{RH_s(\rn)}$, such that, if
$\uc\in[0,\uc_0)$, one can choose positive constants $\dz\in(0,1/3)$,
$\kappa\in(0,1)$ and $C_3\in(0,\fz)$, depending only on $p_1$, $p_2$, $q$,
$\bz_1,\,\bz_2$, $C_1,\,C_2$, $[\omega]_{A_{q(1-\gz)/p_2}(\rn)}$
and $[\omega]_{RH_s(\rn)}$, such that, for any $\lz\in(\lz_0,\fz)$,
\begin{equation}\label{3.7}
\omega(E(\bz\lz))\le\dz^{\frac{s-1}s}\omega(E(\lz))+\omega\lf(\lf\{x\in Q_0:\
\cm_{\gz,2B_0}\lf(|\wz{f}|^{p_2}\r)(x)>(\kappa\lz)^b\r\}\r),
\end{equation}
where
\begin{equation}\label{3.8}
\lz_0:=C_3\fint_{2B_0}|\wz{F}|^{p_1}\,dx,
\end{equation}
$\bz:=[2\dz^{(s-1)/s}]^{-p_1/q}$ and $b:=p_2/p_1$.

Now we assume that \eqref{3.7} holds true for a moment and prove
\eqref{3.6} via using \eqref{3.7}. Let $T\in(\lz_0,\fz)$ be any given constant.
Multiplying both sides of \eqref{3.7} by $\lz^{\frac{q}{p_1}-1}$ and then integrating in
$\lz$ over the interval $(\lz_0,T)$, we find that
\begin{align}\label{3.9}
\int_{\lz_0}^T\lz^{\frac{q}{p_1}-1}\omega(E(\bz\lz))\,d\lz&\le
\dz^{\frac{s-1}s}\int_{\lz_0}^T\lz^{\frac{q}{p_1}-1}\omega(E(\lz))\,d\lz\\
&\quad+\int_{\lz_0}^T\lz^{\frac{q}{p_1}-1}
\omega\lf(\lf\{x\in Q_0:\ \cm_{\gz,2B_0}\lf(|\wz{f}|^{p_2}\r)(x)>(\kappa\lz)^b\r\}\r)\,d\lz.\nonumber
\end{align}
Moreover, from \eqref{3.4}, Lemma \ref{l3.2},
$q_0=p_2q/(p_2+\gz q)$ and a change of variables,
it follows that
\begin{align*}
&\int_0^\fz\lz^{\frac{q}{p_1}-1}\omega\lf(\lf\{x\in Q_0:\
\cm_{\gz,2B_0}\lf(|\wz{f}|^{p_2}\r)(x)>(\kappa\lz)^b\r\}\r)\,d\lz\\
&\quad=\frac{p_1}{p_2}\kappa^{-\frac{q}{p_1}}\int_0^\fz
\lz^{\frac{q}{p_2}-1}
\omega\lf(\lf\{x\in Q_0:\ \cm_{\gz,2B_0}\lf(|\wz{f}|^{p_2}\r)(x)>\lz\r\}\r)\,d\lz\\
&\quad=\frac{p_1}{q}\kappa^{-\frac{q}{p_1}}\int_{Q_0}
\lf[\cm_{\gz,2B_0}\lf(|\wz{f}|^{p_2}\r)\r]^{\frac{q}{p_2}}\omega\,dx
\ls\frac{p_1}{q}\kappa^{-\frac{q}{p_1}}\lf[\int_{2B_0}
|\wz{f}|^{q_0}
\omega^{\frac{q_0}{q}}\,dx\r]^{\frac{q}{q_0}},
\end{align*}
which, combined with a change of variables and \eqref{3.9}, implies that
\begin{align}\label{3.10}
&\bz^{-\frac{q}{p_1}}\int_{\bz\lz_0}^{\bz T}\lz^{\frac{q}{p_1}-1}\omega(E(\lz))\,d\lz\\
&\quad\le
\dz^{\frac{s-1}s}\int_{\lz_0}^T\lz^{\frac{q}{p_1}-1}\omega(E(\lz))\,d\lz
+C_{(\kappa,\,[\omega]_{A_{q(1-\gz)/p_2}(\rn)})}
\lf[\int_{2B_0}|\wz{f}|^{q_0}
\omega^{\frac{q_0}{q}}\,dx\r]^{\frac{q}{q_0}},\nonumber
\end{align}
where $C_{(\kappa,\,[\omega]_{A_{q(1-\gz)/p_2}(\rn)})}$ is a positive
constant depending only on $\kappa$ and $[\omega]_{A_{q(1-\gz)/p_2}(\rn)}$. Let
$$\mathrm{I}:=C_{(\kappa,\,[\omega]_{A_{q(1-\gz)/p_2}(\rn)})}
\lf[\int_{2B_0}|\wz{f}|^{q_0}
\omega^{\frac{q_0}{q}}\,dx\r]^{\frac{q}{q_0}}.
$$
By \eqref{3.10} and the fact that $\bz\in(1,\fz)$,
we conclude that, for any $T\in(\lz_0,\fz)$,
\begin{align*}
&\bz^{-\frac{q}{p_1}}\int_{0}^{T}\lz^{\frac{q}{p_1}-1}\omega(E(\lz))\,d\lz\\
&\quad\le\bz^{-\frac{q}{p_1}}\int_{0}^{\bz T}\lz^{\frac{q}{p_1}-1}\omega(E(\lz))\,d\lz\\
&\quad\le\bz^{-\frac{q}{p_1}}\int_{0}^{\bz\lz_0}\lz^{\frac{q}{p_1}-1}\omega(E(\lz))\,d\lz+
\dz^{\frac{s-1}s}\int_{\lz_0}^T\lz^{\frac{q}{p_1}-1}\omega(E(\lz))\,d\lz
+\mathrm{I}\\
&\quad\le\dz^{\frac{s-1}s}\int_{0}^T\lz^{\frac{q}{p_1}-1}\omega(E(\lz))\,d\lz
+C\omega(Q_0)\lz_0^{\frac{q}{p_1}}+\mathrm{I},
\end{align*}
where $C$ is a positive constant independent of $T$,
which further implies that, for any $T\in(\lz_0,\fz)$,
\begin{align}\label{3.11}
\bz^{-\frac{q}{p_1}}\lf[1-\dz^{\frac{s-1}s}\bz^{\frac{q}{p_1}}\r]
\int_{0}^{T}\lz^{\frac{q}{p_1}-1}\omega(E(\lz))\,d\lz\le
C\omega(Q_0)\lz_0^{\frac{q}{p_1}}+\mathrm{I}.
\end{align}
From $\bz=[2\dz^{(s-1)/s}]^{-p_1/q}$, Lemma \ref{l3.2}
and letting $T\to\fz$ in \eqref{3.11}, we deduce that
$$\int_{Q_0}|\wz{G}|^q\omega\,dx\le C\omega(Q_0)\lz_0^{\frac{q}{p_1}}
+C_{(\kappa,\,[\omega]_{A_{q(1-\gz)/p_2}(\rn)})}
\lf[\int_{2B_0}|\wz{f}|^{q_0}
\omega^{\frac{q_0}{q}}\,dx\r]^{\frac{q}{q_0}},
$$
which, together with $\lz_0=C_3\fint_{2B_0}|\wz{G}|^{p_1}\,dx$,
$|Q_0|\sim|B_0|$ and Lemma \ref{l3.1}(v), further implies that \eqref{3.6} holds true.

Now we prove \eqref{3.7}. By \eqref{3.4} and Lemma \ref{l3.2}, we find that
there exists a positive constant $C_4$, depending only on $n$, such that,
for any $\lz\in(0,\fz)$,
\begin{equation}\label{3.12}
|E(\lz)|\le\frac{C_4}{\lz}\int_{2B_0}|\wz{G}|^{p_1}\,dx.
\end{equation}
Let $C_3:=C_4|2B_0||Q_0|^{-1}\dz^{-1}$. Then, from \eqref{3.8} and \eqref{3.12},
it follows that, for any $\lz\in(\lz_0,\fz)$, $|E(\lz)|<\dz|Q_0|$.

From now on, we fix $\lz\in(\lz_0,\fz)$. By the fact that $E(\lz)$ is open in $Q_0$
and the Calder\'on--Zygmund decomposition (see, for instance,
\cite[p.\,17]{St93} and \cite[Lemma 4.2.2]{sh18}),
we conclude that there exist an index set $I$ and a sequence $\{Q_k\}_{k\in I}$
of disjoint and maximal dyadic subcubes of $Q_0$ such that
\begin{itemize}
\item[(a)] for any $k\in I$, $Q_k\subset E(\lz)$;
\vspace{-0.5em}
\item[(b)]for any $k\in I$, $\wz{Q}_k\subset Q_0$, but $\wz{Q}_k\nsubseteq E(\lz)$,
where $\wz{Q}_k$ denotes the dyadic parent cube of $Q_k$;
\vspace{-0.5em}
\item[(c)] $|E(\lz)\backslash(\bigcup_{k\in I} Q_k)|=0$.
\end{itemize}
For any $k\in I$, denote by $B_k$ the ball with the same center and the same
diameter as $Q_k$, namely, its radius $r_{B_k}:=\sqrt{n}\ell(Q_k)/2$, where $\ell(Q_k)$ denotes the side length of $Q_k$.
From the definition of $E(\lz)$ and the above property (b) of $\{Q_k\}_{k\in I}$,
it follows that there exists a positive constant $C_5$, depending only on $n$, such that,
for any ball $\wz{B}\subset2B_0$ satisfying $\wz{B}\cap B_k\neq\emptyset$ and
$r_{\wz{B}}\ge r_{B_k}$,
\begin{equation}\label{3.13}
\fint_{\wz{B}}|\wz{G}|^{p_1}\,dx\le C_5\lz,
\end{equation}
which, combined with the definition of $\cm_{2B_0}$, further implies that,
for any $x\in Q_k$ with $k\in I$,
\begin{equation}\label{3.14}
\cm_{2B_0}\lf(|\wz{G}|^{p_1}\r)(x)\le\max\lf\{\cm_{2B_k}\lf(|\wz{G}|^{p_1}\r)(x), C_5\lz\r\}.
\end{equation}
Let $B:=B(x_0,r)$ be a ball satisfying $B\subset2B_0$ and $|B|\le\bz_1|\boz|/5^n$,
where $x_0\in\rn$ and $r\in(0,\fz)$.
If $2B\subset\boz$, let $\wz{G}_B:=G_B$ and $\wz{R}_B:=R_B$; if $2B\cap\boz=\emptyset$,
let $\wz{G}_B:=0$ and $\wz{R}_B:=0$; if $2B\cap\boz\neq\emptyset$
and $2B\cap\boz^\complement\neq\emptyset$, let $\wz{G}_B:=G_{B_1}\mathbf{1}_{\boz}$
and $\wz{R}_B:=R_{B_1}\mathbf{1}_{\boz}$, where $B_1:=B(y_0,4r)$ and $y_0\in\partial\boz$
satisfy $2B\subset B(y_0,4r)$. Then it is easy to see that, for any ball $B$ satisfying
$2B\subset2B_0$ and $|B|\le\bz_1|\boz|/5^n$,
$|\wz{G}|\le|\wz{G}_B|+|\wz{R}_B|$ on $2B$, \eqref{3.1} and \eqref{3.2}
also hold true if $G$, $f$, $G_B$, $R_B$ and $\boz$ are replaced, respectively, by
$\wz{G}$, $\wz{f}$, $\wz{G}_B$, $\wz{R}_B$ and $2B_0$ (see, for instance,
\cite[pp.\,2435-2436]{g12} for more details).

Take $\dz\in(0,1/3)$ to be sufficiently small such that $\bz> C_5$.
Then, by \eqref{3.14}, the fact that
$$|\wz{G}|^{p_1}\le 2^{p_1-1}\lf(|\wz{G}_{B_k}|^{p_1}+|\wz{R}_{B_k}|^{p_1}\r)$$
on $2B_k$, \eqref{3.4} and Lemma \ref{l3.2}, we conclude that, for any $k\in I$,
\begin{align}\label{3.15}
|E(\bz\lz)\cap Q_k|&\le\lf|\lf\{x\in Q_k:\ \cm_{2B_k}\lf(|\wz{G}|^{p_1}\r)(x)>\bz\lz\r\}\r|\\
&\le\lf|\lf\{x\in Q_k:\ \cm_{2B_k}\lf(|\wz{G}_{B_k}|^{p_1}\r)(x)>\frac{\bz\lz}{2^{p_1}}\r\}\r|\nonumber\\
&\qquad+\lf|\lf\{x\in Q_k:\ \cm_{2B_k}\lf(|\wz{R}_{B_k}|^{p_1}\r)(x)>\frac{\bz\lz}{2^{p_1}}\r\}\r|\nonumber\\
&\le\frac{2^{p_1}C_4}{\bz\lz}\int_{2B_k}|\wz{G}_{B_k}|^{p_1}\,dx
+\frac{C_{(n,\,p_1,\,p_3)}}{(\bz\lz)^{p_3/p_1}}
\int_{2B_k}|\wz{R}_{B_k}|^{p_3}\,dx.\nonumber
\end{align}
Now we claim that, for any $k\in I$, if
\begin{equation}\label{3.16}
Q_k\cap\lf\{x\in Q_0:\ \cm_{\gz,2B_0}\lf(|\wz{f}|^{p_2}\r)(x)\le(\kappa\lz)^b\r\}\neq\emptyset,
\end{equation}
then
\begin{equation}\label{3.17}
|E(\bz\lz)\cap Q_k|\le\wz{C}^{-\frac s{s-1}}\dz|Q_k|,
\end{equation}
where $\wz{C}$ is as in Lemma \ref{l3.1}(vi).
Indeed, for such a $k$, from \eqref{3.2}, \eqref{3.13} and \eqref{3.16}, it follows that
\begin{equation}\label{3.18}
\fint_{2B_k}|\wz{G}_{B_k}|^{p_1}\,dx\le C_{(p_1)}\lf(C_2^{p_1}\kappa+\uc^{p_1} C_5\r)\lz.
\end{equation}
Moreover, by \eqref{3.1}, \eqref{3.13} and \eqref{3.16}, we find that
\begin{equation}\label{3.19}
\fint_{2B_k}|\wz{R}_{B_k}|^{p_3}\,dx\le
C_1^{p_3}\lf[(C_5\lz)^{\frac1{p_1}}+(\kappa\lz)^{\frac1{p_1}}\r]^{p_3}
=\lf[C_1\lf(C_5^{\frac1{p_1}}+\kappa^{\frac1{p_1}}\r)\r]^{p_3}\lz^{\frac{p_3}{p_1}}.
\end{equation}
From \eqref{3.15}, \eqref{3.18}, \eqref{3.19} and $\bz=[2\dz^{(s-1)/s}]^{-p_1/q}$,
it follows that, for any $k\in I$ satisfying \eqref{3.16},
\begin{align}\label{3.20}
&|E(\bz\lz)\cap Q_k|\\
&\quad\le\lf\{\frac{2^{p_1}C_4C_{(p_1)}(C_2^{p_1}\kappa+\uc^{p_1} C_5)}{\bz}+
\frac{C_{(n,p_1,p_3)}[C_1(C_5^{1/p_1}+\kappa^{1/p_1})]^{p_3}}
{\bz^{p_3/p_1}}\r\}\frac{|2B_k|}{|Q_k|}|Q_k|\nonumber\\
&\quad\le C_{(n,p_1,p_3)}\lf[\lf(C_2^{p_1}\kappa+\uc^{p_1} C_5\r)\dz^{\frac{(s-1)p_1}{sq}-1}+
\lf(C_5^{1/p_1}+\kappa^{1/p_1}\r)^{p_3}\dz^{\frac{(s-1)p_3}{sq}-1}\r]\dz|Q_k|.\nonumber
\end{align}
By the assumption $s>(\frac{p_3}{q})'$, we know that $\frac{(s-1)p_3}{sq}>1$.
Now, we choose $\dz\in(0,1/3)$ sufficiently small such that
\begin{equation}\label{3.21}
C_{(n,p_1,p_3)}\lf(C_5^{\frac1{p_1}}+\kappa^{\frac1{p_1}}\r)^{p_3}\dz^{\frac{(s-1)p_3}{sq}-1}\le\frac12
\wz{C}^{-\frac s{s-1}},
\end{equation}
where $\wz{C}$ is as in Lemma \ref{l3.1}(vi).
Then, for such a fixed $\dz$, we take $\kappa\in(0,1)$ and $\uc_0\in(0,1)$ small
enough such that, for any $\uc\in[0,\uc_0)$,
$$C_{(n,p_1,p_3)}\lf(C_2^{p_1}\kappa+\uc^{p_1} C_5\r)\dz^{\frac{(s-1)p_1}{sq}-1}\le
\frac12\wz{C}^{-\frac{s}{s-1}},$$
which, together with \eqref{3.20} and \eqref{3.21}, further implies that,
for any $k\in I$ such that \eqref{3.16} holds true,
$$|E(\bz\lz)\cap Q_k|\le\wz{C}^{-\frac s{s-1}}\dz|Q_k|.
$$
Thus, for any $k\in I$ satisfying \eqref{3.16}, \eqref{3.17} holds true.

Now we prove \eqref{3.7} via using \eqref{3.17}. From $\omega\in RH_s(\rn)$,
Lemma \ref{l3.1}(vi) and \eqref{3.17}, we deduce that, for any $k$ satisfying \eqref{3.16},
\begin{equation}\label{3.22}
\omega(E(\bz\lz)\cap Q_k)\le\dz^{\frac{s-1}s}\omega(Q_k).
\end{equation}
Let $I_1:=\{k\in I:\ \eqref{3.16}\ \text{holds true for}\ k\}$.
Then, by \eqref{3.22} and the term $(c)$, we conclude that, for any $\lz\in(\lz_0,\fz)$,
\begin{align*}
\omega(E(\bz\lz))&=\omega\lf(E(\bz\lz)\bigcap\lf[\bigcup_{k\in I} Q_k\r]\r)\\
&\le\sum_{k\in I_1}\omega\lf(E(\bz\lz)\cap Q_k\r)+\omega\lf(\lf\{x\in Q_0:\
\cm_{\gz,2B_0}\lf(|\wz{f}|^{p_2}\r)(x)>(\kappa\lz)^b\r\}\r)\\
&\le\dz^{\frac{s-1}s}\sum_{k\in I_1}\omega\lf(Q_k\r)+\omega\lf(\lf\{x\in Q_0:\
\cm_{\gz,2B_0}\lf(|\wz{f}|^{p_2}\r)(x)>(\kappa\lz)^b\r\}\r)\\
&\le\dz^{\frac{s-1}s}\omega(E(\lz))+\omega\lf(\lf\{x\in Q_0:\
\cm_{\gz,2B_0}\lf(|\wz{f}|^{p_2}\r)(x)>(\kappa\lz)^b\r\}\r).
\end{align*}
This finishes the proof of \eqref{3.7} and hence of Theorem \ref{t3.1}.
\end{proof}

\section{Proofs of Theorems \ref{t1.1} and \ref{t1.2}\label{s4}}

\hskip\parindent In this section, we prove Theorems \ref{t1.1} and \ref{t1.2} by using Theorem \ref{t3.1}.
We begin with the following auxiliary conclusion.

\begin{lemma}\label{l4.1}
Assume that $n\ge2$, $\boz\subset\rn$ is a bounded Lipschitz domain,
$\az\in L^\fz(\partial\boz)$ is as in \eqref{1.4},
$p\in(1,\fz)$, $1/p+1/p'=1$ and $F\in L^{(p')_\ast}(\boz)$, where $(p')_\ast$ is as in \eqref{1.5}
with $p$ replaced by $p'$.
Let $v\in W^{1,p'}(\boz)$ be the weak solution of the Robin problem
\begin{equation}\label{4.1}
\begin{cases}
-\mathrm{div}(A\nabla v)=F\ \ & \text{in}\ \ \boz,\\
\dfrac{\partial v}{\partial\boldsymbol{\nu}}+\az v=0\ \ & \text{on}\ \ \partial\boz.
\end{cases}
\end{equation}
Assume further that the weak solution $u\in W^{1,p}(\boz)$ of
the Robin problem
\begin{equation}\label{4.2}
\begin{cases}
-\mathrm{div}(A\nabla u)=\mathrm{div}(\mathbf{f})\ \ & \text{in}\ \ \boz,\\
\dfrac{\partial u}{\partial\boldsymbol{\nu}}+\az u
=\mathbf{f}\cdot\boldsymbol{\nu}\ \ & \text{on}\ \ \partial\boz
\end{cases}
\end{equation}
satisfies the estimate
\begin{equation}\label{4.3}
\|u\|_{W^{1,p}(\boz)}\le
C\|\mathbf{f}\|_{L^p(\boz;\rn)},
\end{equation}
where $C$ is a positive constant independent of $u$
and $\mathbf{f}$. Then there exists a positive constant $C$,
independent of $v$ and $F$, such that
$$\|v\|_{W^{1,p'}(\boz)}\le C\lf\|F\r\|_{L^{(p')_\ast}(\boz)}.$$
\end{lemma}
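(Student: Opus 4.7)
The plan is a duality argument exploiting the symmetry $A=A^\top$ together with the assumption \eqref{4.3}. Fix an arbitrary $\mathbf{f}\in C_{\mathrm{c}}^\infty(\boz;\rn)$ with $\|\mathbf{f}\|_{L^p(\boz;\rn)}\le 1$ and let $u:=u_{\mathbf{f}}\in W^{1,p}(\boz)$ denote the weak solution of the Robin problem \eqref{4.2}, so that $\|u\|_{W^{1,p}(\boz)}\le C$ by \eqref{4.3}. Choosing $u$ as a test function in the weak formulation of \eqref{4.1} and $v$ as a test function in the weak formulation of \eqref{4.2} (both legitimate, after density approximation, since $u\in W^{1,p}(\boz)$ and $v\in W^{1,p'}(\boz)$) yields two identities whose subtraction, after using the symmetry of $A$ to identify $\int_\boz A\nabla v\cdot\nabla u\,dx$ with $\int_\boz A\nabla u\cdot\nabla v\,dx$, produces the key duality relation
\begin{equation*}
\int_\boz\mathbf{f}\cdot\nabla v\,dx=-\int_\boz Fu\,dx.
\end{equation*}

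A direct computation based on $1/p+1/p'=1$ reveals that the conjugate exponent $((p')_\ast)'$ of $(p')_\ast$ coincides with the Sobolev conjugate $p^\ast=np/(n-p)$ when $p<n$, while for $p\ge n$ the definition of $p_\ast$ in \eqref{1.5} (with its free parameter $\epsilon$) ensures that the Sobolev embedding $W^{1,p}(\boz)\hookrightarrow L^{((p')_\ast)'}(\boz)$ still holds. Applying H\"older's inequality and this embedding together with the assumed bound on $u$ gives
\begin{equation*}
\lf|\int_\boz\mathbf{f}\cdot\nabla v\,dx\r|\le\|F\|_{L^{(p')_\ast}(\boz)}\|u\|_{L^{((p')_\ast)'}(\boz)}\le C\|F\|_{L^{(p')_\ast}(\boz)}\|\mathbf{f}\|_{L^p(\boz;\rn)}.
\end{equation*}
Taking the supremum over $\mathbf{f}$, together with the density of $C_{\mathrm{c}}^\infty(\boz;\rn)$ in $L^p(\boz;\rn)$, then yields the gradient bound $\|\nabla v\|_{L^{p'}(\boz)}\le C\|F\|_{L^{(p')_\ast}(\boz)}$.

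The remaining and most delicate task is the companion estimate $\|v\|_{L^{p'}(\boz)}\le C\|F\|_{L^{(p')_\ast}(\boz)}$. My plan is to combine the Poincar\'e--Wirtinger inequality $\|v-v_\boz\|_{L^{p'}(\boz)}\le C\|\nabla v\|_{L^{p'}(\boz)}$ (valid on any bounded Lipschitz domain) with a separate control on the mean $v_\boz:=|\boz|^{-1}\int_\boz v\,dx$. For the latter I would introduce an auxiliary function $\phi$ solving the Robin problem with data $1/|\boz|$ in the interior and $\frac{\partial\phi}{\partial\boldsymbol{\nu}}+\az\phi=0$ on $\partial\boz$, whose $W^{1,2}$-solvability follows from Lax--Milgram combined with the Friedrichs inequality of Remark \ref{r1.1x} (which critically exploits $\az\not\equiv0$); testing the weak form of \eqref{4.1} against $\phi$ and invoking symmetry of $A$ yields the clean identity $v_\boz=\int_\boz F\phi\,dx$, which H\"older and Sobolev bound by $C\|F\|_{L^{(p')_\ast}(\boz)}$. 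The main obstacle I foresee is ensuring $\phi\in L^{((p')_\ast)'}(\boz)$ for the full range of $p$ allowed in the statement: when $p>2$ the bare $W^{1,2}$-regularity of $\phi$ does not suffice, and one must either bootstrap the regularity of $\phi$ using the hypothesis \eqref{4.3} itself (applied through a suitable auxiliary divergence-form representation) or invoke a $p'$-analogue of the Friedrichs inequality to bypass the mean bound entirely by estimating $\|v\|_{L^{p'}(\boz)}$ directly via $\|\nabla v\|_{L^{p'}(\boz)}$ and an $\az$-weighted boundary term, the latter being controlled through the weak formulation of \eqref{4.1}.
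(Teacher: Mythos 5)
Your duality argument for the gradient bound is correct and coincides with the paper's: testing the weak formulation of \eqref{4.1} against $u$ and that of \eqref{4.2} against $v$ (by density), equating the two left-hand sides via $A=A^\top$ gives $\int_\boz\mathbf{f}\cdot\nabla v\,dx=-\int_\boz Fu\,dx$, and then H\"older, the embedding $W^{1,p}(\boz)\hookrightarrow L^{((p')_\ast)'}(\boz)$, and \eqref{4.3} finish. Your identity $((p')_\ast)'=p^\ast$ when $p<n$ is also right.

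The companion $L^{p'}$ bound on $v$, however, is a genuine gap — one that you honestly flag but do not close. The identity $v_\boz=\int_\boz F\phi\,dx$ is fine, but $\phi\in W^{1,2}(\boz)\hookrightarrow L^{2n/(n-2)}(\boz)$ only closes the estimate when $((p')_\ast)'\le 2n/(n-2)$, essentially $p\le2$. Your first proposed fix cannot work as stated: any $\mathbf{f}_0$ with $\mathrm{div}\,\mathbf{f}_0=1/|\boz|$ necessarily has $\int_{\partial\boz}\mathbf{f}_0\cdot\boldsymbol{\nu}\,d\sz=1\ne0$, so the Robin solution that \eqref{4.3} actually covers carries the nonzero conormal datum $\mathbf{f}_0\cdot\boldsymbol{\nu}$ rather than $0$; the correction $\phi-\tilde{\psi}$ then has a nonzero boundary datum $g$, and a $W^{1,p}$ estimate for that datum is not part of the hypothesis \eqref{4.3} (it is obtained only later, in Corollary \ref{c4.1}, which relies on Lemma \ref{l4.1} itself). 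Your second fix (a $p'$-Friedrichs inequality) is too vague to assess: the boundary term $\int_{\partial\boz}\az|v|^{p'}\,d\sz$ it would introduce is not controlled by the weak formulation — testing against $\varphi\equiv1$ only controls the signed quantity $\int_{\partial\boz}\az v\,d\sz=\int_\boz F\,dx$, not the absolute integral.

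The paper sidesteps all of this with a pointwise representation: $v(x)=\int_\boz R(x,y)F(y)\,dy$ in terms of the Robin--Green function $R$, and the known kernel bound $|R(x,y)|\ls|x-y|^{2-n}$ for $n\ge3$ (logarithmic for $n=2$) plus boundedness of $\boz$ gives $|v(x)|\ls\int_\boz|x-y|^{1-n}|F(y)|\,dy$; since $1/(p')_\ast=1/p'+1/n$ when $p'>n/(n-1)$ (with a similar check in the other range), the $L^{(p')_\ast}\to L^{p'}$ boundedness of the fractional integral $I_1$ gives $\|v\|_{L^{p'}(\boz)}\ls\|F\|_{L^{(p')_\ast}(\boz)}$ for all $p\in(1,\fz)$ at once. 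If you wish to salvage your auxiliary-function strategy rather than quote the Green function estimate, the cleanest route is neither of your two proposals but to observe that $\phi$ solves an inhomogeneous Robin problem with $L^\fz$ right-hand side and is therefore bounded by De Giorgi--Nash--Moser iteration up to the boundary (compare Lemma \ref{l5.3}); then $|v_\boz|\le\|\phi\|_{L^\fz(\boz)}\|F\|_{L^1(\boz)}\ls\|F\|_{L^{(p')_\ast}(\boz)}$ follows immediately.
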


\begin{proof}
By the assumptions \eqref{4.1} and \eqref{4.2}, we conclude that
$$\int_{\boz}A\nabla u\cdot\nabla v\,dx+\int_{\partial\boz}\az uv\,d\sz(x)
=-\int_{\boz}\mathbf{f}\cdot\nabla v\,dx=
\int_{\boz} Fu\,dx,
$$
which, combined with \eqref{4.3}, the Sobolev inequality
(see, for instance, \cite[Thoerem 7.26]{gt01} and
\cite[Section 2.3.4, Theorem 3.4]{n12}) and the H\"older inequality,
further implies that
\begin{align}\label{4.4}
\|\nabla v\|_{L^{p'}(\boz;\rn)}&=\sup_{\|\mathbf{f}\|_{L^p(\boz;\rn)}\le1}
\lf|\int_\boz\nabla v\cdot\mathbf{f}\,dx\r|=\sup_{\|\mathbf{f}\|_{L^p(\boz;\rn)}\le1}
\lf|\int_\boz Fu\,dx\r|\\
&\le\sup_{\|\mathbf{f}\|_{L^p(\boz;\rn)}\le1}\|F\|_{L^{(p')_\ast}(\boz)}
\|u\|_{L^{q}(\boz)}
\ls\sup_{\|\mathbf{f}\|_{L^p(\boz;\rn)}\le1}\|F\|_{L^{(p')_\ast}(\boz)}
\|u\|_{W^{1,p}(\boz)}\nonumber\\
&\ls\sup_{\|\mathbf{f}\|_{L^p(\boz;\rn)}\le1}
\|\mathbf{f}\|_{L^p(\boz;\rn)}\|F\|_{L^{(p')_\ast}(\boz)}
\ls\|F\|_{L^{(p')_\ast}(\boz)},\nonumber
\end{align}
where $q\in(1,\fz)$ is given by $1/q+1/(p')_\ast=1$.
Moreover, denote by $R(\cdot,\cdot)$ the \emph{Robin-Green function} of the boundary
value problem \eqref{4.1}. Then, for any $x\in\boz$,
\begin{equation}\label{4.5}
v(x)=\int_{\boz}R(x,y)F(y)\,dy.
\end{equation}
Furthermore, it is well known that, for any $x,\,y\in\boz$,
$|R(x,y)|\ls|x-y|^{2-n}$ when $n\ge3$, and $|R(x,y)|\ls\ln(1+|x-y|^{-1})$
when $n=2$ (see, for instance, \cite[p.\,3207]{ck14}),
which, together with \eqref{4.5} and the fact that $\boz$ is bounded,
further implies that, for any $x\in\boz$,
$$|v(x)|\ls\int_{\boz}|x-y|^{1-n}|F(y)|\,dy.
$$
From this and the well-known results on the boundedness of the fractional integral
(see, for instance, \cite[Theorem 1.2.3]{g14a}),
we deduce that $\|v\|_{L^{p'}(\boz)}\ls\|F\|_{L^{(p')_\ast}(\boz)}$,
which, combined with \eqref{4.4}, then completes the proof Lemma \ref{l4.1}.
\end{proof}

\begin{lemma}\label{l4.2}
Let $n\ge2$, $\boz\subset\rn$ be a bounded Lipschitz domain, $\az\in L^\fz(\partial\boz)$ be as in \eqref{1.4},
$p\in(1,\fz)$, $1/p+1/p'=1$, $\mathbf{g}\in L^{p'}(\boz;\rn)$ and
$p_\ast\in(1,\fz)$ be as in \eqref{1.5}.
Assume that $u\in W^{1,p'}(\boz)$ is the weak solution of the Robin problem
\eqref{4.2} with $\mathbf{f}$ replaced by $\mathbf{g}$.
If the weak solution $v\in W^{1,p}(\boz)$ of the Robin problem $(R)_p$
with $\mathbf{f}\in L^p(\boz;\rn)$, $F\in L^{p_\ast}(\boz)$
and $g\equiv0$ satisfies the estimate \eqref{1.13}, then
$\|u\|_{W^{1,p'}(\boz)}\le C\|\mathbf{g}\|_{L^{p'}(\boz;\rn)},$
where $C$ is a positive constant independent of $v$ and $\mathbf{g}$.
\end{lemma}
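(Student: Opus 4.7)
The proof will hinge on a duality argument that exploits the symmetry of $A$. The plan is to employ $v$ and $u$ as mutual test functions in each other's weak formulations to derive a dual identity, and then apply standard $L^p$--$L^{p'}$ duality to recover the desired norm bounds.

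More precisely, for arbitrary $\mathbf{f}\in L^p(\boz;\rn)$ and $F\in L^{p_\ast}(\boz)$, I would let $v\in W^{1,p}(\boz)$ be the weak solution of $(R)_p$ with data $(\mathbf{f},F,0)$, which by hypothesis satisfies \eqref{1.13}. Using the density of $C^{\fz}(\overline{\boz})$ in both $W^{1,p}(\boz)$ and $W^{1,p'}(\boz)$ (valid because $\boz$ is Lipschitz), one can plug $v$ into the weak formulation \eqref{1.7} for $u$ (with $g\equiv 0$ and source $\mathbf{g}$), and $u$ into the corresponding formulation for $v$. Subtracting the two identities and exploiting the symmetry of $A$ (so the bilinear form and the boundary integral coincide on both sides) yields
\begin{equation*}
\int_\boz \mathbf{f}\cdot\nabla u\,dx-\int_\boz Fu\,dx=\int_\boz \mathbf{g}\cdot\nabla v\,dx.
\end{equation*}
The right-hand side is controlled by the H\"older inequality and \eqref{1.13}, giving
\begin{equation*}
\lf|\int_\boz \mathbf{g}\cdot\nabla v\,dx\r|\le \|\mathbf{g}\|_{L^{p'}(\boz;\rn)}\|\nabla v\|_{L^p(\boz;\rn)}\ls \|\mathbf{g}\|_{L^{p'}(\boz;\rn)}\lf[\|\mathbf{f}\|_{L^p(\boz;\rn)}+\|F\|_{L^{p_\ast}(\boz)}\r].
\end{equation*}

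To conclude, I would first take $F\equiv 0$ and use $L^p$--$L^{p'}$ duality, obtaining the gradient bound $\|\nabla u\|_{L^{p'}(\boz;\rn)}\le C\|\mathbf{g}\|_{L^{p'}(\boz;\rn)}$. For the $L^{p'}$ norm of $u$ itself, take $\mathbf{f}\equiv 0$, so that $\|u\|_{L^{p'}(\boz)}=\sup_{\|F\|_{L^p(\boz)}\le 1}|\int_\boz Fu\,dx|$; since $\boz$ is bounded and $p_\ast\le p$ in all cases (either from the explicit formula $p_\ast=np/(n+p)<p$ when $p\in(n/(n-1),\fz)$, or by choosing the arbitrary parameter $\epsilon$ small enough when $p\in(1,n/(n-1)]$), the embedding $L^p(\boz)\hookrightarrow L^{p_\ast}(\boz)$ gives $\|F\|_{L^{p_\ast}(\boz)}\ls \|F\|_{L^p(\boz)}$, which closes the estimate. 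Summing the two bounds delivers $\|u\|_{W^{1,p'}(\boz)}\le C\|\mathbf{g}\|_{L^{p'}(\boz;\rn)}$.

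The only delicate point is the justification of $v$ and $u$ as mutual test functions, since the literal statement of \eqref{1.7} is formulated for $\varphi\in C^{\fz}(\rn)$ rather than for elements of $W^{1,p}(\boz)$ or $W^{1,p'}(\boz)$. However, because $\boz$ is a bounded Lipschitz domain, smooth functions on $\overline{\boz}$ (extendable to $C^{\fz}(\rn)$ via a standard extension operator) are dense in these Sobolev spaces, so the weak formulation extends by continuity to the required classes of test functions, and no further analytic obstacle remains.
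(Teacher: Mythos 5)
Your proof is correct and follows essentially the same duality argument as the paper: both rely on using the weak solution of the Robin problem with dual data as a test function (exploiting the symmetry of $A$) and then bounding $\|\nabla u\|_{L^{p'}(\boz;\rn)}$ and the zeroth-order part of $\|u\|_{W^{1,p'}(\boz)}$ separately by $L^p$--$L^{p'}$ duality together with the embedding $L^p(\boz)\hookrightarrow L^{p_\ast}(\boz)$ on the bounded domain. The only cosmetic difference is that you estimate $\|u\|_{L^{p'}(\boz)}$ directly, whereas the paper first obtains $\|u\|_{L^{(p_\ast)'}(\boz)}$ and then passes to $W^{1,p'}(\boz)$ via the Poincar\'e inequality.
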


\begin{proof}
Let $w\in W^{1,p}(\boz)$ be the weak solution of the Robin
problem \eqref{4.2} with $\mathbf{f}\in L^p(\boz;\rn)$.
Then
$$\int_\boz\mathbf{g}\cdot \nabla w\,dx=-\int_\boz A\nabla u\cdot\nabla w\,dx=
\int_\boz \mathbf{f}\cdot\nabla u\,dx,
$$
which, together with \eqref{4.3} and the H\"older inequality, further implies that
\begin{align}\label{4.6}
\|\nabla u\|_{L^{p'}(\boz;\rn)}&=\sup_{\|\mathbf{f}\|_{L^{p}(\boz;\rn)}\le1}
\lf|\int_\boz\mathbf{f}\cdot\nabla u\,dx\r|
=\sup_{\|\mathbf{f}\|_{L^{p}(\boz;\rn)}\le1}
\lf|\int_\boz\mathbf{g}\cdot \nabla w\,dx\r|\\
&\le\sup_{\|\mathbf{f}\|_{L^{p}(\boz;\rn)}\le1}\|\mathbf{g}\|_{L^{p'}(\boz;\rn)}
\|\nabla w\|_{L^{p}(\boz;\rn)}
\nonumber\\
&\ls\sup_{\|\mathbf{f}\|_{L^{p}(\boz;\rn)}\le1}\|\mathbf{g}\|_{L^{p'}(\boz;\rn)}
\|\mathbf{f}\|_{L^{p}(\boz;\rn)}\ls\|\mathbf{g}\|_{L^{p'}(\boz;\rn)}.\nonumber
\end{align}
Let $h$ be the weak solution of the Robin problem
\begin{equation*}
\begin{cases}
-\mathrm{div}(A\nabla h)=F\ \ & \text{in}\ \ \boz,\\
\dfrac{\partial h}{\partial\boldsymbol{\nu}}+\az h=0\ \ & \text{on}\ \ \partial\boz
\end{cases}
\end{equation*}
with $F\in L^{p_\ast}(\boz)$. Then, by the assumption condition in Lemma \ref{l4.2},
we conclude that
$$\|h\|_{W^{1,p}(\boz)}\ls\|F\|_{L^{p_\ast}(\boz)},
$$
which, combined with
$$\int_\boz uF\,dx=\int_\boz A\nabla u\cdot\nabla h\,dx+\int_{\partial\boz}
\az uh\,d\sz(x)=-\int_{\boz}\mathbf{g}\cdot\nabla h\,dx,
$$
further implies that
\begin{align*}
\|u\|_{L^{q}(\boz)}&=\sup_{\|F\|_{L^{p_\ast}(\boz)}\le1}
\lf|\int_\boz uF\,dx\r|
=\sup_{\|F\|_{L^{p_\ast}(\boz)}\le1}
\lf|\int_\boz\mathbf{g}\cdot \nabla h\,dx\r|\nonumber\\
&\le\sup_{\|F\|_{L^{p_\ast}(\boz)}\le1}\|\mathbf{g}\|_{L^{p'}(\boz;\rn)}
\|\nabla h\|_{L^{p}(\boz;\rn)}\nonumber\\
&\ls\sup_{\|F\|_{L^{p_\ast}(\boz)}\le1}\|\mathbf{g}\|_{L^{p'}(\boz;\rn)}
\|F\|_{L^{p_\ast}(\boz)}\ls\|\mathbf{g}\|_{L^{p'}(\boz;\rn)},
\end{align*}
where $q\in(1,\fz)$ is given by $1/q+1/p_\ast=1$.
From this, \eqref{4.6} and the H\"older inequality, it follows that
$$\|u\|_{W^{1,p'}(\boz)}\ls\|u\|_{L^{1}(\boz)}+\|\nabla u\|_{L^{p'}(\boz;\rn)}
\ls\|u\|_{L^{q}(\boz)}+\|\nabla u\|_{L^{p'}(\boz;\rn)}
\ls\|\mathbf{g}\|_{L^{p'}(\boz;\rn)}.
$$
This finishes the proof of Lemma \ref{l4.2}.
\end{proof}

By Lemmas \ref{l4.1} and \ref{l4.2}, we have the following corollary.

\begin{corollary}\label{c4.1}
Let $n\ge2$, $\boz\subset\rn$ be a bounded Lipschitz domain, $\az\in L^\fz(\partial\boz)$ be as in \eqref{1.4},
$p\in(1,\fz)$, and $p_\ast\in(1,\fz)$ be as in \eqref{1.5}.
Assume that the weak solution $u\in W^{1,p}(\boz)$ of the Robin problem $(R)_p$,
with $\mathbf{f}\in L^p(\boz;\rn)$, $F\in L^{p_\ast}(\boz)$ and $g\equiv0$,
satisfies the estimate \eqref{1.13}.
Then the weak solution $v$ of the Robin problem
\begin{equation*}
\begin{cases}
-\mathrm{div}(A\nabla v)=\mathrm{div}(\mathbf{g})+H\ \ & \text{in}\ \ \boz,\\
\dfrac{\partial v}{\partial\boldsymbol{\nu}}+\az v
=\mathbf{g}\cdot\boldsymbol{\nu}+h\ \ & \text{on}\ \ \partial\boz,
\end{cases}
\end{equation*}
with $\mathbf{g}\in L^{p}(\boz;\rn)$, $H\in L^{p_\ast}(\boz)$
and $h\in W^{-1/p,p}(\partial\boz)$, satisfies
$$\|v\|_{W^{1,p}(\boz)}\le C\lf[\|\mathbf{g}\|_{L^{p}(\boz;\rn)}
+\|H\|_{L^{p_\ast}(\boz)}+\|h\|_{W^{-1/p,p}(\partial\boz)}\r],$$
where $C$ is a positive constant independent of $v$, $\mathbf{g}$, $H$
and $h$.
\end{corollary}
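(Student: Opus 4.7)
The plan is to exploit the linearity of the Robin problem by decomposing $v=v_1+v_2$, where $v_1\in W^{1,p}(\boz)$ is a weak solution of $(R)_p$ with data $\mathbf{f}:=\mathbf{g}$, $F:=H$ and $g\equiv 0$, and $v_2:=v-v_1$ therefore weakly solves $-\divz(A\nabla v_2)=0$ in $\boz$ together with $\partial v_2/\partial\boldsymbol{\nu}+\az v_2=h$ on $\partial\boz$. The bound $\|v_1\|_{W^{1,p}(\boz)}\ls\|\mathbf{g}\|_{L^{p}(\boz;\rn)}+\|H\|_{L^{p_\ast}(\boz)}$ is exactly the assumed estimate \eqref{1.13}, so the entire task reduces to controlling $v_2$ by $\|h\|_{W^{-1/p,p}(\partial\boz)}$, which I would carry out by two dual pairing arguments that hinge on Lemmas \ref{l4.1} and \ref{l4.2}.

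To estimate $\|\nabla v_2\|_{L^p(\boz;\rn)}$, I would dualize against an arbitrary $\mathbf{f}\in L^{p'}(\boz;\rn)$. Let $u\in W^{1,p'}(\boz)$ denote the weak solution of the Robin problem \eqref{4.2} associated with this $\mathbf{f}$; Lemma \ref{l4.2} then supplies $\|u\|_{W^{1,p'}(\boz)}\ls\|\mathbf{f}\|_{L^{p'}(\boz;\rn)}$. Using $u$ as a test function in the weak formulation of $v_2$ and $v_2$ as a test function in the weak formulation of $u$, the symmetry of $A$ forces the interior bilinear forms and the Robin boundary integrals to cancel, yielding
\begin{equation*}
-\int_\boz \mathbf{f}(x)\cdot\nabla v_2(x)\,dx=\langle h,u\rangle_{\partial\boz}.
\end{equation*}
The right-hand side is controlled by $\|h\|_{W^{-1/p,p}(\partial\boz)}\|u\|_{W^{1/p,p'}(\partial\boz)}$, which the trace theorem $W^{1,p'}(\boz)\hookrightarrow W^{1/p,p'}(\partial\boz)$ combined with Lemma \ref{l4.2} upgrades to a constant multiple of $\|h\|_{W^{-1/p,p}(\partial\boz)}\|\mathbf{f}\|_{L^{p'}(\boz;\rn)}$. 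Taking the supremum over $\|\mathbf{f}\|_{L^{p'}(\boz;\rn)}\le1$ then delivers the desired gradient bound.

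The estimate for $\|v_2\|_{L^p(\boz)}$ proceeds in parallel, dualizing against $F\in L^{p'}(\boz)$. One lets $w\in W^{1,p'}(\boz)$ solve the Robin problem \eqref{4.1} with right-hand side $F$; Lemma \ref{l4.1} together with the elementary embedding $L^{p'}(\boz)\hookrightarrow L^{(p')_\ast}(\boz)$ on the bounded $\boz$ produces $\|w\|_{W^{1,p'}(\boz)}\ls\|F\|_{L^{p'}(\boz)}$. The very same cross-testing and symmetry manoeuvre yields the identity $\int_\boz F(x)v_2(x)\,dx=\langle h,w\rangle_{\partial\boz}$, from which $\|v_2\|_{L^p(\boz)}\ls\|h\|_{W^{-1/p,p}(\partial\boz)}$ follows upon taking the supremum over $F$. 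Combining this with the gradient estimate and the hypothesised bound on $v_1$ completes the proof.

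The main subtlety to address, rather than a real obstacle, will be justifying the use of $u,\,w\in W^{1,p'}(\boz)$ as test functions in the weak formulation \eqref{1.7}, whose nominal test class is $C^\fz(\rn)$; extending \eqref{1.7} to such $W^{1,p'}$ test functions by density is routine on Lipschitz domains but should be made explicit. A minor verification is also needed for the embedding $L^{p'}(\boz)\hookrightarrow L^{(p')_\ast}(\boz)$ across the two regimes of \eqref{1.5}: when $p'>n/(n-1)$, $(p')_\ast=np'/(n+p')<p'$ is immediate, and when $p'\le n/(n-1)$ one merely chooses $\epsilon<p'-1$ in the definition of $(p')_\ast$ to secure the embedding on the bounded domain $\boz$.
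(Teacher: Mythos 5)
Your proof is correct, but the decomposition is genuinely different from the paper's. You split $v = v_1 + v_2$ with $v_1$ absorbing both $\mathbf{g}$ and $H$ and hence estimable directly from the hypothesis \eqref{1.13}; only the boundary datum $h$ is left for the remainder $v_2$, which you then control by two duality pairings against $L^{p'}(\boz)$, using Lemma~\ref{l4.2} for the gradient bound and Lemma~\ref{l4.1} together with the elementary embedding $L^{p'}(\boz)\hookrightarrow L^{(p')_\ast}(\boz)$ for the $L^p$ bound. The paper instead splits $v = w_1 + w_2$ where $w_2$ carries only $\mathbf{g}$ and $w_1$ carries both $H$ and $h$; there the two data $H$ and $h$ must both be threaded through the duality pairing, producing the extra term $\|H\|_{L^{p_\ast}(\boz)}\|u_1\|_{L^{(p_\ast)'}(\boz)}$ that requires a Sobolev embedding for $u_1\in W^{1,p'}(\boz)$, and the $L^p$ control of $w_1$ is obtained by dualizing against $L^{p_\ast}(\boz)$ (with a solution $u_2\in W^{1,p}(\boz)$ coming from \eqref{1.13} with $\mathbf{f}\equiv 0$) rather than via Lemma~\ref{l4.1}. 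Your reorganization absorbs $H$ before any duality is invoked, so each duality step deals with a single datum, and the second pairing becomes the natural $L^p$--$L^{p'}$ one feeding directly into Lemma~\ref{l4.1}. Both arguments rely on the symmetry of $A$ for the cross-testing identities, and both tacitly extend the test class of \eqref{1.7} from $C^\infty(\rn)$ to $W^{1,p'}(\boz)$ by density, a point you are right to flag; your checking of $L^{p'}(\boz)\hookrightarrow L^{(p')_\ast}(\boz)$ in both regimes of \eqref{1.5} is also the correct small verification.
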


\begin{proof}
Let $u_1$ be the weak solution of the Robin problem
\begin{equation*}
\begin{cases}
-\mathrm{div}(A\nabla u_1)=\mathrm{div}(\mathbf{f}_1)\ \ & \text{in}\ \ \boz,\\
\dfrac{\partial u_1}{\partial\boldsymbol{\nu}}+\az u_1
=\mathbf{f}_1\cdot\boldsymbol{\nu}\ \ & \text{on}\ \ \partial\boz
\end{cases}
\end{equation*}
with $\mathbf{f}_1\in L^{p'}(\boz;\rn)$.
Then, by the assumption conditions in Corollary \ref{c4.1} and Lemma \ref{l4.2},
we conclude that
\begin{equation}\label{4.7}
\|u_1\|_{W^{1,p'}(\boz)}\ls\|\mathbf{f}_1\|_{L^{p'}(\boz;\rn)}.
\end{equation}
Let $w_1$ be the weak solution of the Robin problem
\begin{equation*}
\begin{cases}
-\mathrm{div}(A\nabla w_1)=H\ \ & \text{in}\ \ \boz,\\
\dfrac{\partial w_1}{\partial\boldsymbol{\nu}}+\az w_1
=h\ \ & \text{on}\ \ \partial\boz.
\end{cases}
\end{equation*}
Furthermore, assume that $w_2$ is the weak solution of the Robin problem
\begin{equation*}
\begin{cases}
-\mathrm{div}(A\nabla w_2)=\mathrm{div}(\mathbf{g})\ \ & \text{in}\ \ \boz,\\
\dfrac{\partial w_2}{\partial\boldsymbol{\nu}}+\az w_2
=\mathbf{g}\cdot\boldsymbol{\nu}\ \ & \text{on}\ \ \partial\boz.
\end{cases}
\end{equation*}
Then $v=w_1+w_2$ and
\begin{equation}\label{4.8}
\|w_2\|_{W^{1,p}(\boz)}\ls\|\mathbf{g}\|_{L^p(\boz;\rn)}.
\end{equation}
Moreover,
$$-\int_\boz\mathbf{f}_1\cdot\nabla w_1\,dx=
\int_\boz A\nabla u_1\cdot\nabla w_1\,dx+\int_{\partial\boz}
\az u_1w_1\,d\sz(x)=\int_{\boz}Hu_1\,dx+\langle h,u_1\rangle_{\partial\boz},
$$
which, together with the H\"older inequality, the Sobolev inequality
and the trace inequality
$$\|u_1\|_{W^{1/p,p'}(\partial\boz)}\ls\|u_1\|_{W^{1,p'}(\boz)}$$
(see, for instance, \cite[Section 2.5.4, Theorem 5.5]{n12}), further implies that
\begin{align*}
\lf|\int_\boz\mathbf{f}_1\cdot\nabla w_1\,dx\r|&\le
\|H\|_{L^{p_\ast}(\boz)}\|u_1\|_{L^{(p_\ast)'}(\boz)}+\|h\|_{W^{-1/p,p}(\partial\boz)}
\|u_1\|_{W^{1/p,p'}(\partial\boz)}\\
&\ls\lf[\|H\|_{L^{p_\ast}(\boz)}+\|h\|_{W^{-1/p,p}(\partial\boz)}\r]
\|u_1\|_{W^{1,p'}(\boz)}.
\end{align*}
From this and \eqref{4.7}, it follows that
\begin{equation}\label{4.9}
\|\nabla w_1\|_{L^{p}(\boz;\rn)}\ls
\|H\|_{L^{p_\ast}(\boz)}+\|h\|_{W^{-1/p,p}(\partial\boz)}.
\end{equation}
Let $u_2$ be the weak solution of the Robin problem
\begin{equation*}
\begin{cases}
-\mathrm{div}(A\nabla u_2)=F\ \ & \text{in}\ \ \boz,\\
\dfrac{\partial u_2}{\partial\boldsymbol{\nu}}+\az u_2
=0\ \ & \text{on}\ \ \partial\boz,
\end{cases}
\end{equation*}
with $F\in L^{p_\ast}(\boz)$.
Then $\|u_2\|_{W^{1,p}(\boz)}\ls\|F\|_{L^{p_\ast}(\boz)}$.
Using this estimate, similarly to the estimation of \eqref{4.9},
we conclude that
$$\|w_1\|_{L^p(\boz)}\ls\|H\|_{L^{p_\ast}(\boz)}+\|h\|_{W^{-1/p,p}(\partial\boz)},
$$
which, combined with \eqref{4.9}, further implies that
\begin{equation}\label{4.10}
\|w_1\|_{W^{1,p}(\boz)}\ls
\|H\|_{L^{p_\ast}(\boz)}+\|h\|_{W^{-1/p,p}(\partial\boz)}.
\end{equation}
Thus, from \eqref{4.8}, \eqref{4.10} and $v=w_1+w_2$,
we deduce that
$$\|v\|_{W^{1,p}(\boz)}\ls\|\mathbf{g}\|_{L^p(\boz;\rn)}
+\|H\|_{L^{p_\ast}(\boz)}+\|h\|_{W^{-1/p,p}(\partial\boz)}.
$$
This finishes the proof of Corollary \ref{c4.1}.
\end{proof}

To show Theorem \ref{t1.2}, we also need the  following
Lemma \ref{l4.3}, whose proof is similar to that of \cite[Lemma 4.38]{bm16}
and we omit the details here.

\begin{lemma}\label{l4.3}
Let $n\ge2$, $\boz\subset\rn$ be a Lipschitz domain, $0<p_0<q\le\fz$ and
$r_0\in(0,\diam(\boz))$. Assume that $x\in\overline{\boz}$ and
the weak reverse H\"older inequality
\begin{equation*}
\lf[\fint_{B_\boz(x,r)}|g|^q\,dx\r]^{\frac1q}
\le C_6\lf[\fint_{B_\boz(x,2r)}
|g|^{p_0}\,dx\r]^{\frac1{p_0}}
\end{equation*}
holds true for a given measurable function $g$ on $\boz$ and any $r\in(0,r_0)$,
where $C_6$ is a positive constant, independent of $x$ and $r$,
which may depend on $g$.
Then, for any given $p\in(0,\fz]$, there exists a positive constant $C$,
depending only on $p$, $p_0$, $q$ and $C_6$, such that
\begin{equation*}
\lf[\fint_{B_\boz(x,r)}|g|^q\,dx\r]^{\frac1q}
\le C\lf[\fint_{B_\boz(x,2r)}
|g|^{p}\,dx\r]^{\frac1p}
\end{equation*}
holds true for any $r\in(0,r_0)$.
\end{lemma}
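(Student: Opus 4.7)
The plan is to split the argument into two cases according to the relation between $p$ and $p_0$. When $p\in[p_0,\fz]$, H\"older's inequality immediately gives
\begin{equation*}
\lf[\fint_{B_\boz(x,2r)}|g|^{p_0}\,dx\r]^{1/p_0}\le\lf[\fint_{B_\boz(x,2r)}|g|^{p}\,dx\r]^{1/p},
\end{equation*}
so the hypothesis yields the desired bound with constant $C=C_6$. Thus the substance of the argument lies in the case $p\in(0,p_0)$.

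For $p\in(0,p_0)$, the first step is a standard $L^{p_0}$-interpolation between $L^p$ and $L^q$. Let $\theta\in(0,1)$ be determined by $\frac{1}{p_0}=\frac{1-\theta}{p}+\frac{\theta}{q}$; then H\"older's inequality, normalized by the volume of $B_\boz(x,2r)$, gives
\begin{equation*}
\lf[\fint_{B_\boz(x,2r)}|g|^{p_0}\,dx\r]^{1/p_0}\le\lf[\fint_{B_\boz(x,2r)}|g|^{p}\,dx\r]^{(1-\theta)/p}\lf[\fint_{B_\boz(x,2r)}|g|^{q}\,dx\r]^{\theta/q}.
\end{equation*}
Combining this with the hypothesis produces the intermediate inequality
\begin{equation*}
\Phi(\rho)\le C_6\,\Psi(2\rho)^{1-\theta}\,\Phi(2\rho)^{\theta},
\end{equation*}
valid for any $\rho\in(0,r_0/2)$, where $\Phi(\rho):=[\fint_{B_\boz(x,\rho)}|g|^{q}\,dx]^{1/q}$ and $\Psi(\rho):=[\fint_{B_\boz(x,\rho)}|g|^{p}\,dx]^{1/p}$.

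The second step is to iterate this intermediate inequality so as to eliminate the factor $\Phi(2\rho)^{\theta}$ on the right-hand side. Applying Young's inequality in the form $a^{1-\theta}b^{\theta}\le\eta\,b+C(\eta,\theta)\,a$ with $a=\Psi(2\rho)$ and $b=\Phi(2\rho)$, one obtains, for any $\eta>0$,
\begin{equation*}
\Phi(\rho)\le C_6\eta\,\Phi(2\rho)+C(\eta,\theta,C_6)\,\Psi(2\rho).
\end{equation*}
Choosing $\eta$ so that $C_6\eta=1/2$ and iterating $N$ times (as long as $2^N\rho<r_0$) yields
\begin{equation*}
\Phi(\rho)\le 2^{-N}\Phi(2^N\rho)+C\sum_{k=0}^{N-1}2^{-k}\Psi(2^{k+1}\rho).
\end{equation*}
Invoking the hypothesis at a radius close to $r_0$ to control the tail term $\Phi(2^N\rho)$, and exploiting the monotonicity of $\rho\mapsto\|g\|_{L^p(B_\boz(x,\rho))}$ and $\rho\mapsto\|g\|_{L^q(B_\boz(x,\rho))}$ to estimate the geometric sum, one closes the argument and obtains the uniform bound $\Phi(\rho)\le C\,\Psi(2\rho)$, exactly in the spirit of the scheme used in \cite[Lemma 4.38]{bm16}.

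The main obstacle is this iteration and absorption step: the intermediate inequality involves $\Phi$ on both sides but evaluated at different scales, so Young's inequality alone cannot produce the absorption needed to close the estimate. The delicate point is balancing the geometric decay $2^{-N}$ against the possible growth of $\Phi$ at larger radii, which is precisely the scheme carried out in \cite[Lemma 4.38]{bm16} and which the paper invokes to justify omitting the details.
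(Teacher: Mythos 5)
Your first case and the interpolation step in the second are correct, and Young's inequality does give the split you state, but the subsequent iteration does not close. You iterate $\Phi(\rho)\le\tfrac12\Phi(2\rho)+C\Psi(2\rho)$ outward along the doubling sequence $\rho,2\rho,\ldots$ and obtain
\begin{equation*}
\Phi(\rho)\le 2^{-N}\Phi\bigl(2^{N}\rho\bigr)+C\sum_{k=0}^{N-1}2^{-k}\Psi\bigl(2^{k+1}\rho\bigr),
\end{equation*}
and then assert that monotonicity of $\rho\mapsto\|g\|_{L^p(B_\boz(x,\rho))}$ lets you bound the right side by $C\Psi(2\rho)$. It does not: these norms are \emph{increasing} in $\rho$, which only gives the lower bound $\Psi(2^{k+1}\rho)\gs 2^{-kn/p}\Psi(2\rho)$, the wrong direction, and the averaged quantities $\Psi(\cdot)$ themselves are not monotone at all. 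If, for instance, $g$ vanishes on $B_\boz(x,r_0/2)$ and is of size one on $B_\boz(x,r_0)\setminus B_\boz(x,r_0/2)$, then for small $\rho$ the terms with $2^{k+1}\rho\le r_0/2$ all vanish, but the remaining terms with $2^{k+1}\rho$ near $r_0$ (and the residual $2^{-N}\Phi(2^{N}\rho)$) are strictly positive while $\Psi(2\rho)=0$; the right-hand side of your iterated inequality is therefore not $\ls\Psi(2\rho)$. The outward iteration escapes to scales where the averages of $g$ are no longer governed by $\Psi(2\rho)$, and there the absorption you need is lost.

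The argument behind \cite[Lemma~4.38]{bm16} --- and the classical Giaquinta-type scheme for lowering the right-hand exponent in a weak reverse H\"older inequality --- keeps every radius inside the fixed window $[r,2r]$, so that all $\Psi$'s encountered remain comparable to $\Psi(2r)$. The missing ingredient in your write-up is a covering step: for $r\le\rho_1<\rho_2\le 2r$, one covers $B_\boz(x,\rho_1)$ by $O\bigl((\rho_1/(\rho_2-\rho_1))^{n}\bigr)$ balls of radius $\sim\rho_2-\rho_1$ centred at points of $\overline{\boz}$ near $x$, applies the hypothesis at each such centre (this is where one needs the constant $C_6$ to be uniform in the centre, which is what ``independent of $x$'' in the statement signals and what is available in the application to Theorem~\ref{t1.1}), and sums, obtaining
\begin{equation*}
\lf(\fint_{B_\boz(x,\rho_1)}|g|^q\,dx\r)^{1/q}\le C\lf(\frac{\rho_2}{\rho_2-\rho_1}\r)^{n/p_0}\lf(\fint_{B_\boz(x,\rho_2)}|g|^{p_0}\,dx\r)^{1/p_0}.
\end{equation*}
Interpolating and applying Young's inequality to this, and then replacing $\Psi(\rho_2)$ by $C\Psi(2r)$ --- a genuine comparison since $|B_\boz(x,\rho_2)|\sim|B_\boz(x,2r)|$ for $\rho_2\in[r,2r]$ --- gives $\Phi(\rho_1)\le\tfrac12\Phi(\rho_2)+C\,r^{\alpha}(\rho_2-\rho_1)^{-\alpha}\Psi(2r)$ with $\alpha:=n/(p_0(1-\theta))$, for all $r\le\rho_1<\rho_2\le 2r$. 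The standard hole-filling iteration lemma (see, e.g., \cite[Chapter~V]{g83}) then yields $\Phi(r)\le C\Psi(2r)$, which is the claim.
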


Now we prove Theorem \ref{t1.1} by using Theorem \ref{t3.1}
and Lemmas \ref{l4.1}, \ref{l4.2} and \ref{l4.3}.

\begin{proof}[Proof of Theorem \ref{t1.1}]
We first show that (i) implies (ii).
Assume that (i) holds true. Let $r_0\in(0,\diam(\boz))$
be a constant and $B:=B(x_0,r)$ as in (ii),
namely, $r\in(0,r_0/4)$ and either $x_0\in\partial\boz$ or $B(x_0,2r)\subset\boz$.
Let $\eta\in C^\fz_{\mathrm{c}}(\rn)$ satisfy that $0\le\eta\le1$, $\eta\equiv1$ on $B$,
$\supp(\eta)\subset2B$, and $|\nabla\eta|\ls r^{-1}$. Moreover,
assume that $-\mathrm{div}(A\nabla v)=0$
in $2B\cap\boz$ and $\frac{\partial v}{\partial\boldsymbol{\nu}}+\az v=0$
on $B(x_0,2r)\cap\partial\boz$ when $x_0\in\partial\boz$. Then
\begin{equation}\label{4.11}
\begin{cases}
-\mathrm{div}(A\nabla([v-\overline{v}]\eta))=
-\mathrm{div}([v-\overline{v}]A\nabla\eta)-A\nabla v\cdot\nabla\eta\ \ & \text{in}\ \ \boz,\\
\dfrac{\partial([v-\overline{v}]\eta)}{\partial\boldsymbol{\nu}}+\az(v-\overline{v})\eta
=(v-\overline{v})A\nabla\eta\cdot\boldsymbol{\nu}-\az\overline{v}\eta
\ & \text{on}\ \ \partial\boz,
\end{cases}
\end{equation}
where $\overline{v}:=\fint_{2B\cap\boz}v\,dy$.
Indeed, by the assumption that $-\mathrm{div}(A\nabla v)=0$
in $2B\cap\boz$ and $\frac{\partial v}{\partial\boldsymbol{\nu}}+\az v=0$
on $B(x_0,2r)\cap\partial\boz$, we conclude that, for any $\phi \in C^\fz(\rn)$,
\begin{align*}
&\int_\boz A\nabla((v-\overline{v})\eta)\cdot\nabla\phi\,dx
+\int_{\partial\boz}\az(v-\overline{v})\eta\phi\,d\sz(x)\nonumber\\
&\quad=\int_\boz
A(v-\overline{v})\nabla\eta\cdot\nabla\phi\,dx
+\int_{\boz}\eta A\nabla(v-\overline{v})\cdot\nabla\phi\,dx
+\int_{\partial\boz}\az(v-\overline{v})\eta\phi\,d\sz(x)\nonumber\\
&\quad=\int_{\boz}A(v-\overline{v})\nabla\eta\cdot\nabla\phi\,dx
+\int_\boz A\nabla v\cdot\nabla(\eta\phi)\,dx-
\int_\boz \phi A\nabla v\cdot\nabla\eta\,dx
+\int_{\partial\boz}\az(v-\overline{v})\eta\phi\,d\sz(x)\nonumber\\
&\quad=\int_\boz A(v-\overline{v})\nabla\eta\cdot\nabla\phi\,dx
-\int_{\boz}\phi A\nabla v\cdot\nabla\eta\,dx
-\int_{\partial\boz}\az\overline{v}\eta\phi\,d\sz(x),
\end{align*}
which implies that \eqref{4.11} holds true.
From (i), \eqref{4.11},
Corollary \ref{c4.1} and \eqref{1.3}, we deduce that
\begin{align}\label{4.12}
\lf\|(v-\overline{v})\eta\r\|_{W^{1,p}(\boz)}
&\ls\lf\|A(v-\overline{v})\nabla\eta\r\|_{L^p(\boz;\rn)}+
\lf\|A\nabla v\cdot\nabla\eta\r\|_{L^{p_\ast}(\boz)}
+\|\az\overline{v}\eta\|_{W^{-1/p,p}(\partial\boz)}.
\end{align}
By the facts that $\supp(\eta)\subset2B$ and $|\nabla\eta|\ls r^{-1}$,
and the Sobolev inequality, we conclude that
\begin{align}\label{4.13}
\lf\|A(v-\overline{v})\nabla\eta\r\|_{L^p(\boz;\rn)}
\ls\lf\|(v-\overline{v})\nabla\eta\r\|_{L^p(\boz;\rn)}
\ls r^{-1}\lf\|v-\overline{v}\r\|_{L^p(2B\cap\boz)}\ls
r^{-1}\lf\|\nabla v\r\|_{L^{p_\ast}(2B\cap\boz;\rn)}.
\end{align}
Furthermore, it is easy to see that
\begin{align}\label{4.14}
\lf\|A\nabla v\cdot\nabla\eta\r\|_{L^{p_\ast}(\boz)}\ls
r^{-1}\lf\|\nabla v\r\|_{L^{p_\ast}(2B\cap\boz;\rn)}.
\end{align}
Moreover, by the facts that
$$ W^{1/p,p'}(\partial\boz)\subset L^s(\partial\boz)
\quad\text{and}\quad \lf(W^{1/p,p'}(\partial\boz)\r)^\ast=W^{-1/p,p}(\partial\boz)
$$
(see, for instance, \cite[Section 2.5.8, Corollary 5.1]{n12}),
where $s:=\frac{(n-1)p'}{(n-1)-p'/p}$ and $(W^{1/p,p'}(\partial\boz))^\ast$
denotes the \emph{dual space} of $W^{1/p,p'}(\partial\boz)$, we find that
$$
L^{s'}(\partial\boz)\subset W^{-1/p,p}(\partial\boz),
$$
which, together with $s'=(n-1)p/n$ and $\supp(\eta)\subset2B$, implies that
\begin{align*}
\|\az\overline{v}\eta\|_{W^{-1/p,p}(\partial\boz)}\ls
|\overline{v}|\|\az\eta\|_{L^{\frac{(n-1)p}n}(\partial\boz)}\ls
|\overline{v}|\|\az\|_{L^{\fz}(\partial\boz)}r^{n/p}.
\end{align*}
From this, \eqref{4.12}, \eqref{4.13} and \eqref{4.14}, we deduce that
$$\lf\|(v-\overline{v})\eta\r\|_{W^{1,p}(\boz)}\ls
r^{-1}\lf\|\nabla v\r\|_{L^{p_\ast}(2B\cap\boz;\rn)}
+\|\az\|_{L^{\fz}(\partial\boz)}|\overline{v}|r^{n/p},
$$
which, combined with the facts that $\eta\equiv1$ on $B$ and
$\frac{1}{p_\ast}=\frac{1}{p}+\frac{1}{n}$, further implies that
\begin{align}\label{4.15}
\lf[\fint_{B\cap\boz}(|v|+|\nabla v|)^p\,dx\r]^{\frac1p}&\ls
\lf[1+\|\az\|_{L^\fz(\partial\boz)}\r]\lf[\lf(\fint_{2B\cap\boz}|\nabla v|^{p_\ast}\,dx\r)^{\frac1{p_\ast}}
+\fint_{2B\cap\boz}|v|\,dx\r]\\
&\ls\lf[1+\|\az\|_{L^\fz(\partial\boz)}\r]
\lf[\fint_{2B\cap\boz}(|v|+|\nabla v|)^{p_\ast}\,dx\r]^{\frac1{p_\ast}}.\nonumber
\end{align}
By \eqref{4.15} and Lemma \ref{l4.3}, we find that
\eqref{1.14} holds true. Thus, we show that (i) implies (ii).

Now we prove that (ii) implies (iii). Assume that (ii) holds true. Let $q\in[p_0,p]$,
$q_0\in[1,\frac{q}{p_0}]$, $r_0\in[(\frac{p}{q})',\fz]$
and $\omega\in A_{q_0}(\rn)\cap RH_{r_0}(\rn)$.
Let $\mathbf{f}\in L^q_\omega(\boz;\rn)$, $F\in L^{q_\ast}_{\omega^{q_\ast/q}}(\boz)$,
and $u_1,\,u_2\in W^{1,p_0}(\boz)$ be, respectively, the weak solutions of the Robin problems
\begin{equation}\label{4.16}
\begin{cases}
-\mathrm{div}(A\nabla u_1)=\mathrm{div}(\mathbf{f})\ \ &\text{in}\ \ \boz,\\
\dfrac{\partial u_1}{\partial\boldsymbol{\nu}}+\az u_1=\mathbf{f}\cdot
\boldsymbol{\nu} \ \ &\text{on}\ \ \partial\boz
\end{cases}
\end{equation}
and
\begin{equation}\label{4.17}
\begin{cases}
-\mathrm{div}(A\nabla u_2)=F\ \ &\text{in}\ \ \boz,\\
\dfrac{\partial u_2}{\partial\boldsymbol{\nu}}+\az u_2=0 \ \ &\text{on}\ \ \partial\boz.
\end{cases}
\end{equation}
Then $u=u_1+u_2$.

From Lemma \ref{l3.1}(vii), it follows that
$L^q_\omega(\boz)\subset L^{\frac q{q_0}}(\boz)\subset L^{p_0}(\boz)$,
which further implies that $\mathbf{f}\in L^{p_0}(\boz;\rn)$.
Let $B:=B(x_B,r_B)\subset\rn$ be a ball satisfying $r_B\in(0,r_0/4)$ and either $2B\subset\boz$ or $x_B\in\partial\boz$.
Take $\phi\in C^\fz_{\mathrm{c}}(\rn)$ such that $\phi\equiv1$ on $2B$, $0\le\phi\le1$
and $\supp(\phi)\subset 4B$. Let $v_1,\,w_1\in W^{1,p_0}(\boz)$ be, respectively, the
weak solutions of the Robin problems
\begin{equation}\label{4.18}
\begin{cases}
-\mathrm{div}(A\nabla v_1)=\mathrm{div}(\phi\mathbf{f})\ \ &\text{in}\ \ \boz,\\
\dfrac{\partial v_1}{\partial\boldsymbol{\nu}}+\az v_1=\phi\mathbf{f}\cdot
\boldsymbol{\nu} \ \ &\text{on}\ \ \partial\boz
\end{cases}
\end{equation}
and
\begin{equation}\label{4.19}
\begin{cases}
-\mathrm{div}(A\nabla w_1)=\mathrm{div}((1-\phi)\mathbf{f})\ \ &\text{in}\ \ \boz,\\
\dfrac{\partial w_1}{\partial\boldsymbol{\nu}}+\az w_1=(1-\phi)\mathbf{f}\cdot
\boldsymbol{\nu} \ \ &\text{on}\ \ \partial\boz.
\end{cases}
\end{equation}
Then $u_1=v_1+w_1$.

Let $G:=|u_1|+|\nabla u_1|$, $f:=|\mathbf{f}|$, $G_B:=|v_1|+|\nabla v_1|$
and $R_B:=|w_1|+|\nabla w_1|$. It is easy to see that $0\le G\le G_B+R_B$.
By the assumption \eqref{1.12}, we conclude that
$$\|v_1\|_{W^{1,p_0}(\boz)}\ls\|\phi\mathbf{f}\|_{L^{p_0}(\boz;\rn)},
$$
which further implies that
\begin{align}\label{4.20}
\lf(\fint_{2B_\boz}G_B^{p_0}\,dx\r)^{\frac1{p_0}}
&\ls\lf[\frac{1}{|2B_\boz|}\int_{\boz}(|v_1|+|\nabla v_1|)^{p_0}\,dx\r]^{\frac1{p_0}}\\
&\ls\lf(\frac{1}{|2B_\boz|}\int_{\boz}|\mathbf{f}\phi|^{p_0}\,dx\r)^{\frac1{p_0}}
\ls\lf(\fint_{4B_\boz}f^{p_0}\,dx\r)^{\frac1{p_0}}.\nonumber
\end{align}
Moreover, from \eqref{4.19} and (ii),
it follows that \eqref{1.14} holds true for $w_1$,
which, together with the self-improvement property of the weak reverse H\"older inequality
(see, for instance, \cite[pp.\,122-123]{g83}), further implies that
there exists an $\uc_0\in(0,\fz)$ such that the inequality \eqref{1.14}
holds true with $p$ replaced by $p+\uc_0$.
By this and Lemma \ref{l4.3}, we find that,
for any $t\in(0,p_0]$, the weak reverse H\"older inequality
\begin{equation}\label{4.21}
\lf[\fint_{B_\boz}(|w_1|+|\nabla w_1|)^{p+\uc_0}\,dx\r]^{\frac1{p+\uc_0}}
\ls\lf[\fint_{2B_\boz}
(|w_1|+|\nabla w_1|)^{t}\,dx\r]^{\frac1t}
\end{equation}
holds true. This, combined with \eqref{4.20}, further implies that
\begin{align}\label{4.22}
\lf[\fint_{2B_\boz}R_B^{p+\uc_0}\,dx
\r]^{\frac1{p+\uc_0}}
&\ls\lf[\fint_{4B_\boz}(|w_1|+|\nabla w_1|)^{p_0}\,dx\r]^{\frac1{p_0}}\\
&\ls\lf(\fint_{4B_\boz}G^{p_0}\,dx\r)^{\frac1{p_0}}
+\lf(\fint_{4B_\boz}f^{p_0}\,dx\r)^{\frac1{p_0}}.\nonumber
\end{align}
Moreover, from the assumption $\omega\in A_{q_0}(\rn)\cap RH_{r_0}(\rn)$
with $q_0\in[1,\frac{q}{p_0}]$ and $r_0\in[(\frac{p}{q})',\fz]$, we deduce that $\omega\in A_{q/p_0}(\rn)\cap RH_s(\rn)$
with $s\in((\frac{p+\uc_0}{q})',\fz]$.
By \eqref{4.20} and \eqref{4.22}, we find that \eqref{3.1} and \eqref{3.2}
hold true with $p_1=p_2:=p_0$, $p_3:=p+\uc_0$ and $\gz:=0$. Then, applying Theorem \ref{t3.1}
and the H\"older inequality, we conclude that
\begin{align}\label{4.23}
&\lf[\frac{1}{\omega(\boz)}\int_{\boz}(|u_1|+|\nabla u_1|)^q\omega\,dx\r]^{\frac1q}\\
&\quad\ls\lf[\fint_{\boz}(|u_1|+|\nabla u_1|)^{p_0}\,dx\r]^{\frac1{p_0}}+
\lf[\frac{1}{\omega(\boz)}\int_{\boz}|\mathbf{f}|^q\omega\,dx\r]^{\frac1q}\nonumber\\
&\quad\ls\lf(\fint_{\boz}|\mathbf{f}|^{p_0}\,dx\r)^{\frac1{p_0}}+
\lf[\frac{1}{\omega(\boz)}\int_{\boz}|\mathbf{f}|^q\omega\,dx\r]^{\frac1q}
\ls\lf[\frac{1}{\omega(\boz)}\int_{\boz}|\mathbf{f}|^q\omega\,dx\r]^{\frac1q}.\nonumber
\end{align}

Let $v_2,\,w_2\in W^{1,p_0}(\boz)$ be, respectively, the
weak solutions of the Robin problems
\begin{equation}\label{4.24}
\begin{cases}
-\mathrm{div}(A\nabla v_2)=F\phi\ \ &\text{in}\ \ \boz,\\
\dfrac{\partial v_2}{\partial\boldsymbol{\nu}}+\az v_2=0 \ \ &\text{on}\ \ \partial\boz
\end{cases}
\end{equation}
and
\begin{equation}\label{4.25}
\begin{cases}
-\mathrm{div}(A\nabla w_2)=F(1-\phi)\ \ &\text{in}\ \ \boz,\\
\dfrac{\partial w_2}{\partial\boldsymbol{\nu}}+\az w_2=0 \ \ &\text{on}\ \ \partial\boz.
\end{cases}
\end{equation}
Then $u_2=v_2+w_2$ and, by the assumption \eqref{1.12}, we find that
\begin{equation}\label{4.26}
\|v_2\|_{W^{1,p_0}(\boz)}\ls\|F\phi\|_{L^{(p_0)_\ast}(\boz)}.
\end{equation}
Let $G:=|u_2|+|\nabla u_2|$, $f:=|F|$, $G_B:=|v_2|+|\nabla v_2|$
and $R_B:=|w_2|+|\nabla w_2|$. From \eqref{4.26} and the facts
that $\phi\equiv1$ on $2B$, $0\le\phi\le1$
and $\supp(\phi)\subset 4B$, it follows that
\begin{align}\label{4.27}
\lf(\fint_{2B_\boz}G_B^{p_0}\,dx\r)^{\frac1{p_0}}&
\ls\lf[\frac{1}{|2B_\boz|}\int_{\boz}(|v_2|+|\nabla v_2|)^{p_0}\,dx\r]^{\frac1{p_0}}\\
&\ls\lf[|2B_\boz|^{1-\frac{(p_0)_\ast}{p_0}}\frac{1}{|2B_\boz|}\int_{\boz}
|F\phi|^{(p_0)_\ast}\,dx\r]^{\frac{1}{(p_0)_\ast}}\nonumber\\
&\ls\lf[|4B_\boz|^{1-\frac{(p_0)_\ast}{p_0}}\fint_{4B_\boz}
f^{(p_0)_\ast}\,dx\r]^{\frac{1}{(p_0)_\ast}}.\nonumber
\end{align}
Furthermore, by the fact that $\phi\equiv1$ on $2B$, \eqref{4.25}
and the assumption that (ii) holds true, we further conclude that \eqref{4.21}
also holds true with $w_1$ replaced by $w_2$, which, together with
\eqref{4.27}, implies that
\begin{align}\label{4.28}
\lf(\fint_{2B_\boz}R_B^{p+\uc_0}\,dx
\r)^{\frac1{p+\uc_0}}&\ls\lf[\fint_{4B_\boz}(|w_2|+|\nabla w_2|)^{p_0}\,dx\r]^{\frac1{p_0}}\\
&\ls\lf(\fint_{4B_\boz}G^{p_0}\,dx\r)^{\frac1{p_0}}
+\lf[|4B_\boz|^{1-\frac{(p_0)_\ast}{p_0}}\fint_{4B_\boz}
f^{(p_0)_\ast}\,dx\r]^{\frac{1}{(p_0)_\ast}}.\nonumber
\end{align}
From \eqref{4.27} and \eqref{4.28}, we deduce that \eqref{3.1} and \eqref{3.2}
hold true with $p_1:=p_0$, $p_2:=(p_0)_\ast$, $p_3:=p+\uc_0$ and $\gz:=1-\frac{(p_0)_\ast}{p_0}$,
which, combined with Theorem \ref{t3.1}, $1/q_\ast=1/q+1/n$ and the H\"older inequality, implies that
\begin{align}\label{4.29}
&\lf[\frac{1}{\omega(\boz)}\int_{\boz}(|u_2|+|\nabla u_2|)^q\omega\,dx\r]^{\frac1q}\\
&\quad\ls\lf[\fint_{\boz}(|u_2|+|\nabla u_2|)^{p_0}\,dx\r]^{\frac1{p_0}}+
\lf[\frac{1}{\omega(\boz)}\int_{\boz}|F|^{q_\ast}\omega^{\frac{q_\ast}{q}}
\,dx\r]^{\frac{1}{q_\ast}}[\omega(\boz)]^{\frac{1}{n}}\nonumber\\
&\quad\ls\lf[\frac{1}{\omega(\boz)}\int_{\boz}|F|^{q_\ast}\omega^{\frac{q_\ast}{q}}
\,dx\r]^{\frac{1}{q_\ast}}[\omega(\boz)]^{\frac{1}{n}}.\nonumber
\end{align}
Thus, by \eqref{4.23}, \eqref{4.29} and $u=u_1+u_2$, we conclude that
\eqref{1.15} holds true.
This show that (ii) implies (iii).

Finally, we prove that (iii) implies (i). By taking
$q:=p$ and $\omega\equiv1$ in (iii),
we know that (i) holds true. This finishes the proof of Theorem \ref{t1.1}.
\end{proof}

To show Theorem \ref{t1.2}, we need the following Lemma \ref{l4.4},
which was essentially obtained in \cite{gz20}. Indeed, if $A\in\mathrm{VMO}(\rn)$,
Lemma \ref{l4.4} was established in \cite{gz20}. Moreover, the proof for
Lemma \ref{l4.4} presented in \cite{gz20} is also valid when
$A$ satisfies the $(\dz,R)$-BMO
condition for sufficiently small $\dz\in(0,\fz)$ and some $R\in(0,\fz)$.

\begin{lemma}\label{l4.4}
Let $n\ge3$, $\boz\subset\rn$ be a bounded Lipschitz domain and
$\az\in L^\fz(\partial\boz)$ as in \eqref{1.4}.
Assume that the matrix $A$ is real-valued,
symmetric, bounded and measurable, and satisfies \eqref{1.3}.
Then there exist positive constants $\uc_0,\,\dz_0\in(0,\fz)$,
depending only on $n$ and the Lipschitz constant of $\boz$, such that, for any given $p\in[2, 3+\uc_0)$,
if $A$ satisfies the $(\dz,R)$-$\mathrm{BMO}$
condition for some $\dz\in(0,\dz_0)$ and $R\in(0,\fz)$ or $A\in\mathrm{VMO}(\rn)$,
then the weak reverse H\"older inequality \eqref{1.14},
with $p_0$ replaced by $2$, holds true for any function $v\in W^{1,2}(B(x_0,2r)\cap\boz)$ satisfying
$\mathrm{div}(A\nabla v)=0$ in $B(x_0,2r)\cap\boz$ and
$\frac{\partial v}{\partial\boldsymbol{\nu}}+\az v=0$
on $B(x_0,2r)\cap\partial\boz$ when $x_0\in\partial\boz$, where $r\in(0,r_0)$
with $r_0\in(0,\diam(\boz))$ being a constant.
\end{lemma}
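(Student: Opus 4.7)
My plan is to deduce the lemma from the corresponding result in \cite{gz20} combined with a small-BMO perturbation argument. The VMO case is essentially established in \cite{gz20}, while the small-$(\dz,R)$-BMO case follows because the proof strategy in \cite{gz20} is stable under such perturbations.

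First, I would handle the case $A \in \mathrm{VMO}(\rn)$. As part of proving the global estimate \eqref{1.10} for the Robin problem on a bounded Lipschitz domain for $p \in (3/2 - \uc_0, 3 + \uc_0)$, the authors of \cite{gz20} in effect establish the local weak reverse H\"older inequality \eqref{1.14} with $p_0 = 2$ and any $p \in [2, 3+\uc_0)$ for the local homogeneous Robin problem on boundary balls. This can be seen alternatively by combining the global estimate of \cite{gz20} with the implication (i) $\Rightarrow$ (ii) of Theorem~\ref{t1.1}: the hypothesis \eqref{1.12} with $p_0 = 2$ is automatic from the Lax--Milgram theorem and the Friedrichs inequality (cf. Remark~\ref{r1.1x}), so the implication yields \eqref{1.14} for every $p \in [2, 3 + \uc_0)$ at boundary balls. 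Interior balls $B(x_0, 2r) \subset \boz$ are handled separately by classical interior Meyers-type reverse H\"older estimates, which are independent of the boundary geometry.

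Second, to cover the case where $A$ satisfies only the $(\dz, R)$-BMO condition with $\dz \in (0, \dz_0)$ small, I would employ a standard freezing-of-coefficients argument. Fix a boundary ball $B := B(x_0, r)$ with $x_0 \in \partial\boz$ and $r < \min\{r_0/4, R/8\}$, and let $\bar{A}$ denote the integral average of $A$ over $4B_\boz$, so that $\bar{A}$ is a constant symmetric elliptic matrix. Split $v = w + \fai$, where $w$ solves the frozen-coefficient problem on $2B_\boz$, namely, $-\mathrm{div}(\bar{A} \nabla w) = 0$ with the $\bar{A}$-conormal Robin condition $\bar{A} \nabla w \cdot \boldsymbol{\nu} + \az w = 0$ on $2B \cap \partial\boz$ and Dirichlet data $w = v$ on $\partial(2B_\boz) \setminus (2B \cap \partial\boz)$. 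Since $\bar{A}$ is constant (hence trivially VMO), Step~1 (or its analogue for the associated mixed problem) gives the reverse H\"older inequality \eqref{1.14} for $w$ at every $p \in [2, 3 + \uc_0)$. On the other hand, $\fai := v - w$ satisfies a PDE with interior source $\mathrm{div}((\bar{A} - A)\nabla v)$ and boundary ``Robin defect'' $(A - \bar{A})\nabla v \cdot \boldsymbol{\nu}$ on $2B \cap \partial\boz$. An energy/Caccioppoli estimate combined with the trace inequality on $\boz$ and the BMO bound $(\fint_{4B_\boz}|A - \bar{A}|^2\,dx)^{1/2} \ls \dz$ yields $\|\nabla \fai\|_{L^2(B_\boz)} \ls \dz \|\nabla v\|_{L^2(4B_\boz)}$. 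Combining both bounds, invoking the self-improvement property of the weak reverse H\"older inequality to upgrade the exponent and absorb lower-order terms, and choosing $\dz_0$ small enough to absorb the perturbation, then yields \eqref{1.14} for $v$ at every $p \in [2, 3 + \uc_0)$.

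The main obstacle is the careful management of the Robin boundary condition under freezing: because the conormal derivative depends on the coefficient matrix, replacing $A$ by $\bar{A}$ produces the boundary ``Robin defect'' term for $\fai$, which must be controlled by a boundary trace inequality on the Lipschitz domain and then absorbed into an interior term via Caccioppoli. This forces $\dz_0$ to depend on the Lipschitz constant of $\partial\boz$. Once this balance is struck, the estimate closes uniformly over all admissible boundary balls, and we obtain the lemma in the stated generality.
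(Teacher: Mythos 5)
Your overall strategy is close to what the paper does: the paper's own ``proof'' of Lemma \ref{l4.4} is essentially a citation, namely that the VMO case is established in \cite{gz20} and that the argument there goes through verbatim under the small $(\dz,R)$-BMO condition, while the perturbation machinery you sketch is the same kind of argument the paper itself carries out in Section \ref{s5} (Lemmas \ref{l5.1}, \ref{l5.4}, \ref{l5.5} combined with Theorem \ref{t3.1}). Your Step 1 is fine: deriving the boundary reverse H\"older inequality from the global estimate of \cite{gz20} via the implication (i) $\Rightarrow$ (ii) of Theorem \ref{t1.1} with $p_0=2$ is legitimate and non-circular, since Theorem \ref{t1.1} and the solvability of $(R)_2$ (Remark \ref{r1.1x}) are independent of Lemma \ref{l4.4}; note also that statement (ii) of Theorem \ref{t1.1} already covers interior balls, so no separate interior argument is needed (and plain Meyers estimates for merely bounded measurable $A$ would in any case only give an exponent slightly above $2$, not all $p<3+\uc_0$).

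In Step 2, however, two steps as written would not go through. First, the bound $\|\nabla\fai\|_{L^2(B_\boz)}\ls\dz\|\nabla v\|_{L^2(4B_\boz)}$ does not follow from an energy estimate plus a trace inequality plus the bound $(\fint_{4B_\boz}|A-\bar A|^2\,dx)^{1/2}\ls\dz$. In the weak formulation the Robin condition is encoded in the bilinear form, so no separate boundary ``Robin defect'' appears, and trying to control $(A-\bar A)\nabla v\cdot\boldsymbol{\nu}$ on $2B\cap\partial\boz$ by a trace inequality cannot work because $\nabla v$ has no boundary integrability. What the energy identity actually yields is $\|\nabla\fai\|_{L^2(2B_\boz)}^2\ls\int_{2B_\boz}|A-\bar A|^2|\nabla v|^2\,dx$, and converting this into a small factor requires H\"older's inequality together with a Meyers-type self-improved integrability of $\nabla v$ for the \emph{local Robin problem} (the analogue of Lemma \ref{l5.1}, used exactly this way in \eqref{5.14}) and the John--Nirenberg inequality; this ingredient is missing from your sketch. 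Second, the final passage from ``$w$ satisfies the reverse H\"older inequality at exponent $p$ and $\fai$ is small in $L^2$'' to \eqref{1.14} for $v$ is not given by the self-improvement property of weak reverse H\"older inequalities: self-improvement raises the exponent only by a small amount, whereas here one must jump from $2$ to every $p<3+\uc_0$. The correct mechanism is the good-$\lambda$/real-variable argument, i.e.\ Theorem \ref{t3.1} with $\omega\equiv1$ and $f\equiv0$, applied with your freezing decomposition performed at every admissible ball, exactly as in the derivation of \eqref{5.27}; it is the smallness threshold $\uc_0$ of Theorem \ref{t3.1} (through $\theta(r)<\uc_0/2$) that dictates $\dz_0$, not an absorption into a trace term. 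With these two repairs, both available in the paper, your argument closes and indeed gives an explicit proof of the small-BMO case that the paper only asserts by reference to \cite{gz20}.
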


Now we prove Theorem \ref{t1.2} via using Lemma \ref{l4.4} and Theorem \ref{t1.1}.

\begin{proof}[Proof of Theorem \ref{t1.2}]
We first prove (i). By Remark \ref{r1.1x}, we know that
the Robin problem $(R)_2$ is uniquely solvable and \eqref{1.12} holds true
with $p_0:=2$. From this, Theorem \ref{t1.1} and Lemma \ref{l4.4},
we deduce that there exists a positive constant $\uc_0$, depending only on $n$
and the Lipschitz constant of $\boz$, such that, for any given $p\in[2,3+\uc_0)$,
the Robin problem $(R)_p$ with $g\equiv0$ is uniquely solvable and
\eqref{1.13} holds true. By this and Lemmas \ref{l4.1} and \ref{l4.2},
we conclude that, for any given $p\in((3+\uc_0)',3+\uc_0)$,
the Robin problem $(R)_p$ with $g\equiv0$ is uniquely solvable and
\eqref{1.13} holds true, which, together with Corollary \ref{c4.1},
further implies that, for any given $p\in((3+\uc_0)',3+\uc_0)$,
\eqref{1.17} holds true. This finishes the proof of (i).

Now we show (ii). Let $p\in(\wz{p}_0,3+\uc_0)$
and $\omega\in A_{\frac{p}{\wz{p}_0}}(\rn)\cap RH_{(\frac{3+\uc_0}{p})'}(\rn)$.
Then, from Lemma \ref{l3.1}(i) and the self-improvement property of
the reverse H\"older inequality, it follows that there exists
an $\epsilon\in(0,\min\{p-\wz{p}_0,3+\uc_0-p\})$
such that $\omega\in A_{\frac{p}{\wz{p}_0+\epsilon}}(\rn)\cap
RH_{(\frac{3+\uc_0-\epsilon}{p})'}(\rn)$. Let $s_1:=\wz{p}_0+\epsilon$
and $s_2:=3+\uc_0-\epsilon$. Then $\max\{n/(n-1),(3+\uc_0)'\}<s_1<p<s_2<3+\uc_0$.
Moreover, by (i), we find that
the Robin problems $(R)_{s_1}$ and $(R)_{s_2}$ with $g\equiv0$ are uniquely solvable
and the weak solutions $u$ satisfy \eqref{1.13},
which, combined with Theorem \ref{t1.1}, $\omega\in A_{\frac{p}{s_1}}(\rn)\cap
RH_{(\frac{s_2}{p})'}(\rn)$ and $s_1>n/(n-1)$, implies that
the weighted Robin problem $(R)_{p,\,\omega}$ with
$\mathbf{f}\in L^p_\omega(\boz;\rn)$ and $F\in L^{p_\ast}_{\omega^{p_\ast/p}}(\boz)$
is uniquely solvable and the weak solution $u$ satisfies the estimate \eqref{1.18}.
This finishes the proof of (ii) and hence of Theorem \ref{t1.2}.
\end{proof}

\section{Proof of Theorem \ref{t1.3}\label{s5}}

\hskip\parindent In this section, we prove Theorem \ref{t1.3}
by using Theorem \ref{t1.1}. We begin with the following Meyer type estimates in the
case of the Robin boundary case.

\begin{lemma}\label{l5.1}
Let $n\ge3$, $\boz\subset\rn$ be a bounded Lipschitz domain
and $r_0\in(0,\diam(\boz))$ a constant. Assume that the matrix $A$ is real-valued,
symmetric, bounded and measurable, and satisfies \eqref{1.3}.
Let $u\in W^{1,2}(B(x_0,4r)\cap\boz)$ be a solution of the equation
$\mathrm{div}(A\nabla u)=0$ in $B(x_0,4r)\cap\boz$ with $\frac{\partial u}{\partial\boldsymbol{\nu}}+\az u=0$
on $B(x_0,4r)\cap\partial\boz$, where $x_0\in\overline{\boz}$, $r\in(0,r_0/4)$
and $\az\in L^\fz(\partial\boz)$ is as in \eqref{1.4}.
Then there exist positive constants $C$, $\uc_0\in(0,\fz)$, independent of $u$, $x_0$ and $r$, such that
\begin{align}\label{5.1}
\lf[\fint_{B_\boz(x_0,r)}(|u|+|\nabla u|)^{2+\uc_0}\,dx\r]^{\frac1{2+\uc_0}}
\le C\lf[1+\|\az\|_{L^\fz(\partial\boz)}\r]\lf[\fint_{B_\boz(x_0,2r)}(|u|+|\nabla u|)^2\,dx\r]^{\frac12}.
\end{align}
\end{lemma}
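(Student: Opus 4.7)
The plan is to deduce the Meyer-type estimate \eqref{5.1} by applying Gehring's self-improvement lemma to $G:=|u|+|\nabla u|$. The core task is to prove, for every sub-ball $B(y,\rho)\subset B(x_0,4r)$ with $y\in\overline{\boz}$, the reverse H\"older inequality
\begin{equation*}
\lf(\fint_{B_\boz(y,\rho)} G^2\,dx\r)^{\frac12}\le C\lf[1+\|\az\|_{L^\fz(\partial\boz)}\r]\lf(\fint_{B_\boz(y,2\rho)} G^{q_\ast}\,dx\r)^{\frac1{q_\ast}},
\end{equation*}
where $q_\ast:=2n/(n+2)\in(1,2)$. Once this is in hand, the local version of Gehring's lemma on $B(x_0,4r)\cap\boz$ produces some $\uc_0>0$ and the desired bound \eqref{5.1}.

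To establish the reverse H\"older inequality, I would run a Caccioppoli-type computation. Let $\eta\in C^\fz_{\mathrm c}(B(y,2\rho))$ be a cutoff with $\eta\equiv1$ on $B(y,\rho)$ and $|\nabla\eta|\le C/\rho$, and set $c:=\fint_{B_\boz(y,2\rho)}u\,dx$. Testing the weak form of $-\divz(A\nabla u)=0$ together with $\partial u/\partial\boldsymbol{\nu}+\az u=0$ against $\varphi:=\eta^2(u-c)$, and then using ellipticity \eqref{1.3} with Young's inequality to absorb the gradient cross term, one obtains
\begin{equation*}
\int_{B_\boz(y,\rho)}|\nabla u|^2\,dx\le\frac{C}{\rho^2}\int_{B_\boz(y,2\rho)}|u-c|^2\,dx+\lf|\int_{\partial\boz}\az u\eta^2(u-c)\,d\sigma\r|.
\end{equation*}
The boundary integral is handled by splitting $u=(u-c)+c$, applying Cauchy--Schwarz to the piece linear in $c$, and invoking the Lipschitz-domain trace inequality on $w:=\eta(u-c)$,
\begin{equation*}
\int_{\partial\boz\cap B(y,2\rho)}w^2\,d\sigma\le C\lf[\rho^{-1}\int_{B_\boz(y,2\rho)}w^2\,dx+\rho\int_{B_\boz(y,2\rho)}|\nabla w|^2\,dx\r].
\end{equation*}
The $\|\az\|_\fz\rho\int\eta^2|\nabla u|^2$ contribution so produced is absorbed into the left-hand side provided $\rho\|\az\|_\fz$ is small, which can be arranged either by restricting to small $\rho$ (and then handling larger scales via a finite covering) or equivalently by rescaling $\tilde u(x):=u(4rx)$ so that the effective Robin coefficient becomes $4r\az$.

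The net outcome of the Caccioppoli step is an estimate of the form
\begin{equation*}
\fint_{B_\boz(y,\rho)}|\nabla u|^2\,dx\le\frac{C[1+\|\az\|_\fz]}{\rho^2}\fint_{B_\boz(y,2\rho)}|u-c|^2\,dx+C\|\az\|_\fz|c|^2.
\end{equation*}
Invoking the Lipschitz-domain Sobolev--Poincar\'e inequality $(\fint|u-c|^2)^{1/2}\le C\rho(\fint|\nabla u|^{q_\ast})^{1/q_\ast}$ on $B_\boz(y,2\rho)$, together with the elementary Jensen bound $|c|\le (\fint_{B_\boz(y,2\rho)}|u|^{q_\ast}\,dx)^{1/q_\ast}$, converts the above into the targeted reverse H\"older inequality for $|\nabla u|$; adding the trivial $|u|^2\le 2(u-c)^2+2c^2$ upgrades it to the same bound for $G=|u|+|\nabla u|$, and a final application of the localized Gehring lemma then promotes $q_\ast$ to some $2+\uc_0$, yielding \eqref{5.1}. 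The main technical obstacle is the clean tracking of the $[1+\|\az\|_{L^\fz(\partial\boz)}]$ dependence through the trace inequality in the Caccioppoli step: the unwanted $\|\az\|_\fz\rho\int\eta^2|\nabla u|^2$ term must be absorbed on the left, forcing the smallness of $\rho\|\az\|_\fz$ discussed above, which is precisely what the scale restriction $r<r_0/4$ accommodates.
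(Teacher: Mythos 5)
Your approach is genuinely different from the paper's. The paper handles the boundary case by invoking two off-the-shelf tools: the boundary Caccioppoli-type estimate of Ott--Brown (Lemma \ref{l5.2}, from \cite{ob13}), which bounds $(\fint_{B_\boz}|\nabla u|^2)^{1/2}$ by $\fint|\nabla u|$ plus the surface $L^{\frac{2(n-1)}{n-2}}$ norm of the Robin datum $\az u$, and a Moser iteration $L^\fz$ bound (Lemma \ref{l5.3}) that controls $\sup|u|$ by $\fint|u|$. Neither ingredient requires absorbing a $|\nabla u|^2$ term, so no smallness of $\rho\|\az\|_\fz$ is needed; the constant comes out linearly in $1+\|\az\|_{L^\fz(\partial\boz)}$. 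You instead run a direct Caccioppoli with the crude $L^2$ trace inequality, which forces you to absorb a $\|\az\|_\fz\,\rho\int\eta^2|\nabla u|^2$ contribution.

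There is a genuine gap in the way you close this step. You assert that the required smallness of $\rho\|\az\|_\fz$ is ``precisely what the scale restriction $r<r_0/4$ accommodates,'' but $r_0$ is a fixed constant depending only on $\boz$ and bears no relation to $\|\az\|_{L^\fz(\partial\boz)}$. If $\|\az\|_\fz$ is large, the radii $\rho$ you need (up to $r_0/4$) can be arbitrarily far outside the regime $\rho\|\az\|_\fz\ll1$, and your absorption step fails there. The fallback ``finite covering at larger scales'' is also problematic: a weak reverse H\"older inequality valid only for balls of radius $\le\rho_0\sim1/\|\az\|_\fz$ does not transfer in any obvious way to a Gehring-type $L^{2+\uc_0}$ estimate on balls of radius comparable to $r_0$, because the local averages $\fint_{B_\boz(y_i,2\rho_0)}G^2$ over a covering family need not be comparable to $\fint_{B_\boz(x_0,2r)}G^2$.

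The fix lies in exploiting the sign $\az\ge0$ from \eqref{1.4}, which you never invoke. Splitting $u=(u-c)+c$ in the boundary term produces $\int_{\partial\boz}\az\eta^2(u-c)^2\,d\sz+c\int_{\partial\boz}\az\eta^2(u-c)\,d\sz$, and the first (quadratic) piece is nonnegative, hence sits on the good side of the energy identity and can simply be dropped. Only the piece linear in $c$ remains; bounding it by Cauchy--Schwarz, the $L^2$ trace inequality, and Young's inequality with parameter $\delta\sim\mu_0/\rho$ absorbs the $\eta^2|\nabla u|^2$ contribution into the left-hand side at \emph{every} scale, with no size restriction on $\rho\|\az\|_\fz$, and yields the promised $C[1+\|\az\|_{L^\fz(\partial\boz)}]$ dependence. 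With that repair your route is a valid alternative to the paper's; it buys self-containedness (no appeal to \cite{ob13} or to Moser iteration) at the cost of a slightly longer Caccioppoli computation. Note that the paper also relies on the sign of $\az$, but at a different point---it is what makes the Moser iteration in Lemma \ref{l5.3} go through with an $\az$-independent constant.
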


To show Theorem \ref{l5.1}, we need the following Lemmas \ref{l5.2} and \ref{l5.3}.

\begin{lemma}\label{l5.2}
Let $n\ge3$, $\boz\subset\rn$ be a bounded Lipschitz domain
and $r_0\in(0,\diam(\boz))$ a constant.  Assume that $x_0\in\partial\boz$
and $r\in(0,r_0)$. Let $u$ be as in Lemma \ref{l5.1}. Then
there exists a positive constant $C$, independent of $u$, $x_0$ and $r$, such that
\begin{align}\label{5.2}
\lf[\fint_{B_\boz(x_0,r)}|\nabla u|^{2}\,dx\r]^{\frac12}
\le C\lf\{\fint_{B_\boz(x_0,2r)}|\nabla u|\,dx+\lf[\fint_{B(x_0,2r)\cap\partial\boz}
|\az u|^p\,d\sz(x)\r]^{\frac1p}\r\},
\end{align}
where $p:=\frac{2(n-1)}{n-2}$.
\end{lemma}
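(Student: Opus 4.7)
The plan is to derive \eqref{5.2} by combining a Caccioppoli-type energy estimate adapted to the Robin boundary condition with the Sobolev--Poincar\'e inequality at the critical exponent $q_0:=2n/(n+2)$, followed by log-convexity interpolation and Giaquinta's hole-filling iteration. The whole computation is carried out on a pair of nested radii $r\le s_1<s_2\le 2r$, which furnishes the room needed for the final absorption step.

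First I would fix such $s_1,s_2$ and choose a standard cutoff $\eta\in C^\infty_{\mathrm{c}}(B(x_0,s_2))$ with $\eta\equiv1$ on $B(x_0,s_1)$, $0\le\eta\le 1$, and $|\nabla\eta|\le 2/(s_2-s_1)$. Setting $c:=\fint_{B_\boz(x_0,s_2)}u\,dy$ and testing the local Robin weak formulation (admissible since $\supp\eta\subset B(x_0,2r)\subset B(x_0,4r)$) with $\varphi:=(u-c)\eta^2$, the ellipticity \eqref{1.3} together with Young's inequality yields the Caccioppoli-type bound
\begin{equation*}
\int_{B_\boz(x_0,s_1)}|\nabla u|^2\,dx\ls\frac{1}{(s_2-s_1)^2}\int_{B_\boz(x_0,s_2)}|u-c|^2\,dx+\int_{B(x_0,s_2)\cap\partial\boz}|\az u|\,|u-c|\,d\sz.
\end{equation*}
The exponent $q_0$ is chosen so that its bulk Sobolev conjugate equals $2$ and, by a direct computation, its trace-Sobolev conjugate equals $p':=2(n-1)/n$, the H\"older conjugate of $p=2(n-1)/(n-2)$. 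Consequently, the Sobolev--Poincar\'e inequality on the Lipschitz domain $B_\boz(x_0,s_2)$ together with its trace analogue on $\partial\boz\cap B(x_0,s_2)$ give, with the same $c$,
\begin{equation*}
\lf(\fint_{B_\boz(x_0,s_2)}|u-c|^2\,dx\r)^{\frac12}+\lf(\fint_{\partial\boz\cap B(x_0,s_2)}|u-c|^{p'}\,d\sz\r)^{\frac{1}{p'}}\ls r\lf(\fint_{B_\boz(x_0,s_2)}|\nabla u|^{q_0}\,dx\r)^{\frac{1}{q_0}}.
\end{equation*}
Combining these two displays with H\"older's inequality on $\partial\boz\cap B(x_0,s_2)$ and dividing by $|B_\boz(x_0,s_1)|\sim r^n$ converts the Caccioppoli bound into
\begin{align*}
\fint_{B_\boz(x_0,s_1)}|\nabla u|^2\,dx&\ls\lf(\frac{r}{s_2-s_1}\r)^2\lf(\fint_{B_\boz(x_0,s_2)}|\nabla u|^{q_0}\,dx\r)^{\frac{2}{q_0}}\\
&\quad+\lf(\fint_{\partial\boz\cap B(x_0,s_2)}|\az u|^p\,d\sz\r)^{\frac{1}{p}}\lf(\fint_{B_\boz(x_0,s_2)}|\nabla u|^{q_0}\,dx\r)^{\frac{1}{q_0}}.
\end{align*}

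To close the argument, I would invoke the log-convexity of $L^q$-norms with respect to the probability measure $|B_\boz(x_0,s_2)|^{-1}dx$: since $1<q_0<2$, one has $\|\nabla u\|_{L^{q_0}}\le\|\nabla u\|_{L^1}^{1-\tz}\|\nabla u\|_{L^2}^{\tz}$ with $\tz:=(n-2)/n\in(0,1)$. Applied to both factors on the right-hand side and followed by Young's inequality—with the free parameter in the volume term chosen proportional to $((s_2-s_1)/r)^2$—this produces, with some $K=K(n)\in(0,\fz)$,
\begin{equation*}
\fint_{B_\boz(x_0,s_1)}|\nabla u|^2\,dx\le\tfrac{1}{2}\fint_{B_\boz(x_0,s_2)}|\nabla u|^2\,dx+C\lf(\frac{r}{s_2-s_1}\r)^K\lf(\fint_{B_\boz(x_0,2r)}|\nabla u|\,dx\r)^2+C\lf(\fint_{\partial\boz\cap B(x_0,2r)}|\az u|^p\,d\sz\r)^{\frac{2}{p}}.
\end{equation*}
Since the leading coefficient $\tfrac12$ is strictly less than $1$ uniformly in $s_1,s_2$, the Giaquinta iteration lemma (see, e.g., \cite{g83}) applied to $\phi(s):=\int_{B_\boz(x_0,s)}|\nabla u|^2\,dx$ on the interval $[r,2r]$ absorbs the first term, and taking square roots yields \eqref{5.2}. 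The main technical obstacle is organizing the Young's inequality chain in this last step so that the coefficient of $\fint_{B_\boz(x_0,s_2)}|\nabla u|^2\,dx$ remains strictly below $1$ independently of $s_2-s_1$, while the low-order contributions accrue only a finite power of $r/(s_2-s_1)$; this is precisely where the borderline choice $q_0<2$, equivalently $\tz<1$, is essential.
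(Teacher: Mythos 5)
Your proposal is correct. The paper itself gives no proof of Lemma \ref{l5.2}: it simply says ``Lemma \ref{l5.2} is just \cite[Lemma 3.13]{ob13},'' where the estimate is established for harmonic functions. Your argument reconstructs precisely the machinery underlying that citation: boundary Caccioppoli via the test function $(u-c)\eta^2$, the critical Sobolev--Poincar\'e inequality at $q_0 = 2n/(n+2)$ for the bulk $L^2$ term, the trace Sobolev--Poincar\'e into $L^{p'}$ with $p' = 2(n-1)/n$ (which is exactly the paper's Lemma \ref{l5.7}), log-convexity interpolation with $\tz = (n-2)/n$, and Giaquinta iteration on $[r,2r]$. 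The closing Young chain does close: since $q_0 = 2/(2-\tz)$ and $q_0/2 + q_0(1-\tz)/2 = 1$, the boundary product $T\,a^{1-\tz}b^{\tz}$ is $\le \tfrac14 b^2 + C\,T^{q_0}a^{q_0(1-\tz)} \le \tfrac14 b^2 + C(T^2 + a^2)$ by weighted AM--GM, while the volume product accrues exactly $(r/(s_2-s_1))^{2/(1-\tz)} = (r/(s_2-s_1))^n$, a finite power as required. The merit of writing the proof out as you do is that it applies verbatim to a general bounded measurable elliptic $A$ (only \eqref{1.3} enters the Caccioppoli step) and treats the Robin term natively, whereas \cite[Lemma 3.13]{ob13} is stated for $\Delta u = 0$ and applying it here requires recasting $\frac{\partial u}{\partial\boldsymbol{\nu}} = -\az u$ as inhomogeneous Neumann data and noting that their argument only uses ellipticity. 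Two small points to make explicit in a polished version: you should justify testing the Robin identity against the $W^{1,2}$ function $(u-c)\eta^2$ (density of smooth test functions), and you should record that $|B_\boz(x_0,s)| \sim r^n$ uniformly for $s\in[r,2r]$ by the Lipschitz measure-density property, so that passing between integrals and averages across the two radii does not disturb the absorption constant $\tfrac12$ needed for the iteration lemma.
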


Lemma \ref{l5.2} is just \cite[Lemma 3.13]{ob13} (see also \cite[Lemma 3.7]{tob13}).

\begin{lemma}\label{l5.3}
Let $n\ge3$, $\boz\subset\rn$ be a bounded Lipschitz domain
and $r_0\in(0,\diam(\boz))$ a constant.  Assume that $x_0\in\partial\boz$
and $r\in(0,r_0)$. Let $u$ be as in Lemma \ref{l5.1}. Then
there exists a positive constant $C$, independent of $u$, $x_0$ and $r$, such that
\begin{align}\label{5.3}
\sup_{x\in B_\boz(x_0,r)}|u(x)|\le
C\lf[\fint_{B_\boz(x_0,2r)}|u|^{2}\,dx\r]^{\frac12}.
\end{align}
\end{lemma}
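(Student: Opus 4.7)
The plan is to prove \eqref{5.3} via a Moser iteration at the boundary. The key point that makes the Robin case as tractable as the Neumann case is that, since $\az\ge0$ on $\partial\boz$, the boundary term that arises when testing against $\eta^{2}|u|^{2\beta}u$ has a favorable sign and can simply be discarded, so the proof reduces essentially to a homogeneous-equation Moser scheme.

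Concretely, for any $\beta\ge0$ and any cutoff $\eta\in C^{\fz}_{\mathrm{c}}(B(x_0,2r))$, I would test the weak formulation \eqref{1.7} (with $\mathbf{f}\equiv0$, $F\equiv0$ and $g\equiv0$) against $\varphi:=\eta^{2}|u|^{2\beta}u$; admissibility is guaranteed by a standard truncation-and-mollification argument (replacing $|u|^{2\beta}$ by $\min\{|u|,M\}^{2\beta}$, regularizing, then letting $M\to\fz$). The Robin boundary contribution equals $\int_{B(x_0,2r)\cap\partial\boz}\az\eta^{2}|u|^{2\beta+2}\,d\sz(x)\ge0$, so it may be dropped. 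Writing $w:=|u|^{\beta+1}$, the ellipticity condition \eqref{1.3} and Young's inequality then yield the Caccioppoli-type bound
\[
\int_{B_\boz(x_0,2r)}\eta^{2}|\nabla w|^{2}\,dx\le C\int_{B_\boz(x_0,2r)}w^{2}|\nabla\eta|^{2}\,dx,
\]
where $C$ depends only on $n$ and $\mu_0$ (the explicit $\beta$-dependence on the two sides cancels because the equation is homogeneous).

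Next I would invoke the Sobolev inequality on the Lipschitz domain $B_\boz(x_0,2r)$, whose embedding constant can be taken uniform in $x_0\in\overline{\boz}$ and small $r$ by a universal constant depending only on $n$ and the Lipschitz character of $\boz$ (a consequence of rescaling, plus the fact that $r^{-1}(B_\boz(x_0,2r)-x_0)$ has Lipschitz geometry controlled independently of $x_0$ and $r$). Combined with the Caccioppoli estimate above applied with $\eta$ supported in $B(x_0,r'')$ and equal to $1$ on $B(x_0,r')$, this produces a reverse-H\"older-type inequality
\[
\lf(\fint_{B_\boz(x_0,r')}|u|^{2\chi(\beta+1)}\,dx\r)^{\frac{1}{2\chi(\beta+1)}}\le\lf(\frac{Cr}{r''-r'}\r)^{\frac{1}{\beta+1}}\lf(\fint_{B_\boz(x_0,r'')}|u|^{2(\beta+1)}\,dx\r)^{\frac{1}{2(\beta+1)}},
\]
with $\chi:=n/(n-2)$, for any $r\le r'<r''\le2r$. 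Iterating along $r_k:=r(1+2^{-k})$ and $\beta_k+1:=\chi^{k}$, and using that $\sum_{k}(k+1)\chi^{-k}<\fz$, one obtains \eqref{5.3}.

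The main technical point is the uniformity of the Sobolev embedding constant on $B_\boz(x_0,2r)$ indicated above, which is standard for bounded Lipschitz domains after rescaling but must be verified carefully since $x_0$ may lie on $\partial\boz$. A secondary, purely technical, issue is that $\varphi=\eta^{2}|u|^{2\beta}u$ need not lie in $C^{\fz}(\rn)$ as demanded by \eqref{1.7}; this is resolved by the truncation-mollification procedure noted above together with extending the weak formulation to $W^{1,2}$ test functions by density.
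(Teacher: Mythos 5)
Your Moser-iteration argument is correct and is precisely the approach the paper intends: the authors do not write out a proof but state that the lemma follows ``via using the Moser iteration method, which is similar to that of \cite[Lemma 3.1]{ls04}.'' Your key observation that the Robin boundary term $\int_{B(x_0,2r)\cap\partial\boz}\az\eta^{2}|u|^{2\beta+2}\,d\sz$ has a favorable sign (since $\az\ge0$) and can be discarded is exactly what makes the Lanzani--Shen scheme go through unchanged here.
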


Lemma \ref{l5.3} can be proved via using the Moser iteration method,
which is similar to that of \cite[Lemma 3.1]{ls04}; we omit the details here.

Now we show Lemma \ref{l5.1} by using Lemmas \ref{l5.2} and \ref{l5.3}.

\begin{proof}[Proof of Lemma \ref{l5.1}]
We prove this lemma via considering the following two cases for $x_0$ and $r$.

\emph{Case 1)} $B(x_0,4r)\subset\boz$. In this case, $\mathrm{div}(A\nabla u)=0$ in $B(x_0,4r)$.
Let $\eta\in C^\fz_{\mathrm{c}}(2B(x_0,r))$ be such that $0\le\eta\le1$, $\eta\equiv1$ on $B(x_0,r)$
and $|\nabla\eta|\ls r^{-1}$.
Take $v:=(u-\overline{u})\eta^2$ as a test function,
where $\overline{u}:=\fint_{2B(x_0,r)}u\,dx.$
Then
$$\int_{B(x_0,2r)}A\nabla u\cdot\nabla v\,dx=0,
$$
which, together with $|\nabla\eta|\ls r^{-1}$, further implies that
$$\int_{B(x_0,r)}|\nabla u|^2\,dx\ls\frac{1}{r^2}\int_{B(x_0,2r)}
|u-\overline{u}|^2\,dx.
$$
By this and the Sobolev inequality, we conclude that
\begin{align}\label{5.4}
\lf[\fint_{B(x_0,r)}|\nabla u|^{2}\,dx\r]^{\frac12}\ls
\lf[\fint_{B(x_0,2r)}|\nabla u|^{\frac{2n}{n+2}}\,dx\r]^{\frac{n+2}{2n}}.
\end{align}
Moreover, from the Sobolev inequality and the H\"older inequality, it follows that
\begin{align}\label{5.5}
\lf[\fint_{B(x_0,2r)}|u|^{2}\,dx\r]^{\frac12}&\le
\lf[\fint_{B(x_0,2r)}|u-\overline{u}|^2\,dx\r]^{\frac12}+|\overline{u}|\\
&\ls\lf[\fint_{B(x_0,2r)}(|u|+|\nabla u|
)^{\frac{2n}{n+2}}\,dx\r]^{\frac{n+2}{2n}},\nonumber
\end{align}
which, combined with \eqref{5.4}, implies that
\begin{align}\label{5.6}
\lf[\fint_{B(x_0,r)}(|u|+|\nabla u|)^{2}\,dx\r]^{\frac12}\ls
\lf[\fint_{B(x_0,2r)}(|u|+|\nabla u|)^{\frac{2n}{n+2}}
\,dx\r]^{\frac{n+2}{2n}}.
\end{align}

\emph{Case 2)} $x_0\in\partial\boz$.
In this case, let $p:=\frac{2(n-1)}{n-2}$.
Then, by Lemmas \ref{l5.2} and \ref{l5.3}, we find that
\begin{align*}
\lf[\fint_{B_\boz(x_0,r/2)}|\nabla u|^{2}\,dx\r]^{\frac12}&\ls\fint_{B_\boz(x_0,r)}|\nabla u|\,dx
+\lf[\fint_{B(x_0,r)\cap\partial\boz}
|\az u|^p\,d\sz(x)\r]^{\frac1p}\nonumber\\
&\ls\fint_{B_\boz(x_0,r)}|\nabla u|\,dx
+\lf[\sup_{x\in B_\boz(x_0,r)}|u(x)|\r]\|\az\|_{L^\fz(\partial\boz)}\nonumber\\
&\ls\fint_{B_\boz(x_0,2r)}|\nabla u|\,dx+
\lf[\fint_{B_\boz(x_0,2r)}|u|\,dx\r]
\|\az\|_{L^\fz(\partial\boz)},
\end{align*}
which implies that
\begin{align}\label{5.7}
\lf[\fint_{B_\boz(x_0,r/2)}|\nabla u|^{2}\,dx\r]^{\frac12}
\ls\lf[1+\|\az\|_{L^\fz(\partial\boz)}\r]\fint_{B_\boz(x_0,2r)}(|u|+|\nabla u|)\,dx.
\end{align}
Furthermore, similarly to \eqref{5.5}, we know that
\begin{align*}
\lf[\fint_{B_\boz(x_0,r/2)}|u|^{2}\,dx\r]^{\frac12}
\ls\lf[1+\|\az\|_{L^\fz(\partial\boz)}\r]\fint_{B_\boz(x_0,2r)}(|u|+|\nabla u|)\,dx,
\end{align*}
which, together with \eqref{5.7}, further implies that
\begin{align*}
\lf[\fint_{B_\boz(x_0,r/2)}(|u|+|\nabla u|)^{2}\,dx\r]^{\frac12}
\ls\lf[1+\|\az\|_{L^\fz(\partial\boz)}\r]\fint_{B_\boz(x_0,2r)}(|u|+|\nabla u|)\,dx.
\end{align*}
By this and a simple covering argument, we conclude that
\begin{align}\label{5.8}
\lf[\fint_{B_\boz(x_0,r)}(|u|+|\nabla u|)^{2}\,dx\r]^{\frac12}
\ls\lf[1+\|\az\|_{L^\fz(\partial\boz)}\r]\fint_{B_\boz(x_0,2r)}(|u|+|\nabla u|)\,dx.
\end{align}

Finally, from \eqref{5.6}, \eqref{5.8} and the self-improvement property of the weak reverse H\"older inequality
(see, for instance, \cite[pp.\,122-123]{g83}), it follows that
there exists an $\uc_0\in(0,\fz)$ such that, for any
$x_0\in\overline{\boz}$ and $r\in(0,r_0/4)$,
\begin{align*}
\lf[\fint_{B_\boz(x_0,r)}(|u|+|\nabla u|)^{2+\uc_0}\,dx\r]^{\frac1{2+\uc_0}}\ls\lf[1+\|\az\|_{L^\fz(\partial\boz)}\r]
\lf[\fint_{B_\boz(x_0,2r)}(|u|+|\nabla u|)^2\,dx\r]^{\frac12},
\end{align*}
which is the desired estimate.
This finishes the proof of Lemma \ref{l5.1}.
\end{proof}

Moreover, to show Theorem \ref{t1.3}, we need the following Lemma \ref{l5.4}.

\begin{lemma}\label{l5.4}
Let $n\ge3$, $\boz\subset\rn$ be a bounded Lipschitz domain
and $r_0\in(0,\diam(\boz))$ a constant. Assume that the matrix $A$ is real-valued,
symmetric, bounded and measurable, and satisfies \eqref{1.3}.
Let $u\in W^{1,2}(B(x_0,4r)\cap\boz)$ be a solution of the equation
$\mathrm{div}(A\nabla u)=0$ in $B(x_0,4r)\cap\boz$ with
$\frac{\partial u}{\partial\boldsymbol{\nu}}+\az u=0$
on $B(x_0,4r)\cap\partial\boz$, where $x_0\in\overline{\boz}$, $r\in(0,r_0/4)$,
and $\az\in L^\fz(\partial\boz)$
satisfies $\az\ge \az_0$ with $\az_0\in(0,\fz)$ being a constant.
Then there exist a function $\tz:=\tz(r)$, $p\in(2,\fz)$
and a function $v\in W^{1,p}(B(x_0,r)\cap\boz)$ such that
\begin{equation}\label{5.9}
\lf[\fint_{B_\boz(x_0,r)}(|v|+|\nabla v|)^p\,dx\r]^{\frac1p}
\le C\lf[\fint_{B_\boz(x_0,4r)}(|u|+|\nabla u|)^2\,dx\r]^{\frac12}
\end{equation}
and
\begin{align}\label{5.10}
\lf\{\fint_{B_\boz(x_0,r)}[|u-v|+|\nabla(u-v)|]^2\,dx\r\}^{\frac12}
\le\tz(r)\lf[\fint_{B_\boz(x_0,4r)}(|u|+|\nabla u|)^2\,dx\r]^{\frac12},
\end{align}
where $C$ is a positive constant independent of $u,\,v$, $x_0$ and $r$.
\end{lemma}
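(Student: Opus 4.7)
The plan is to take $v$ as the weak solution of an auxiliary local Laplace--Robin problem, and then to bound $v$ and the difference $u - v$ via separate arguments. Concretely, let $v \in W^{1,2}(B_\boz(x_0, 2r))$ be the unique weak solution of
\begin{equation*}
\begin{cases}
-\bdz v = 0 & \text{in}\ B_\boz(x_0, 2r), \\
\nabla v \cdot \boldsymbol{\nu} + \az v = 0 & \text{on}\ B(x_0, 2r) \cap \pat\boz, \\
v = u & \text{on}\ \pat B(x_0, 2r) \cap \boz.
\end{cases}
\end{equation*}
The well-posedness of this auxiliary problem follows from the Lax--Milgram theorem together with the Friedrichs-type inequality recalled in Remark \ref{r1.1x}, since $\az \ge \az_0 > 0$ on a subset of $\pat\boz$ of positive surface measure.

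To establish \eqref{5.9}, I would invoke \cite[Theorems 1.2 and 3.2]{yyy18}, which yield a weak reverse H\"older inequality for the non-tangential maximal function of $\nabla v$ for the local Laplace--Robin problem on a bounded $C^1$ or (semi-)convex domain with $\az \ge \az_0 > 0$. Combined with standard interior-to-non-tangential comparisons valid on these domains (thanks to the uniform exterior ball condition in the semi-convex case, and the $C^1$ hypothesis otherwise), and with the self-improvement property of the reverse H\"older inequality \cite[pp.\,122--123]{g83}, this yields, for some $p \in (2, \fz)$, an interior bound of the form
\begin{equation*}
\lf[\fint_{B_\boz(x_0, r)}(|v| + |\nabla v|)^p\,dx\r]^{\frac1p} \ls \lf[\fint_{B_\boz(x_0, 2r)}(|v| + |\nabla v|)^2\,dx\r]^{\frac12}.
\end{equation*}
A standard energy estimate for the auxiliary problem then gives $\|v\|_{W^{1,2}(B_\boz(x_0, 2r))} \ls \|u\|_{W^{1,2}(B_\boz(x_0, 2r))}$, and using $|B_\boz(x_0, 2r)| \sim |B_\boz(x_0, 4r)|$ one concludes \eqref{5.9}.

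For \eqref{5.10}, subtracting the weak formulations for $u$ and $v$ and testing the difference with $\varphi := u - v$ (which vanishes in the trace sense on $\pat B(x_0, 2r) \cap \boz$) would yield
\begin{align*}
&\int_{B_\boz(x_0, 2r)} A\nabla(u-v) \cdot \nabla(u-v)\,dx + \int_{B(x_0, 2r) \cap \pat\boz} \az(u-v)^2\,d\sz(x) \\
&\quad = -\int_{B_\boz(x_0, 2r)} (A - I)\nabla v \cdot \nabla(u-v)\,dx.
\end{align*}
The ellipticity \eqref{1.3}, Young's inequality, and the Friedrichs inequality (valid since $u-v = 0$ on $\pat B(x_0, 2r) \cap \boz$) together give a $W^{1,2}$ bound on $u - v$ by a multiple of $\|\nabla v\|_{L^2(B_\boz(x_0, 2r))}$, and combining this with the preceding energy bound yields \eqref{5.10} with $\theta(r)$ depending on $\|A - I\|_{L^\fz(B_\boz(x_0, 4r))}$, $\|\az\|_{L^\fz(\pat\boz)}$, $\mu_0$, $n$ and the geometry of $\boz$.

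The hardest step I expect is the derivation of \eqref{5.9} from \cite[Theorems 1.2 and 3.2]{yyy18}: the cited estimates control the non-tangential maximal function of $\nabla v$, so converting them into a solid interior $L^p$ bound requires careful use of the geometric structure of $C^1$ or (semi-)convex domains, together with trace and extension arguments allowing the Dirichlet matching $v = u$ on $\pat B(x_0, 2r) \cap \boz$ to be propagated into an $L^2$-based right-hand side involving $|u| + |\nabla u|$ on $B_\boz(x_0, 4r)$.
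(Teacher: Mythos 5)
The key step that fails is your choice of the comparison operator. You take $v$ to solve $-\Delta v=0$, i.e., the constant matrix is the identity $I$, and your energy estimate for $u-v$ produces a factor $\tz(r)$ controlled by $\|A-I\|_{L^\fz(B_\boz(x_0,4r))}$. But this quantity has no reason to be small: an elliptic matrix with small or zero BMO oscillation (say $A\equiv 2I$, constant) may be far from $I$ in $L^\fz$. The whole point of Lemma \ref{l5.4}, as it is used in the proof of Theorem \ref{t1.3} (between \eqref{5.25} and \eqref{5.26}), is that $\tz(r)$ must become small when $A$ has small BMO norm or $A\in\mathrm{VMO}(\rn)$, so that the perturbation parameter $\uc$ in Theorem \ref{t3.1} falls below $\uc_0$. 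The paper achieves this by taking the comparison matrix to be $A_0:=\fint_{B(x_0,2r)}A\,dx$ rather than $I$, so the energy identity produces the oscillation $A_0-A$; and — equally importantly — instead of estimating $\int|A_0-A|^2|\nabla u|^2$ by $\|A_0-A\|_{L^\fz}^2\|\nabla u\|_{L^2}^2$, it applies H\"older's inequality with conjugate exponents $p_1,p_1'$ followed by the Meyers estimate for $u$ from Lemma \ref{l5.1}. This yields precisely the BMO-type average $\bigl(\fint_{B(x,2t)}|A-A_0|^{2p_1'}\,dy\bigr)^{1/(2p_1')}$ that defines $\tz(r)$ in \eqref{5.15}, which is small under the $(\dz,R)$-BMO or VMO hypothesis by the John--Nirenberg inequality. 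Your argument omits both the replacement of $I$ by $A_0$ and the H\"older--Meyers step, so the $\tz(r)$ you construct cannot be made small, and the lemma as you prove it is useless for the subsequent iteration.

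A secondary gap: for \eqref{5.9} you invoke \cite[Theorems 1.2 and 3.2]{yyy18} via non-tangential maximal functions, which requires $\boz$ to be $C^1$ or (semi-)convex, but Lemma \ref{l5.4} is stated for a general bounded Lipschitz domain. The paper proves \eqref{5.9} from the Meyers-type higher integrability for \emph{constant-coefficient} elliptic equations on Lipschitz domains, which gives \emph{some} $p>2$ — and that is all Lemma \ref{l5.4} claims. The full range $p\in(2,\fz)$ on $C^1$ or (semi-)convex domains is a separate result (Lemma \ref{l5.5}), and it is there, not in Lemma \ref{l5.4}, that the restricted geometry and the machinery from \cite{yyy18} are needed.
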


\begin{proof}
Let $A_0:=\{c_{ij}\}_{i,j=1}^n$, where, for any $i,\,j\in\{1,\,\ldots,\,n\}$,
$c_{ij}:=\fint_{B(x_0,2r)}a_{ij}\,dx$.
Assume that $v\in W^{1,2}(B_\boz(x_0,2r))$
is the solution of the following boundary value problem
\begin{equation}\label{5.11}
\begin{cases}
-\mathrm{div}(A_0\nabla v)=0\ \ &\text{in}\ \ B_\boz(x_0,2r),\\
A_0\nabla v\cdot\boldsymbol{\nu}+\az v=A\nabla u\cdot\boldsymbol{\nu}+\az u\ \ &\text{on} \ \
\partial B_\boz(x_0,2r).
\end{cases}
\end{equation}
Using $u-v$ as a test function in \eqref{5.11}, we find that
\begin{align*}
&\int_{B_\boz(x_0,2r)}A_0\nabla(u-v)\cdot\nabla(u-v)\,dx
+\int_{\partial B_\boz(x_0,2r)}\az(u-v)^2\,d\sz(x)\\
&\quad\quad\quad=\int_{B_\boz(x_0,2r)}(A_0-A)\nabla u\cdot\nabla(u-v)\,dx,
\end{align*}
which, combined with \eqref{1.3} and the H\"older inequality, implies that,
for any $\uc\in(0,\fz)$, there exists a positive constant $C_{(\uc)}$, depending only on $\uc$,
such that
\begin{align}\label{5.12}
&\mu_0\int_{B_\boz(x_0,2r)}|\nabla(u-v)|^2\,dx
+\int_{\partial B_\boz(x_0,2r)}\az(u-v)^2\,d\sz(x)\\
&\quad\le
\int_{B_\boz(x_0,2r)}|A_0-A||\nabla u||\nabla(u-v)|\,dx\nonumber\\
&\quad\le\uc\int_{B_\boz(x_0,2r)}|\nabla(u-v)|^2\,dx+ C_{(\uc)}
\int_{B_\boz(x_0,2r)}|A_0-A|^2|\nabla u|^2\,dx,\nonumber
\end{align}
where $\mu_0\in(0,1)$ is as in \eqref{1.3}.
Take $\uc:=\mu_0/2$ in \eqref{5.12}. Then, by \eqref{5.12},
the assumption $\az\ge\az_0$ and the Friedrichs inequality
(see, for instance, \cite[pp.\,11-12, Theorem 1.9]{n12}
and \cite[Theorem 6.1]{ls04}), we conclude that
\begin{align}\label{5.13}
&\int_{B_\boz(x_0,2r)}[|u-v|+|\nabla(u-v)|]^2\,dx\\
&\quad\le C\lf[\int_{B_\boz(x_0,2r)}|\nabla(u-v)|^2\,dx
+\int_{\partial B_\boz(x_0,2r)}(u-v)^2\,d\sz(x)\r]\nonumber\\
&\quad\le C_7\int_{B_\boz(x_0,2r)}|A_0-A|^2|\nabla u|^2\,dx.\nonumber
\end{align}
Moreover, from Lemma \ref{l5.1}, we deduce that there exist positive
constants $p_1\in(1,\fz)$ and $C_8\in(0,\fz)$, independent of $u$, $x_0$, and $r$, such that
$$\lf[\fint_{B_\boz(x_0,2r)}
(|u|+|\nabla u|)^{2p_1}\,dx\r]^{\frac1{2p_1}}\le C_8
\lf[\fint_{B_\boz(x_0,4r)}
(|u|+|\nabla u|)^{2}\,dx\r]^{\frac12},
$$
which, together with \eqref{5.13} and the H\"older inequality, further implies that
\begin{align}\label{5.14}
&\lf[\fint_{B_\boz(x_0,2r)}
(|u-v|+|\nabla (u-v)|)^2\,dx\r]^{\frac12}\\
&\quad\le C_7\lf[\fint_{B(x_0,2r)}
|A_0-A|^{2p_1'}\,dx\r]^{\frac1{2p_1'}}\lf[\fint_{B_\boz(x_0,2r)}
|\nabla u|^{2p_1}\,dx\r]^{\frac1{2p_1}}\nonumber\\
&\quad\le\tz(r)\lf[\fint_{B_\boz(x_0,4r)}
(|u|+|\nabla u|)^{2}\,dx\r]^{\frac12},\nonumber
\end{align}
where $1/p_1+1/p_1'=1$ and
\begin{equation}\label{5.15}
\tz(r):=C_7C_8\sup_{x\in\overline{\boz},\,t\in(0,r]}
\lf[\frac{1}{|B(x,2t)|}\int_{B(x,2t)}
\lf|A(y)-\frac{1}{|B(x,2t)|}\int_{B(x,2t)}A(z)\,dz\r|^{2p_1'}\,dy\r]^{\frac1{2p_1'}}.
\end{equation}
Thus, \eqref{5.10} holds true.
Furthermore, from the known regularity theory of second order elliptic equations
(see, for instance, \cite[Chapter 1]{k94} and \cite[Chapter V]{g83}), it follows that there exists a
$p\in(2,\fz)$ such that
$$\lf[\fint_{B_\boz(x_0,2r)}
|\nabla v|^p\,dx\r]^{\frac1p}\ls\lf[\fint_{B_\boz(x_0,4r)}
(|v|+|\nabla v|)^2\,dx\r]^{\frac12},
$$
which, combined with \eqref{5.14} and the fact that $A=\{a_{ij}\}_{i,j=1}^n\in L^\fz(\rn;\rr^{n^2})$,
further implies that
\begin{equation}\label{5.16}
\lf[\fint_{B_\boz(x_0,2r)}
|\nabla v|^p\,dx\r]^{\frac1p}\ls\lf[\fint_{B_\boz(x_0,4r)}
(|u|+|\nabla u|)^2\,dx\r]^{\frac12}.
\end{equation}
Furthermore, by the Sobolev inequality and \eqref{5.16}, we conclude that
\begin{align*}
\lf[\fint_{B_\boz(x_0,2r)}
|v|^p\,dx\r]^{\frac1p}&\ls\lf[\fint_{B_\boz(x_0,2r)}
|v-\overline{v}|^p\,dx\r]^{\frac1p}+|\overline{v}|\ls\lf[\fint_{B_\boz(x_0,2r)}
|\nabla v|^p\,dx\r]^{\frac1p}+|\overline{v}|\nonumber\\
&\ls\lf[\fint_{B_\boz(x_0,2r)}(|u|+|\nabla u|)^2\,dx\r]^{\frac12},
\end{align*}
where $\overline{v}:=\fint_{B_\boz(x_0,2r)}v\,dx$,
which, together with \eqref{5.16} again, further implies that
\begin{equation*}
\lf[\fint_{B_\boz(x_0,2r)}
(|v|+|\nabla v|)^p\,dx\r]^{\frac1p}\ls\lf[\fint_{B_\boz(x_0,4r)}
(|u|+|\nabla u|)^2\,dx\r]^{\frac12}.
\end{equation*}
Thus, \eqref{5.9} holds true. This finishes the proof of Lemma \ref{l5.4}.
\end{proof}

\begin{lemma}\label{l5.5}
Let $n\ge3$, $\boz\subset\rn$ be a bounded $C^1$ or (semi-)convex domain,
and $\az\in L^\fz(\partial\boz)$ satisfy $\az\ge \az_0$ with $\az_0\in(0,\fz)$ being a constant.
Assume that $p\in(2,\fz)$ and $A_0:=\{a_{ij}\}_{i,j=1}^n$ is a symmetric matrix
with constant coefficients that satisfy \eqref{1.3}. Let $v\in W^{1,2}(B_\boz(x_0,2r))$ be a weak solution
of the equation $\mathrm{div}(A_0\nabla v)=0$ in $B_\boz(x_0,2r)$ with
$\frac{\partial v}{\partial\boldsymbol{\nu}}+\az u=0$ on $B(x_0,2r)\cap\partial\boz$,
where $B(x_0,r)$ is a ball such that $r\in(0,r_0/4)$
and either $x_0\in\partial\boz$ or $B(x_0,2r)\subset\boz$, and $r_0\in(0,\diam(\boz))$
is a constant. Then there exists a positive constant $C$, depending only
on $n$, $p$ and $\boz$, such that
\begin{equation}\label{5.17}
\lf[\fint_{B_\boz(x_0,r)}(|v|+|\nabla v|)^p\,dx\r]^{\frac1p}
\le C\lf[\fint_{B_\boz(x_0,2r)}(|v|+|\nabla v|)^2\,dx\r]^{\frac12}.
\end{equation}
\end{lemma}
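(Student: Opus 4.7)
The plan is to split the argument into the interior case $B(x_0,2r)\subset\boz$ and the boundary case $x_0\in\partial\boz$, and then reduce the boundary case to the Robin problem for the Laplace equation by a linear change of variables so that \cite[Theorems 1.2 and 3.2]{yyy18} applies. In the interior case, $v$ solves the constant-coefficient equation $\mathrm{div}(A_0\nabla v)=0$ in the Euclidean ball $B(x_0,2r)$; smoothness of constant-coefficient solutions together with the standard Caccioppoli inequality yields $\|v\|_{L^\fz(B(x_0,r))}+\|\nabla v\|_{L^\fz(B(x_0,r))}\ls[\fint_{B(x_0,2r)}(|v|+|\nabla v|)^2\,dx]^{1/2}$, from which \eqref{5.17} follows at once for every $p\in(2,\fz)$.

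For the boundary case, I would introduce the linear change of variables $x=A_0^{1/2}y$ and set $\wz v(y):=v(A_0^{1/2}y)$ on the transformed domain $\wz\boz:=A_0^{-1/2}\boz$. A direct index computation (using the symmetry of $A_0$) shows that $\Delta\wz v=0$ in $\wz\boz$, while the conormal-Robin condition $(A_0\nabla v)\cdot\boldsymbol{\nu}+\az v=0$ transforms into $\partial_{\wz{\boldsymbol{\nu}}}\wz v+\wz\az\,\wz v=0$ on $\partial\wz\boz$ for a new coefficient $\wz\az$ which still satisfies $\wz\az\ge\wz\az_0>0$, thanks to the hypothesis $\az\ge\az_0$ and to the uniform control of the Jacobian by the eigenvalues of $A_0$ (which lie in $[\mu_0^{1/2},\mu_0^{-1/2}]$ by \eqref{1.3}). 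Since $A_0^{1/2}$ is a fixed invertible linear map, $\wz\boz$ remains $C^1$ or (semi-)convex (respectively), with geometric constants depending only on $n$, $\mu_0$ and $\boz$.

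Next, I would invoke \cite[Theorems 1.2 and 3.2]{yyy18} for $\wz v$ on $\wz\boz$, which furnishes a weak reverse H\"older inequality for the non-tangential maximal function $(\nabla\wz v)^\ast$ of the gradient on surface balls of $\partial\wz\boz$ for every $p\in(2,\fz)$. To pass from this boundary estimate to an interior $L^p$ estimate on $\wz B_{\wz\boz}(\wz x_0,\wz r)$, I would note that every $y\in\wz B_{\wz\boz}(\wz x_0,\wz r)$ lies in a non-tangential cone $\Gamma(\wz x)$ based at some $\wz x$ in a nearby surface ball, so $|\nabla\wz v(y)|\le(\nabla\wz v)^\ast(\wz x)$; integrating in $y$ and applying Fubini together with the fact that any $\wz x$ is covered by a cone region of bounded measure gives the interior $L^p$ bound on $|\nabla\wz v|$. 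A Poincar\'e-type inequality (subtracting a suitable constant) then controls $\wz v$ by $\nabla\wz v$, and pulling the estimate back to $v$ via the change of variables recovers \eqref{5.17}, since the Jacobian and the $L^2\to L^2$ change-of-variable factor depend only on $\mu_0$.

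The main obstacle will be the quantitative localization of the non-tangential-to-interior conversion to the half-ball $\wz B_{\wz\boz}(\wz x_0,\wz r)$, because the results of \cite{yyy18} are most naturally stated on surface balls of $\partial\wz\boz$ rather than on the precise half-balls appearing in \eqref{5.17}; a subsidiary delicate point is to verify that both the geometric constants of $\wz\boz$ and the lower bound $\wz\az\ge\wz\az_0$ of the transformed Robin coefficient are controlled uniformly, so that the constant $C$ in \eqref{5.17} indeed depends only on $n$, $p$ and $\boz$ (through $\mu_0$ and the Lipschitz/semi-convexity constants), as claimed.
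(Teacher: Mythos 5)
Your overall architecture matches the paper's: change of variables to reduce $A_0$ to $I$, interior estimates for harmonic functions when $B(x_0,2r)\subset\boz$, and then reduction of the boundary case to the weak reverse H\"older inequality for the non-tangential maximal function $(\nabla v)^\ast$ proved in \cite{yyy18}. However, there is a genuine gap in the boundary case. After invoking \cite[Theorems 1.2 and 3.2]{yyy18} and converting the $L^p$ bound on $(\nabla v)^\ast$ (over a surface ball) into an interior $L^p$ bound on $|\nabla v|$, you are still left with the quantity $\{\fint_{B(x_0,2r)\cap\partial\boz}[(\nabla v)^\ast]^2\,d\sz\}^{1/2}$ on the right-hand side. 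The target \eqref{5.17} requires this to be controlled by the \emph{solid} $L^2$ average $[\fint_{B_\boz(x_0,2r)}(|v|+|\nabla v|)^2\,dx]^{1/2}$, and this last step is not addressed in your proposal. It is not automatic: $(\nabla v)^\ast$ is a boundary maximal function, and its $L^2$ norm on $\partial\boz$ is comparable to the $L^2$ norm of the boundary \emph{trace} of $\nabla v$, which is not controlled by the interior $L^2$ average without an additional Rellich-type argument.

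The paper closes this gap precisely as follows. It splits $(\nabla v)^\ast\le (\nabla v)_r^\ast+(\nabla v)_{r,e}^\ast$ into a ``near'' truncated part and a ``far'' part. The far part is an $L^\infty$ estimate in the annulus (handled by interior elliptic estimates), while the near part is controlled through a Rellich-type inequality (Lemma \ref{l5.6}, from \cite{ob13}) which bounds $\|(\nabla v)_r^\ast\|_{L^2}$ in terms of $\|\partial v/\partial\boldsymbol\nu\|_{L^2(\partial\boz)}$ plus $r^{-1/2}\|\nabla v\|_{L^2(B_\boz)}$. Only then does the Robin condition $\partial v/\partial\boldsymbol\nu=-\az v$ enter, together with a trace inequality (Lemma \ref{l5.7}, from \cite{ks08}) to convert the boundary integral of $|\az v|^2$ back into a solid average of $|v|+|\nabla v|$. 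Without these two ingredients, the argument cannot close. Your proposal would need to supply this Rellich-plus-trace step before the Poincar\'e step can finish the proof.
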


To prove Lemma \ref{l5.5}, we need the following three lemmas.

\begin{lemma}\label{l5.6}
Let $n\ge3$ and $\boz\subset\rn$ be a bounded Lipschitz domain.
Assume that $x\in\partial\boz$, $r\in(0,r_0)$ and $\Delta u=0$
in $4B(x,r)\cap\boz$, where $r_0\in(0,\diam(\boz))$ is a constant.
If $|\nabla u|\in L^2(4B(x,r)\cap\boz)$ and
$\frac{\partial u}{\partial\boldsymbol{\nu}}\in L^2(4B(x,r)\cap\partial\boz)$,
then $|\nabla u|\in L^2(4B(x,r)\cap\partial\boz)$ and there exists a positive constant $C$,
independent of $u$, $x$ and $r$, such that
$$\int_{B(x,r)\cap\partial\boz}[(\nabla u)_r^{\ast}]^2\,d\sz(x)
\le C\lf[\int_{4B(x,r)\cap\partial\boz}\lf|\frac{\partial u}{\partial\boldsymbol{\nu}}\r|^2
\,d\sz(x)+\frac{1}{r}\int_{4B(x,r)\cap\boz}|\nabla u|^2\,dy\r],
$$
where, for any $x\in\partial\boz$,
$$(\nabla u)_r^{\ast}(x):=\sup\lf\{|\nabla v(y)|:\ y\in\boz\cap B(x,r),\ |x-y|<2\dz(y)\r\}
$$
with $\dz(y):=\dist(y,\partial\boz)$.
\end{lemma}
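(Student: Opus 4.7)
The plan is to combine the classical Rellich identity for harmonic functions on Lipschitz domains with a Dahlberg--Kenig type non-tangential maximal function estimate, localized through suitable cutoffs. Since $\boz$ is Lipschitz, there exists a smooth vector field $h\in C^\fz(\rn;\rn)$ such that $h\cdot\boldsymbol{\nu}\ge c_0>0$ almost everywhere on $\partial\boz\cap B(x,4r)$, where $c_0$ depends only on the Lipschitz constant of $\boz$. Pick also a cutoff $\eta\in C^\fz_{\mathrm{c}}(B(x,4r))$ with $\eta\equiv1$ on $B(x,2r)$, $0\le\eta\le 1$ and $|\nabla\eta|\ls 1/r$. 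The first step is to integrate the Rellich-type divergence identity
\begin{align*}
\divz\lf[h\eta^2|\nabla u|^2-2(h\cdot\nabla u)\eta^2\nabla u\r]
&=\eta^2\lf[\divz(h)|\nabla u|^2-2(Dh\,\nabla u)\cdot\nabla u\r]\\
&\quad+2\eta|\nabla u|^2(h\cdot\nabla\eta)-4\eta(h\cdot\nabla u)(\nabla u\cdot\nabla\eta)
\end{align*}
over $\boz\cap B(x,4r)$, using $\Delta u=0$. Applying the divergence theorem, bounding the interior remainder by $|\nabla\eta|\ls 1/r$ and $\|h\|_{W^{1,\fz}}\ls 1$, and using the pointwise lower bound $h\cdot\boldsymbol{\nu}\ge c_0$ on the boundary together with Cauchy--Schwarz, I expect to obtain $|\nabla u|\in L^2(B(x,2r)\cap\partial\boz)$ together with the localized Rellich estimate
\begin{equation*}
\int_{B(x,2r)\cap\partial\boz}|\nabla u|^2\,d\sz(y)
\ls\int_{B(x,4r)\cap\partial\boz}\lf|\frac{\partial u}{\partial\boldsymbol{\nu}}\r|^2\,d\sz(y)
+\frac{1}{r}\int_{B(x,4r)\cap\boz}|\nabla u|^2\,dy.
\end{equation*}

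The second step is to pass from the $L^2$ bound for $|\nabla u|$ on $B(x,2r)\cap\partial\boz$ to the corresponding bound for the truncated non-tangential maximal function $(\nabla u)_r^{\ast}$ on $B(x,r)\cap\partial\boz$. For harmonic functions in Lipschitz domains this is a classical consequence of the Dahlberg--Kenig--Verchota layer potential theory: whenever the boundary trace of $|\nabla u|$ lies in $L^2$, the non-tangential maximal function of $\nabla u$ is controlled in $L^2$ by the same norm. I would localize this by multiplying $u$ by a cutoff that is $1$ on $B(x,2r)$ and supported in $B(x,4r)$, writing the resulting non-harmonic correction as a volume potential, and controlling its non-tangential maximal function via standard Caccioppoli and interior regularity estimates; this produces
\begin{equation*}
\int_{B(x,r)\cap\partial\boz}[(\nabla u)_r^{\ast}]^2\,d\sz(y)
\ls\int_{B(x,2r)\cap\partial\boz}|\nabla u|^2\,d\sz(y)
+\frac{1}{r}\int_{B(x,4r)\cap\boz}|\nabla u|^2\,dy,
\end{equation*}
which combined with the Rellich estimate above yields the claim.

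The main obstacle will be the second step: the Dahlberg--Kenig non-tangential estimate is most naturally stated for globally harmonic functions on the full Lipschitz domain with boundary data in $L^2(\partial\boz)$, whereas here $u$ is only known to be harmonic in $4B\cap\boz$ and only $(\nabla u)_r^{\ast}$ (with cones truncated at height $\le r$) is sought. I would surmount this by selecting a Lipschitz subdomain $\boz'\subset\boz$ with $\boz'\cap B(x,r)=\boz\cap B(x,r)$, $\boz'\subset B(x,3r)\cap\boz$, and Lipschitz character uniformly controlled by that of $\boz$, then applying the global non-tangential maximal estimate to $\eta u$ on $\boz'$ where $\eta$ is a cutoff equal to $1$ on $B(x,2r)$ and supported in $B(x,3r)$. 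The terms involving $\nabla\eta$ and $\Delta\eta\cdot u$ produce interior error terms that, after an application of the Poincaré inequality (permissible because $u$ may be replaced by $u-c$ without changing $\nabla u$, or handled directly since only $\nabla u$ appears), are absorbed into the factor $\frac{1}{r}\int_{4B\cap\boz}|\nabla u|^2\,dy$ on the right-hand side.
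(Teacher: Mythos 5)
The paper does not give a proof of this lemma: the line immediately following it, ``Lemma~\ref{l5.7} is just \cite[Lemma 4.4]{ob13}'', is evidently a misprint for ``Lemma~\ref{l5.6}'', and the result is simply imported from Ott--Brown. Your overall plan — a localized Rellich identity to pass from $\partial u/\partial\boldsymbol{\nu}$ to $|\nabla u|$ on the boundary, then a Dahlberg--Kenig--Verchota type $L^2$ nontangential estimate on a suitable Lipschitz subdomain — is the right circle of ideas, and your Rellich computation checks out. Two caveats, one of which is a real gap.

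The gap is in your Step 2, where you superimpose two fixes that are alternatives, not complements, and the cutoff fix creates a genuine problem. Once you have a Lipschitz subdomain $\boz'$ with $\boz\cap B(x,r)\subset\boz'\subset\boz\cap B(x,3r)$ and controlled Lipschitz character, $u$ is already harmonic on all of $\boz'$ (since $\boz'\subset\boz\cap 4B$), so there is no reason to introduce $\eta u$. If you do, then $\Delta(\eta u)=2\nabla\eta\cdot\nabla u+u\,\Delta\eta$ is supported in an annulus that reaches $\partial\boz$, and controlling the nontangential maximal function of the correction term is not a matter of ``standard Caccioppoli and interior regularity'' — it is essentially a Neumann problem with a right-hand side supported up to the boundary, i.e.\ a problem of the same order of difficulty as the one you are trying to solve. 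Dropping the cutoff removes the obstacle: apply the boundary-value estimate directly to $u$ on $\boz'$, split $\partial\boz'$ into $\partial\boz'\cap\partial\boz$ and the artificial piece $\partial\boz'\cap\boz$, and handle the latter by Caccioppoli together with an averaging over the radius used to carve out $\boz'$, giving $\int_{\partial\boz'\cap\boz}|\nabla u|^2\,d\sigma\ls r^{-1}\int_{4B\cap\boz}|\nabla u|^2\,dy$. The second, minor, point is that your Rellich step is in fact superfluous: the Jerison--Kenig--Verchota $L^2$ Neumann solvability on Lipschitz domains already gives $\|(\nabla u)^*_{\boz'}\|_{L^2(\partial\boz')}\ls\|\partial_\nu u\|_{L^2(\partial\boz')}$ for harmonic $u$ on $\boz'$, with the compatibility condition $\int_{\partial\boz'}\partial_\nu u\,d\sigma=0$ holding automatically because $\Delta u=0$; splitting $\partial\boz'$ as above yields the lemma in one pass, which is what \cite[Lemma 4.4]{ob13} essentially does. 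If you nevertheless keep the two-step scheme, take the Rellich vector field $h$ constant in the local graph coordinates (so $Dh=0$); otherwise the interior error term $\int\eta^2[\,\divz(h)|\nabla u|^2-2(Dh\,\nabla u)\cdot\nabla u\,]\,dx$ has no $1/r$ factor and does not fit the right-hand side of the stated inequality.
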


Lemma \ref{l5.7} is just \cite[Lemma 4.4]{ob13} (see also \cite[Lemma 4.2]{tob13}).

\begin{lemma}\label{l5.7}
Let $n\ge3$ and $\boz\subset\rn$ be a bounded Lipschitz domain.
Assume that $q\in(1,n)$, $p:=\frac{q(n-1)}{n-q}$ and $v\in W^{1,2}(B\cap\boz)$
satisfies $\int_{B\cap\boz}v\,dx=0$. Then there exists a positive constant $C$,
depending only on $n$, $q$ and the Lipschitz constant of $\boz$, such that
$$\lf[\int_{B\cap\partial\boz}|v|^p\,d\sz(x)\r]^{\frac1p}\le C
\lf(\int_{B\cap\boz}|\nabla v|^q\,dx\r)^{\frac1q}.
$$
\end{lemma}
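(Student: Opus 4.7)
The plan is to deduce the inequality from two classical scale-invariant estimates on the Lipschitz domain $B_{\boz}:=B\cap\boz$: the Poincar\'e inequality for mean-zero functions, and the trace Sobolev embedding $W^{1,q}(B_{\boz})\hookrightarrow L^{p}(B\cap\paz\boz)$ at the critical trace exponent $p=q(n-1)/(n-q)$. First I would reduce to a scale-invariant setting. Write $B=B(x_{B},r)$. If $B\cap\paz\boz=\emptyset$ the left-hand side vanishes, while if $B\supset\boz$ the claim is the standard trace plus Poincar\'e inequality on $\boz$ itself; hence one may assume $r\le\diam(\boz)$ and that $B$ meets $\paz\boz$. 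Because $\boz$ is Lipschitz, $B_{\boz}$ is itself a Lipschitz domain whose ``Lipschitz character'' is controlled uniformly in $(x_{B},r)$ by that of $\boz$; this is seen by choosing a boundary coordinate cylinder near a point of $B\cap\paz\boz$ and performing the rescaling $y:=(x-x_{B})/r$, which sends $B_{\boz}$ to a subset of $B(\mathbf{0}_{n},1)$ whose defining graph has the same Lipschitz constant as the original.

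On this unit-scale model domain, the mean-zero Poincar\'e inequality gives $\|v\|_{L^{q}(B_{\boz})}\le Cr\|\nabla v\|_{L^{q}(B_{\boz})}$ (the factor $r$ is the only trace of the scaling), while the classical trace Sobolev embedding for Lipschitz domains at the critical trace exponent yields
\begin{equation*}
\lf(\int_{B\cap\paz\boz}|v|^{p}\,d\sz(x)\r)^{1/p}\le C\lf[r^{-1}\lf(\int_{B_{\boz}}|v|^{q}\,dx\r)^{1/q}+\lf(\int_{B_{\boz}}|\nabla v|^{q}\,dx\r)^{1/q}\r],
\end{equation*}
where the weights $r^{-1}$ and $1$ are exactly those dictated by the scaling since $\sz$-measure is $(n-1)$-dimensional, $dx$ is $n$-dimensional, and the gradient scales by $r^{-1}$; indeed one checks that $(n-1)/p=n/q-1$, which is precisely the dimensional consistency required. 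Substituting the Poincar\'e bound into the first term on the right gives the desired
\begin{equation*}
\lf(\int_{B\cap\paz\boz}|v|^{p}\,d\sz(x)\r)^{1/p}\le C\lf(\int_{B_{\boz}}|\nabla v|^{q}\,dx\r)^{1/q}.
\end{equation*}

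The main obstacle, and the reason for insisting on the scaling step, is to guarantee that the constant $C$ does not deteriorate as $r\to0$ or as $x_{B}$ slides along $\paz\boz$. This boils down to checking that the family $\{r^{-1}(B_{\boz}-x_{B})\}$ has uniform Lipschitz character as $(x_{B},r)$ varies; once the boundary is flattened by a bi-Lipschitz chart of $\boz$ with uniformly bounded Lipschitz constant, the rescaled domains are all subsets of $B(\mathbf{0}_{n},1)$ whose defining graphs share a common Lipschitz bound, and the classical Sobolev--Poincar\'e and trace constants on such a model family are uniformly bounded in terms of $n$, $q$, and the Lipschitz constant of $\boz$. With this uniformity in hand, the argument above produces exactly a constant of the form asserted in the lemma.
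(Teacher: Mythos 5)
The paper itself gives no argument here: Lemma \ref{l5.7} is simply quoted from Kim and Shen (the reference \cite{ks08}, Lemma 3.1). Your proposal is therefore a reconstruction, and its core — rescale $B\cap\Omega$ to unit scale, invoke the critical trace embedding $W^{1,q}\hookrightarrow L^{q(n-1)/(n-q)}(\partial\cdot)$ on a Lipschitz domain together with the mean-zero Poincar\'e inequality, and check the dimensional bookkeeping $(n-1)/p=n/q-1$ — is indeed the standard route and is sound for the balls that actually occur in the paper (balls centered at boundary points, radius below the natural scale $r_0$ of the Lipschitz character; for balls with $B(x_0,2r)\subset\Omega$ the left-hand side is vacuous).

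The genuine weak point is your reduction step and the uniformity claim built on it. You allow any ball with $r\le\mathrm{diam}(\Omega)$ meeting $\partial\Omega$ and assert that the rescaled domains $r^{-1}(B\cap\Omega-x_B)$ have uniform Lipschitz character. That is false when the center of $B$ lies outside $\overline{\Omega}$ and $\partial B$ is nearly tangent to $\partial\Omega$, and in that regime the inequality itself degenerates, so no argument can close this case. Concretely, let a flat portion of $\partial\Omega$ lie in $\{x_3=0\}$ with $\Omega$ locally above it, and let $B$ be the unit ball centered at $(0,0,-(1-\epsilon))$: then $B\cap\Omega$ is a lens of height $\epsilon$ and horizontal radius about $\sqrt{2\epsilon}$, and $B\cap\partial\Omega$ is the disk of that radius. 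Taking $n=3$, $q=2$, $p=4$ and $v$ equal to the radial function $|x'|/\sqrt{2\epsilon}$ minus its mean over the lens, one computes $\|\nabla v\|_{L^2(B\cap\Omega)}\sim\epsilon^{1/2}$ while $\|v\|_{L^4(B\cap\partial\Omega)}\sim\epsilon^{1/4}$, so the ratio blows up like $\epsilon^{-1/4}$ although the Lipschitz constant of $\Omega$ is unchanged. Hence the lemma must be read, as in Kim--Shen and as it is used in the proof of Lemma \ref{l5.5}, for balls centered at points of $\partial\Omega$ (or of $\overline{\Omega}$) with $r\lesssim r_0$. In that regime your plan does work, but the step you wave at is exactly the one that needs an argument: one shows that $\partial B(x_0,r)$ meets the boundary graph transversally (the graph through $x_0$ lies in a cone with vertex $x_0$, so the sphere's radial direction and the graph's cone directions are uniformly separated), whence $B(x_0,r)\cap\Omega$ is a Lipschitz domain with character controlled by that of $\Omega$, uniformly in $x_0$ and $r$; only then do the trace and Poincar\'e constants on the rescaled family admit the uniform bound your final inequality requires.
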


Lemma \ref{l5.7} is just \cite[Lemma 3.1]{ks08}.

\begin{lemma}\label{l5.8}
Let $n\ge3$, $\boz\subset\rn$ be a bounded $C^1$
or (semi-)convex domain, $\az\in L^\fz(\partial\boz)$ satisfy
$\az\ge \az_0$ with $\az_0\in(0,\fz)$ being a constant, and $r_0\in(0,\diam(\boz))$ be a constant.
Assume that $\Delta v=0$ in $\boz$,
$(\nabla v)^\ast\in L^{2}(\partial\boz)$ and $\frac{\partial v}{\partial\boldsymbol{\nu}}+\az v=0$
on $B(x,3r)\cap\partial\boz$, where $x\in\partial\boz$ and $r\in(0,r_0)$.
Then, for any $p\in(2,\fz)$, there exists a positive constant $C\in(0,\fz)$
such that the weak reverse H\"older inequality
\begin{align*}
\lf\{\fint_{B(x,r)\cap\partial\boz}\lf[(\nabla v)^\ast\r]^p\,d\sz(x)\r\}^{\frac{1}{p}}
\le C\lf\{\fint_{B(x,2r)\cap\partial\boz}
\lf[(\nabla v)^\ast\r]^{2}\,d\sz(x)\r\}^{\frac{1}{2}}
\end{align*}
holds true. Here $(\nabla v)^\ast$ denotes the
non-tangential maximal function of $\nabla v$,
namely, for any $x\in\partial\boz$,
\begin{equation*}
(\nabla v)^\ast(x):=\sup\lf\{|\nabla v(y)|:\ y\in\boz,\ |x-y|<2\dz(y)\r\}
\end{equation*}
with $\dz(y):=\dist(y,\partial\boz):=\inf\{|y-z|:\ z\in\partial\boz\}$.
\end{lemma}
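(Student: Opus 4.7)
The plan is to reduce the desired weak reverse Hölder inequality to the boundary regularity theory established in \cite[Theorems 1.2 and 3.2]{yyy18}, which addresses exactly the local Laplace--Robin problem on $C^1$ or (semi-)convex domains. The starting observation is that the Robin condition on $B(x,3r)\cap\partial\boz$ gives
$$\lf|\frac{\partial v}{\partial\boldsymbol{\nu}}\r|=|\az v|\le \|\az\|_{L^\fz(\partial\boz)}\,|v|\quad\text{on}\ B(x,3r)\cap\partial\boz,$$
so the conormal derivative is pointwise dominated by the trace of $v$. This converts the problem into one of controlling the non-tangential maximal function of $\nabla v$ through boundary data that are a bounded multiple of the trace of $v$ plus, via the bulk harmonicity, an interior $L^2$ quantity.

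First I would localize: choose a smooth cutoff supported in $B(x,3r)\cap\oz{\boz}$, equal to $1$ on $B(x,2r)$, and use Lemma \ref{l5.6} to dominate, for any $y\in B(x,r)\cap\partial\boz$, the truncated non-tangential maximal function $(\nabla v)_{r}^{\ast}(y)$ by the boundary $L^2$-norm of $\partial v/\partial\boldsymbol{\nu}$ on $4B(y,r)\cap\partial\boz$ plus an interior $L^2$ term on $4B(y,r)\cap\boz$. Using the Robin identity, the boundary flux term is controlled by $\|v\|_{L^2(B(x,3r)\cap\partial\boz)}$, and then the trace inequality in Lemma \ref{l5.7} applied to $v-\overline{v}$ (combined with the Friedrichs inequality since $\az\ge\az_0>0$ prevents constants from being in the kernel) transfers this back into a bulk $L^2$ bound of $|\nabla v|$ on $B(x,3r)\cap\boz$. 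Thus one obtains the base estimate
$$\lf\{\fint_{B(x,r)\cap\partial\boz}\lf[(\nabla v)^\ast\r]^{2}\,d\sz\r\}^{1/2}\ls
\lf\{\fint_{B(x,2r)\cap\partial\boz}\lf[(\nabla v)^\ast\r]^{2}\,d\sz\r\}^{1/2}.$$

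Second, to upgrade from $L^2$ to $L^p$ for $p\in(2,\fz)$, I would invoke the enhanced boundary regularity available in $C^1$ and (semi-)convex domains. In the $C^1$ case this comes from the Dahlberg--Kenig--Pipher type estimates together with the Calder\'on--Zygmund theory on Lipschitz surfaces, and in the semi-convex case from the Bernstein identity on the boundary (see \cite[(3.1.1.2)]{g85} or \cite[Theorem 3.2]{mmmy10}) which controls tangential and normal derivatives of $v$ symmetrically; both are precisely the inputs used in \cite{yyy18} to establish Theorems 1.2 and 3.2 there. Combining these higher-integrability gradient bounds with the Moser-type iteration and the self-improvement property of weak reverse H\"older inequalities (see, for instance, \cite[pp.\,122--123]{g83}), one obtains the desired $L^p$ estimate.

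The main obstacle will be the handling of the Robin coupling $\az v$ in the iteration; this term is absent in the pure Neumann case treated in \cite{g18,ycyy20}, and the Bernstein identity there does not transfer verbatim. The remedy, following \cite{yyy18}, is to exploit the sign condition $\az\ge\az_0>0$ to absorb $\int_{\partial\boz}\az v^2\,d\sz$ as a coercive boundary term that stabilizes the iteration, producing the needed reverse H\"older control on $(\nabla v)^\ast$ despite the mixing of gradient and trace information on $\partial\boz$.
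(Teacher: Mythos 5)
The paper's ``proof'' of Lemma~\ref{l5.8} consists entirely of the two sentences following the statement: for (semi-)convex $\boz$ it is declared a simple corollary of \cite[Theorems 1.2 and 3.2]{yyy18}, and for $C^1$ $\boz$ a simple corollary of \cite[Theorem 1.2]{yyy18} together with \cite[Theorem 5.1]{ls04}; no further details are supplied. Your plan also ultimately terminates in an appeal to \cite{yyy18}, so the citation that carries the weight matches. The preamble you add, however, does not function as a proof and in one place contradicts the paper itself.

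First, the ``base estimate'' you obtain at the end of Step 1,
\begin{equation*}
\lf\{\fint_{B(x,r)\cap\partial\boz}\lf[(\nabla v)^\ast\r]^{2}\,d\sz\r\}^{1/2}\ls \lf\{\fint_{B(x,2r)\cap\partial\boz}\lf[(\nabla v)^\ast\r]^{2}\,d\sz\r\}^{1/2},
\end{equation*}
has exponent $2$ on both sides, so it is true by mere inclusion of the surface balls and is not a weak reverse H\"older inequality. The self-improvement property you then invoke (\cite[pp.\,122--123]{g83}) requires a strict gap between the exponents to begin; starting from equal exponents it yields nothing, and in any case it only gives an $\epsilon$-improvement, not the full range $p\in(2,\fz)$. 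Consequently all of the $p>2$ content in your plan is still hidden inside the citation to \cite{yyy18}. Second, your sketch of what \cite{yyy18} does is inconsistent with the paper's own remarks: you propose running the Bernstein identity on the boundary (as in \cite{g18,ycyy20}) and absorbing the Robin coupling $\az v$ as a coercive term, but the Introduction of this paper explicitly states that ``the Bernstein type identity, used in \cite{g18,ycyy20}, is only compatible with the Dirichlet or the Neumann boundary condition, but not valid for the Robin boundary condition.'' That incompatibility is precisely why the authors import \cite{yyy18} rather than adapt the Bernstein argument, so the ``remedy'' you outline would not go through. Finally, the Friedrichs inequality you invoke in Step 1 is a global statement on $\partial\boz$ and cannot be applied as stated here, since $v$ only satisfies the Robin condition on $B(x,3r)\cap\partial\boz$, not on all of $\partial\boz$; to make that step rigorous you would have to localize the coercivity argument.
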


When $\boz\subset\rn$ is a bounded (semi-)convex domain,
the conclusion of Lemma \ref{l5.8} is a simple
corollary of \cite[Theorems 1.2 and 3.2]{yyy18}.
Moreover, when $\boz\subset\rn$ is a bounded $C^1$ domain, the conclusion of Lemma \ref{l5.8}
is a simple corollary of \cite[Theorem 1.2]{yyy18} and \cite[Theorem 5.1]{ls04}.
We omit the details here.

We now show Lemma \ref{l5.5} via using Lemmas \ref{l5.6}, \ref{l5.7} and \ref{l5.8}.

\begin{proof}[Proof of Lemma \ref{l5.5}]
Observing that the matrix $A_0$ is symmetric and elliptic, and
has constant coefficients, by a change of the coordinate system,
we may assume that $A_0=I$, namely, $\Delta v=0$ in $B(x_0,2r)\cap\boz$
and $\frac{\partial v}{\partial\boldsymbol{\nu}}+\az v=0$ on $B(x_0,2r)\cap\partial\boz$.
We now consider the following two cases.

\emph{Case 1)} $B(x_0,2r)\subset\boz$. In this case, by the interior estimate of harmonic
functions (see, for instance, \cite[Section 2.7]{gt01}), we know that
$$\sup_{y\in B(x_0,r)}|v(y)|\ls\lf[
\fint_{B(x_0,2r)}|v|^2\,dx\r]^{\frac12}
\quad\text{and}\quad
\sup_{y\in B(x_0,r)}|\nabla v(y)|\ls\lf[
\fint_{B(x_0,2r)}|\nabla v|^2\,dx\r]^{\frac12},
$$
which implies that \eqref{5.17} holds true in this case.

\emph{Case 2)} $x_0\in\partial\boz$.
In this case, it is easy to see that
$$
\lf[\fint_{B_\boz(x_0,r)}|\nabla v|^p\,dx\r]^{\frac1p}
\ls\lf\{\fint_{B(x_0,r)\cap\partial\boz}
\lf[(\nabla v)^\ast\r]^p\,d\sz(x)\r\}^{\frac1p},
$$
which, combined with Lemma \ref{l5.8}, further implies that
\begin{equation}\label{5.18}
\lf[\fint_{B_\boz(x_0,r)}|\nabla v|^p\,dx\r]^{\frac1p}
\ls\lf\{\fint_{B(x_0,2r)\cap\partial\boz}
\lf[(\nabla v)^\ast\r]^2\,d\sz(x)\r\}^{\frac12}.
\end{equation}
For any $x\in\partial\boz$, let
$$(\nabla v)_{r,\,e}^{\ast}(x):=\sup\lf\{|\nabla v(y)|:\ y\in\boz\cap B(x,r)^\complement,\ |x-y|<2\dz(y)\r\}.
$$
Then
\begin{align}\label{5.19}
\lf\{\fint_{B(x_0,2r)\cap\partial\boz}
\lf[(\nabla v)_{r,\,e}^\ast\r]^2\,d\sz(x)\r\}^{\frac12}
\ls\sup_{y\in B_\boz(x,4r)\backslash B(x,r)}|\nabla v(y)|
\ls\lf[\fint_{B_\boz(x_0,4r)}|\nabla v|^2\,dx\r]^{\frac12}.
\end{align}
Furthermore, by Lemma \ref{l5.6} and the fact that $\frac{\partial v}{\partial\boldsymbol{\nu}}+\az v=0$
on $B(x_0,2r)\cap\partial\boz$, we conclude that
\begin{align}\label{5.20}
&\lf\{\fint_{B(x_0,r)\cap\partial\boz}
\lf[(\nabla v)_{r}^\ast\r]^2\,d\sz(x)\r\}^{\frac12}\\
&\quad\ls\lf[\fint_{B(x_0,4r)\cap\partial\boz}
\lf|\az v\r|^2\,d\sz(x)\r]^{\frac12}+\lf[
\fint_{B_\boz(x_0,4r)}|\nabla v|^2\,dy\r]^{\frac12}.\nonumber
\end{align}
From Lemma \ref{l5.7} and the H\"older inequality,
we deduce that
\begin{align*}
&\lf[\int_{B(x_0,4r)\cap\partial\boz}
\lf|\az v\r|^2\,d\sz(x)\r]^{\frac12}\nonumber\\
&\quad\le\lf[\int_{B(x_0,4r)\cap\partial\boz}
\lf|\az \overline{v}\r|^2\,d\sz(x)\r]^{\frac12}+\lf[\int_{B(x_0,4r)\cap\partial\boz}
\lf|\az(v-\overline{v})\r|^2\,d\sz(x)\r]^{\frac12}\nonumber\\
&\quad\le|\overline{v}|\lf[\int_{B(x_0,4r)\cap\partial\boz}
\lf|\az\r|^2\,d\sz(x)\r]^{\frac12}\nonumber\\
&\quad\quad+\lf[\int_{B(x_0,4r)\cap\partial\boz}
\lf|v-\overline{v}\r|^{\frac{2(n-1)}{n-2}}\,d\sz(x)\r]^{\frac{n-2}{2(n-1)}}
\lf[\int_{B(x_0,4r)\cap\partial\boz}
\lf|\az\r|^{2(n-1)}\,d\sz(x)\r]^{\frac{1}{2(n-1)}}\nonumber\\
&\quad\ls \lf\{r^{\frac{n-1}{2}}\lf[\fint_{B_\boz(x_0,4r)}|v|\,dx\r]
+r^{\frac{n+1}{2}}
\lf[\fint_{B_\boz(x_0,4r)}|\nabla v|^2\,dy\r]^{\frac12}\r\}\|\az\|_{L^\fz(\partial\boz)},
\end{align*}
where $\overline{v}:=\fint_{B(x_0,4r)\cap\boz}v\,dy$,
which further implies that
\begin{align}\label{5.21}
&\lf[\fint_{B(x_0,4r)\cap\partial\boz}
\lf|\az v\r|^2\,d\sz(x)\r]^{\frac12}\ls
\lf[\fint_{B_\boz(x_0,4r)}(|v|+|\nabla v|)^2\,dx\r]^{\frac12}
\|\az\|_{L^\fz(\partial\boz)}.
\end{align}
Thus, by \eqref{5.20} and \eqref{5.21}, we conclude that
\begin{align*}
\lf\{\fint_{B(x_0,r)\cap\partial\boz}
\lf[(\nabla v)_{r}^\ast\r]^2\,d\sz(x)\r\}^{\frac12}
\ls\lf[1+\|\az\|_{L^\fz(\partial\boz)}\r]
\lf[\fint_{B_\boz(x_0,4r)}(|v|+|\nabla v|)^2\,dx\r]^{\frac12},
\end{align*}
which, together with \eqref{5.19} and the fact that $(\nabla v)^\ast\le
(\nabla v)_{r}^\ast+(\nabla v)_{r,\,e}^\ast$, further implies that
\begin{align*}
\lf\{\fint_{B(x_0,r)\cap\partial\boz}
\lf[(\nabla v)^\ast\r]^2\,d\sz(x)\r\}^{\frac12}
\ls\lf[1+\|\az\|_{L^\fz(\partial\boz)}\r]
\lf[\fint_{B_\boz(x_0,4r)}(|v|+|\nabla v|)^2\,dx\r]^{\frac12}.
\end{align*}
From this estimate and Lemma \ref{l5.8}, it follows that
$$
\lf\{\fint_{B(x_0,r)\cap\partial\boz}
\lf[(\nabla v)^\ast\r]^p\,d\sz(x)\r\}^{\frac1p}
\ls\lf[1+\|\az\|_{L^\fz(\partial\boz)}\r]
\lf[\fint_{B_\boz(x_0,4r)}(|v|+|\nabla v|)^2\,dx\r]^{\frac12},
$$
which further implies that
\begin{align}\label{5.22}
\lf[\fint_{B_\boz(x_0,r)}|\nabla v|^p\,dx\r]^{\frac1p}
\ls\lf[1+\|\az\|_{L^\fz(\partial\boz)}\r]
\lf[\fint_{B_\boz(x_0,4r)}(|v|+|\nabla v|)^2\,dx\r]^{\frac12}.
\end{align}
Moreover, by the H\"older inequality,
the Sobolev inequality and \eqref{5.22}, we find that
\begin{align*}
\lf[\fint_{B_\boz(x_0,r)}|v|^p\,dx\r]^{\frac1p}&\le
\lf[\fint_{B_\boz(x_0,r)}|v-\overline{v}|^p\,dx\r]^{\frac1p}+|\overline{v}|
\ls\lf[\fint_{B_\boz(x_0,r)}|\nabla v|^p\,dx\r]^{\frac1p}+|\overline{v}|\\
&\ls\lf[1+\|\az\|_{L^\fz(\partial\boz)}\r]
\lf[\fint_{B_\boz(x_0,4r)}(|v|+|\nabla v|)^2\,dx\r]^{\frac12},
\end{align*}
where $\overline{v}:=\fint_{B_\boz(x_0,r)}v\,dy$,
which, combined with \eqref{5.22}, implies that \eqref{5.17} holds true.
This finishes the proof of Lemma \ref{l5.5}.
\end{proof}

To prove Theorem \ref{t1.3}(ii), we need the following weighted Sobolev inequality,
which was established in \cite[Theorem 1.5]{fks82}.

\begin{lemma}\label{l5.9}
Let $n\ge2$ and $\boz\subset\rn$ be a bounded Lipschitz domain.
Assume that $p\in[1,\fz)$, $\omega\in A_p(\rn)$ and
$u\in W^{1,p}_\omega(\boz)$ satisfies $\int_\boz u\,dx=0$.
Then there exists a positive constant $C$, depending only on $n$, $p$,
the Lipschitz constant of $\boz$, and $[\omega]_{A_p(\rn)}$, such that
$$\lf[\frac{1}{\omega(\boz)}\int_{\boz}|u|^{\frac{np}{n-1}}\omega\,dx\r]^{\frac{n-1}{np}}
\le C|\boz|^{\frac1n}
\lf[\frac{1}{\omega(\boz)}\int_{\boz}|\nabla u|^p\omega\,dx\r]^{\frac{1}{p}}.
$$
\end{lemma}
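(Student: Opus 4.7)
The plan is to derive the inequality from two ingredients: (i) a local weighted Sobolev--Poincar\'e inequality on balls, and (ii) a chaining argument that propagates the local estimate to $\boz$ via its John (equivalently, Boman chain) property.

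For (i), I would establish that, for every ball $B=B(x_0,r)\subset\rn$ and every $u\in W^{1,p}_\omega(B)$,
\begin{equation*}
\lf[\frac{1}{\omega(B)}\int_{B}|u-u_B|^{\frac{np}{n-1}}\omega\,dx\r]^{\frac{n-1}{np}}
\le C r \lf[\frac{1}{\omega(B)}\int_{B}|\nabla u|^p\omega\,dx\r]^{\frac{1}{p}},
\end{equation*}
where $u_B:=\fint_B u\,dy$ and $C$ depends only on $n$, $p$ and $[\omega]_{A_p(\rn)}$. The starting point is the classical Riesz-type pointwise representation
\begin{equation*}
|u(x)-u_B|\le C_n\int_B\frac{|\nabla u(y)|}{|x-y|^{n-1}}\,dy\quad\text{for a.e. } x\in B,
\end{equation*}
valid for $u\in W^{1,1}(B)$. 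This identifies $u-u_B$ pointwise with a truncated Riesz potential of order $1$ applied to $|\nabla u|\mathbf{1}_B$, so the desired estimate reduces to a weighted $(L^p(\omega,B)\to L^{np/(n-1)}(\omega,B))$-bound for such a potential with the \emph{same} weight $\omega$ on both sides. Such a bound is a consequence of the Muckenhoupt--Wheeden fractional integral theorem applied to $\omega\in A_p(\rn)$, the crucial quantitative input being the doubling property together with the $A_p$ dimensional bound $\omega(B(x,s))/\omega(B(x,t))\ls(s/t)^{np}$ for $s\le t$, which match precisely the gain $\frac{1}{p}-\frac{n-1}{np}=\frac{1}{n}$ in integrability.

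For (ii), because $\boz$ is a bounded Lipschitz domain, it is a John domain, hence admits a central ball $B_0\subset\boz$ and a Whitney-type cover $\{B_j\}_{j}$ by balls such that each $B_j$ is connected to $B_0$ by a chain of pairwise overlapping balls of geometrically controlled length and bounded overlap, with constants depending only on the Lipschitz character of $\boz$. Telescoping the local ball estimate along each chain and summing in $j$ via bounded overlap yields
\begin{equation*}
\lf[\frac{1}{\omega(\boz)}\int_{\boz}|u-u_{B_0}|^{\frac{np}{n-1}}\omega\,dx\r]^{\frac{n-1}{np}}
\le C|\boz|^{\frac1n}\lf[\frac{1}{\omega(\boz)}\int_{\boz}|\nabla u|^p\omega\,dx\r]^{\frac{1}{p}},
\end{equation*}
where we used $\diam(\boz)\sim|\boz|^{1/n}$ (with constants depending on the Lipschitz character). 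The normalization $\int_\boz u\,dx=0$ then rewrites $u_{B_0}=-\fint_\boz(u-u_{B_0})\,dy$, and H\"older's inequality combined with the fact that $\omega\in A_{np/(n-1)}(\rn)$ (since $np/(n-1)\ge p$ and $A_p\subset A_q$ for $q\ge p$) gives $|u_{B_0}|\,\omega(\boz)^{(n-1)/(np)}\le C\|u-u_{B_0}\|_{L^{np/(n-1)}_\omega(\boz)}$; replacing $u-u_{B_0}$ by $u$ on the left then delivers the stated inequality.

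The principal obstacle is the local ball estimate with the \emph{sharp} exponent $np/(n-1)$: the usual weighted Hardy--Littlewood--Sobolev bound for Riesz potentials yields only the weaker exponent $np/(n-p)$ (for $p<n$) with a modified weight on the left, whereas here the weight $\omega$ must stay unchanged on both sides. The decisive ingredient overcoming this is the self-improvement of $A_p$ weights (Lemma \ref{l3.1}(i)--(ii)): every $\omega\in A_p(\rn)$ lies in $A_{p-\varepsilon}(\rn)\cap RH_{1+\gamma}(\rn)$ for some $\varepsilon,\,\gamma>0$, and this quantitative slack is precisely what allows the weighted fractional-integral bound to absorb the $n/(n-1)$ integrability gain without losing a factor of $\omega$.
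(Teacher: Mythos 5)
Your overall skeleton (a local weighted Sobolev--Poincar\'e inequality on balls, then a Boman/John chaining argument, then conversion of the unweighted mean-zero normalization via H\"older and $A_q$) is a legitimate route, and the chaining and normalization steps are fine for a bounded Lipschitz domain with a doubling $A_p$ weight. Note, however, that the paper does not prove this lemma at all: it quotes it directly from Fabes--Kenig--Serapioni \cite[Theorem 1.5]{fks82}. So the only genuinely new content in your proposal is your derivation of the local ball estimate, and that is exactly where there is a gap.

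The local step (i) is not actually proved by what you invoke. The Muckenhoupt--Wheeden theorem (Lemma \ref{l3.2} in the paper) gives bounds of the form $I_\az:\ L^p_{w^p}(\rn)\to L^q_{w^q}(\rn)$ with $1/q=1/p-\az/n$; it never produces a bound with the \emph{same} weight $\omega$ on both sides and exponents $p$ and $np/(n-1)$, and no choice of order reconciles the kernel $|x-y|^{1-n}$ with the gain $\frac1p-\frac{n-1}{np}=\frac1{np}$ (not $\frac1n$, as you wrote). Your ``crucial quantitative input'' is also stated backwards: $A_p$ yields the \emph{lower} mass bound $\omega(B(x,s))/\omega(B(x,t))\gs (s/t)^{np}$ for $s\le t$, i.e.\ $\omega\,dx$ has dimension at most $D=np$, and it is this $D$ that explains the exponent $Dp/(D-p)=np/(n-1)$. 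Finally, the claim that the self-improvement $\omega\in A_{p-\varepsilon}\cap RH_{1+\gz}$ is ``precisely what'' gives the critical exponent is misplaced: in \cite{fks82} openness only supplies the extra $\dz$ beyond $n/(n-1)$, while reaching $np/(n-1)$ itself requires a genuine argument (a weak-type estimate combined with truncation, or a good-$\lz$ argument), because the naive dyadic decomposition of the truncated Riesz potential together with H\"older and the mass bound produces a divergent series at this exponent. As written, step (i) is a citation to a theorem that does not apply plus a heuristic whose bookkeeping does not close; if you instead quote the ball case from \cite{fks82} (Theorems 1.2 and 1.5), your steps (ii) and the mean-zero adjustment do correctly deliver the stated inequality on bounded Lipschitz domains.
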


Now we show Theorem \ref{t1.3} by using Theorem \ref{t1.1}
and Lemmas \ref{l5.5} and \ref{l5.9}.

\begin{proof}[Proof of Theorem \ref{t1.3}]
We first prove (i) via splitting the proof into the following two steps.

\textbf{Step 1.} Assume that $u_1$ is the weak
solution of the following Robin problem
\begin{equation}\label{5.23}
\begin{cases}
-\mathrm{div}(A\nabla u_1)=\mathrm{div}(\mathbf{g}_1)\ \ &\text{in}\ \ \boz,\\
\dfrac{\partial u_1}{\partial\boldsymbol{\nu}}+\az u_1=\mathbf{g}_1\cdot\boldsymbol{\nu} \ \ &\text{on}\ \ \partial\boz.
\end{cases}
\end{equation}
In this step, we show that, for any given $p\in(1,\fz)$,
there exists a $\dz_0\in(0,\fz)$, depending only on $n$, $p$ and $\boz$,
such that, if $A$ satisfies the $(\dz,R)$-$\mathrm{BMO}$ condition for some $\dz\in(0,\dz_0)$
and $R\in(0,\fz)$ or $A\in\mathrm{VMO}(\rn)$, then
\begin{equation}\label{5.24}
\|u_1\|_{W^{1,p}(\boz)}\ls\|\mathbf{g}_1\|_{L^p(\boz;\rn)}.
\end{equation}
We first assume that $p\in(2,\fz)$. By the definition of $\mathrm{VMO}(\rn)$,
we find that, if $A\in\mathrm{VMO}(\rn)$,
then there exists an $r_1\in(0,\fz)$ such that, for any $r\in(0,r_1)$,
$\tz(r)<\uc_0/2$, where $\tz(r)$ and $\uc_0$ are, respectively, as in \eqref{5.15}
and Theorem \ref{t3.1}.
Let $B(x_0,r)\subset\rn$ be such
that $r\in(0,\min\{r_0,r_1\}/4)$ and either $x_0\in\partial\boz$ or $B(x_0,2r)\subset\boz$,
where $r_0\in(0,\diam(\boz))$ is as in Lemma \ref{l5.5}.
Assume that $v_1\in W^{1,2}(B_\boz(x_0,2r))$ is a weak solution of the equation
$\mathrm{div}(A\nabla v_1)=0$ in $B_\boz(x_0,2r)$ with
$\frac{\partial v_1}{\partial\boldsymbol{\nu}}+\az v_1=0$
on $B(x_0,2r)\cap\partial\boz$. Let $w_1\in W^{1,2}(B_\boz(x_0,2r))$ be a weak solution
of the equation $\mathrm{div}(A_0\nabla w_1)=0$ in $B_\boz(x_0,2r)$ with
$A_0\nabla w_1\cdot\boldsymbol{\nu}+\az w_1=A\nabla v_1\cdot\boldsymbol{\nu}+\az v_1$ on
$\partial B_\boz(x_0,2r)$, where $A_0:=\{c_{ij}\}_{i,j=1}^n$ with
$c_{ij}:=\fint_{B(x_0,2r)}a_{ij}\,dx$ for any $i,\,j\in\{1,\,\ldots,\,n\}$.
Then, from Lemmas \ref{l5.4} and \ref{l5.5}, we deduce that
\begin{equation}\label{5.25}
\lf[\fint_{B_\boz(x_0,r)}(|w_1|+|\nabla w_1|)^{p+1}\,dx\r]^{\frac1{p+1}}
\ls\lf[\fint_{B_\boz(x_0,2r)}(|v_1|+|\nabla v_1|)^2\,dx\r]^{\frac12}
\end{equation}
and
\begin{align}\label{5.26}
\lf[\fint_{B_\boz(x_0,r)}(|v_1-w_1|+|\nabla(v_1-w_1)|)^2\,dx\r]^{\frac12}
\le\tz(r)\lf[\fint_{B_\boz(x_0,2r)}(|v_1|+|\nabla v_1|)^2\,dx\r]^{\frac12},
\end{align}
where $\tz(r)$ is as in \eqref{5.15}.

By the well-known John-Nirenberg inequality on $\mathrm{BMO}(\rn)$
(see, for instance, \cite{g12,St93}), we know that
there exists a $\dz_0\in(0,\fz)$ sufficiently small such that, if
$A$ satisfies the $(\dz,R)$-$\mathrm{BMO}$ condition with some $\dz\in(0,\dz_0)$
and $R\in(r_0,\fz)$, then, for any $r\in(0,r_0)$,
$\tz(r)<\uc_0/2$, where $\uc_0$ is as in Theorem \ref{t3.1}.
By this, \eqref{5.25}, \eqref{5.26}, $r\in(0,\min\{r_0,r_1\}/4)$
and Theorem \ref{t3.1} with $\omega\equiv1$, we conclude that,
if $A$ satisfies the $(\dz,R)$-$\mathrm{BMO}$ condition with some $\dz\in(0,\dz_0)$
and $R\in(r_0,\fz)$ or $A\in\mathrm{VMO}(\rn)$, then
$|\nabla v_1|\in L^p(B_\boz(x_0,2r))$ and
\begin{equation}\label{5.27}
\lf[\fint_{B_\boz(x_0,r)}(|v_1|+|\nabla v_1|)^{p}\,dx\r]^{\frac1p}
\ls\lf[\fint_{B_\boz(x_0,2r)}(|v_1|+|\nabla v_1|)^2\,dx\r]^{\frac12},
\end{equation}
which, together with Theorem \ref{t1.1}
and the fact that the Robin problem $(R)_2$ is uniquely solvable, further implies that \eqref{5.24} holds true
in the case $p\in(2,\fz)$.

Assume now that $p\in(1,2)$. In this case, $p'\in(2,\fz)$,
where $1/p+1/p'=1$. From this, Theorem \ref{t1.1}, Lemma \ref{l4.2},
and the fact that \eqref{5.24} holds true for any given $p\in(2,\fz)$,
we deduce that \eqref{5.24} also holds true for any given $p\in(1,2)$.
Furthermore, by Theorem \ref{t1.2}(i), we know that \eqref{5.24} holds true when $p=2$.
Thus, \eqref{5.24} holds true for any given $p\in(1,\fz)$.

\textbf{Step 2.} Let $u_2$ be the weak solution of the following Robin problem
\begin{equation}\label{5.28}
\begin{cases}
-\mathrm{div}(A\nabla u_2)=F_2\ \ & \text{in}\ \ \boz,\\
\dfrac{\partial u_2}{\partial\boldsymbol{\nu}}+\az u_2=0\ \ & \text{on}\ \ \partial\boz.
\end{cases}
\end{equation}
Then, from \eqref{5.24} and Lemma \ref{l4.1}, it follows that,
for any given $p\in(1,\fz)$, there exists a $\dz_0\in(0,\fz)$, depending only on $n$, $p$
and $\boz$,  such that, if $A$ satisfies the $(\dz,R)$-$\mathrm{BMO}$ condition for
some $\dz\in(0,\dz_0)$ and $R\in(0,\fz)$ or $A\in\mathrm{VMO}(\rn)$, then
\begin{equation}\label{5.29}
\|u_2\|_{W^{1,p}(\boz)}\ls\|F_2\|_{L^{p_\ast}(\boz)}.
\end{equation}
Assume that $u$ is the weak solution of the Robin problem
\begin{equation*}
\begin{cases}
-\mathrm{div}(A\nabla u)=\mathrm{div}(\mathbf{f})+F\ \ & \text{in}\ \ \boz,\\
\dfrac{\partial u}{\partial\boldsymbol{\nu}}+\az u
=\mathbf{f}\cdot\boldsymbol{\nu}+g\ \ & \text{on}\ \ \partial\boz.
\end{cases}
\end{equation*}
Then, via using \eqref{5.24}, \eqref{5.29} and Corollary \ref{c4.1}, we conclude that,
for any given $p\in(1,\fz)$, there exists a $\dz_0\in(0,\fz)$, depending only on $n$, $p$
and $\boz$,  such that, if $A$ satisfies the $(\dz,R)$-$\mathrm{BMO}$ condition for
some $\dz\in(0,\dz_0)$ and $R\in(0,\fz)$ or $A\in\mathrm{VMO}(\rn)$, then
$$\|u\|_{W^{1,p}(\boz)}\ls\|\mathbf{f}\|_{L^{p}(\boz;\rn)}
+\|F\|_{L^{p_\ast}(\boz)}+\|g\|_{W^{-1/p,p}(\partial\boz)}.$$
This finishes the proof of (i).

Now we show (ii) via also splitting the proof into the following two steps.

\emph{Step 1.} Let $u_1$ be the weak solution of the Robin problem \eqref{5.23}.
In this step we prove that, for any given $p\in(1,\fz)$ and $\omega\in A_p(\rn)$,
there exists a $\dz_0\in(0,\fz)$, depending only on $n$, $p$, $[\omega]_{A_p(\rn)}$
and $\boz$,
such that, if $A$ satisfies the $(\dz,R)$-$\mathrm{BMO}$ condition for
some $\dz\in(0,\dz_0)$ and $R\in(0,\fz)$ or $A\in\mathrm{VMO}(\rn)$, then
\begin{equation}\label{5.30}
\|u_1\|_{W^{1,p}_\omega(\boz)}\ls\|\mathbf{g}_1\|_{L^p_\omega(\boz;\rn)}.
\end{equation}

By $\omega\in A_p(\rn)$ and Lemma \ref{l3.1}(i),
we find that there exists a $p_0\in(1,p)$ such that $\omega\in A_{\frac p{p_0}}(\rn)$.
Furthermore, from Lemma \ref{l3.1}(ii), it follows that there exists a sufficiently
large $p_1\in(p,\fz)$ such that $\omega\in RH_{(\frac{p_1}{p})'}(\rn)$.
Then
\begin{equation}\label{5.31}
\omega\in A_{\frac p{p_0}}(\rn)\cap RH_{(\frac{p_1}{p})'}(\rn).
\end{equation}
Moreover, by \eqref{5.27} and Lemma \ref{l4.3}, we find that the inequality \eqref{5.27} holds true with
$p$ and $2$ replaced, respectively, by $p_1$ and $p_0$, namely,
\begin{equation}\label{5.32}
\lf[\fint_{B_\boz(x_0,r)}(|v_1|+|\nabla v_1|)^{p_2}\,dx\r]^{\frac1{p_2}}
\ls\lf[\fint_{B_\boz(x_0,2r)}(|v_1|+|\nabla v_1|)^{p_0}\,dx\r]^{\frac1{p_0}},
\end{equation}
where $v_1$ is as in \eqref{5.27}.
Furthermore, from Theorem \ref{t1.3}(i), it follows that, for any given $q\in(1,\fz)$,
the Robin problem $(R)_q$ is uniquely solvable and \eqref{1.23} holds true,
which, combined with \eqref{5.32}, \eqref{5.31}, Theorem \ref{t1.1} and Remark \ref{r1.1}(a),
implies that there exists a $\dz_0\in(0,\fz)$, depending only on $n$, $p$, $\boz$ and $[\omega]_{A_p(\rn)}$,
such that, if $A$ satisfies the $(\dz,R)$-$\mathrm{BMO}$ condition with some $\dz\in(0,\dz_0)$
and $R\in(0,\fz)$ or $A\in\mathrm{VMO}(\rn)$, then \eqref{5.30} holds true in this case.

\emph{Step 2.} Let $u_2$ be the weak solution of the Robin problem \eqref{5.28}.
In this step, we show that, for any given $p\in(1,\fz)$ and $\omega\in A_p(\rn)$,
there exists a $\dz_0\in(0,\fz)$, depending only on $n$, $p$, $[\omega]_{A_p(\rn)}$ and $\boz$,
such that, if $A$ satisfies the $(\dz,R)$-$\mathrm{BMO}$ condition for some $\dz\in(0,\dz_0)$
and $R\in(0,\fz)$ or $A\in\mathrm{VMO}(\rn)$, then
\begin{equation}\label{5.33}
\|u_2\|_{W^{1,p}_\omega(\boz)}\ls\|F_2\|_{L^q_{\omega^a}(\boz)},
\end{equation}
where $q:=np/(n+p-1)$ and $a:=(n-1)/(n+p-1)$.

Let $\omega_1:=\omega^{-p'/p}$. Then, by Lemma \ref{l3.1}(iii),
we know that $\omega_1\in A_{p'}(\rn)$. Assume that
$\mathbf{h}_1\in L^{p'}_{\omega_1}(\boz;\rn)$ and
$v_1$ is the weak solution of the Robin problem \eqref{5.23} with
$\mathbf{g}_1$ replaced by $\mathbf{h}_1$. Then
\begin{equation*}
\int_\boz\mathbf{h}_1\cdot\nabla u_2\,dx=-\int_{\boz}A\nabla u_2\cdot\nabla v_1\,dx
-\int_{\partial\boz}\az u_2v_1\,d\sz(x)=\int_\boz F_2v_1\,dx,
\end{equation*}
which, together with the H\"older inequality, Lemma \ref{l5.9} and the fact that
\eqref{5.30} holds true for any given $p\in(1,\fz)$ and $\omega\in A_p(\rn)$,
further implies that
\begin{align}\label{5.34}
\|\nabla u_2\|_{L^p_\omega(\boz;\rn)}&=\sup_{\|\mathbf{h_1}\|_{L^{p'}_{\omega_1}(\boz;\rn)}\le1}
\lf|\int_\boz\mathbf{h}_1\cdot\nabla u_2\,dx\r|
=\sup_{\|\mathbf{h}_1\|_{L^{p'}_{\omega_1}(\boz;\rn)}\le1}\lf|\int_\boz F_2v_1\,dx\r|\\
&\ls\sup_{\|\mathbf{h}_1\|_{L^{p'}_{\omega_1}(\boz;\rn)}\le1}\|F_2\|_{L^q_{\omega^a}(\boz)}
\|v_1\|_{L^{q'}_{\omega_1}(\boz)}
\ls\sup_{\|\mathbf{h}_1\|_{L^{p'}_{\omega_1}(\boz;\rn)}\le1}\|F_2\|_{L^q_{\omega^a}(\boz)}
\|v_1\|_{W^{1,p'}_{\omega_1}(\boz)}\nonumber\\
&\ls\sup_{\|\mathbf{h}_1\|_{L^{p'}_{\omega_1}(\boz;\rn)}\le1}\|F_2\|_{L^q_{\omega^a}(\boz)}
\|\mathbf{h}_1\|_{L^{p'}_{\omega_1}(\boz;\rn)}\ls
\|F_2\|_{L^q_{\omega^a}(\boz)}.\nonumber
\end{align}
Furthermore, from $\omega\in A_p(\rn)$, $q=np/(n+p-1)$,
$a=(n-1)/(n+p-1)$ and Lemma \ref{l3.1}(iv), it follows that
$\omega^a\in A_q(\rn)$. Then, by (i) and (vii) of Lemma \ref{l3.1}, we find that
there exists a $q_0\in(1,q)$ such that
\begin{align}\label{5.35}
\|F_2\|_{L^{\frac{q}{q_0}}(\boz)}\ls\|F_2\|_{L^q_{\omega^a}(\boz)}.
\end{align}
Furthermore, from Theorem \ref{t1.3}(i), we deduce that
there exists an $s_0\in(1,\fz)$ such that
$$\|u_2\|_{L^{s_0}(\boz)}\ls\|F_2\|_{L^{\frac{q}{q_0}}(\boz)},
$$
which, combined with Lemma \ref{l5.9}, \eqref{5.34}, \eqref{5.35}
and the H\"older inequality, further implies that
\begin{align*}
\|u_2\|_{L^p_\omega(\boz)}&\le\|u_2-\overline{u}_2\|_{L^p_\omega(\boz)}+
|\overline{u}_2|[\omega(\boz)]^{\frac1p}\nonumber\\
&\le\|u_2-\overline{u}_2\|_{L^{\frac{np}{n-1}}_\omega(\boz)}[\omega(\boz)]^{\frac1{np}}
+\|u_2\|_{L^{s_0}(\boz)}[\omega(\boz)]^{\frac1{p}}|\boz|^{-\frac{1}{s_0}}\nonumber\\
&\ls\|\nabla u_2\|_{{L^{p}_\omega(\boz;\rn)}}|\boz|^{\frac{1}{n}}
+\|F_2\|_{L^q_{\omega^a}(\boz)}\ls\|F_2\|_{L^q_{\omega^a}(\boz)},
\end{align*}
where $\overline{u}_2:=\fint_\boz u_2\,dx$.
By this and \eqref{5.34}, we conclude that \eqref{5.33} holds true.

Let $u$ be the weak solution of the Robin problem
\begin{equation*}
\begin{cases}
-\mathrm{div}(A\nabla u)=\mathrm{div}(\mathbf{f})+F\ \ & \text{in}\ \ \boz,\\
\dfrac{\partial u}{\partial\boldsymbol{\nu}}+\az u
=\mathbf{f}\cdot\boldsymbol{\nu}\ \ & \text{on}\ \ \partial\boz.
\end{cases}
\end{equation*}
Then $u=u_{\mathbf{f}}+u_{F}$, where $u_{\mathbf{f}}$ and $u_{F}$ are,
respectively, the weak solutions of the Robin problems \eqref{5.23} and \eqref{5.28}
with $\mathbf{g}_1$ and $F_2$ replaced, respectively, by
$\mathbf{f}$ and $F$. From this, \eqref{5.30} and \eqref{5.33}, we deduce that
\eqref{1.24} holds true for any given $p\in(1,\fz)$ and $\omega\in A_p(\rn)$.
This finishes the proof of (ii).

Finally, we show (iii).
Let $p\in(n/(n-1),\fz)$ and $\omega\in A_1(\rn)$.
Then, by Lemma \ref{l3.1}(ii), we find that there exists a $p_2\in(p,\fz)$
such that $\omega\in RH_{(\frac{p_2}{p})'}(\rn)$.
Furthermore, take $p_0\in(n/(n-1),p)$.
Then $\omega\in A_1(\rn)\subset A_{p/p_0}(\rn)$. Thus,
$\omega\in A_{p/p_0}(\rn)\cap RH_{(\frac{p_2}{p})'}(\rn)$.
From Theorem \ref{t1.3}(i), it follows that the Robin problems $(R)_{p_0}$ and $(R)_{p_2}$
are uniquely solvable, and \eqref{1.12} and \eqref{1.13} hold true
with $p$ replaced by $p_2$, which, together with Theorem \ref{t1.1},
further implies that there exists a positive constant $\dz_0\in(0,\fz)$,
depending only on $n$, $p$, $\boz$ and $[\omega]_{A_1(\rn)}$,  such that, if
$A$ satisfies the $(\dz,R)$-$\mathrm{BMO}$ condition for some $\dz\in(0,\dz_0)$
and $R\in(0,\fz)$ or $A\in\mathrm{VMO}(\rn)$, then the weighted Robin problem $(R)_{p,\,\omega}$ with
$\mathbf{f}\in L^p_\omega(\boz;\rn)$ and $F\in L^{p_\ast}_{\omega^{p_\ast/p}}(\boz)$ is uniquely solvable
and, for any weak solution $u$,
\begin{equation*}
\|u\|_{W^{1,p}_\omega(\boz)}\ls\|\mathbf{f}\|_{L^p_\omega(\boz;\rn)}
+\|F\|_{L^{p_\ast}_{\omega^{p_\ast/p}}(\boz)}.
\end{equation*}
This finishes the proof of (iii) and hence of Theorem \ref{t1.3}.
\end{proof}

\section{Proofs of Theorems \ref{t2.1}, \ref{t2.2}, \ref{t2.3}
and \ref{t2.4}}\label{s6}

\hskip\parindent In this section, we show Theorems \ref{t2.1}, \ref{t2.2}, \ref{t2.3} and \ref{t2.4}
by using Theorems \ref{t1.2} and \ref{t1.3}, properties of Muckenhoupt
weights and the extrapolation theorem recently established in \cite{ch18}.

To prove Theorems \ref{t2.1} and \ref{t2.2},
we need the following lemma, which is well known (see, for instance,
\cite[Section 7.1.2]{g14} and \cite[Lemma 3.4]{mp12}).

\begin{lemma}\label{l6.1}
\begin{itemize}
\item[{\rm(i)}] Let $s\in[1,\fz)$, $\omega\in A_s(\rn)$, $z\in\rn$ and $k\in(0,\fz)$
be a constant. Assume that $\tau^z(\omega)(\cdot):=\omega(\cdot-z)$ and
$\omega_k:=\min\{\omega,\,k\}$. Then $\tau^z(\omega)\in A_s(\rn)$ with
$[\tau^z(\omega)]_{A_s(\rn)}=[\omega]_{A_s(\rn)}$ and
$\omega_k\in A_s(\rn)$ with $[\omega_k]_{A_s(\rn)}\le c_{(s)}[\omega]_{A_s(\rn)}$,
where $c_{(s)}:=1$ when $s\in[1,2]$, and $c_{(s)}:=2^{s-1}$ when $s\in(2,\fz)$.
\item[{\rm(ii)}] For any $x\in\rn$, let $\omega_b(x):=|x|^b$,
where $b\in\rr$ is a constant. Then, for any given $s\in(1,\fz)$, $\omega_b\in A_s(\rn)$
if and only if $b\in(-n,n[s-1])$. Moreover, $[\omega_b]_{A_s(\rn)}\le C_{(n,\,s,\,b)}$,
where $C_{(n,\,s,\,b)}$ is a positive constant depending only on $n$, $s$ and $b$.
\item[{\rm(iii)}] Let $q\in(1,\fz]$, $\omega\in RH_q(\rn)$, $z\in\rn$ and $k\in(0,\fz)$
be a constant. Assume that $\tau^z(\omega)$ and $\omega_k$ are as in Lemma \ref{l6.1}(i).
Then $\tau^z(\omega)\in RH_q(\rn)$ with $[\tau^z(\omega)]_{RH_q(\rn)}=[\omega]_{RH_q(\rn)}$, and
$\omega_k\in RH_q(\rn)$ with $[\omega_k]_{RH_q(\rn)}\le c_{(q)}[\omega]_{RH_q(\rn)}$,
where $c_{(q)}$ is a positive constant depending only on $q$.
\item[{\rm(iv)}] For any $x\in\rn$, let $\omega_b(x):=|x|^b$,
where $b\in\rr$ is a constant. Assume that $q\in(1,\fz]$.
If $b\in(-n/q,\fz)$, then $\omega_b\in RH_q(\rn)$; furthermore,
$[\omega_b]_{RH_q(\rn)}\le C_{(n,\,q,\,b)}$,
where $C_{(n,\,q,\,b)}$ is a positive constant depending only on $n$, $q$ and $b$.
\end{itemize}
\end{lemma}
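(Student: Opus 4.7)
The plan is to reduce each of (i)--(iv) to direct computations that are classical in the theory of Muckenhoupt and reverse H\"older weights, treating the four items by three essentially distinct arguments: translation invariance, truncation, and power-weight computation.

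For the translation statements in (i) and (iii), I would simply apply the change of variables $y = x - z$ inside the integrals defining $[\omega]_{A_s(\rn)}$ and $[\omega]_{RH_q(\rn)}$. Because the Lebesgue measure is translation invariant and the suprema in Definition \ref{d1.1} are taken over \emph{all} balls of $\rn$, each individual testing integral for $\tau^z(\omega)$ on a ball $B(x_0,r)$ equals the testing integral for $\omega$ on $B(x_0-z,r)$, so the constants transfer on the nose: $[\tau^z(\omega)]_{A_s(\rn)}=[\omega]_{A_s(\rn)}$ and $[\tau^z(\omega)]_{RH_q(\rn)}=[\omega]_{RH_q(\rn)}$.

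For the truncation statements in (i) and (iii), I would fix a ball $B$ and verify the defining $A_s$ or $RH_q$ inequality for $\omega_k:=\min\{\omega,k\}$ by a two-case analysis depending on the relative size of $\fint_B\omega$ and $k$. The main pointwise tool is the identity
\begin{equation*}
\omega_k^{-1/(s-1)}=\max\bigl\{\omega^{-1/(s-1)},\,k^{-1/(s-1)}\bigr\}\le \omega^{-1/(s-1)}+k^{-1/(s-1)},
\end{equation*}
together with the trivial bound $\omega_k\le\omega$. When $s\in[1,2]$ these two estimates combine without loss, giving $c_{(s)}=1$, while for $s\in(2,\fz)$ the power $s-1>1$ forces a convexity-type inequality $(a+b)^{s-1}\le 2^{s-2}(a^{s-1}+b^{s-1})$, which produces the factor $c_{(s)}=2^{s-1}$. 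The $RH_q$ case in (iii) is handled by an analogous split: on the subset of $B$ where $\omega\le k$, $\omega_k$ agrees with $\omega$ and the $RH_q$ bound for $\omega$ applies, whereas on the subset where $\omega> k$ one uses $\omega_k\equiv k$ to get a trivial constant estimate, and the two regimes are glued together by the size of $\fint_B\omega_k$.

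For the power-weight statements (ii) and (iv), I would compute the averages $\fint_B|x|^b\,dx$ and $\fint_B|x|^{-b/(s-1)}\,dx$ (resp.\ $\fint_B|x|^{bq}\,dx$) directly in polar coordinates, splitting according to whether $B(x_0,r_B)$ contains the origin, stays away from the origin, or straddles the sphere $|x|=r_B$. In each regime the integrals factor as standard one-dimensional radial integrals, whose finiteness is governed exactly by the exponent thresholds: $b>-n$ and $-b/(s-1)>-n$ for (ii), giving $-n<b<n(s-1)$; and $b>-n/q$ for (iv). Homogeneity in $r_B$ then produces the uniform bound on $[\omega_b]_{A_s(\rn)}$ and $[\omega_b]_{RH_q(\rn)}$, with constants depending only on $n$, the exponent, and $b$.

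I expect no genuine obstacle here; the only delicate step is the bookkeeping in the truncation argument to produce the sharp constants $c_{(s)}$ and $c_{(q)}$. Since the lemma is standard, if brevity were desired I would simply cite \cite[Section 7.1.2]{g14} and \cite[Lemma 3.4]{mp12} and omit the details.
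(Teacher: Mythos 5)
The paper does not prove this lemma; it is cited as well known from \cite[Section 7.1.2]{g14} and \cite[Lemma 3.4]{mp12}, which you also acknowledge at the end of your proposal. Your outline of the standard arguments --- change of variables for translation invariance, pointwise truncation estimates, polar-coordinate computation for power weights --- is a reasonable sketch of how a self-contained proof would go, and items (ii), (iv) and the translation parts of (i), (iii) go through as you describe.

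There is, however, a genuine gap in the truncation step of (i). The two pointwise estimates $\omega_k\le\omega$ and $\omega_k^{-1/(s-1)}\le\omega^{-1/(s-1)}+k^{-1/(s-1)}$ do \emph{not} ``combine without loss'' to give $c_{(s)}=1$ for $s\in[1,2]$. Following your sketch (and even inserting the additional bound $\fint_B\omega_k\le k$, which you do not mention but need, together with a case split on $\fint_B\omega\lessgtr k$) yields only
$[\omega_k]_{A_s(\rn)}\le[\omega]_{A_s(\rn)}+1\le2\,[\omega]_{A_s(\rn)}$, that is, $c_{(s)}=2$, not $c_{(s)}=1$. To obtain the sharp constant one can instead argue by monotonicity in $k$: fixing a ball $B$ and setting $\mu(k):=\fint_B\omega_k$, $\alpha(k):=\fint_B\omega_k^{-1/(s-1)}$ and $E_k:=\{x\in B:\ \omega(x)>k\}$, the layer-cake formula gives $\mu'(k)=|E_k|/|B|$ and $\alpha'(k)=-\frac{1}{s-1}k^{-s/(s-1)}|E_k|/|B|$ for a.e.\ $k$, whence
\begin{equation*}
\frac{d}{dk}\lf[\mu(k)\,\alpha(k)^{s-1}\r]
=\frac{|E_k|}{|B|}\,\alpha(k)^{s-2}\lf[\alpha(k)-\mu(k)\,k^{-\frac{s}{s-1}}\r]\ge0,
\end{equation*}
because $\alpha(k)\ge k^{-1/(s-1)}\ge\mu(k)\,k^{-s/(s-1)}$; letting $k\to\fz$ then gives $[\omega_k]_{A_s(\rn)}\le[\omega]_{A_s(\rn)}$ (in fact for every $s\in(1,\fz)$, so $2^{s-1}$ for $s>2$ is not sharp either). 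For the paper's applications only uniformity of $[\omega_k]_{A_s(\rn)}$ in $k$ is needed, so the gap is harmless downstream, but your argument does not establish the constants as literally claimed; the same caveat applies to your analogous split for $\omega_k$ in (iii).
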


Now we show Theorem \ref{t2.1} by using Theorem \ref{t1.2}(ii) and Lemma \ref{l6.1}.

\begin{proof}[Proof of Theorem \ref{t2.1}]
Since the proofs of (i) and (ii) are similar,
we only show (ii) here. We prove (ii) via
borrowing some ideas from \cite{mp12,mp11}.

Let $u$ be the weak solution of the Robin problem $(R)_2$
with $\mathbf{f}\in \cm^{\tz}_p(\boz;\rn)$, $F\in \cm^{\wz{\tz}}_{p_\ast}(\boz)$
and $g\equiv0$, where $p\in(\max\{n/(n-1),(3+\uc_0)'\},3+\uc_0)$,
$\tz\in(pn/(3+\uc_0),n]$, $p_\ast$ is as in \eqref{1.5} and
$\wz{\tz}$ is given by $\wz{\tz}:=p_\ast(1+\frac{\tz}{p})$.
For any $\rho\in(0,\diam(\boz)]$, $\epsilon\in(0,\tz-n+n/(\frac{3+\uc_0}{p})')$
and $z,\,x\in\boz$, let
$$\omega_z(x):=\min\lf\{|x-z|^{-n+\tz-\epsilon},\,\rho^{-n+\tz-\epsilon}\r\}.
$$
Then, by (i) and (ii) of Lemma \ref{l6.1}, we conclude that, for any given $z\in\boz$,
$\omega_z\in A_s(\rn)$ with any given $s\in(1,\fz)$,
and there exists a positive constant $C_{(n,\,s,\,\tz)}$, depending only on
$n$, $s$ and $\tz$, such that $[\omega_z]_{A_s(\rn)}\le C_{(n,\,s,\,\tz)}$.
Furthermore, from the assumption $\tz>np/(3+\uc_0)$, it follows that
$\tz>n-n/(\frac{3+\uc_0}{p})'$, which, combined with $\epsilon\in(0,\tz-n+n/(\frac{3+\uc_0}{p})')$,
further implies that $\tz-n-\epsilon>-n/(\frac{3+\uc_0}{p})'$. By this, and (iii)
and (iv) of Lemma \ref{l6.1}, we find that, for any given $z\in\boz$,
$\omega_z\in RH_{(\frac{3+\uc_0}{p})'}(\rn)$ and
$[\omega_z]_{RH_{(\frac{3+\uc_0}{p})'}(\rn)}\ls1$. Thus, for any given $z\in\boz$,
$\omega_z\in A_{\frac{p}{\wz{p}_0}}(\rn)\cap RH_{(\frac{3+\uc_0}{p})'}(\rn)$,
$[\omega_z]_{A_{\frac{p}{\wz{p}_0}}(\rn)}\ls1$ and
$[\omega_z]_{RH_{(\frac{3+\uc_0}{p})'}(\rn)}\ls1$,
where $\wz{p}_0:=\max\{\frac n{n-1},(3+\uc_0)'\}$.
From this, \eqref{1.18} and the fact that, for any $x\in B(z,\rho)$,
$\omega_z(x)=\rho^{-n+\tz-\epsilon}$, we deduce that, for any $z\in\boz$
and $\rho\in(0,\diam(\boz)]$,
\begin{align}\label{6.1}
\||u|+|\nabla u|\|_{L^{p}(B(z,\rho)\cap\boz)}&=\rho^{\frac{n-\tz+\epsilon}{p}}
\||u|+|\nabla u|\|_{L^{p}_{\omega_z}(B(z,\rho)\cap\boz)}\\
&\ls\rho^{\frac{n-\tz+\epsilon}{p}}
\lf[\|\mathbf{f}\|_{L^{p}_{\omega_z}(\boz;\rn)}
+\|F\|_{L^{p_\ast}_{\omega_z^{p_\ast/p}}(\boz)}\r].\nonumber
\end{align}
Moreover, by the Fubini theorem and the fact that, for any given $z\in\boz$,
$\omega_z\le \rho^{-n+\tz-\epsilon}$, we know that
\begin{align}\label{6.2}
\int_\boz |\mathbf{f}|^p\omega_z\,dx&=\int_0^\fz\lf[\int_{\{x\in\boz:\ \omega_z(x)>t\}}
|\mathbf{f}|^p\,dx\r]\,dt\le\int_0^{\rho^{-n+\tz-\epsilon}}\int_{B(z,t^{-\frac{1}{n-\tz+\epsilon}})\cap\boz}
|\mathbf{f}|^p\,dx\,dt\\
&\le\|\mathbf{f}\|_{\cm^\tz_p(\boz)}^p
\int_0^{\rho^{-n+\tz-\epsilon}}t^{-\frac{n-\tz}{n-\tz+\epsilon}}\,dt
\ls\|\mathbf{f}\|_{\cm^\tz_p(\boz;\rn)}^p \rho^{-\epsilon}.\nonumber
\end{align}
Moreover, similarly to \eqref{6.2}, we have
$$
\int_\boz |F|^{p_\ast}\omega_z^{\frac{p_\ast}{p}}\,dx\ls
\|F\|_{\cm^{\wz{\tz}}_{p_\ast}(\boz)}^{p_\ast} \rho^{n-\wz{\tz}}
\rho^{(-n+\tz-\epsilon)\frac{p_\ast}{p}},
$$
which, together with \eqref{6.1}, \eqref{6.2} and $\wz{\tz}=p_\ast(1+\frac{\tz}{p})$,
further implies that, for any $z\in\boz$ and $\rho\in(0,\diam(\boz)]$,
\begin{align*}
\||u|+|\nabla u|\|_{L^{p}(B(z,\rho)\cap\boz)}&\ls\rho^{\frac{n-\tz+\epsilon}{p}}
\lf[\|\mathbf{f}\|_{\cm^\tz_p(\boz;\rn)} \rho^{-\frac{\epsilon}{p}}
+\|F\|_{\cm^{\wz{\tz}}_{p_\ast}(\boz)}\rho^{\frac{n-\wz{\tz}}{p_\ast}}
\rho^{(-n+\tz-\epsilon)\frac{1}{p}}\r]\nonumber\\
&\ls\rho^{\frac{n-\tz}{p}}
\lf[\|\mathbf{f}\|_{\cm^\tz_p(\boz;\rn)}
+\|F\|_{\cm^{\wz{\tz}}_{p_\ast}(\boz)}\r].
\end{align*}
From this and the definition of $\cm^\tz_p(\boz)$, we deduce that
$$\||u|+|\nabla u|\|_{\cm^{\tz}_p(\boz)}\ls
\|\mathbf{f}\|_{\cm^{\tz}_p(\boz;\rn)}+\|F\|_{\cm^{\wz{\tz}}_{p_\ast}(\boz)}.
$$
This finishes the proof of Theorem \ref{t2.1}.
\end{proof}

\begin{proof}[Proof of Theorem \ref{t2.2}]
The proof of Theorem \ref{t2.2} is similar to that of Theorem \ref{t2.1}
and we omit the details.
\end{proof}

To show Theorem \ref{t2.3} via using the Rubio de Francia extrapolation theorem
in the scale of Musielak--Orlicz spaces (or generalized Orlicz spaces),
we need the following Lemma \ref{l6.2}, which is just \cite[Corollary 4.21]{ch18}.

\begin{lemma}\label{l6.2}
Let $n\ge2$, $\boz\subset\rn$ be a bounded Lipschitz domain, $f,\,h$ be given non-negative
measurable functions on $\boz$  and $1<p_1<p<p_2<\fz$.
Assume that, for any $\omega\in A_{p/p_1}(\rn)\cap RH_{(p_2/p)'}(\rn)$,
\begin{equation*}
\|f\|_{L^p_\omega(\boz)}\le C\|g\|_{L^p_\omega(\boz)},
\end{equation*}
where $C$ is a positive constant depending only
on $n$, $p$, $\boz$, $[\omega]_{A_{p/p_1}(\rn)}$
and $[\omega]_{RH_{(p_2/p)'}(\rn)}$.
If $\fai\in\Phi_w(\boz)$ satisfies Assumptions $(A0)$--$(A2)$,
$\mathrm{(aInc)}_{q_1}$ and $\mathrm{(aDec)}_{q_2}$ for some $p_1<q_1\le q_2<p_2$,
then there exists a positive constant $C$,
depending only on $n$, $\boz$ and $\fai$, such that
$\|f\|_{L^{\fai}(\boz)}\le C\|h\|_{L^{\fai}(\boz)}$.
\end{lemma}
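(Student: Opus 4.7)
The plan is to prove this limited-range extrapolation result by adapting the Rubio de Francia iteration algorithm to the Musielak--Orlicz setting. The hypotheses on $\fai$ play two complementary roles: conditions (A0)--(A2) guarantee the boundedness of the Hardy--Littlewood maximal operator $\cm$ on the Musielak--Orlicz space $L^\fai(\boz)$ and on a suitable associate space $L^{\fai'}(\boz)$, while the growth bounds $\mathrm{(aInc)}_{q_1}$ and $\mathrm{(aDec)}_{q_2}$ with $p_1<q_1\le q_2<p_2$ localize the integrability of $L^\fai(\boz)$ strictly inside the range $(p_1,p_2)$, which is exactly the range compatible with the weight class $A_{p/p_1}(\rn)\cap RH_{(p_2/p)'}(\rn)$ appearing in the hypothesis.

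The heart of the construction would proceed as follows. By duality in the Musielak--Orlicz setting I would write $\|f\|_{L^\fai(\boz)}=\sup\{|\int_\boz fh\,dx|:\|h\|_{L^{\fai'}(\boz)}\le1\}$. For each such $h$ I would form a Rubio de Francia iterate
\begin{equation*}
R_1(h):=\sum_{k=0}^\fz\frac{\cm^k(h)}{2^k\,\|\cm\|_{L^{\fai'}\to L^{\fai'}}^k},
\end{equation*}
which satisfies $h\le R_1(h)$ pointwise, has $L^{\fai'}$-norm at most $2\|h\|_{L^{\fai'}}$, and lies in $A_1(\rn)$ with uniformly controlled $A_1$-constant. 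A companion iteration, carried out on a rescaling of $\fai$ matched to the upper index $p_2$ (so that the rescaled function again belongs to $\Phi_w$, thanks to $\mathrm{(aDec)}_{q_2}$), produces a second $A_1$-weight $R_2(g)$ with controlled modular. Combining these as $\omega:=R_1(h)^{1-p/p_1}R_2(g)^{-\sz}$ with $\sz$ chosen so that the Jones factorization places $\omega$ in $A_{p/p_1}(\rn)\cap RH_{(p_2/p)'}(\rn)$ with uniform constants, then inserting this weight into the hypothesis $\|f\|_{L^p_\omega}\le C\|g\|_{L^p_\omega}$ and applying the generalized H\"older inequality in $L^\fai$ closes the argument once we take the supremum over $h$.

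The main obstacle is the boundedness of $\cm$ on the associate and rescaled spaces, together with keeping the constants uniform in $h$ and $g$ throughout the iteration. The conditions (A0)--(A2) are precisely the Musielak--Orlicz analogues of the log-H\"older regularity needed to transfer Calder\'on--Zygmund theory to the generalized Orlicz setting, while $\mathrm{(aInc)}_{q_1}$ with $q_1>p_1$ and $\mathrm{(aDec)}_{q_2}$ with $q_2<p_2$ ensure that both the rescalings $\fai(x,t^{1/p_1})$ and $\fai(x,t^{1/p_2})$ remain in $\Phi_w(\boz)$, so that the iteration is well defined in each rescaled space. Once these boundedness and factorization results are in place, the remainder of the argument parallels the limited-range extrapolation of Auscher--Martell and Cruz-Uribe--P\'erez in the classical weighted Lebesgue setting, and the uniform control of constants propagates cleanly to the desired estimate $\|f\|_{L^\fai(\boz)}\ls\|g\|_{L^\fai(\boz)}$.
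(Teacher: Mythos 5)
The paper does not actually prove this lemma; the ``proof'' is the single sentence that it is \cite[Corollary 4.21]{ch18}. Your proposal attempts to sketch that cited proof directly, and the high-level strategy is faithful to the approach of Cruz-Uribe and H\"ast\"o: Assumptions $(A0)$--$(A2)$ are there to guarantee that $\cm$ is bounded on $L^{\fai^\ast}$ and on the rescaled Musielak--Orlicz spaces, which is what makes the Rubio de Francia series converge, while $\mathrm{(aInc)}_{q_1}$ and $\mathrm{(aDec)}_{q_2}$ with $p_1<q_1\le q_2<p_2$ place the scale strictly inside $(p_1,p_2)$ so that the weights produced by the iteration land in $A_{p/p_1}(\rn)\cap RH_{(p_2/p)'}(\rn)$.

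That said, as written the sketch postpones precisely the steps that carry the proof. You write $\omega:=R_1(h)^{1-p/p_1}R_2(g)^{-\sz}$ with ``$\sz$ chosen so that the Jones factorization places $\omega$ in $A_{p/p_1}(\rn)\cap RH_{(p_2/p)'}(\rn)$,'' but you never identify $\sz$ or check this. The characterization $\omega\in A_{p/p_1}\cap RH_{(p_2/p)'}$ if and only if $\omega^{(p_2/p)'}\in A_\tau$ with $\tau=(p_2/p)'(p/p_1-1)+1$, combined with Jones factorization, gives $\omega=u^{1-p/p_2}v^{1-p/p_1}$ with $u,v\in A_1$; so your $-\sz$ should equal $1-p/p_2$, and one must then verify that $[\omega]_{A_{p/p_1}(\rn)}$ and $[\omega]_{RH_{(p_2/p)'}(\rn)}$ are bounded by quantities depending only on $\fai$ (through the operator norms of $\cm$ on the relevant spaces), since otherwise the constant $C$ in the weighted $L^p$ hypothesis is not under control when you pass to the supremum over $h$. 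You also need to extend $h$ and $g$ off $\boz$ to run the iteration over balls in $\rn$, and the associate-space duality and generalized H\"older inequality in $L^\fai(\boz)$ must be invoked in their precise Musielak--Orlicz forms, which are not automatic and are themselves nontrivial results in \cite{ch18,hh19}. In short, your blueprint correctly reproduces the mechanism the paper delegates to \cite{ch18}, but a self-contained argument would still need the weight construction, the weight-characteristic bounds, and the maximal-function boundedness on $L^{\fai^\ast}$ and the rescalings established rather than asserted.
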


Now we prove Theorem \ref{t2.3} via using Theorem \ref{t1.2},
Remark \ref{r1.3} and Lemma \ref{l6.2}.

\begin{proof}[Proof of Theorem \ref{t2.3}]
Let $u$ be the weak solution of the Robin problem
\begin{equation*}
\begin{cases}
-\mathrm{div}(A\nabla u)=\mathrm{div}(\mathbf{f})\ \ &\text{in}\ \ \boz,\\
\dfrac{\partial u}{\partial\boldsymbol{\nu}}+\az u=\mathbf{f}\cdot\boldsymbol{\nu} \ \ &\text{on}\ \ \partial\boz.
\end{cases}
\end{equation*}
Assume that $p\in((3+\uc_0)',3+\uc_0)$,
where $\uc_0\in(0,\fz)$ is as in Theorem \ref{t1.2}.
Then, by Theorem \ref{t1.2} and Remark \ref{r1.3}, we conclude that,
for any given $\omega\in A_{\frac{p}{(3+\uc_0)'}}(\rn)\cap
RH_{(\frac{3+\uc_0}{p})'}(\rn)$, there exists a positive constant $\dz_0\in(0,\fz)$,
depending only on $n$, $p$, the Lipschitz constant of $\boz$, $[\omega]_{A_{\frac{p}{(3+\uc_0)'}}(\rn)}$
and $[\omega]_{RH_{(\frac{3+\uc_0}{p})'}(\rn)}$,  such that, if
$A$ satisfies the $(\dz,R)$-$\mathrm{BMO}$ condition for some $\dz\in(0,\dz_0)$
and $R\in(0,\fz)$ or $A\in\mathrm{VMO}(\rn)$, then
\begin{equation*}
\||u|+|\nabla u|\|_{L^p_\omega(\boz)}\ls\|\mathbf{f}\|_{L^p_\omega(\boz;\rn)}.
\end{equation*}
From this and Lemma \ref{l6.2} with $f:=|u|+|\nabla u|$, $h:=|\mathbf{f}|$,
$p_1:=(3+\uc_0)'$ and $p_2:=3+\uc_0$,
we deduce that
$$\|u\|_{L^{\fai}(\boz)}+\lf\|\nabla u\r\|_{L^{\fai}(\boz;\rn)}
\sim\|f\|_{L^{\fai}(\boz)}
\ls\|h\|_{L^{\fai}(\boz)}\sim\lf\|\mathbf{f}\r\|_{L^{\fai}(\boz;\rn)},
$$
which completes the proof of Theorem \ref{t2.3}.
\end{proof}

To prove Theorem \ref{t2.4} via using the extrapolation technique,
we need the following Lemma \ref{l6.3}, which is just \cite[Corollary 4.8]{ch18}.

\begin{lemma}\label{l6.3}
Let $n\ge2$, $\boz\subset\rn$ be a bounded Lipschitz domain, $f,\,h$ be given non-negative
measurable functions on $\boz$  and $1\le p_0\le q_0<\fz$.
Assume that, for any $\omega\in A_1(\rn)$,
\begin{equation*}
\|f\|_{L^{q_0}_\omega(\boz)}\le C\|g\|_{L^{p_0}_{\omega^{p_0/q_0}}(\boz)},
\end{equation*}
where $C$ is a positive constant depending only on $n$, $p_0$, $q_0$,
$\boz$ and $[\omega]_{A_1(\rn)}$.
Let $\fai\in\Phi_w(\boz)$ satisfies Assumptions $(A0)$--$(A2)$,
$\mathrm{(aInc)}_{q_0}$ and $\mathrm{(aDec)}_{q_+}$ for some $q_+\in(q_0,\fz)$,
and the function $\psi$ be given by, for any $(x,t)\in\boz\times[0,\fz)$,
$\psi^{-1}(x,t)=t^{\frac{1}{p_0}-\frac{1}{q_0}}\fai^{-1}(x,t)$.
Then there exists a positive constant $C$,
depending only on $n$, $\boz$ and $\fai$, such that
$\|f\|_{L^{\fai}(\boz)}\le C\|h\|_{L^{\psi}(\boz)}$.
\end{lemma}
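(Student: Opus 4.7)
The plan is to prove this off-diagonal extrapolation theorem via an adaptation of the Rubio de Francia iteration algorithm to the Musielak--Orlicz setting, exploiting the fact that the assumptions $(A0)$--$(A2)$ and $\mathrm{(aInc)}_{q_0}$, $\mathrm{(aDec)}_{q_+}$ force the Hardy--Littlewood maximal operator $\cm$ to be bounded on a suitable associated generalized Orlicz space. The strategy proceeds in roughly four stages: a rescaling to a ``diagonal'' setting, a duality reduction, the Rubio de Francia construction of an $A_1$ majorant, and a final reverse-duality step.

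\textbf{Stage 1 (rescaling to exponent one).} Define the rescaled Musielak--Orlicz function $\wz{\fai}(x,t):=\fai(x,t^{1/q_0})$ and $\wz{\psi}(x,t):=\psi(x,t^{1/q_0})$. Then $\|f\|_{L^\fai(\boz)}^{q_0}=\|f^{q_0}\|_{L^{\wz{\fai}}(\boz)}$ and, using $\psi^{-1}(x,t)=t^{1/p_0-1/q_0}\fai^{-1}(x,t)$, a direct computation shows that $\wz{\psi}^{-1}(x,t)=t^{q_0/p_0-1}\wz{\fai}^{-1}(x,t)$, i.e., $\wz{\psi}(x,t)=\wz{\fai}(x,t^{p_0/q_0})^{q_0/p_0}$ up to standard equivalence. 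The $\mathrm{(aInc)}_{q_0}$ hypothesis on $\fai$ becomes $\mathrm{(aInc)}_1$ for $\wz{\fai}$, which combined with the gap to $\mathrm{(aDec)}_{q_+/q_0}$ produces room to accommodate the maximal operator. Thus it suffices to prove
\begin{equation*}
\|f^{q_0}\|_{L^{\wz{\fai}}(\boz)}\le C\lf\|h^{q_0}\r\|_{L^{\wz{\psi}}(\boz)}.
\end{equation*}

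\textbf{Stage 2 (duality).} By the standard Musielak--Orlicz norm-modular duality (valid under $(A0)$--$(A2)$), we have
$$\|f^{q_0}\|_{L^{\wz{\fai}}(\boz)}\sim \sup\lf\{\int_{\boz}f^{q_0}G\,dx:\ G\ge0,\ \|G\|_{L^{\wz{\fai}^\ast}(\boz)}\le1\r\}.$$
Fix such a $G$. The task is now to dominate $\int_\boz f^{q_0}G\,dx$ by a constant times $\|h^{q_0}\|_{L^{\wz{\psi}}(\boz)}$.

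\textbf{Stage 3 (Rubio de Francia algorithm).} Under Assumptions $(A0)$--$(A2)$ together with the gap between $\mathrm{(aInc)}_{q_0}$ and $\mathrm{(aDec)}_{q_+}$, the maximal operator $\cm$ is bounded on the dual scale associated with $\wz{\fai}$ (this is exactly the type of boundedness proved in \cite{ch18,hh19}; it is the main technical input). Let $T$ denote its norm on the relevant space. Define
\begin{equation*}
\mathcal{R}G(x):=\sum_{k=0}^{\fz}\frac{\cm^k G(x)}{(2T)^k},
\end{equation*}
where $\cm^0G:=G$ and $\cm^k:=\cm\circ\cm^{k-1}$. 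Then $G\le\mathcal{R}G$ pointwise, $\|\mathcal{R}G\|\lesssim\|G\|$ in the appropriate norm, and crucially $\cm(\mathcal{R}G)\le 2T\cdot\mathcal{R}G$, so $\mathcal{R}G\in A_1(\rn)$ with $[\mathcal{R}G]_{A_1(\rn)}\le 2T$, a uniform bound.

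\textbf{Stage 4 (apply hypothesis and reverse duality).} Set $\omega:=\mathcal{R}G$. Since $\omega\in A_1(\rn)$ with controlled constant, the hypothesis yields
\begin{equation*}
\int_{\boz}f^{q_0}G\,dx\le\int_{\boz}f^{q_0}\omega\,dx\le C^{q_0}\lf(\int_{\boz}h^{p_0}\omega^{p_0/q_0}\,dx\r)^{q_0/p_0}.
\end{equation*}
By H\"older's inequality in Musielak--Orlicz spaces applied to the pair $(\wz{\psi},(\wz{\psi})^\ast)$, and using the identity $\wz{\psi}(x,t)\sim\wz{\fai}(x,t^{p_0/q_0})^{q_0/p_0}$, one estimates
$$\int_\boz h^{p_0}\omega^{p_0/q_0}\,dx\le C\|h^{p_0}\|_{L^{\wz{\psi}^{p_0/q_0}}(\boz)}\|\omega^{p_0/q_0}\|_{(\wz{\psi}^{p_0/q_0})^\ast}\le C\|h^{q_0}\|^{p_0/q_0}_{L^{\wz{\psi}}(\boz)}\|G\|_{L^{\wz{\fai}^\ast}(\boz)}^{p_0/q_0},$$
after translating powers via the definition of the Luxemburg norm. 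Taking supremum over admissible $G$ and unraveling Stage 1 gives the desired estimate.

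The main obstacle is Stage 3: proving that $\cm$ is bounded on the Musielak--Orlicz space naturally paired to $\wz{\fai}$, which is exactly where Assumptions $(A0)$--$(A2)$ together with the $\mathrm{(aInc)}$/$\mathrm{(aDec)}$ gap enter in an essential way; once this maximal inequality is available, the construction of $\mathcal{R}G$ and the remaining duality manipulations are routine adaptations of classical Rubio de Francia extrapolation.
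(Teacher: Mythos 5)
Your plan is essentially a reconstruction of the proof of the result the paper actually invokes here: the paper gives no argument for Lemma \ref{l6.3} at all, but simply quotes \cite[Corollary 4.8]{ch18}, and the proof in \cite{ch18} is precisely the rescaling--duality--Rubio de Francia scheme you outline. So the route is correct in outline rather than genuinely different. The substantive steps check out: with $\wz{\fai}(x,t):=\fai(x,t^{1/q_0})$ one has $\wz{\psi}^{-1}(x,t)=t^{q_0/p_0-1}\wz{\fai}^{-1}(x,t)$, the rescaling turns $\mathrm{(aInc)}_{q_0}$/$\mathrm{(aDec)}_{q_+}$ into $\mathrm{(aInc)}_{1}$/$\mathrm{(aDec)}_{q_+/q_0}$, hence $\wz{\fai}^\ast$ satisfies $\mathrm{(aInc)}_{(q_+/q_0)'}$ with exponent strictly larger than $1$, which together with (A0)--(A2) is exactly what makes $\cm$ bounded on the associate space and the algorithm $\mathcal{R}$ run with $[\mathcal{R}G]_{A_1(\rn)}\le 2T$; and your final H\"older step is justified by $(\eta^\ast)^{-1}(x,t)\sim[(\wz{\fai}^\ast)^{-1}(x,t)]^{p_0/q_0}$ for $\eta(x,t):=\wz{\psi}(x,t^{q_0/p_0})$, so that $\|\omega^{p_0/q_0}\|_{L^{\eta^\ast}(\boz)}\ls\|\omega\|_{L^{\wz{\fai}^\ast}(\boz)}^{p_0/q_0}\ls1$. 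Two points you pass over deserve explicit care, and both are supplied by the cited machinery rather than being routine: first, the hypothesis requires $\omega\in A_1(\rn)$ while $\fai\in\Phi_w(\boz)$ lives only on the bounded domain, so to iterate the genuine maximal operator on $\rn$ and simultaneously control $\|\mathcal{R}G\|_{L^{\wz{\fai}^\ast}(\boz)}$ one must either extend $\fai$ to $\rn$ preserving (A0)--(A2) and the (aInc)/(aDec) bounds or use the domain formulation of the maximal inequality as in \cite{ch18,hh19}; second, the invariance of (A0)--(A2) under the rescalings $t\mapsto t^{1/q_0}$, $t\mapsto t^{q_0/p_0}$ and under conjugation, and the norm-conjugate (duality) formula for weak $\Phi$-functions, all need the equivalence lemmas from \cite{hh19}. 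With those references in place your sketch is a faithful proof of the lemma; as a self-contained argument it would still owe its key input (the maximal inequality on $L^{\wz{\fai}^\ast}$) to \cite{ch18,hh19}, which is exactly why the paper contents itself with the citation.
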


We show Theorem \ref{t2.4} by using Theorem \ref{t1.3}(iii)
and Lemma \ref{l6.3}.

\begin{proof}[Proof of Theorem \ref{t2.4}]
Let $u_\mathbf{f}$ and $u_F$ be, respectively, the weak solutions of the Robin problems
\begin{equation*}
\begin{cases}
-\mathrm{div}(A\nabla u_\mathbf{f})=\mathrm{div}(\mathbf{f})\ \ &\text{in}\ \ \boz,\\
\dfrac{\partial u_\mathbf{f}}{\partial\boldsymbol{\nu}}+\az u_\mathbf{f}=\mathbf{f}\cdot\boldsymbol{\nu}
\ \ &\text{on}\ \ \partial\boz
\end{cases}
\end{equation*}
and
\begin{equation*}
\begin{cases}
-\mathrm{div}(A\nabla u_F)=F\ \ & \text{in}\ \ \boz,\\
\dfrac{\partial u_F}{\partial\boldsymbol{\nu}}+\az u_F=0\ \ & \text{on}\ \ \partial\boz.
\end{cases}
\end{equation*}
Then $u=u_\mathbf{f}+u_F$. Assume that $p\in(n/(n-1),\fz)$.
By Theorem \ref{t1.3}(iii), we find that,
for any given $\omega\in A_1(\rn)$, there exists a positive constant $\dz_0\in(0,\fz)$,
depending only on $n$, $p$, $\boz$ and $[\omega]_{A_1(\rn)}$, such that, if
$A$ satisfies the $(\dz,R)$-$\mathrm{BMO}$ condition for some $\dz\in(0,\dz_0)$
and $R\in(0,\fz)$ or $A\in\mathrm{VMO}(\rn)$, then
\begin{equation*}
\||u_\mathbf{f}|+|\nabla u_\mathbf{f}|\|_{L^p_\omega(\boz)}\ls\|\mathbf{f}\|_{L^p_\omega(\boz;\rn)}.
\end{equation*}
From this and Lemma \ref{l6.3} with $f:=|u_\mathbf{f}|+|\nabla u_\mathbf{f}|$,
$h:=|\mathbf{f}|$ and $p_0=q_0:=p$, it follows that
\begin{equation}\label{6.3}
\|u_\mathbf{f}\|_{L^{\fai}(\boz)}+\lf\|\nabla u_\mathbf{f}\r\|_{L^{\fai}(\boz;\rn)}
\sim\|f\|_{L^{\fai}(\boz)}
\ls\|h\|_{L^{\fai}(\boz)}\sim\lf\|\mathbf{f}\r\|_{L^{\fai}(\boz;\rn)}.
\end{equation}
Moreover, by Theorem \ref{t1.3}(iii), we conclude that,
for any given $\omega\in A_1(\rn)$,
\begin{equation*}
\||u_F|+|\nabla u_F|\|_{L^p_\omega(\boz)}\ls\|F\|_{L^{p_\ast}_{\omega^{p_\ast/p}}(\boz)},
\end{equation*}
which, combined with Lemma \ref{l6.3} in the case that $f:=|u_F|+|\nabla u_F|$,
$h:=|F|$, $p_0:=p_\ast$ and $q_0:=p$ and the fact that $\frac1{p_\ast}-\frac1p=\frac1n$,
further implies that
\begin{equation*}
\|u_F\|_{L^{\fai}(\boz)}+\lf\|\nabla u_F\r\|_{L^{\fai}(\boz;\rn)}
\sim\|f\|_{L^{\fai}(\boz)}
\ls\|h\|_{L^{\psi}(\boz)}\sim\lf\|F\r\|_{L^{\psi}(\boz)}.
\end{equation*}
From this estimate, \eqref{6.3} and $u=u_\mathbf{f}+u_F$, we deduce that
\begin{equation*}
\|u\|_{L^{\fai}(\boz)}+\lf\|\nabla u\r\|_{L^{\fai}(\boz;\rn)}
\ls\lf\|\mathbf{f}\r\|_{L^{\fai}(\boz;\rn)}+\lf\|F\r\|_{L^{\psi}(\boz)},
\end{equation*}
which completes the proof of Theorem \ref{t2.4}.
\end{proof}

\noindent{\textbf{Acknowledgements}}\quad
Sibei Yang would also like to thank Professor Jun Geng
for some helpful discussions on the topic of this article.

\bigskip

\noindent Sibei Yang

\medskip

\noindent School of Mathematics and Statistics, Gansu Key Laboratory of Applied Mathematics
and Complex Systems, Lanzhou University, Lanzhou 730000, People's Republic of China

\smallskip

\noindent{\it E-mail:} \texttt{yangsb@lzu.edu.cn}

\bigskip

\noindent Dachun Yang (Corresponding author) and Wen Yuan

\medskip

\noindent Laboratory of Mathematics and Complex Systems (Ministry of Education of China),
School of Mathematical Sciences, Beijing Normal University, Beijing 100875,
People's Republic of China

\smallskip

\smallskip

\noindent {\it E-mail}: \texttt{dcyang@bnu.edu.cn}

\end{document}